\documentclass[11pt,letterpaper]{article}
\usepackage{amsmath,amssymb,mathrsfs,amsthm}
\usepackage{microtype}
\usepackage{booktabs}
\usepackage{graphicx}
\usepackage{bbm}
\usepackage{blkarray}
\usepackage{enumerate}

\usepackage{bm}
\usepackage{listings}
\usepackage{caption}
\usepackage{subcaption}
\usepackage{econometrics}

\usepackage{setspace}
\usepackage{multirow}

\usepackage{tabularx,array,booktabs} 
\newcolumntype{C}{>{\centering\arraybackslash}X}

\usepackage[authoryear]{natbib}

\usepackage[colorlinks=true,linkcolor=magenta,citecolor=blue,backref=page]{hyperref}
\let \backreforig \backref
\renewcommand*{\backref}[1]{[\backreforig{#1}]}

\usepackage[toc,title,titletoc,header]{appendix}

\usepackage{hyperref}
\hypersetup{
colorlinks=true,
linkcolor=blue,
filecolor=magenta,      
urlcolor=magenta,
pdftitle={Overleaf Example},
pdfpagemode=FullScreen,
}

\usepackage[margin=1in]{geometry}
\usepackage{setspace}

\theoremstyle{definition}

\newtheorem{theorem}{Theorem}[section]

\newtheorem{lemma}{Lemma}[section]
\newtheorem{proposition}{Proposition}[section]
\newtheorem{remark}{Remark}[section]

\newtheorem{assumption}{Assumption}

\newcommand{\indep}{\perp \!\!\! \perp}

\newcommand{\V}{\mathbb{V}}
\newcommand{\Exp}{\mathbb{E}}

\allowdisplaybreaks

\usepackage{xr}
\usepackage{array}
\usepackage{arydshln}

\usepackage{lineno}
\setpagewiselinenumbers

\title{Misspecified regressions with mixed regressors:\\ 
robust inference and causal interpretation}
\author{Mengsi Gao \\
Department of Economics\\
University of Southern California\\
\url{mengsi.gao@usc.edu}
\and 
Peng Ding \\
Department of Statistics\\
UC Berkeley \\
\url{pengdingpku@berkeley.edu} \\
}
\date{\today}

\begin{document}
\maketitle

\onehalfspacing
\begin{abstract}
For analytic convenience, existing statistical frameworks either assume random or fixed regressors. However, it is a little awkward that they do not cover the practical case of estimating the average treatment effect in experiments with randomized treatments and non-randomized, fixed pretreatment covariates. We unify the literature by providing the theory for regressions with mixed regressors that contain both random and fixed components. Importantly, our theory allows for misspecification of the regression functions. We first establish general results for estimating equations with both random and fixed components and then use it to analyze misspecified linear regression, with applications to completely randomized experiments. We focus on the causal interpretation of the regression coefficients and standard errors even when the models are wrong. We start with the theory for independent data and then extend the discussion to clustered data.
\end{abstract}

\noindent KEYWORDS: Causal inference; instrumental variable; misspecification; regression adjustment; robust standard error

\section{Introduction to statistical inference with misspecified models} \label{sec:intro}
Empirical researchers use models to extract information from data. However, models are only approximations to real world phenomena and are often misspeficied. The theory of misspecified models has been of continuing interest in statistics and econometrics \citep[e.g.,][]{Cox1961, Huber1967, White1980a, White1982, Imbens1997, AbadieImbensZheng2014, BujaBrownBerk2019, BujaBrownKuchibhotla2019}. 
A well-known result from \citet{Huber1967} and \citet{White1982} is that we must use the Huber--White (HW) (also known as robust or sandwich) standard errors when the models are misspecified.

\citet{AbadieImbensZheng2014} pointed out an interesting distinction between random regressors and fixed regressors in misspecified regressions: while the  robust standard errors are consistent with random regressors, they are only conservative with fixed regressors (see also \citet{White1983} and \citet{Chow1984}). In the theoretical literature, the choice between random regressors and fixed regressors is often driven by analytic convenience. They do not cover the practical setting of randomized experiments with randomized treatments and fixed pretreatment covariates. Motivated by this setting due to its relevance for regression-based analysis for causal inference, we unify the literature by developing the theory for misspecified regressions with mixed regressors. We show that similar to the setting of fixed regressors, the robust standard errors are conservative with mixed regressors. This constitutes our first contribution.

As \citet{Freedman2006} critically pointed out, although robust standard errors can be useful for approximating the large-sample uncertainty of the estimators based on misspecified models, they do not solve the first-order problem that the targeted parameters themselves may not be meaningful in general. 
\citet{Geer2019} made a similar point. A more optimistic quote from   
\citet{BoxDraper1987} is that ``Essentially, all models are wrong, but some are useful.'' 
To complement \citet{BoxDraper1987}, we argue that when we use models for inference, we must first verify that the parameters from misspecified models have meaningful interpretations. In linear regression, a celebrated result is that least squares gives the best linear approximation to the conditional mean function of the outcome given the regressors \citep{White1980a, AngristPischke2009, BujaBrownBerk2019}. While this is a correct mathematical statement, it does answer the question whether linear approximation is a good approximation in the first place (\citealp{Sims2010}; \citealp[Chapter~12]{Ding2023}). We report some positive results about linear regression for causal inference in randomized experiments. In particular, we analyze various regressions for causal inference in randomized experiments with covariate adjustment, and extend them to the local average treatment effect framework \citep{ImbensAngrist1994, AngristImbensRubin1996}.    
This constitutes our second contribution.

We further extend the theory to deal with clustered data. Moreover, we detail the theory for regression analysis of cluster randomized experiments \citep{BarriosDiamondImbens2012, SuDing2021, AbadieAtheyImbens2023, BugniCanayShaikh2025}. 
In particular, we propose a novel correction to the usual Liang--Zeger (LZ) cluster robust standard error \citep{LiangZeger1986, Arellano1987, AbadieAtheyImbens2023} when the covariates are treated as random in the regression with treatment-covariates interaction. This constitutes our third contribution.

Table \ref{tab:overview} summarizes the main results of our paper. 
The remainder of the paper is organized as follows. 
Section \ref{sec:ME} displays the general results of $Z$-estimation with independent and identically distributed (i.i.d.) data. 
Section \ref{sec:i.i.d.Theory} displays the results from linear regressions. 
Section \ref{sec:application} displays the results under complete randomization. 
Section \ref{sec:cluster} displays the general results of $Z$-estimation under clustered data, and apply it to cluster randomization. 
Section \ref{sec:Simulation} studies the finite-sample performance of our point and variance estimators based on simulation.
Section \ref{sec:Discussion} provides the concluding remarks. 
The Supplementary Material contains the results for the local average treatment effect framework, all proofs, and several intermediate results.

\begin{table}[!ht]
\centering
\caption{Overview of results}
\label{tab:overview}
\begin{subtable}[t]{0.95\textwidth}
\centering
\caption{i.i.d. data}
\label{tab:overview-i.i.d.}

\begin{tabularx}{\linewidth}{
>{\hsize=1.10\hsize\centering\arraybackslash}X|
>{\hsize=0.75\hsize\centering\arraybackslash}X|
>{\hsize=0.85\hsize\centering\arraybackslash}X|
>{\hsize=1.45\hsize\centering\arraybackslash}X|
>{\hsize=0.85\hsize\centering\arraybackslash}X
}
\hline
\multicolumn{2}{c|}{Setting} & Design & HW SE & Theorem \\
\hline
\multicolumn{2}{c|}{} & Random & consistent & Theorem \ref{thm:ME_random}  \\
\cline{3-5}
\multicolumn{2}{c|}{$Z$-estimation} & Fixed & conservative & Theorem \ref{thm:ME_fixed} \\
\cline{3-5}
\multicolumn{2}{c|}{} & Mixed & conservative & Theorem \ref{thm:ME_mixed} \\
\hline
\multicolumn{2}{c|}{} & Random & consistent & Theorem \ref{thm:random}  \\
\cline{3-5}
\multicolumn{2}{c|}{OLS} & Fixed & conservative & Theorem \ref{thm:fixed} \\
\cline{3-5}
\multicolumn{2}{c|}{} & Mixed & conservative & Theorem \ref{thm:AsyDist_mixed} \\
\hline
& 
no $X$ & Random & consistent & Theorem \ref{thm:y_Z_r} \\
\cline{2-5}
Complete & \multirow{2}{*}{Fisher} & Random & consistent & Theorem \ref{thm:PRA_r} \\
\cline{3-5}
Randomization & & Mixed & conservative & Theorem \ref{thm:PRA_m} \\
\cline{2-5}
& \multirow{2}{*}{Lin} & Random & anti-conservative & Theorem \ref{thm:FRA_random} \\
\cline{3-5}
& & Mixed & conservative & Theorem \ref{thm:FRA_m} \\
\hline
\end{tabularx}
\end{subtable}

\vspace{0.8em}

\begin{subtable}[t]{0.95\textwidth}
\centering
\caption{Clustered data}
\label{tab:overview-cluster}

\begin{tabularx}{\linewidth}{
>{\hsize=1.10\hsize\centering\arraybackslash}X|
>{\hsize=0.75\hsize\centering\arraybackslash}X|
>{\hsize=0.85\hsize\centering\arraybackslash}X|
>{\hsize=1.45\hsize\centering\arraybackslash}X|
>{\hsize=0.85\hsize\centering\arraybackslash}X
}
\hline
\multicolumn{2}{c|}{Setting} & Design & LZ SE & Theorem \\
\hline
\multicolumn{2}{c|}{} & Random & consistent & Theorem \ref{thm:ME_random_cluster}  \\
\cline{3-5}
\multicolumn{2}{c|}{$Z$-estimation} & Fixed & conservative & Theorem \ref{thm:ME_fixed_cluster} \\
\cline{3-5}
\multicolumn{2}{c|}{} & Mixed & conservative & Theorem \ref{thm:ME_mixed_cluster} \\
\hline
& no $X$ & Random & consistent & Theorem \ref{thm:cluster_random} \\
\cline{2-5}
Cluster & \multirow{2}{*}{Fisher} & Random & consistent & Theorem \ref{thm:cluster_add_r} \\
\cline{3-5}
Randomization & & Mixed & conservative & Theorem \ref{thm:cluster_add_m} \\
\cline{2-5}
& \multirow{2}{*}{Lin} & Random & {anti-conservative} &  Theorem \ref{thm:cluster_adj_r} \\
\cline{3-5}
& & Mixed & conservative & Theorem \ref{thm:cluster_adj_m} \\
\hline
\end{tabularx}
\end{subtable}

\vspace{0.3em}
\footnotesize
\emph{Notes:} 
``OLS'' denotes ordinary least squares; ``HW SE'' denotes the Huber--White robust standard error, and ``LZ SE'' denotes the Liang--Zeger cluster-robust standard error. 
``No $X$'' refers to the regression without covariates, 
``Fisher'' refers to additive regression adjustment \citep{Fisher1935} and ``Lin'' refers to fully interacted regression adjustment \citep{Lin2013}.
\end{table}

\paragraph{Notation}
We use $\mathbb{N}$ to denote the set of all non-negative integers.
Let $1(\cdot)$ denote the indicator function.
Let ${I}_{m}$ be a $m \times m$ identity matrix. We suppress the dimension $m$ when it is clear from the context. 
Let $\|\cdot\|$ denote the Euclidean norm, i.e., $\|w\| = \sqrt{w^{\mathrm{T}} w}$ for $w\in\mathbb{R}^v$. 
Unless stated otherwise, all vectors are assumed to be column vectors.
Let $X$, $Y$, and $\varepsilon$ be the $n \times K$ matrix with $i$th row equal to $x_i^{\mathrm{T}}$, the $n$-vector with $i$th element equal to $y_i$, and the $n$-vector with $i$th element equal to $\varepsilon_i$, respectively.
Define $\dot{x}_i = x_i - \Exp(x_i)$ and $\ddot{x}_i = x_i - \bar{x}$ where $\bar{x} = n^{-1}\sum_{i=1}^n x_i$, following the notation in \cite{NegiWooldridge2021}. 
Let $[\cdot]_{(a,a)}$ denote the $(a,a)$th element of the matrix inside $[\cdot]$. 
We use $\text{lm}(y_i \sim x_i)$ to denote the least-squares
regression of $y_i$ on $x_i$ and focus on the associated Eicker--Huber--White (EHW) variance estimator and $\text{lm}(y_{ij} \sim x_{ij})$ to denote the least-squares
regression of $y_{ij}$ on $x_{ij}$ and focus on the associated LZ variance estimator clustered at the level of cluster $i$.
The terms ``regression'' and ``EHW variance'' refer to the numerical outputs of the least-squares fit without any modeling assumptions.
Let \(\mathbb{E}_\circ[\cdot]\) denote the relevant expectation operator (unconditional or conditional depending on the design): 
\(\mathbb{E}_\circ = \mathbb{E}_{x}\) refers to the expectation under the joint distribution of \(x\), while
\(\mathbb{E}_\circ = \mathbb{E}_{x_1|x_2}\) refers to the expectation under the conditional distribution of \(x_1\) given \(x_2\).
The same convention applies to the probability measure $\mathbb{P}_\circ$.
We use $o(1;\mathbb{P}_{\circ})$ as a less cluttered notation for
$o_{\mathbb{P}_{\circ}}(1)$, denoting a sequence of random variables
that converges to zero in $\mathbb{P}_{\circ}$-probability.
We use \(\V\) to denote variance.
To present our asymptotic results, we introduce the notion of conditional convergence in distributions.
Let $\mathcal{L}(t;W \mid X)=\mathbb{P}(W \le t \mid X)$ denote the cumulative distribution function of the random variable $W$.
We say that $W_n \mid X_n \overset{\textup{d}}{\to} W $ a.s.
if $\limsup_{n\to\infty}\;\sup_{t\in\mathbb{R}}
\bigl|\mathcal{L}\bigl(t;W_n \mid X_n\bigr)
-\mathcal{L}\bigl(t;W\bigr)\bigr|
=0$, $\mathbb{P}$-a.s.
We omit the measure-theoretic language ``a.s." below.
Let $v^{\otimes 2} = vv^{\mathrm{T}}$ denote the outer product of a vector $v$.

\section{Robust inference based on $Z$-estimation} \label{sec:ME}
We begin with the familiar random-design setting with i.i.d. observations and establishes a benchmark in Section \ref{sec:Z_random}. Because this is the standard framework for \(Z\)-estimation, we use it to introduce the basic notation and variance estimator before turning to fixed and mixed designs.
Sections \ref{sec:Z_fixed} and \ref{sec:Z_mixed} then develop the corresponding fixed- and mixed-design results. The key distinction across these designs is the source of randomness: random-design inference is unconditional, fixed-design inference conditions on all covariates, and mixed-design inference conditions on only a subset of the covariates.

\subsection{Random design}\label{sec:Z_random}

Under random design, we observe independent observations \( w_i \), and the parameter of interest \( \beta^\textup{r} \) is identified through a population moment condition
\[
\mathbb{E}[\psi(w_i; \beta^\textup{r})] = 0,
\]
where we use $\beta^\textup{r}$ to denote the estimand under random design.
We focus on the class of estimators \( \hat{\beta} \) that can be written as the unique solution to the corresponding sample estimating equations:
\begin{equation}
\bar{\psi}(W;b) =
\frac{1}{n}\sum_{i=1}^n \psi(w_i;b) = 0,
\label{eq:ME}
\end{equation}
where $W = (w_1,\cdots, w_n)$ collects all observations.
A first-order Taylor
expansion of $\bar{\psi}(W;\hat{\beta})$ around $\beta^\textup{r}$ gives
\[
0=\bar{\psi}(W;\hat{\beta}) \approx \bar{\psi}(W;{\beta}^\textup{r}) +\Gamma(\beta^\textup{r})\,(\hat\beta-\beta^\textup{r}),
\]
where ${\Gamma}(\beta^\textup{r})
= \left.\frac{\partial}{\partial b^{\mathrm{T}}} \bar{\psi}(W;{b}) \right|_{b=\beta^\textup{r}}$.
This yields the linearization of $Z$-estimator in \eqref{eq:ME} as $\hat{\beta}
\approx \beta^\textup{r} - \Gamma(\beta^\textup{r})^{-1}\bar{\psi}(W;{\beta}^\textup{r})$, 
and the HW variance estimator for the asymptotic variance of $\hat{\beta}$:
\begin{equation}
\hat{V}_{\textsc{hw}}
= n^{-1} 
\hat{\Gamma} ^{-1}
\hat{\Delta}
\hat{\Gamma} ^{-\mathrm{T}} 
\label{eq:EHW_ME}
\end{equation}
where 
\begin{align*}
\hat{\Gamma}
= \left.\frac{\partial}{\partial b^{\mathrm{T}}} \bar{\psi}(W;{b}) \right|_{b=\hat{\beta}}
\quad \text{and} \quad
\hat{\Delta}
= \frac{1}{n} \sum_{i=1}^n \psi(w_i;\hat{\beta})\psi(w_i;\hat{\beta})^{\mathrm{T}}.
\end{align*}
We use the label HW for the general $Z$-estimation setting.
Maximum likelihood estimation with possibly misspecified models  is a special case of this framework with $\psi$ as the score function \citep{Cox1961,Huber1967,White1980a, White1982}; we therefore focus on the general $Z$-estimation.

We apply the general $Z$-estimation results to linear regression in Section~\ref{sec:i.i.d.Theory} and to randomized experiments in Section~\ref{sec:application}. 
The framework accommodates not only Lin-type fully interacted regression adjustment, but also the IV regression, with the local average treatment effect as a leading application of the latter \citep{ImbensAngrist1994}. Throughout this section, we write $w_i=(y_i,x_i)$, where $y_i$ is the response and $x_i$ denotes the covariates.
Specifically, we define the estimand $\beta^\textup{r}$ as the unique solution to the estimating equation:
\[
\mathbb{E}\left[\psi(y_i, x_i;\beta^\textup{r})\right] = 0.  
\]
Define the asymptotic variance of $\hat{\beta}$ under random design as
$V^\textup{r} 
= (\Gamma^\textup{r})^{-1}\Delta^\textup{r}(\Gamma^{\textup{r} })^{-\mathrm{T}}$, with 
\begin{align*}
\Gamma^\textup{r}
&= \mathbb{E}\left[ \frac{\partial \psi(y_i, x_i;\beta^\textup{r})}{\partial b^{\mathrm{T}}}  \right] 
\text{ and }
\Delta^\textup{r} 
= \mathbb{E}\left[\psi(y_i, x_i;\beta^\textup{r})\psi(y_i, x_i;\beta^\textup{r})^{\mathrm{T}}\right]. 
\end{align*}
More generally, to present the three cases in a unified notation, we use \(\beta^{\diamond}\) to denote the estimand under different designs, and \(\mathbb{E}_{\circ}\) and \(\mathbb{P}_{\circ}\) to denote the corresponding expectation operator and probability measure, respectively.
Specifically, let \(\diamond = \textup{r}\) under a random design with \(\circ = (y, x)\); 
\(\diamond = \textup{f}\) under a fixed design with \(\circ = y| x \); 
and \(\diamond = \textup{m}\) under a mixed design with \(\circ = (y, x_1) | x_2\). 
This notation allows us to present the following assumption uniformly across the three design regimes.

\begin{assumption} \label{asu:ME}
Let \(\beta^{\diamond} \in \Theta \subset \mathbb{R}^p\) be the target parameter. 
Suppose that $w_i$, $i=1,\ldots,n$, are i.i.d.. 
\begin{enumerate}[(a)] 
\item The parameter space $\Theta$ is compact, \(\beta^{\diamond}\) lies in the interior of the  \(\Theta\), and $\beta^{\diamond}$ is the unique solution to \(\mathbb{E}_\circ[\psi(w, b)] = 0\).

\item The function $\psi(w,b)$ is continuous in $b\in\Theta$ almost
surely, and $\mathbb{E}\left[\sup_{b\in\Theta}\|\psi(w,b)\|\right]<\infty$.

\item There exists a neighborhood $\mathscr{N}$ of $\beta^\diamond$
such that $\psi(w,b)$ is continuously differentiable in $b\in \mathscr{N}$ almost surely.

\item $\mathbb{E}_\circ\left[
    \sup_{b\in\mathscr{N}}\|\psi(w,b)\|^{2+\delta}
\right]<\infty$ for some $\delta>0$ and $\mathbb{E}_\circ\left[
    \sup_{b\in\mathscr{N}}\|\nabla_b\psi(w,b)\|
\right]<\infty$.

\item \(\Gamma^\diamond := \mathbb{E}_\circ[\nabla_b \psi(w, \beta^{\diamond})]\) is nonsingular.
\end{enumerate}
\end{assumption}

Theorem \ref{thm:ME_random} below reviews the asymptotic normality of the $Z$-estimator $\hat{\beta}$ and the consistency of the HW variance estimator $\hat{V}_{\textsc{hw}}$ in \eqref{eq:EHW_ME} under random design.

\begin{theorem}\label{thm:ME_random}
Assume that $W = \{y_i, x_i\}_{i=1}^n$ are i.i.d..
Under random design and Assumption \ref{asu:ME} with \(\beta^{\diamond} = \beta^{\textup{r}}\), and \(\mathbb{E}_\circ = \mathbb{E}_{(y,x)}\) and \(\mathbb{P}_\circ = \mathbb{P}_{(y,x)}\), we have 
\[
(V^\textup{r})^{-1/2}
\sqrt{n}(\hat{\beta} - \beta^\textup{r}) 
\overset{\textup{d}}{\to} \mathcal{N}\left(0, I \right)
\text{ and } 
n \hat{V}_{\textsc{hw}} = V^\textup{r} + o(1;\mathbb{P}_{(y,x)}).
\] 
\end{theorem}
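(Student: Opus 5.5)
The proof follows the classical Huber--White route in three stages: consistency of $\hat\beta$, asymptotic normality via a mean-value expansion, and consistency of the sandwich pieces $\hat\Gamma$ and $\hat\Delta$.

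\textbf{Consistency.} By Assumption \ref{asu:ME}(b), $\psi(w,b)$ is continuous in $b$ on the compact set $\Theta$ and has an integrable envelope $\sup_{b\in\Theta}\|\psi(w,b)\|$. The uniform law of large numbers (e.g.\ Lemma 2.4 of Newey and McFadden) therefore gives
\[
\sup_{b\in\Theta}\|\bar\psi(W;b)-\mathbb{E}[\psi(w,b)]\| = o(1;\mathbb{P}_{(y,x)}),
\]
and the map $b\mapsto\mathbb{E}[\psi(w,b)]$ is continuous by dominated convergence. Assumption \ref{asu:ME}(a) says $\beta^{\textup{r}}$ is the unique zero. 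Together with compactness this yields the identifiability bound $\inf_{\|b-\beta^{\textup{r}}\|\ge\epsilon}\|\mathbb{E}[\psi(w,b)]\|>0$. Since $\bar\psi(W;\hat\beta)=0$, the standard argument gives $\hat\beta\to\beta^{\textup{r}}$ in probability.

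\textbf{Asymptotic normality.} With probability tending to one, $\hat\beta\in\mathscr{N}$. Apply the mean value theorem row by row to get
\[
0=\bar\psi(W;\hat\beta)=\bar\psi(W;\beta^{\textup{r}})+\tilde\Gamma(\hat\beta-\beta^{\textup{r}}),
\]
where row $j$ of $\tilde\Gamma$ is $\partial\bar\psi_j/\partial b^{\mathrm T}$ evaluated at an intermediate point $\tilde b_j$ between $\hat\beta$ and $\beta^{\textup{r}}$. Assumption \ref{asu:ME}(c)--(d) gives continuity of $\nabla_b\psi$ on $\mathscr{N}$ and an integrable envelope, so a second application of the uniform LLN on a compact ball inside $\mathscr{N}$ gives
\[
\sup_{b}\Bigl\|n^{-1}\sum_{i=1}^n\nabla_b\psi(w_i,b)-\mathbb{E}[\nabla_b\psi(w,b)]\Bigr\| = o(1;\mathbb{P}_{(y,x)}),
\]
with $b\mapsto\mathbb{E}[\nabla_b\psi(w,b)]$ continuous. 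Because each $\tilde b_j\to\beta^{\textup{r}}$, it follows that $\tilde\Gamma\to\Gamma^{\textup{r}}$, which is nonsingular by Assumption \ref{asu:ME}(e). Hence $\sqrt n(\hat\beta-\beta^{\textup{r}})=-(\Gamma^{\textup{r}})^{-1}\sqrt n\,\bar\psi(W;\beta^{\textup{r}})+o_{\mathbb{P}}(1)$. The summands $\psi(w_i,\beta^{\textup{r}})$ are i.i.d.\ with mean zero and finite second moments by (d), so the multivariate CLT gives $\sqrt n\,\bar\psi(W;\beta^{\textup{r}})\overset{\textup{d}}{\to}\mathcal N(0,\Delta^{\textup{r}})$. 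Slutsky's theorem and premultiplication by $(V^{\textup{r}})^{-1/2}$ then give the first claim. This step implicitly requires $\Delta^{\textup{r}}$ to be positive definite, and I would note that explicitly.

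\textbf{Variance consistency.} Since $n\hat V_{\textsc{hw}}=\hat\Gamma^{-1}\hat\Delta\hat\Gamma^{-\mathrm T}$, it suffices to show $\hat\Gamma\to\Gamma^{\textup{r}}$ and $\hat\Delta\to\Delta^{\textup{r}}$ and then invoke the continuous mapping theorem. $\hat\Gamma\to\Gamma^{\textup{r}}$ follows from the same uniform convergence used for $\tilde\Gamma$. For $\hat\Delta$, note that $\psi\psi^{\mathrm T}$ is continuous in $b$ on $\mathscr{N}$ and dominated by $\sup_{b\in\mathscr{N}}\|\psi(w,b)\|^2$, which is integrable because of the $2+\delta$ moment in (d). A uniform LLN for $n^{-1}\sum_{i=1}^n\psi(w_i,b)\psi(w_i,b)^{\mathrm T}$ over a compact neighborhood, combined with $\hat\beta\to\beta^{\textup{r}}$ and continuity of the limit, gives $\hat\Delta=\Delta^{\textup{r}}+o(1;\mathbb{P}_{(y,x)})$.

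\textbf{Main obstacle.} The delicate step is the uniform convergence near $\beta^{\textup{r}}$, which controls both the intermediate-point Jacobian $\tilde\Gamma$ and $\hat\Delta$. I would handle it once with a single lemma: continuity in $b$ plus a dominating integrable envelope on a compact set implies a uniform LLN. This is proved by a finite-cover argument that bounds $\mathbb{E}\bigl[\sup_{\|b'-b\|<\eta}\|f(w,b')-f(w,b)\|\bigr]$, which tends to zero as $\eta\to0$ by dominated convergence. One technical caveat: $\mathscr{N}$ should be taken to be a closed ball, or shrunk to one, so that compactness is available.
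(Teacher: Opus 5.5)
Your proposal is correct and follows essentially the same route as the paper. Both arguments use a uniform LLN plus identification for consistency, an expansion of $\bar\psi$ around $\beta^{\textup{r}}$ with a uniformly convergent Jacobian followed by the CLT and Slutsky, and Newey--McFadden-type uniform convergence on a neighborhood for $\hat\Gamma$ and $\hat\Delta$.

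The only differences are minor. You use a row-wise mean value theorem where the paper uses the integral form of Taylor's theorem. Your remarks that $\Delta^{\textup{r}}$ must be nonsingular for $(V^{\textup{r}})^{-1/2}$ to exist, and that $\mathscr{N}$ should be taken as a closed ball, are valid points the paper leaves implicit.
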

Theorem \ref{thm:ME_random} is the standard random-design result for $Z$-estimation equations; see, for example, \cite{NeweyMcFadden1994}. We include it to establish notation and to serve as the benchmark for the fixed- and mixed-design results below.

\subsection{Fixed design} \label{sec:Z_fixed}
Under fixed design, the $x_i$'s are fixed, or, equivalently, we condition on them.
Let $X = (x_1, \ldots, x_n)^{\mathrm{T}}.$
Define the estimand $\beta^\textup{f}$ as the solution to the estimating equations:
\[
\mathbb{E}\left[ \frac{1}{n} \sum_{i=1}^n \psi(y_i,x_i;\beta^\textup{f} ) \mid {X} \right] = 0.   
\]
Define the conditional variance $n\cdot \V(\hat{\beta} \mid X)$ as $V^\textup{f} = (\Gamma^\textup{f})^{-1}\Delta^\textup{f}(\Gamma^{\textup{f} })^{-\mathrm{T}}$ with 
\begin{align*}
\Gamma^\textup{f} 
&= \frac{1}{n} \sum_{i=1}^n \mathbb{E}\left[  \frac{\partial \psi(y_i,x_i;\beta^\textup{f} )}{\partial b^{\mathrm{T}}}  \mid x_i \right] 
\text{ and }
\Delta^\textup{f} 
= \frac{1}{n} \sum_{i=1}^n \V\left[ \psi(y_i,x_i;\beta^\textup{f}) \mid x_i \right],
\end{align*}
and the asymptotic bias of $\hat{V}_{\textsc{ehw}}$ in \eqref{eq:EHW_ME} for $V^\textup{f}$ as
\begin{equation}
B^{\textup{f}}
= (\Gamma^\textup{f})^{-1} \left( \frac{1}{n} \sum_{i=1}^n \left( \mathbb{E}\left[ \psi(y_i,x_i;\beta^\textup{f}) \mid x_i \right] \right)^{\otimes 2}  \right) (\Gamma^{\textup{f} })^{-\mathrm{T}}. 
\label{eq:bias_me_fixed}
\end{equation}

\begin{theorem}\label{thm:ME_fixed}
Assume that $\{y_i, x_i\}_{i=1}^n$ are i.i.d..
Under fixed design and Assumption \ref{asu:ME} with \(\beta^{\diamond} = \beta^{\textup{f}}\), \(\mathbb{E}_\circ = \mathbb{E}_{y|x}\) and \(\mathbb{P}_\circ = \mathbb{P}_{y|x}\), we have 
\[
(V^\textup{f})^{-1/2}
\sqrt{n}(\hat{\beta} - \beta^\textup{f}) \mid X
\overset{\textup{d}}{\to} \mathcal{N}\left(0, I \right)
\text{ and }
n \hat{V}_{\textsc{hw}} 
= V^\textup{f} + B^{\textup{f}}
+ o(1;\mathbb{P}_{y|x}).
\] 
\end{theorem}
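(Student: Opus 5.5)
Conditional on $X$, the pairs $(y_i,x_i)$ become independent but not identically distributed: $y_i$ follows the conditional law given $x_i$. The plan is to run the standard $Z$-estimation argument of Theorem \ref{thm:ME_random} under this triangular-array, independent non-identical setting. Throughout, $\mathbb{P}_{y|x}$ statements hold for $\mathbb{P}$-almost every sequence $x_1,x_2,\ldots$. The almost-sure control of the design averages comes from the i.i.d. assumption on $(y_i,x_i)$ together with the strong law of large numbers applied to functions of $x_i$. For example, $n^{-1}\sum_i \mathbb{E}[\sup_{b}\|\psi\|^{2+\delta}\mid x_i]\to \mathbb{E}[\sup_b\|\psi\|^{2+\delta}]<\infty$ a.s.

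\textbf{Step 1 (consistency).} I will establish a conditional uniform law of large numbers: $\sup_{b\in\Theta}\|\bar\psi(W;b)-n^{-1}\sum_i\mathbb{E}[\psi(y_i,x_i;b)\mid x_i]\|=o(1;\mathbb{P}_{y|x})$. The route is the usual bracketing/compactness argument with envelope $\sup_b\|\psi\|$. Continuity and compactness give finitely many brackets, and the conditional Chebyshev/Kolmogorov inequality for independent summands handles each bracket. The a.s. boundedness of averaged conditional envelopes from the previous paragraph makes this work. Identification by Assumption \ref{asu:ME}(a), with $\beta^{\textup f}$ the unique zero of the conditional population map (interpreted uniformly in $n$), then yields $\hat\beta-\beta^{\textup f}=o(1;\mathbb{P}_{y|x})$ by the standard argmin/zero argument.

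\textbf{Step 2 (normality).} I will take a mean-value expansion $0=\bar\psi(W;\beta^{\textup f})+\tilde\Gamma(\hat\beta-\beta^{\textup f})$. Using Assumption \ref{asu:ME}(c)--(d) and Step 1, together with a conditional ULLN for $\nabla_b\psi$ on $\mathscr N$, I get $\tilde\Gamma=\Gamma^{\textup f}+o(1;\mathbb{P}_{y|x})$, and Assumption \ref{asu:ME}(e) gives invertibility. The key term is $\sqrt n\,\bar\psi(W;\beta^{\textup f})=n^{-1/2}\sum_i\{\psi_i-\mathbb{E}[\psi_i\mid x_i]\}$. This equality holds because the conditional means average to zero by definition of $\beta^{\textup f}$. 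The summands are independent and conditionally mean zero with variance average $\Delta^{\textup f}$. Lyapunov's CLT applies via the $2+\delta$ moment, since $n^{-1-\delta/2}\sum_i\mathbb{E}[\|\psi_i\|^{2+\delta}\mid x_i]\to0$ a.s. Combined with Cramér--Wold, this gives $(\Delta^{\textup f})^{-1/2}\sqrt n\bar\psi\mid X\to\mathcal N(0,I)$. Pólya's theorem upgrades this to the sup-norm CDF convergence in the paper's definition. Slutsky then yields the first claim.

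\textbf{Step 3 (variance).} $\hat\Gamma\to\Gamma^{\textup f}$ follows as in Step 2. For $\hat\Delta$, continuity and the envelope bound give $\hat\Delta=n^{-1}\sum_i\psi_i\psi_i^{\mathrm T}+o(1;\mathbb{P}_{y|x})$ with $\psi_i=\psi(y_i,x_i;\beta^{\textup f})$. A conditional weak LLN, using the $1+\delta/2$ moment of $\|\psi_i\|^2$, gives $n^{-1}\sum_i\mathbb{E}[\psi_i\psi_i^{\mathrm T}\mid x_i]+o(1)$. I then decompose
\[
\mathbb{E}[\psi_i\psi_i^{\mathrm T}\mid x_i]=\V[\psi_i\mid x_i]+(\mathbb{E}[\psi_i\mid x_i])^{\otimes2}.
\]
Averaging gives $\Delta^{\textup f}+(\Gamma^{\textup f})B^{\textup f}(\Gamma^{\textup f})^{\mathrm T}$, and sandwiching produces $V^{\textup f}+B^{\textup f}$.

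The main obstacle is Step 1 and the uniform-in-$b$ replacement in Step 3 under only conditional, non-identically distributed laws. The difficulty is that $\beta^{\textup f}$ depends on $n$ and on $X$, so the identification condition must hold uniformly along the sequence. I will handle this by noting that $n^{-1}\sum_i\mathbb{E}[\psi(\cdot;b)\mid x_i]\to\mathbb{E}[\psi(\cdot;b)]$ uniformly a.s. by the i.i.d. ULLN in $x$. I will then treat the well-separatedness of $\beta^{\textup f}$ as implied by Assumption \ref{asu:ME}(a) for large $n$.
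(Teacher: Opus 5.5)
Your proposal is correct, modulo a truncation step, and has the same architecture as the paper's proof: a conditional ULLN plus identification for consistency; an integral-form expansion with a local conditional ULLN for $\nabla_b\psi$; and centering $\sqrt n\,\bar\psi(W;\beta^{\mathrm f})$ by the conditional means, which average to zero by definition of $\beta^{\mathrm f}$. It then uses a conditional Lyapunov CLT, and for $\hat\Delta$ the decomposition $\mathbb{E}[\psi_i\psi_i^{\mathrm T}\mid x_i]=\V[\psi_i\mid x_i]+(\mathbb{E}[\psi_i\mid x_i])^{\otimes 2}$, exactly as the paper does. The genuinely different ingredient is how you obtain the conditional ULLN. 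The paper (Lemma \ref{lem:conditional-ulln}) applies the ordinary i.i.d.\ dominated ULLN under the joint law to the centered functions $a(w_i;b)-\mathbb{E}[a(w_i;b)\mid x_i]$, which are i.i.d.\ with an integrable envelope. It then transfers the result to the conditional law by Markov's inequality, $\mathbb{P}\{\mathbb{P}_{y|x}(A_n>\varepsilon)>\eta\}\le\eta^{-1}\mathbb{P}(A_n>\varepsilon)\to0$. This needs only the first-moment envelopes in Assumption \ref{asu:ME}(b),(d) and no truncation, but it gives conditional convergence only in $\mathbb{P}$-probability over $X$. Your bracketing argument along almost every design sequence yields the stronger a.s.-in-$X$ statement, which fits the paper's a.s.\ definition of conditional convergence in distribution better. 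However, with only a first-moment envelope on $\Theta$ (and for $\nabla_b\psi$ on $\mathscr N$), Chebyshev/Kolmogorov does not apply directly. Bounded averaged conditional first moments do not give a WLLN for independent non-identically distributed summands, so you must truncate each bracket at a level $t$. You then control the tail through the SLLN $n^{-1}\sum_i\mathbb{E}[U_i1(U_i>t)\mid x_i]\to\mathbb{E}[U1(U>t)]$, which is small for large $t$. On identification, the paper simply asserts that $c^{\mathrm f}_{n,\varepsilon}$ stays bounded away from zero. Your reduction via the a.s.\ uniform convergence of $n^{-1}\sum_i\mathbb{E}[\psi(y_i,x_i;b)\mid x_i]$ to $\mathbb{E}[\psi(w;b)]$ makes this precise. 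It does, however, use that the unconditional map has a unique zero $\beta^{\mathrm r}$, so that $\beta^{\mathrm f}\to\beta^{\mathrm r}$ and separation transfers. Per-$n$ uniqueness of $\beta^{\mathrm f}$ in the fixed-design version of Assumption \ref{asu:ME}(a) does not supply this by itself, and the paper's proof tacitly needs the same condition.
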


Theorem \ref{thm:ME_fixed} is the fixed-design counterpart of the standard
$Z$-estimation result in Theorem \ref{thm:ME_random}. 
Theorem \ref{thm:ME_fixed} indicates that under fixed design, $\hat{V}_{\textsc{hw}}$ in \eqref{eq:EHW_ME} is conservative with asymptotic bias $B^{\textup{f}}$ in \eqref{eq:bias_me_fixed}.
\citet{AbadieImbensZheng2014} also study general $Z$-estimators under misspecification and distinguish between population and covariate-conditional estimands. Their asymptotic results, however, are formulated under i.i.d.\ sampling and unconditional inference, whereas
our fixed-design analysis conditions on the realized regressors.

\subsection{Mixed design} \label{sec:Z_mixed}
We partition all the regressors $X= (X_1, X_2)$. Under mixed design, our analysis conditions on part of the regressors, i.e., ${X}_2$, and take the remaining regressors ${X}_1$ as random.
Define the estimand $\beta^\textup{m}$ as the solution to the estimating equations:
\[
\mathbb{E}\left[\frac{1}{n} \sum_{i=1}^n \psi(y_i,x_i;\beta^\textup{m})\mid {X}_2 \right] = 0. 
\]
Define the conditional variance $n\cdot \V(\hat{\beta} \mid X_2)$ as
$V^\textup{m} = (\Gamma^\textup{m})^{-1}\Delta^\textup{m}(\Gamma^{\textup{m} })^{-\mathrm{T}}$ with 
\begin{align*}
\Gamma^\textup{m} 
&= \frac{1}{n} \sum_{i=1}^n \mathbb{E}\left[  \frac{\partial \psi(y_i,x_i;\beta^\textup{m} )}{\partial b^{\mathrm{T}}}  \mid x_{i2} \right] 
\text{ and }
\Delta^\textup{m} 
= \frac{1}{n} \sum_{i=1}^n \V\left[ \psi(y_i,x_i;\beta^\textup{m}) \mid x_{i2} \right]
\end{align*}
and the asymptotic bias of $\hat{V}_{\textsc{ehw}}$ in \eqref{eq:EHW_ME} for $V^\textup{m}$ as 
\begin{equation}
B^\textup{m}
= (\Gamma^\textup{m})^{-1} \left( \frac{1}{n} \sum_{i=1}^n \left( \mathbb{E}\left[ \psi(y_i,x_i;\beta^\textup{m}) \mid x_{i2} \right] \right)^{\otimes 2}  \right) 
(\Gamma^{\textup{m} })^{-{\mathrm{T}}}.
\label{eq:bias_me_mixed}
\end{equation}
\begin{theorem} \label{thm:ME_mixed}
Assume that $\{y_i, x_i\}_{i=1}^n$ are i.i.d..
Under mixed design and Assumption \ref{asu:ME} with \(\beta^{\diamond} = \beta^{\textup{m}}\), \(\mathbb{E}_\circ = \mathbb{E}_{(y,x_1)|x_2}\) and \(\mathbb{P}_\circ = \mathbb{P}_{(y,x_1)|x_2}\), we have 
\begin{equation*}
(V^\textup{m})^{-1/2} \sqrt{n}(\hat{\beta} - \beta^\textup{m}) \mid X_2
\overset{\textup{d}}{\to} \mathcal{N}\left(0, I \right)
\text{ and }
n \hat{V}_{\textsc{hw}}
=  V^\textup{m}
+ B^\textup{m}
+ o(1;\mathbb{P}_{(y,x_1)\mid x_2}). 
\end{equation*}
\end{theorem}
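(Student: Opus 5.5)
The argument conditions on $X_2$ throughout. Given $X_2$, the pairs $(y_i, x_{i1})$ are independent across $i$ but not identically distributed, since their conditional laws depend on $x_{i2}$. The proof therefore follows the standard $Z$-estimation route of Theorem \ref{thm:ME_random}, with the i.i.d. laws of large numbers and CLT replaced by their independent, non-identically-distributed versions, exactly as in the fixed-design proof of Theorem \ref{thm:ME_fixed}. Indeed, Theorem \ref{thm:ME_mixed} is formally Theorem \ref{thm:ME_fixed} with ``$y$'' replaced by the random vector $(y, x_1)$ and ``$x$'' replaced by $x_2$. I would either invoke that proof verbatim under this relabeling or repeat its steps as follows.

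\textbf{Step 1 (consistency).} Define $\bar\psi^{\textup{m}}(b)=n^{-1}\sum_i \mathbb{E}[\psi(w_i;b)\mid x_{i2}]$. Using the envelope condition in Assumption \ref{asu:ME}(b), continuity in $b$, and compactness of $\Theta$, I would establish a uniform law of large numbers conditional on $X_2$:
\[
\sup_{b\in\Theta}\|\bar\psi(W;b)-\bar\psi^{\textup{m}}(b)\|=o(1;\mathbb{P}_{(y,x_1)\mid x_2}).
\]
The tool is a bracketing or finite-cover argument combined with a Kolmogorov/Markov LLN for independent summands. Since the $x_{i2}$ are themselves i.i.d. draws, the conditional moment bounds hold for almost every sequence $X_2$. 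Identifiable uniqueness in Assumption \ref{asu:ME}(a) then gives $\hat\beta\to\beta^{\textup{m}}$ in conditional probability.

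\textbf{Step 2 (asymptotic normality).} I would expand $0=\bar\psi(W;\hat\beta)$ around $\beta^{\textup{m}}$ using the mean-value theorem on $\mathscr N$. A uniform LLN for $\nabla_b\psi$ on $\mathscr N$, which follows from Assumption \ref{asu:ME}(c)--(d), gives $\nabla_b\bar\psi(W;\tilde\beta)=\Gamma^{\textup{m}}+o(1)$. Hence
\[
\sqrt n(\hat\beta-\beta^{\textup{m}})=-(\Gamma^{\textup{m}})^{-1}\sqrt n\,\bar\psi(W;\beta^{\textup{m}})+o(1).
\]
By the definition of $\beta^{\textup{m}}$, the sum $\sqrt n\,\bar\psi(W;\beta^{\textup{m}})=n^{-1/2}\sum_i\{\psi(w_i;\beta^{\textup{m}})-\mathbb{E}[\psi(w_i;\beta^{\textup{m}})\mid x_{i2}]\}$ has conditional mean zero and conditional variance $\Delta^{\textup{m}}$. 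The Lyapunov CLT, with the $2+\delta$ moment from Assumption \ref{asu:ME}(d), applies after normalizing by $(\Delta^{\textup{m}})^{-1/2}$; this uses Cramér--Wold together with the fact that $\Delta^{\textup{m}}$ converges almost surely to a positive definite limit. Slutsky then yields the first claim.

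\textbf{Step 3 (variance estimator).} Consistency of $\hat\Gamma$ for $\Gamma^{\textup{m}}$ follows from Step 2's uniform LLN combined with consistency of $\hat\beta$. For $\hat\Delta$, a uniform LLN for $\psi\psi^{\mathrm T}$ on $\mathscr N$ holds because the $2+\delta$ moment dominates $\|\psi\|^2$, so $\hat\Delta=n^{-1}\sum_i\mathbb{E}[\psi\psi^{\mathrm T}(w_i;\beta^{\textup{m}})\mid x_{i2}]+o(1)$. The key identity $\mathbb{E}[\psi\psi^{\mathrm T}\mid x_{i2}]=\V[\psi\mid x_{i2}]+(\mathbb{E}[\psi\mid x_{i2}])^{\otimes2}$ then splits this limit into $\Delta^{\textup{m}}$ plus the middle term of $B^{\textup{m}}$. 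Sandwiching by $\hat\Gamma^{-1}$ gives $n\hat V_{\textsc{hw}}=V^{\textup{m}}+B^{\textup{m}}+o(1)$.

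The nonvanishing bias arises because only the average of $\mathbb{E}[\psi\mid x_{i2}]$ equals zero, not each term individually. I expect the main obstacle to be technical: the laws of large numbers must be made uniform in $b$ and must hold conditionally on $X_2$ for almost every sequence, since the summands are not identically distributed. I would handle this by applying the unconditional i.i.d. ULLN to the joint data and transferring it via conditional Markov inequalities, which hold for almost every realization of $X_2$.
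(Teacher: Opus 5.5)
Your proposal is correct and matches the paper's approach. The paper proves Theorem~\ref{thm:ME_mixed} by rerunning the fixed-design proof of Theorem~\ref{thm:ME_fixed} with $(y,x_1)$ in the role of $y$ and $x_2$ in the role of $x$, using your ingredients: a conditional uniform LLN obtained by transferring the unconditional one through a Markov inequality (Lemma~\ref{lem:conditional-ulln}), identifiability for consistency, a Taylor expansion, a conditional Lyapunov CLT for the conditionally centered scores, and the split of $\mathbb{E}[\psi\psi^{\mathrm{T}}\mid x_{i2}]$ into $\Delta^{\textup{m}}$ plus the squared conditional means that generate $B^{\textup{m}}$; the one cosmetic difference is that the paper uses the integral form of Taylor's theorem rather than a single mean-value point, which avoids the usual row-by-row caveat for vector-valued $\psi$.
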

Theorem \ref{thm:ME_mixed} generalizes the random- and fixed-design benchmarks to mixed designs, where only part of the regressors are conditioned on. This formulation is useful for settings such as randomized experiments with fixed or pre-determined covariates. Under mixed design, $\hat{V}_{\textsc{hw}}$ remains conservative for estimating $V^\textup{m}$ under misspecification, with the additional term $B^{\textup{m}}$ in \eqref{eq:bias_me_mixed} capturing the approximation-error component induced by conditioning on \(X_2\).
In the special case in which either $X_2$ is empty or $\mathbb{E}\!\left[
\psi(y_i,x_i;\beta^{\textup{m}})
\mid x_{i2}
\right]
=0$, the HW variance estimator $\hat{V}_{\textsc{hw}}$ is consistent rather than strictly conservative.

\section{OLS estimation as a special case}
\label{sec:i.i.d.Theory}
We now specialize the framework in Section \ref{sec:ME} to OLS. The structure of this section parallels that of Section \ref{sec:ME}.
Assume:
\begin{equation}
y_i = x_i^{\mathrm{T}}\beta + \varepsilon_i 
\label{eq:y_x}
\end{equation}
with $y_i$ being the outcome of interest, $x_i$ a $K$-vector of observed
covariates, possibly including an intercept, and $\varepsilon_i$ an unobserved
error. 
If $\sum_{i=1}^n x_ix_i^{\mathrm{T}}>0$, the OLS estimator $\hat{\beta}$ solves 
\begin{equation}
\frac{1}{n} \sum_{i=1}^n \psi(y_i,x_i; b)
= \frac{1}{n} \sum_{i=1}^n x_i(y_i-x_i^{\mathrm{T}} b)
= 0. \label{eq:beta_ols}
\end{equation}
Define the residual from the OLS fit as $\hat{\varepsilon}_i = y_i-x_i^{\mathrm{T}}\hat{\beta}$. 
Specializing the general sandwich estimator $\hat{V}_{\textsc{hw}}$ in \eqref{eq:EHW_ME} to
the OLS estimating equations in \eqref{eq:beta_ols} gives the Eicker--Huber--White (EHW)
variance estimator \citep{Eicker1967,Huber1967,White1980a}:
\begin{equation}
\hat{V}_{\textsc{ehw}}
= \frac{1}{n} \left( \frac{1}{n}\sum_{i=1}^n x_ix_i^{\mathrm{T}} \right)^{-1}
\left(\frac{1}{n}\sum_{i=1}^n \hat{\varepsilon}_i^2 x_ix_i^{\mathrm{T}} \right)
\left(\frac{1}{n}\sum_{i=1}^n x_ix_i^{\mathrm{T}} \right)^{-1}.  
\label{eq:EHW}
\end{equation}
We use the label EHW to emphasize that, for OLS, the general
HW sandwich estimator coincides with \cite{Eicker1967}'s heteroskedasticity-robust covariance estimator.

In this section, we impose the following assumption. 
\begin{assumption}\label{asu:i.i.d.}
\begin{enumerate}[{(a)}]
\item The variables $(y_i, x_i)$, $i=1, \ldots, n$, are i.i.d.;
\item $\mathbb{E}_\circ(y_i^4)<\infty$;
\item $\mathbb{E}_\circ\|x_i\|^4<\infty$;
\item $\mathbb{E}_\circ(x_ix_i^{\mathrm{T}})$ is positive definite.
\end{enumerate}
\end{assumption}
Below, we study the performance of the OLS estimator $\hat{\beta}$ in \eqref{eq:beta_ols} and $\hat{V}_{\textsc{ehw}}$ in \eqref{eq:EHW} under different sources of randomness.

\subsection{Random Design}\label{random}
Under random design, the estimand of interest $\beta^{\textup{r}}$ is the solution to the estimating equations:
\begin{align}
0 = \frac{1}{n} \sum_{i=1}^n \mathbb{E} \left[  x_i (y_i-x_i^{\mathrm{T}} b) \right].
\end{align}
Equivalently, $\beta^{\textup{r}}$ is the population OLS projection coefficient because our theory does not assume the linear model \eqref{eq:y_x} to be correctly specified. 
Define the projection error $\varepsilon_i^{\textup{r}} = y_i-x_i^{\mathrm{T}}\beta^{\textup{r}}$. 
The OLS estimator $\hat{\beta}$ in \eqref{eq:beta_ols} is biased for $\beta^{\textup{r}}$ since $\mathbb{E}(\hat{\beta}-\beta^{\textup{r}})\ne 0$, but is consistent for $\beta^{\textup{r}}$ by the law of large numbers under Assumption \ref{asu:i.i.d.}. 
Define 
\begin{align}
V^{\textup{r}} = \mathbb{E}\left[x_ix_i^{\mathrm{T}}\right]^{-1}\mathbb{E}\left[(\varepsilon_i^{\textup{r}})^2 x_ix_i^{\mathrm{T}}\right]\mathbb{E}\left[x_ix_i^{\mathrm{T}}\right]^{-1}.
\label{eq:Vr}
\end{align}
\begin{theorem}\label{thm:random}
Under random design and Assumption \ref{asu:i.i.d.} with \(\mathbb{E}_\circ = \mathbb{E}_{(y,x)}\), we have
\begin{equation}
(V^{\textup{r}})^{-1/2} \sqrt{n}(\hat{\beta} - \beta^{\textup{r}}) \overset{\textup{d}}{\to} \mathcal{N}(0, I) 
\text{ and } 
n \hat{V}_{\textsc{ehw}} = V^{\textup{r}} + o(1;\mathbb{P}_{(y,x)}).
\end{equation} 
\end{theorem}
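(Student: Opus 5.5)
The plan is to derive Theorem \ref{thm:random} by direct computation, using the explicit OLS formulas, and to treat it as the OLS instance of Theorem \ref{thm:ME_random}. With $\psi(y_i,x_i;b)=x_i(y_i-x_i^{\mathrm{T}}b)$, which is linear in $b$, the objects in Theorem \ref{thm:ME_random} become
\[
\Gamma^{\textup{r}}=-\mathbb{E}(x_ix_i^{\mathrm{T}}), \qquad \Delta^{\textup{r}}=\mathbb{E}[(\varepsilon_i^{\textup{r}})^2x_ix_i^{\mathrm{T}}],
\]
so $V^{\textup{r}}$ in \eqref{eq:Vr} coincides with the general sandwich. Also, $\hat V_{\textsc{hw}}$ reduces to $\hat V_{\textsc{ehw}}$ in \eqref{eq:EHW}, because $\hat\Gamma=-n^{-1}\sum_i x_ix_i^{\mathrm{T}}$ and $\psi(w_i;\hat\beta)=x_i\hat\varepsilon_i$. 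Assumption \ref{asu:ME} needs a compact $\Theta$, which linear OLS does not naturally supply. I would therefore not verify it literally but argue directly, since the closed form makes every step elementary.

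\textbf{Step 1 (consistency and linearization).} Write
\[
\hat\beta-\beta^{\textup{r}}=\Bigl(n^{-1}\sum_i x_ix_i^{\mathrm{T}}\Bigr)^{-1} n^{-1}\sum_i x_i\varepsilon_i^{\textup{r}}.
\]
Assumption \ref{asu:i.i.d.}(c) and the weak law of large numbers give $n^{-1}\sum_i x_ix_i^{\mathrm{T}}\to \mathbb{E}(x_ix_i^{\mathrm{T}})$ in probability, and the limit is invertible by (d). By the definition of $\beta^{\textup{r}}$ we have $\mathbb{E}(x_i\varepsilon_i^{\textup{r}})=0$. Cauchy--Schwarz with (b) and (c) gives $\mathbb{E}\|x_i\varepsilon_i^{\textup{r}}\|^2<\infty$.

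\textbf{Step 2 (asymptotic normality).} The multivariate Lindeberg--L\'evy CLT gives
\[
n^{-1/2}\sum_i x_i\varepsilon_i^{\textup{r}}\overset{\textup{d}}{\to}\mathcal N(0,\Delta^{\textup{r}}).
\]
Slutsky's theorem then yields $\sqrt n(\hat\beta-\beta^{\textup{r}})\to\mathcal N(0,V^{\textup{r}})$. Premultiplying by $(V^{\textup{r}})^{-1/2}$ requires $V^{\textup{r}}$ to be nonsingular, i.e.\ $\Delta^{\textup{r}}>0$. I would state this as implicit in the theorem, since the normalization presupposes it.

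\textbf{Step 3 (EHW consistency).} The bread converges by Step 1, so it remains to show that the meat $n^{-1}\sum_i\hat\varepsilon_i^2x_ix_i^{\mathrm{T}}$ converges to $\Delta^{\textup{r}}$. Expand
\[
\hat\varepsilon_i=\varepsilon_i^{\textup{r}}-x_i^{\mathrm{T}}(\hat\beta-\beta^{\textup{r}}),
\]
which gives
\[
n^{-1}\sum_i\hat\varepsilon_i^2x_ix_i^{\mathrm{T}}=n^{-1}\sum_i(\varepsilon_i^{\textup{r}})^2x_ix_i^{\mathrm{T}}-2n^{-1}\sum_i\varepsilon_i^{\textup{r}}\,x_i^{\mathrm{T}}(\hat\beta-\beta^{\textup{r}})\,x_ix_i^{\mathrm{T}}+n^{-1}\sum_i\{x_i^{\mathrm{T}}(\hat\beta-\beta^{\textup{r}})\}^2x_ix_i^{\mathrm{T}}.
\]
The first term converges to $\Delta^{\textup{r}}$ by the law of large numbers, since $\mathbb{E}[(\varepsilon_i^{\textup{r}})^2\|x_i\|^2]<\infty$. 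Integrability follows from $\varepsilon_i^{\textup{r}}=y_i-x_i^{\mathrm{T}}\beta^{\textup{r}}$ together with (b), (c) and Cauchy--Schwarz: $\mathbb{E}(y_i^2\|x_i\|^2)\le\{\mathbb{E}y_i^4\,\mathbb{E}\|x_i\|^4\}^{1/2}$.

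The remainder terms are the main obstacle, because they involve third and fourth moments of $x_i$. The cross term is bounded in norm by $2\|\hat\beta-\beta^{\textup{r}}\|\,n^{-1}\sum_i|\varepsilon_i^{\textup{r}}|\,\|x_i\|^3$. The quadratic term is bounded by $\|\hat\beta-\beta^{\textup{r}}\|^2\,n^{-1}\sum_i\|x_i\|^4$. Both averages are $O_{\mathbb P}(1)$ by the law of large numbers. Only first moments are needed, and $\mathbb{E}(|\varepsilon_i^{\textup{r}}|\|x_i\|^3)\le\{\mathbb{E}(\varepsilon_i^{\textup{r}})^4\}^{1/4}\{\mathbb{E}\|x_i\|^4\}^{3/4}$ by H\"older, while $\mathbb{E}\|x_i\|^4<\infty$ by (c). This is exactly where the fourth moments in Assumption \ref{asu:i.i.d.} are used. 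Since $\hat\beta-\beta^{\textup{r}}=o_{\mathbb P}(1)$, both remainder terms vanish. The continuous mapping theorem then gives $n\hat V_{\textsc{ehw}}=V^{\textup{r}}+o(1;\mathbb{P}_{(y,x)})$.
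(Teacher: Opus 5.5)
Your proposal is correct and follows essentially the paper's proof: besides citing Theorem \ref{thm:ME_random} with $\psi(w_i;b)=x_i(y_i-x_i^{\mathrm{T}}b)$, the paper argues directly via the LLN, the CLT and Slutsky's theorem, and it handles the meat with the same three-term expansion of $\hat{\varepsilon}_i^2$ around $\beta^{\textup{r}}$. Your H\"older bounds on the cross and quadratic remainders spell out the moment requirements the paper subsumes under ``LLN and $\hat{\beta}=\beta^{\textup{r}}+o(1;\mathbb{P}_{(y,x)})$,'' and the lack of compactness in Assumption \ref{asu:ME}(a) is a sound reason to prefer the direct route.
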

Theorem \ref{thm:random} states that the EHW variance estimator $\hat{V}_{\textsc{ehw}}$ in \eqref{eq:EHW} is consistent for estimating $V^{\textup{r}}$ in \eqref{eq:Vr}.
Theorem \ref{thm:random} is the standard random-design OLS result under possible misspecification; see \cite{White1980, White1982} and \cite{NeweyMcFadden1994}.

\subsection{Fixed Design}\label{fixed}
Under fixed design, we condition on all regressors $X$.
Define the estimand $\beta^{\textup{f}}$ as the solution to the estimating equations:
\begin{align}
0 = \frac{1}{n} \sum_{i=1}^n \mathbb{E} \left(  x_i (y_i-x_i^{\mathrm{T}} b) \mid x_i \right). \label{eq:beta_f}
\end{align}
When conditional on ${X}$, $\hat{\beta}$ is an unbiased estimator for $\beta^{\textup{f}}$ since $\mathbb{E}(\hat{\beta}-\beta^{\textup{f}} \mid {X})=0$, and also consistent for $\beta^{\textup{f}}$ by the law of large numbers under Assumption \ref{asu:i.i.d.}. 
Define the conditional variance $n\cdot \V(\hat{\beta} \mid X)$ as
\begin{equation}
V^{\textup{f}} 
= \left( \frac{1}{n}\sum_{i=1}^n x_ix_i^{\mathrm{T}} \right)^{-1}
\left( \frac{1}{n}\sum_{i=1}^n \sigma^2(x_i)x_ix_i^{\mathrm{T}} \right)
\left( \frac{1}{n}\sum_{i=1}^n x_ix_i^{\mathrm{T}} \right)^{-1},
\label{eq:Vf}
\end{equation}
and the asymptotic bias of $\hat{V}_{\textsc{ehw}}$ in \eqref{eq:EHW} for $V^{\textup{f}}$ as
\begin{equation}
B^{\mathrm{f}}
= \left( \frac{1}{n}\sum_{i=1}^n x_ix_i^{\mathrm{T}} \right)^{-1}
\left( \frac{1}{n}\sum_{i=1}^n \mathbb{E}[\varepsilon_i^{\textup{f}}\mid x_{i}]^2 x_i x_i^{\mathrm{T}} \right)
\left( \frac{1}{n}\sum_{i=1}^n x_ix_i^{\mathrm{T}} \right)^{-1}
\label{bias:OLS_f}
\end{equation}
where $\varepsilon_i^{\textup{f}} = y_i-x_i^{\mathrm{T}}\beta^{\textup{f}}$.
\begin{theorem}\label{thm:fixed}
Under fixed design and Assumption \ref{asu:i.i.d.} with \(\mathbb{E}_\circ = \mathbb{E}_{y|x}\), we have
\begin{align*}
(V^{\textup{f}})^{-1/2} \sqrt{n}(\hat{\beta} - \beta^{\textup{f}}) \mid {X}
\overset{\textup{d}}{\to} \mathcal{N} (0, I)
\text{ and }
n \hat{V}_{\textsc{ehw}} 
= V^{\textup{f}} + B^{\mathrm{f}} + o(1;\mathbb{P}_{y|x}).
\end{align*}
\end{theorem}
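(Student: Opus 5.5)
Theorem \ref{thm:fixed} is the specialization of Theorem \ref{thm:ME_fixed} to $\psi(y_i,x_i;b)=x_i(y_i-x_i^{\mathrm{T}} b)$, and the plan is to exploit the linearity of the OLS estimating equation to do everything explicitly, conditionally on $X$. Write $S_{xx}=n^{-1}\sum_i x_ix_i^{\mathrm{T}}$, $\mu(x_i)=\mathbb{E}(y_i\mid x_i)$ and $\sigma^2(x_i)=\V(y_i\mid x_i)$. First, solving \eqref{eq:beta_f} gives $\beta^{\textup{f}}=S_{xx}^{-1}n^{-1}\sum_i x_i\mu(x_i)$. Second, we have the exact identity $\hat\beta-\beta^{\textup{f}}=S_{xx}^{-1}n^{-1}\sum_i x_i\{y_i-\mu(x_i)\}$. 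By Assumption \ref{asu:i.i.d.}(c)--(d) and the strong law, $S_{xx}\to\mathbb{E}(x_ix_i^{\mathrm{T}})>0$ almost surely, so $S_{xx}$ is invertible for all large $n$ almost surely. In this formulation $\Gamma^{\textup{f}}=-S_{xx}$ and $\Delta^{\textup{f}}=n^{-1}\sum_i\sigma^2(x_i)x_ix_i^{\mathrm{T}}$, so $V^{\textup{f}}$ in \eqref{eq:Vf} is exactly $n\V(\hat\beta\mid X)$, and the estimand is nonrandom given $X$.

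\emph{Asymptotic normality.} Conditional on $X$, the summands $\xi_i=x_i\{y_i-\mu(x_i)\}$ are independent with mean zero and covariance $\sigma^2(x_i)x_ix_i^{\mathrm{T}}$. I would apply the Lindeberg--Feller CLT (through Cram\'er--Wold) to $(\Delta^{\textup{f}})^{-1/2}n^{-1/2}\sum_i\xi_i$ given $X$, and verify a Lyapunov condition with fourth moments:
\[
n^{-2}\sum_i\|x_i\|^4\,\mathbb{E}[\{y_i-\mu(x_i)\}^4\mid x_i]\to 0
\]
almost surely. This should follow from the strong law applied to $\|x_i\|^4\mathbb{E}[(y_i-\mu)^4\mid x_i]$, provided that quantity has finite expectation. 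We also need $\Delta^{\textup{f}}\to\mathbb{E}[\sigma^2(x)xx^{\mathrm{T}}]$ almost surely, assumed nonsingular. This is the main obstacle. Assumption \ref{asu:i.i.d.} gives only $\mathbb{E}y^4<\infty$ and $\mathbb{E}\|x\|^4<\infty$ separately, not $\mathbb{E}\|x\|^4y^4<\infty$. I would either use a truncation-based Lindeberg argument (bounding $\max_i\|x_i\|^2/n\to0$ a.s.\ and using a uniform integrability of $\{y_i-\mu(x_i)\}^2$ weighted by $\|x_i\|^2$), or invoke the moment conditions of Assumption \ref{asu:ME}(d) through Theorem \ref{thm:ME_fixed}. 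The sandwich form then passes through $S_{xx}^{-1}$ to give the first claim.

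\emph{Variance estimator.} Write $\hat\varepsilon_i=\varepsilon_i^{\textup{f}}-x_i^{\mathrm{T}}(\hat\beta-\beta^{\textup{f}})$ and expand
\[
n^{-1}\sum_i\hat\varepsilon_i^2x_ix_i^{\mathrm{T}}=n^{-1}\sum_i(\varepsilon_i^{\textup{f}})^2x_ix_i^{\mathrm{T}}+R_n .
\]
The cross and quadratic terms in $R_n$ are bounded by $\|\hat\beta-\beta^{\textup{f}}\|$ times $n^{-1}\sum_i|\varepsilon_i^{\textup{f}}|\|x_i\|^3$, and by $\|\hat\beta-\beta^{\textup{f}}\|^2$ times $n^{-1}\sum_i\|x_i\|^4$. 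These are $O(1)$ a.s.\ by the moment assumptions and Cauchy--Schwarz, while $\hat\beta-\beta^{\textup{f}}=O_{\mathbb{P}_{y|x}}(n^{-1/2})$ by the previous step, so $R_n=o(1;\mathbb{P}_{y|x})$. Next decompose $\varepsilon_i^{\textup{f}}=\{y_i-\mu(x_i)\}+\{\mu(x_i)-x_i^{\mathrm{T}}\beta^{\textup{f}}\}$, noting that the second piece is $\mathbb{E}[\varepsilon_i^{\textup{f}}\mid x_i]$. Then
\[
\mathbb{E}[(\varepsilon_i^{\textup{f}})^2\mid x_i]=\sigma^2(x_i)+\mathbb{E}[\varepsilon_i^{\textup{f}}\mid x_i]^2 ,
\]
so $n^{-1}\sum_i(\varepsilon_i^{\textup{f}})^2x_ix_i^{\mathrm{T}}$ minus its conditional mean $\Delta^{\textup{f}}+n^{-1}\sum_i\mathbb{E}[\varepsilon_i^{\textup{f}}\mid x_i]^2x_ix_i^{\mathrm{T}}$ is a conditional mean-zero average of independent terms. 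It is $o(1;\mathbb{P}_{y|x})$ by Chebyshev, provided $n^{-2}\sum_i\|x_i\|^4\mathbb{E}[(\varepsilon_i^{\textup{f}})^4\mid x_i]\to0$ a.s., which is the same moment issue handled as above. Sandwiching with $S_{xx}^{-1}$ yields $n\hat V_{\textsc{ehw}}=V^{\textup{f}}+B^{\mathrm{f}}+o(1;\mathbb{P}_{y|x})$ with $B^{\mathrm{f}}$ as in \eqref{bias:OLS_f}, and $B^{\mathrm{f}}\ge0$ gives conservativeness.
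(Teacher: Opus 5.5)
Your proposal is correct and follows essentially the paper's route. The paper likewise delegates normality to Theorem \ref{thm:ME_fixed}; your exact representation $\hat\beta-\beta^{\textup{f}}=S_{xx}^{-1}n^{-1}\sum_i x_i\{y_i-\mu(x_i)\}$ with a conditional Lindeberg--Feller argument is a more self-contained version of that step. The paper also obtains the bias term as you do, by expanding $\hat\varepsilon_i=\varepsilon_i^{\textup{f}}-x_i^{\mathrm{T}}(\hat\beta-\beta^{\textup{f}})$ and using $\mathbb{E}[(\varepsilon_i^{\textup{f}})^2\mid x_i]=\sigma^2(x_i)+\{\mu(x_i)-x_i^{\mathrm{T}}\beta^{\textup{f}}\}^2$.

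The moment issue you flag closes easily. From $\mathbb{E}\|x_i\|^4<\infty$ you get $\max_{i\le n}\|x_i\|^4/n\to0$ a.s. Hence both your Lyapunov sum and your Chebyshev sum are bounded, up to constants, by $(\max_{i\le n}\|x_i\|^4/n)\cdot n^{-1}\sum_i\{\mathbb{E}(y_i^4\mid x_i)+\|\beta^{\textup{f}}\|^4\|x_i\|^4\}$, which tends to $0$ a.s. You therefore need neither truncation nor the detour through Assumption \ref{asu:ME}(d); that detour would in fact require moments beyond Assumption \ref{asu:i.i.d.}.
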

Theorem \ref{thm:fixed} states that under fixed design, the EHW variance estimator $\hat{V}_{\textsc{ehw}}$ in \eqref{eq:EHW} is a conservative estimator for $V^{\textup{f}}$ in \eqref{eq:Vf} with asymptotic bias $B^{\mathrm{f}}$ in \eqref{bias:OLS_f}.
Theorem \ref{thm:fixed} is the OLS specialization of the mixed-design $Z$-estimation result in Theorem \ref{thm:ME_fixed}.

\citet{AbadieImbensZheng2014} also study the covariate-conditional OLS
estimand and derive the corresponding asymptotic distribution. 
Their
analysis is unconditional, so the realized design $X$, and hence the
conditional estimand $\beta^{\mathrm f}(X)$, varies across repeated
samples. By contrast, Theorem~\ref{thm:fixed} establishes asymptotic
normality conditional on the realized design $X$.
\cite{AbadieImbensZheng2014} show that the conditional variance of the OLS estimator $\hat{\beta}$ converges in probability to
\[
V_{\text {cond }}
= \operatorname{plim}(n \cdot \V(\hat{\beta} \mid {X}))
= \left(\mathbb{E}\left[x_i x_i^{\mathrm{T}}\right]\right)^{-1} 
\left(\mathbb{E}\left[\sigma^2\left(x_i\right) x_i x_i^{\mathrm{T}} \right]\right)\left(\mathbb{E}\left[x_i x_i^{\mathrm{T}}\right]\right)^{-1}.
\]
The middle part of $V^{\textup{r}}$ can be decomposed into  conditional variance and approximation error:
\[
\mathbb{E}\left[(y_i-x_i^{\mathrm{T}}\beta^{\textup{r}})^2 \mid x_i \right]
= \mathbb{E}\left[(y_i-\mu(x_i)+\mu(x_i)-x_i^{\mathrm{T}}\beta^{\textup{r}})^2 \mid x_i \right]
= \sigma^2(x_i) + (\mu(x_i)-x_i^{\mathrm{T}}\beta^{\textup{r}})^2,
\]
where $\sigma^2(x_i)= \V(y_i \mid x_i)$ and $\mu(x_i)=\mathbb{E}(y_i \mid x_i)$.
Comparing $V_{\text {cond }}$ with $V^{\textup{r}}$ shows that the latter is generally larger: $V^{\textup{r}} = V_{\text {cond }} + \V(\beta^{\textup{f}})$
where $\V(\beta^{\textup{f}}) = \operatorname{plim}n \cdot \mathbb{E}[(\beta^{\textup{f}} - \beta^{\textup{r}})(\beta^{\textup{f}} - \beta^{\textup{r}})^{\mathrm{T}}]$. The variance estimator $\hat{V}_{\textsc{ehw}}$ is conservative for $V^{\textup{f}}$ since $\beta^{\textup{f}}$ cannot be consistently estimated.

\subsection{Mixed Design}\label{mixed}
We partition the regressors as \(X = (X_1, X_2)\). Under the mixed-design setting, our analysis conditions on part of the regressors, namely \(X_2\), while treating the remaining regressors \(X_1\) as random. A leading application of this framework arises in randomized controlled trials, where \(X_1\) represents the randomized treatment and \(X_2\) consists of fixed or pretreatment covariates.
The estimand $\beta^{\textup{m}}$ is therefore defined as the solution to the estimating equations:
\begin{align}
0 = \frac{1}{n} \sum_{i=1}^n \mathbb{E} \left(  x_i (y_i-x_i^{\mathrm{T}} b) \mid x_{i2} \right). 
\label{eq:beta_m}
\end{align}
With $\varepsilon_i^{\textup{m}} = y_i-x_i^{\mathrm{T}}\beta^{\textup{m}}$, define the conditional variance $n\cdot \V(\hat{\beta} \mid X_2)$ as
\begin{equation}
V^{\mathrm{m}} 
= \left( \frac{1}{n} \sum_{i=1}^n \mathbb{E}(x_ix_i^{\mathrm{T}} \mid x_{i2}) \right)^{-1}
\frac{1}{n} \sum_{i=1}^n \V(x_i \varepsilon_i^{\textup{m}} \mid x_{i2})
\left( \frac{1}{n} \sum_{i=1}^n \mathbb{E}(x_ix_i^{\mathrm{T}} \mid x_{i2}) \right)^{-1}, 
\label{eq:Vm}
\end{equation}
and the asymptotic bias of $\hat{V}_{\textsc{ehw}}$ in \eqref{eq:EHW} for $V^{\mathrm{m}} $ as
\begin{equation}
B^{\textup{m}}
= \left( \frac{1}{n}\sum_{i=1}^n \mathbb{E}(x_ix_i^{\mathrm{T}}\mid x_{i2}) \right)^{-1}
\left( \frac{1}{n}\sum_{i=1}^n \left( \mathbb{E}[x_i \varepsilon_i^{\textup{m}}\mid x_{i2}] \right)^{\otimes 2} \right)
\left( \frac{1}{n}\sum_{i=1}^n \mathbb{E}(x_ix_i^{\mathrm{T}}\mid x_{i2}) \right)^{-1}. \label{bias_OLS_m}
\end{equation}
\begin{theorem}\label{thm:AsyDist_mixed}
Under mixed design and Assumption \ref{asu:i.i.d.} with \(\mathbb{E}_\circ = \mathbb{E}_{(y,x_1)|x_2}\), we have
\[
(V^{\textup{m}})^{-1/2} \sqrt{n}(\hat{\beta} - \beta^{\textup{m}}) \mid {X}_2
\overset{\textup{d}}{\to}
\mathcal{N}\left(0, I \right) 
\text{ and } 
n \hat{V}_{\textsc{ehw}} 
= V^{\mathrm{m}} 
+ B^{\mathrm{m}} + o(1;\mathbb{P}_{(y,x_1)\mid x_2}).
\]
\end{theorem}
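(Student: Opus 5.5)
The plan is to obtain Theorem \ref{thm:AsyDist_mixed} as the OLS specialization of Theorem \ref{thm:ME_mixed}, taking $\psi(y_i,x_i;b)=x_i(y_i-x_i^{\mathrm{T}}b)$ from \eqref{eq:beta_ols}. The work is to check Assumption \ref{asu:ME} under Assumption \ref{asu:i.i.d.} with $\mathbb{E}_\circ=\mathbb{E}_{(y,x_1)\mid x_2}$, and then to identify the abstract quantities $\Gamma^{\textup{m}}$, $\Delta^{\textup{m}}$ and the bias in \eqref{eq:bias_me_mixed} with the expressions in \eqref{eq:Vm} and \eqref{bias_OLS_m}.

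The identification is routine. Since $\nabla_b\psi=-x_ix_i^{\mathrm{T}}$ does not depend on $b$, we get $\Gamma^{\textup{m}}=-n^{-1}\sum_i\mathbb{E}(x_ix_i^{\mathrm{T}}\mid x_{i2})$. The sign cancels in the sandwich. At $b=\beta^{\textup{m}}$ we have $\psi=x_i\varepsilon_i^{\textup{m}}$, so $\Delta^{\textup{m}}=n^{-1}\sum_i\V(x_i\varepsilon_i^{\textup{m}}\mid x_{i2})$, which gives \eqref{eq:Vm}. The term $(\mathbb{E}[x_i\varepsilon_i^{\textup{m}}\mid x_{i2}])^{\otimes 2}$ gives \eqref{bias_OLS_m}. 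For the HW estimator, $\hat\Gamma=-n^{-1}\sum_i x_ix_i^{\mathrm{T}}$ and $\hat\Delta=n^{-1}\sum_i\hat\varepsilon_i^2x_ix_i^{\mathrm{T}}$, so $\hat V_{\textsc{hw}}=\hat V_{\textsc{ehw}}$ in \eqref{eq:EHW}.

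Next I will verify Assumption \ref{asu:ME}.

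Identification (part (a)). Because $\psi$ is linear in $b$, $\beta^{\textup{m}}$ has the closed form
\[
\beta^{\textup{m}}=\Bigl(n^{-1}\textstyle\sum_i\mathbb{E}(x_ix_i^{\mathrm{T}}\mid x_{i2})\Bigr)^{-1}n^{-1}\textstyle\sum_i\mathbb{E}(x_iy_i\mid x_{i2}).
\]
It is unique whenever the bracketed matrix is nonsingular; the same condition gives part (e).

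Nonsingularity (parts (a) and (e)). I will argue that this matrix converges almost surely to $\mathbb{E}(x_ix_i^{\mathrm{T}})$ by the strong law of large numbers applied to the i.i.d. variables $\mathbb{E}(x_ix_i^{\mathrm{T}}\mid x_{i2})$, which are integrable by (c). Assumption \ref{asu:i.i.d.}(d) then makes it positive definite for all large $n$, almost surely in $X_2$.

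Boundedness of $\beta^{\textup{m}}$ (part (a)). The same strong-law argument applied to $\mathbb{E}(x_iy_i\mid x_{i2})$ shows $\beta^{\textup{m}}$ stays bounded. Hence a fixed compact $\Theta$ containing a neighborhood of the limit $\beta^{\textup{r}}$ contains $\beta^{\textup{m}}$ in its interior eventually.

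Continuity and differentiability (parts (b) and (c)). These are immediate because $\psi$ is linear in $b$.

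Moment conditions (parts (b) and (d)). We have $\sup_{b\in\Theta}\|\psi\|\le\|x_i\||y_i|+C\|x_i\|^2$, and (b), (c) give finite second moments of this bound, so part (b) holds. Part (d) requires a $2+\delta$ conditional moment. With fourth moments, $\|x_i\|^2y_i^2$ and $\|x_i\|^4$ are only integrable, so I will take $\delta$ small, or else add $\mathbb{E}\|x_i\|^{4+\delta}<\infty$, if needed. These bounds must hold conditionally on $x_{i2}$ in the averaged sense $n^{-1}\sum_i\mathbb{E}(\cdot\mid x_{i2})$, which again follows a.s. from the strong law.

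The main obstacle is exactly this gap between conditional and unconditional moments. Theorem \ref{thm:ME_mixed} is driven by a triangular-array Lyapunov CLT for the independent but non-identically distributed summands $x_i\varepsilon_i^{\textup{m}}-\mathbb{E}(x_i\varepsilon_i^{\textup{m}}\mid x_{i2})$ given $X_2$. Lyapunov's condition needs $n^{-1-\delta/2}\sum_i\mathbb{E}(\|x_i\varepsilon_i^{\textup{m}}\|^{2+\delta}\mid x_{i2})\to0$.

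If the $2+\delta$ moments are not directly available, I will first try a Lindeberg condition instead. Under only fourth moments, Lindeberg can be verified a.s. via truncation: the averages $n^{-1}\sum_i\mathbb{E}(\|x_i\varepsilon\|^2 1\{\|x_i\varepsilon\|>M\}\mid x_{i2})$ converge a.s. to their unconditional counterparts by the strong law, and those are small for large $M$. Together with $\Delta^{\textup{m}}\to$ a positive definite limit, this yields the conditional normality.

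For the variance part, I will expand $\hat\varepsilon_i=\varepsilon_i^{\textup{m}}-x_i^{\mathrm{T}}(\hat\beta-\beta^{\textup{m}})$. Then $n^{-1}\sum\hat\varepsilon_i^2x_ix_i^{\mathrm{T}}$ differs from $n^{-1}\sum(\varepsilon_i^{\textup{m}})^2x_ix_i^{\mathrm{T}}$ by terms of order $\|\hat\beta-\beta^{\textup{m}}\|$ times averages of $\|x_i\|^3|\varepsilon_i|$ and $\|x_i\|^4$, which are $O_{\mathbb{P}}(1)$. A conditional weak law for independent summands, via Chebyshev, or via truncation if variances are infinite, gives
\[
n^{-1}\textstyle\sum(\varepsilon_i^{\textup{m}})^2x_ix_i^{\mathrm{T}}=n^{-1}\textstyle\sum\mathbb{E}[(\varepsilon_i^{\textup{m}})^2x_ix_i^{\mathrm{T}}\mid x_{i2}]+o(1).
\]
This last average equals $\Delta^{\textup{m}}$ plus the squared-conditional-mean term. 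Combined with $n^{-1}\sum x_ix_i^{\mathrm{T}}=-\Gamma^{\textup{m}}+o(1)$, this yields $V^{\textup{m}}+B^{\textup{m}}$, as Theorem \ref{thm:ME_mixed} asserts.
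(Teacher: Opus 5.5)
Your proposal is correct and follows essentially the paper's route. Like the paper, you specialize Theorem \ref{thm:ME_mixed} to $\psi=x_i(y_i-x_i^{\mathrm{T}}b)$, identify $\Gamma^{\textup{m}}$, $\Delta^{\textup{m}}$ and the bias term, and expand $\hat\varepsilon_i$ around $\varepsilon_i^{\textup{m}}$, so that $n^{-1}\sum_i\mathbb{E}[(\varepsilon_i^{\textup{m}})^2x_ix_i^{\mathrm{T}}\mid x_{i2}]$ splits into $\Delta^{\textup{m}}$ plus the averaged $(\mathbb{E}[x_i\varepsilon_i^{\textup{m}}\mid x_{i2}])^{\otimes 2}$.

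Your verification of Assumption \ref{asu:ME} is more careful than the paper's, which does not check it. One correction: shrinking $\delta$ does not rescue part (d), because $\sup_{b\in\mathscr{N}}\|\psi\|^{2+\delta}$ dominates a multiple of $\|x_i\|^{4+2\delta}$, which fourth moments do not control for any $\delta>0$. It is therefore your Lindeberg/truncation argument that actually closes the CLT step.
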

Theorem \ref{thm:AsyDist_mixed} indicates that under mixed design, the EHW variance estimator $\hat{V}_{\textsc{ehw}}$ in \eqref{eq:EHW} is conservative for $V^{\mathrm{m}}$ in \eqref{eq:Vm} with asymptotic bias $B^{\mathrm{m}}$ in \eqref{bias_OLS_m}.
Theorem \ref{thm:AsyDist_mixed} is the OLS specialization of the mixed-design $Z$-estimation result in Theorem \ref{thm:ME_mixed}, and the result is new in the literature.
Related work by \cite{ChetverikovHahnLiao2023} studies OLS when a regressor of interest is randomly assigned, but does not consider our conditional mixed-design framework or the resulting conservativeness of the EHW variance estimator under misspecification.

\section{Interpretation of misspecified regressions for causal inference} \label{sec:application}

\citet{Freedman2008} and \citet{Geer2019} question the relevance of
inference based on $\hat{V}_{\textsc{hw}}$ when the target parameters lack a
meaningful interpretation under model misspecification. To address this
concern, we provide a detailed discussion of the causal interpretations of
coefficients from regressions commonly used in causal inference when the
working linear models may be misspecified. Our main objective is not merely
to derive asymptotic variance formulas, which follow from the general results 
in Sections~\ref{sec:ME} and \ref{sec:i.i.d.Theory}, but to clarify which
causal parameters the regression coefficients identify.
We study completely randomized experiments and focus on the comparison of random and mixed designs with covariates in Section \ref{sec:FishRCT}, Section \ref{sec:LinRCT} because treatment $Z$ is randomized. 
We do not consider inference conditional on $Z$ because the coefficient does not have causal interpretation if the linear model is incorrect. For this reason, we omit the fixed-design analysis, since it does not quantify the advantages of randomization with a misspecified linear model. 
We focus here on OLS and relegate the IV results for the local average treatment effect framework to Section \ref{sec:IV} in the Supplementary Material .

Consider an experiment with a binary treatment $Z_i\in\{0,1\}$.
Let $y_i(z)$
denote the potential outcome of unit $i$ under treatment status
$z\in\{0,1\}$. This notation allows us to characterize the causal
interpretation of the OLS coefficient even when the working linear model is
misspecified.
The observed outcome $y_i$ is connected with potential outcomes in the usual manner: $y_i = Z_i Y_i(1) + (1-Z_i)Y_i(0)$.
Researchers also observe the covariate vector $x_i = (x_{i1}, \ldots, x_{iJ})$ for $i = 1, \ldots, n$.

In this section, we impose the following assumption. 
\begin{assumption} \label{asu:completeRandom}
\begin{enumerate}[(a)]
\item The variables $(Y_i(1), Y_i(0), Z_i, x_i)$, $i=1, \ldots, n$, are i.i.d.;
\item $(Y_i(0), Y_i(1), x_i) \indep Z_i$;
\item $\mathbb{P}(Z_i=1)= e \in(0,1)$;
\item $\mathbb{E}_\circ[ |Y_{i}(z)|^{4} ] < \infty $ for all $z \in \{0,1\}$ and $\mathbb{E}_\circ[ \left\|x_{i}\right\|^{4} ] < \infty $ for some $\delta > 0$;
\item $\mathbb{E}_\circ(x_ix_i^{\mathrm{T}})$ is positive definite.
\end{enumerate}

\end{assumption}

\subsection{Simple difference in means and its regression implementation}
To estimate the average treatment effect (ATE), we consider the linear regression: 
\begin{equation}
\text{lm}(y_i \sim 1 + Z_i). 
\label{eq:YonZ}
\end{equation}
Let $\hat{\beta}$ denote the coefficient of $Z_i$ from the above OLS fit
and $\hat{\varepsilon}_{i}$ denote the residual from the same OLS fit. 
Define $\hat{V}_{\textsc{ehw}}$ as the $(2,2)$ entry of the EHW variance estimator in the form of \eqref{eq:EHW} with regressors $x_{i} = (1, Z_i)^{\mathrm{T}}$; this is the variance of the coefficient on $Z_i$.

The estimands are $(\alpha^{\textup{r}}, (\beta^{\textup{r}})^{\mathrm{T}} )^{\mathrm{T}} = \mathbb{E}(x_{i} x_{i}^{\mathrm{T}} )^{-1}\mathbb{E}(x_{i} y_i )$.
By the general theory in Section \ref{random}, we derive the asymptotic variance of $\hat{\beta}$ as
\begin{equation}
V^{\textup{r}}
= \frac{\V(\varepsilon^{\textup{r}}_i(1))}{e} + \frac{\V(\varepsilon^{\textup{r}}_i(0))}{1-e}
\label{eq:V_OLS_RCT}
\end{equation}
with $\varepsilon^{\textup{r}}_i(z) = y_i(z) - \Exp(y_i(z))$ for $z\in \{0,1\}$.

\begin{theorem} \label{thm:y_Z_r}
Under random design and Assumption \ref{asu:completeRandom} with \(\mathbb{E}_\circ = \mathbb{E}_{(y,Z)}\), we have $\beta^{\textup{r}} = \mathbb{E}(Y_i(1) - Y_i(0) )$ and
\begin{equation*}
(V^{\textup{r}})^{-1/2}\sqrt{n}(\hat{\beta} - \beta^{\textup{r}} ) 
\overset{\textup{d}}{\to} 
\mathcal{N}(0, 1) 
\text{ and } 
n \hat{V}_{\textsc{ehw}} = V^{\textup{r}} + o(1;\mathbb{P}_{(y,Z)}).
\end{equation*}
\end{theorem}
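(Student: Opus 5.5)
The plan is to obtain Theorem \ref{thm:y_Z_r} as a direct specialization of Theorem \ref{thm:random} with regressor $x_i=(1,Z_i)^{\mathrm{T}}$. Three things are needed: check the conditions of Assumption \ref{asu:i.i.d.}, compute the estimand $(\alpha^{\textup{r}},\beta^{\textup{r}})$ explicitly, and reduce the $(2,2)$ entry of the sandwich matrix in \eqref{eq:Vr} to the expression in \eqref{eq:V_OLS_RCT}. Once these are in place, the asymptotic normality and the consistency of $n\hat{V}_{\textsc{ehw}}$ follow by reading off the $(2,2)$ coordinate of the conclusions of Theorem \ref{thm:random}. For the variance estimator, entrywise convergence in probability of the matrix suffices.

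\textbf{Checking Assumption \ref{asu:i.i.d.}.}
\begin{itemize}
\item The data $(y_i,Z_i)$ are i.i.d. by Assumption \ref{asu:completeRandom}(a).
\item Since $y_i^4\le Y_i(1)^4+Y_i(0)^4$, the fourth moment of $y_i$ is finite by part (d).
\item The vector $x_i$ is bounded, so its fourth moment is finite.
\item The matrix $\mathbb{E}(x_ix_i^{\mathrm{T}})$ has rows $(1,e)$ and $(e,e)$. Its determinant is $e(1-e)>0$ because $e\in(0,1)$.
\end{itemize}

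\textbf{The estimand.} Solve the population normal equations $\mathbb{E}[y_i-\alpha-\beta Z_i]=0$ and $\mathbb{E}[Z_i(y_i-\alpha-\beta Z_i)]=0$. The second equation gives $\alpha+\beta=\mathbb{E}(y_i\mid Z_i=1)$. Subtracting it from the first gives $\alpha=\mathbb{E}(y_i\mid Z_i=0)$. By the independence in Assumption \ref{asu:completeRandom}(b), $\mathbb{E}(y_i\mid Z_i=z)=\mathbb{E}(Y_i(z))$. Hence $\beta^{\textup{r}}=\mathbb{E}(Y_i(1)-Y_i(0))$.

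\textbf{The asymptotic variance.} The projection error is $\varepsilon_i^{\textup{r}}=y_i-\alpha^{\textup{r}}-\beta^{\textup{r}}Z_i$, which equals $Z_i\varepsilon^{\textup{r}}_i(1)+(1-Z_i)\varepsilon^{\textup{r}}_i(0)$ with $\varepsilon^{\textup{r}}_i(z)=Y_i(z)-\mathbb{E}Y_i(z)$. The sandwich simplifies after reparametrizing the regressors as $(1-Z_i,Z_i)$. This is an invertible linear transformation that maps the coefficient vector to $(\alpha,\alpha+\beta)$ and the sandwich covariance to the correspondingly transformed one. In the new basis, $\mathbb{E}(\tilde x_i\tilde x_i^{\mathrm{T}})=\mathrm{diag}(1-e,e)$. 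The middle matrix is also diagonal, with entries $(1-e)\mathbb{E}[\varepsilon^{\textup{r}}_i(0)^2]$ and $e\,\mathbb{E}[\varepsilon^{\textup{r}}_i(1)^2]$, using independence again and $Z_i(1-Z_i)=0$. The sandwich is therefore $\mathrm{diag}\bigl(\V(\varepsilon^{\textup{r}}_i(0))/(1-e),\ \V(\varepsilon^{\textup{r}}_i(1))/e\bigr)$. The variance of the difference of the two coordinates is the sum of these entries, which is exactly \eqref{eq:V_OLS_RCT}.

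The main (mild) obstacle is this last step: keeping the reparametrization bookkeeping correct, and confirming that the EHW estimator is equivariant under the linear change of regressors. Equivariance holds because the fitted residuals are unchanged and the sandwich transforms as $A^{-1}\hat V A^{-\mathrm{T}}$. This lets the $(2,2)$ entry of $n\hat{V}_{\textsc{ehw}}$ be computed in the convenient basis. A final remark is needed for the standardization by $(V^{\textup{r}})^{-1/2}$: we require $V^{\textup{r}}>0$. This holds whenever $Y_i(0)$ or $Y_i(1)$ is nondegenerate, and the standing positive-definiteness implicit in Theorem \ref{thm:random} is assumed to cover it.
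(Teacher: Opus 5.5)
Your proposal is correct and follows the paper's route. The paper's proof is the one-line observation that the result is a special case of Theorem \ref{thm:random} with $x_i=(1,Z_i)^{\mathrm{T}}$; your check of Assumption \ref{asu:i.i.d.}, the normal-equation derivation of $\beta^{\textup{r}}$, and the $(1-Z_i,Z_i)$ reparametrization of the sandwich supply the algebra the paper leaves implicit.
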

The coefficient of $Z_i$ from the projection of $y_i$ on $x_{i}$ identifies the ATE.
Theorem \ref{thm:y_Z_r} also indicates that under random design, $\hat{V}_{\textsc{ehw}}$ is consistent for estimating $V^{\textup{r}}$ in \eqref{eq:V_OLS_RCT}.
Theorem \ref{thm:y_Z_r} is the standard difference-in-means result for randomized experiments; see \cite{Wooldridge2020}.

\subsection{Fisher's analysis of covariance} 
\label{sec:FishRCT}
We consider the additive regression: 
\begin{equation}
\text{lm}(y_i \sim 1 + Z_i + x_i). 
\label{eq:yonZx}
\end{equation}
Let $\hat{\beta}_{\textsc{f}}$ denote the coefficient of $Z_i$ from the OLS in \eqref{eq:yonZx}. We use the subscript ``F'' to signify \cite{Fisher1935}. Denote $\hat{\varepsilon}_{i,\textsc{f}}$ as the residual from the same OLS fit.
Define $\hat{V}_{\textsc{ehw},\textsc{f}}$ as the $(2,2)$ entry of the EHW variance estimator in the form of \eqref{eq:EHW} with regressors $x_{i,\textsc{f}} = (1, Z_i, x_i^{\mathrm{T}})^{\mathrm{T}}$.

\paragraph*{Random Design}
By the general theory in Section \ref{random}, we derive the asymptotic variance of $\hat{\beta}_{\textsc{f}}$ as
\begin{equation*}
V^{\textup{r}}_{\textsc{f}} 
= \frac{\V(\varepsilon^{\textup{r}}_{i,\textsc{f}}(1))}{e} 
+ \frac{\V(\varepsilon^{\textup{r}}_{i,\textsc{f}}(0))}{1-e},
\end{equation*}
where $\varepsilon_{i,\textsc{f}}^{\textup{r}}(z)
= y_i(z) - \mathbb{E}(y_i(z)) - \dot{x}_i^{\mathrm{T}}\gamma_\textsc{f}^{\textup{r}}$ for $z\in\{0, 1\}$ with $\gamma^{\textup{r}}_{\textsc{f}}$ being the probability limit of the coefficient of $x_i$ from the OLS fit in \eqref{eq:yonZx}. 
\begin{theorem} \label{thm:PRA_r}
Under random design and Assumption \ref{asu:completeRandom} with \(\mathbb{E}_\circ = \mathbb{E}_{(y,Z,x)}\), we have $\beta^{\textup{r}} = \mathbb{E}(Y_i(1) - Y_i(0) )$ and
\begin{equation*}
(V_\textsc{f}^{\textup{r}}  )^{-1/2}\sqrt{n}(\hat{\beta}_{\textsc{f}} -\beta^{\textup{r}} ) \overset{\textup{d}}{\to} \mathcal{N}(0,  1)
\text{ and }
n \hat{V}_{\textsc{ehw,f}} = V_\textsc{f}^{\textup{r}} + o(1;\mathbb{P}_{(y,Z,x)}).
\end{equation*}
\end{theorem}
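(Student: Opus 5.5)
Theorem \ref{thm:PRA_r} is an application of Theorem \ref{thm:random} to regressors $x_{i,\textsc{f}}=(1,Z_i,x_i^{\mathrm{T}})^{\mathrm{T}}$. Two things need separate work: identifying the population projection coefficient on $Z_i$, and reducing the $(2,2)$ entry of $V^{\textup{r}}$ in \eqref{eq:Vr} to the stated form.

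\textbf{Step 1: moment conditions.} Assumption \ref{asu:completeRandom}(a),(d),(e), together with $e\in(0,1)$ and $Z_i\indep x_i$, give the conditions of Assumption \ref{asu:i.i.d.}. Positive definiteness of $\mathbb{E}(x_{i,\textsc{f}}x_{i,\textsc{f}}^{\mathrm{T}})$ follows from positive definiteness of $\V(x_i)$ (implicit in (e)) and $\V(Z_i)=e(1-e)>0$. Also $y_i^4\le |Y_i(1)|^4+|Y_i(0)|^4$.

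\textbf{Step 2: identification.} Reparametrize the regression as $\mathrm{lm}(y_i\sim 1+\dot Z_i+\dot x_i)$ with $\dot Z_i=Z_i-e$. This changes only the intercept. By independence, $\mathbb{E}(\dot Z_i\dot x_i)=0$, so the population Gram matrix of $(1,\dot Z_i,\dot x_i)$ is block diagonal. Hence
\[
\beta^{\textup{r}}=\frac{\mathbb{E}(\dot Z_i y_i)}{e(1-e)},\qquad
\gamma^{\textup{r}}_{\textsc{f}}=\V(x_i)^{-1}\mathbb{E}(\dot x_i y_i).
\]
Using $(Y_i(0),Y_i(1))\indep Z_i$,
\[
\mathbb{E}(\dot Z_i y_i)=e(1-e)\bigl[\mathbb{E}Y_i(1)-\mathbb{E}Y_i(0)\bigr],
\]
which gives $\beta^{\textup{r}}=\mathbb{E}(Y_i(1)-Y_i(0))$. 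The intercept equals $\mathbb{E}(y_i)$ in the centered parametrization. So the projection error is
\[
\varepsilon^{\textup{r}}_{i,\textsc{f}}=y_i-\mathbb{E}y_i-\dot Z_i\beta^{\textup{r}}-\dot x_i^{\mathrm{T}}\gamma^{\textup{r}}_{\textsc{f}},
\]
and it equals $\varepsilon^{\textup{r}}_{i,\textsc{f}}(Z_i)$ with $\varepsilon^{\textup{r}}_{i,\textsc{f}}(z)$ as in the statement. The check: $\mathbb{E}y_i+(z-e)\beta^{\textup{r}}=\mathbb{E}y_i(z)$.

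\textbf{Step 3: variance.} Theorem \ref{thm:random} gives asymptotic normality for the full vector and consistency of $n\hat V_{\textsc{ehw}}$. Both are invariant to the reparametrization, since the coefficient on $Z_i$ and the EHW entry for it are unchanged by shifting $Z_i$ and $x_i$ by constants. The $(2,2)$ entry of $V^{\textup{r}}$, by block diagonality, is
\[
\frac{\mathbb{E}\bigl[\dot Z_i^2(\varepsilon^{\textup{r}}_{i,\textsc{f}})^2\bigr]}{e^2(1-e)^2}.
\]
The cross terms from off-diagonal blocks of the middle matrix drop out because the bread matrices are block diagonal. Conditioning on $Z_i$ and using independence, the numerator is
\[
e(1-e)^2\,\mathbb{E}\bigl[\varepsilon^{\textup{r}}_{i,\textsc{f}}(1)^2\bigr]+(1-e)e^2\,\mathbb{E}\bigl[\varepsilon^{\textup{r}}_{i,\textsc{f}}(0)^2\bigr].
\]
Dividing, we get $\mathbb{E}[\varepsilon(1)^2]/e+\mathbb{E}[\varepsilon(0)^2]/(1-e)$. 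Since each $\varepsilon^{\textup{r}}_{i,\textsc{f}}(z)$ has mean zero, these second moments are the variances. This equals $V^{\textup{r}}_{\textsc{f}}$. Taking the $(2,2)$ coordinate of the vector CLT gives the scalar result.

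\textbf{Main obstacle.} The main obstacle is bookkeeping. One has to confirm that the $(2,2)$ entry is unaffected by the off-diagonal blocks of the meat matrix involving $\dot Z_i\dot x_i\varepsilon^2$. These blocks are generally nonzero, but they are pre- and post-multiplied by block-diagonal inverses, so only the $(Z,Z)$ meat entry enters. One also has to verify that the EHW estimator is equivariant under the centering transformation; this holds because the transformation is an invertible linear map $x_{i,\textsc{f}}\mapsto Ax_{i,\textsc{f}}$ with $A$ fixing the $Z$-row pattern.
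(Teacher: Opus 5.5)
Your proposal is correct and follows the paper's route. Like the paper, you apply Theorem~\ref{thm:random} to $x_{i,\textsc{f}}=(1,Z_i,x_i^{\mathrm{T}})^{\mathrm{T}}$ and reduce the $(2,2)$ entry to $\mathbb{E}[(Z_i-e)^2(\varepsilon^{\textup{r}}_{i,\textsc{f}})^2]/\{e^2(1-e)^2\}$ via $(Z_i-e)\varepsilon^{\textup{r}}_{i,\textsc{f}}=(1-e)Z_i\varepsilon^{\textup{r}}_{i,\textsc{f}}(1)-e(1-Z_i)\varepsilon^{\textup{r}}_{i,\textsc{f}}(0)$. Your extra centering of $Z_i$ (the paper centers only $x_i$) just makes the bread block diagonal, and your explicit check of $\beta^{\textup{r}}=\mathbb{E}\{Y_i(1)-Y_i(0)\}$ fills in a step the paper only asserts.

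One small correction: positive definiteness of $\V(x_i)$ is exactly what $\mathbb{E}(x_{i,\textsc{f}}x_{i,\textsc{f}}^{\mathrm{T}})>0$ requires, but it is not implied by Assumption~\ref{asu:completeRandom}(e), since a nonzero constant covariate satisfies (e). It is therefore a nondegeneracy condition that you, like the paper, must treat as tacitly assumed.
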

The coefficient of $Z_i$ from the projection of $y_i$ on $x_{i,\textsc{f}}$ identifies the ATE.
Theorem \ref{thm:PRA_r} indicates that under random design, $\hat{V}_{\textsc{ehw},\textsc{f}}$ is consistent.
This specification corresponds
to Fisher's additive regression adjustment, whose design-based properties are
studied by \citet{Freedman2008} and \citet{Lin2013}. In their finite-population
framework, the usual EHW variance estimator is generally conservative, whereas our random-design formulation,
closer to the perspective of \citet{NegiWooldridge2021}, yields consistency.
We include the result to
connect the random-design OLS theory in Section \ref{random} to Fisher's analysis of covariance adjustment and to provide a benchmark for the mixed-design result below.

\paragraph*{Mixed Design}
Define $\varepsilon_{i,\textsc{f}}^{\textup{m}}(z)
= (y_i(z) - n^{-1} \sum_{i=1}^n \mathbb{E}(y_i(z) \mid x_i)) - \ddot{x}_i^{\mathrm{T}}\gamma_{\textsc{f}}^{\textup{m}}$ for $z=0, 1$ with $\gamma^{\textup{m}}_{\textsc{f}}$ being the probability limit of the coefficient of $x_i$ from the OLS fit in \eqref{eq:yonZx} under mixed design. 
By the general theory in Section \ref{mixed}, we derive the conditional variance of $n\cdot \V(\hat{\beta}_{\textsc{f}} \mid X)$ as
\begin{equation*}
V_{\textsc{f}}^{\textup{m}}
= \frac{1}{n}\sum_{i=1}^n \left( \frac{\mathbb{E}\left(\varepsilon^{\textup{m}}_{i,\textsc{f}}(1)^2 \mid x_i\right)}{e}  
+ \frac{\mathbb{E}\left(\varepsilon^{\textup{m}}_{i,\textsc{f}}(0)^2 \mid x_i\right)}{1-e}  
- \mathbb{E}\left(\varepsilon_{i,\textsc{f}}^{\textup{m}}(1) - \varepsilon_{i,\textsc{f}}^{\textup{m}}(0) \mid x_i \right)^2  \right)
\end{equation*}
and the asymptotic bias of $\hat{V}_{\textsc{ehw},\textsc{f}}$ for $V_{\textsc{f}}^{\textup{m}}$ as
\begin{equation}
B_{\textsc{f}}^{\textup{m}}
=  \frac{1}{n} \sum_{i=1}^n \mathbb{E}(\varepsilon_{i,\textsc{f}}^{\textup{m}}(1) - \varepsilon_{i,\textsc{f}}^{\textup{m}}(0) \mid x_i)^2
\geq 0.
\label{eq:bias_com_F_m}
\end{equation}
\begin{theorem} \label{thm:PRA_m}
Under mixed design and Assumption \ref{asu:completeRandom} with \(\mathbb{E}_\circ = \mathbb{E}_{(y,Z)|x}\), we have $\beta^{\textup{m}} = n^{-1} \sum_{i=1}^n \mathbb{E}(Y_i(1) - Y_i(0) \mid x_i )$ and 
\begin{equation*}
(V_{\textsc{f}}^{\textup{m}})^{-1/2}\sqrt{n}(\hat{\beta}_{\textsc{f}}-\beta^{\textup{m}} ) \mid X \overset{\textup{d}}{\to} \mathcal{N}(0,  1)
\text{ and }
n \hat{V}_{\textsc{ehw},{\textsc{f}}}  
= V_{\textsc{f}}^{\textup{m}} + B_{\textsc{f}}^{\textup{m}} + o(1;\mathbb{P}_{(y,Z) \mid x}).
\end{equation*}
\end{theorem}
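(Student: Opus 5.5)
The plan is to specialize Theorem~\ref{thm:AsyDist_mixed} with $x_{i1}=Z_i$ random, $x_{i2}=x_i$ fixed, and regressor vector $x_{i,\textsc{f}}=(1,Z_i,x_i^{\mathrm{T}})^{\mathrm{T}}$. Then we compute the $(2,2)$ entries of $V^{\mathrm m}$ and $B^{\mathrm m}$ explicitly.

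\textbf{Step 1: the estimand.} I solve the conditional normal equations \eqref{eq:beta_m}. By Assumption~\ref{asu:completeRandom}(b), $Z_i$ is independent of $x_i$, so $\mathbb{E}(Z_i\mid x_i)=e$ and $\mathbb{E}(Z_i^2\mid x_i)=e$. The conditional Gram matrix $n^{-1}\sum_i\mathbb{E}(x_{i,\textsc{f}}x_{i,\textsc{f}}^{\mathrm{T}}\mid x_i)$ therefore has a $Z$-row equal to $(e,e,e\bar x^{\mathrm{T}})$. After centering the covariates at $\bar x$ (a reparametrization of the intercept), the $Z$ block is orthogonal to the $(1,\ddot x_i)$ block up to the constant column. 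The moment conditions for the intercept and for $Z$ then give
\[
\alpha+\beta=n^{-1}\sum_i\mathbb{E}(y_i(1)\mid x_i), \qquad \alpha=n^{-1}\sum_i\mathbb{E}(y_i(0)\mid x_i),
\]
using $\sum_i\ddot x_i=0$. Hence $\beta^{\mathrm m}=n^{-1}\sum_i\mathbb{E}(Y_i(1)-Y_i(0)\mid x_i)$. The coefficient $\gamma^{\mathrm m}_{\textsc f}$ is the projection of $e\,\mathbb{E}(y_i(1)\mid x_i)+(1-e)\,\mathbb{E}(y_i(0)\mid x_i)$ on $\ddot x_i$. I would then check that the conditional residual satisfies
\[
\varepsilon_i^{\mathrm m}=Z_i\varepsilon^{\mathrm m}_{i,\textsc f}(1)+(1-Z_i)\varepsilon^{\mathrm m}_{i,\textsc f}(0).
\]

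\textbf{Step 2: asymptotic normality.} This follows directly from Theorem~\ref{thm:AsyDist_mixed}, since the moment conditions of Assumption~\ref{asu:i.i.d.} follow from Assumption~\ref{asu:completeRandom}(d)--(e). It remains to identify the $(2,2)$ entry of $V^{\mathrm m}$.

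I would use Frisch--Waugh--Lovell at the population-conditional level. With $x$ centered, the inverse Gram matrix applied to the $Z$ coordinate acts as $\{e(1-e)\}^{-1}(Z_i-e)$. So the influence function of $\hat\beta_{\textsc f}$ is
\[
\phi_i=\frac{(Z_i-e)\,\varepsilon_i^{\mathrm m}}{e(1-e)}.
\]
Conditional on $x_i$, averaging over $Z_i$ gives
\[
\mathbb{E}(\phi_i^2\mid x_i)=\frac{\mathbb{E}\{\varepsilon(1)^2\mid x_i\}}{e}+\frac{\mathbb{E}\{\varepsilon(0)^2\mid x_i\}}{1-e},
\qquad
\mathbb{E}(\phi_i\mid x_i)=\mathbb{E}\{\varepsilon(1)-\varepsilon(0)\mid x_i\}.
\]
Since $V^{\mathrm m}$ uses the conditional variance, subtracting the square of the conditional mean yields the displayed $V^{\mathrm m}_{\textsc f}$. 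The $(2,2)$ entry of $B^{\mathrm m}$ in \eqref{bias_OLS_m} is exactly $n^{-1}\sum_i\mathbb{E}(\phi_i\mid x_i)^2$, which is $B^{\mathrm m}_{\textsc f}$.

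\textbf{Main obstacle.} The hardest step is justifying that the $(2,2)$ entry of the sandwich reduces exactly to the variance of the scalar $\phi_i$. The Gram matrix involves $\bar x$ and the sample second moments of $x_i$, and the cross terms between $Z_i$ and $x_i$ must vanish exactly conditional on $X$. I would handle this by a block-inverse computation: the Schur complement for the $Z$ coordinate equals $e-e^2=e(1-e)$ exactly, because $n^{-1}\sum_i\mathbb{E}(Z_ix_i\mid x_i)=e\bar x$ lies in the span of the intercept and covariate columns. Given that, the remaining algebra is routine.
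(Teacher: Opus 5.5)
Your proposal is correct and follows essentially the same route as the paper. You specialize Theorem~\ref{thm:AsyDist_mixed} with covariates centered at $\bar x$, invert the conditional Gram matrix $\bigl(\begin{smallmatrix}1&e&0\\ e&e&0\\ 0&0&\Sigma_x\end{smallmatrix}\bigr)$ to reduce the $Z$-coordinate to $(Z_i-e)\varepsilon_i^{\mathrm m}/\{e(1-e)\}$, and split on $Z_i\in\{0,1\}$. Your observation that the $(2,2)$ entries of $V^{\mathrm m}$ and $B^{\mathrm m}$ are the conditional variance and squared conditional mean of the same scalar $\phi_i$ is a tidier packaging of the paper's explicit $h_{11},h_{12},h_{22}$ computation.
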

The coefficient of $Z_i$ from the projection of $y_i$ on $x_{i,\textsc{f}}$ identifies the average of the empirical conditonal average treatment effect, which is also known as the mixed average treatment effect by \citet{LiDingMealli2023}.
Theorem \ref{thm:PRA_m} also indicates that under mixed design, $\hat{V}_{\textsc{ehw},\textsc{f}}$ is conservative in general with asymptotic bias $B^{\textup{m}}_{\textsc{f}}$ in \eqref{eq:bias_com_F_m}.

\subsection{Lin's fully interacted adjustment} \label{sec:LinRCT}
Now consider the regression with fully-interacted covariates, i.e., 
\begin{equation}
\text{lm}(y_i \sim 1 + Z_i + \ddot{x}_i + Z_i \cdot \ddot{x}_i),
\label{eq:Lin}
\end{equation}
where $\ddot{x}_i = x_i - \bar{x}$.
Let \(\hat{\beta}_{\textsc{l}}\) denote the coefficient on \(Z_i\) from the OLS regression in \eqref{eq:Lin}. We use the subscript ``L'' to reference \citet{Lin2013}. 
The coefficient on \(Z_i\) from the OLS fit in \eqref{eq:Lin} provides an estimate of the ATE when covariates are centered around their sample mean. 
Without centering, we need to estimate the ATE using \(\hat{\beta}_{\textsc{l}} + \hat{\xi}_{\textsc{l}} \bar{x}\) where $\hat{\xi}_{\textsc{l}}$ denotes the coefficient on the interaction term \(Z_i x_i\).
Denote $\hat{\varepsilon}_{i,\textsc{l}}$ as the residual from the OLS fit in \eqref{eq:Lin}.
Define $\hat{V}_{\textsc{ehw},\textsc{l}}$ as the $(2,2)$ entry of the EHW variance estimator in the form of \eqref{eq:EHW} with regressors $x_{i,\textsc{l}} = (1, Z_i,\ddot{x}_i, Z_i\ddot{x}_i)^{\mathrm{T}}$.

\paragraph*{Random Design}
We cannot naively apply the OLS theory without modifications because it ignores the uncertainty in $\bar x$. We can either modify the OLS theory or apply the general $Z$-estimation theory. We adopt the second strategy here.
We apply the $Z$-estimation framework in Section~\ref{sec:Z_random} to derive the asymptotic variance of the estimator $\hat{\beta}_{\textsc{l}}$. 
The variance estimator $\hat{V}_{\textsc{ehw},\textsc{l}}$, obtained by directly applying the OLS results in Section~\ref{random}, is incorrect because it ignores the additional uncertainty introduced by centering the covariates.
In particular, obtaining a consistent EHW covariance estimator requires using augmented estimating equations \citep{Newey1984}.
Specifically, $\hat{\beta}_{\textsc{l}}$ can be expressed as $\hat{Y}_{\textsc{l}}(1) - \hat{Y}_{\textsc{l}}(0)$ where
$\left(\mu_1, \gamma_1, \mu_0, \gamma_0, \mu_x\right)= (\hat{Y}_{\textsc{l}}(1), \hat{\gamma}_1, \hat{Y}_{\textsc{l}}(0), \hat{\gamma}_0, \bar{x})$ jointly solves the estimating equations:
\begin{equation}
0
= \frac{1}{n} \sum_{i=1}^n \left(\begin{array}{c}
Z_i \cdot\left\{y_i-\left(x_i-\mu_x\right)^{\mathrm{T}} \gamma_1-\mu_1\right\} \left(\begin{array}{c}
1 \\
x_i-\mu_x
\end{array}\right) \\
(1-Z_i)  \cdot\left\{y_i-\left(x_i-\mu_x\right)^{\mathrm{T}} \gamma_0-\mu_0\right\} \left(\begin{array}{c}
1 \\
x_i-\mu_x
\end{array}\right) \\
x_i-\mu_x
\end{array}\right),
\label{eq:augmented estimating equations}
\end{equation}
where the last line accounts for the estimation of \(\mu_x\).
With these equations, we can construct the variance estimator using \eqref{eq:EHW_ME} and apply Theorem \ref{thm:random} to demonstrate the consistency of the EHW variance estimator.

Let $\gamma_z^{\textup{r}}$ be the coefficient of $x_{i}$ in the OLS fit of $y_{i}$ on $1$ and $\ddot{x}_{i}$ over $\{i: Z_i = z\}$ under random design.
Define $\varepsilon_{i,\textsc{l}}^{\textup{r}}(z) = y_i(z) - \mathbb{E}(y_i(z)) - \dot{x}_i^{\mathrm{T}}\gamma_z^{\textup{r}}$
for $z\in \{0, 1\}$.
By the general theory in Section \ref{random}, we derive the asymptotic variance of $\hat{\beta}_{\textsc{l}}$ under random design as 
\begin{equation}
V^{\textup{r}}_{\textsc{l}}
= \frac{\V(\varepsilon^{\textup{r}}_{i,\textsc{l}}(1))}{e} 
+ \frac{\V(\varepsilon^{\textup{r}}_{i,\textsc{l}}(0))}{1-e}
+ (\gamma^{\textup{r}}_1 - \gamma_0^{\textup{r}})^{\mathrm{T}} 
\V(x_i) 
(\gamma^{\textup{r}}_1-\gamma^{\textup{r}}_0).
\label{eq:V_lin_random}
\end{equation}
Let $\Sigma_x
= n^{-1} \sum_{i=1}^n \ddot{x}_i\ddot{x}_i^{\mathrm{T}}$.
We estimate $V^{\textup{r}}_{\textsc{l}}$ using the following estimator:
\begin{align}
\hat{V}_{\textsc{ehw},\textsc{l},\text{adj}}
= \hat{V}_{\textsc{ehw},\textsc{l}}
+ \frac{1}{n} (\hat{\gamma}_1 - \hat{\gamma}_0)^{\mathrm{T}} \Sigma_x 
(\hat{\gamma}_1 - \hat{\gamma}_0).
\label{eq:Lin_correction}
\end{align}
Theorem \ref{thm:FRA_random} below establishes the asymptotic normality of $\hat{\beta}_{\textsc{l}}$ and the consistency of $\hat{V}_{\textsc{ehw},\textsc{l},\text{adj}}$ for estimating $V_{\textsc{l}}^{\textup{r}}$ in \eqref{eq:V_lin_random} under random design.

\begin{theorem} \label{thm:FRA_random}
Under random design and Assumption \ref{asu:completeRandom} with \(\mathbb{E}_\circ = \mathbb{E}_{(y,Z,x)}\), we have 
$\beta^{\textup{r}} = \mathbb{E}(Y_i(1) - Y_i(0) )$
and
\[
(V^{\textup{r}}_{\textsc{l}})^{-1/2}\sqrt{n}(\hat{\beta}_{\textsc{l}} - \beta^{\textup{r}})  \overset{\textup{d}}{\to} \mathcal{N}(0, 1)
\text{ and }
n \hat{V}_{\textsc{ehw},\textsc{l},\text{adj}}
= V_{\textsc{l}}^{\textup{r}} + o(1;\mathbb{P}_{(y,Z,x)}).
\]
Moreover, 
\[
n \hat{V}_{\textsc{ehw},\textsc{l}}
= V_{\textsc{l}}^{\textup{r}}
- B_{\textsc{l}}^{\textup{r}}
+ o(1;\mathbb{P}_{(y,Z,x)})
\]
with \begin{equation}
B_{\textsc{l}}^{\textup{r}} 
= (\gamma^{\textup{r}}_1-\gamma^{\textup{r}}_0)^{\mathrm{T}}
\V(x_i) 
(\gamma^{\textup{r}}_1-\gamma^{\textup{r}}_0). 
\label{eq:bias_com_L_r}
\end{equation}
\end{theorem}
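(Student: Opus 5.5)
Write $\theta=(\mu_1,\gamma_1,\mu_0,\gamma_0,\mu_x)$ for the unknowns in the augmented estimating equations \eqref{eq:augmented estimating equations}. The plan is to treat this system as a $Z$-estimation problem and apply Theorem \ref{thm:ME_random}. Once that is done, the two variance statements reduce to comparing the $\beta$-entry of the full sandwich with the $(2,2)$ entry of the OLS sandwich.

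\textbf{Step 1: population solution.} Under Assumption \ref{asu:completeRandom}(b), $Z_i\indep (y_i(1),y_i(0),x_i)$. The population equations therefore factor: $e\,\mathbb{E}[\{y_i(1)-\dot x_i^{\mathrm T}\gamma_1-\mu_1\}(1,\dot x_i^{\mathrm T})^{\mathrm T}]=0$, and similarly for the control arm, with $\mu_x=\mathbb{E}x_i$. Hence $\mu_z^{\textup{r}}=\mathbb{E}\,y_i(z)$, and $\gamma_z^{\textup{r}}=\V(x_i)^{-1}\mathrm{Cov}(x_i,y_i(z))$ is the slope defined in the statement. This gives $\beta^{\textup{r}}=\mu_1^{\textup{r}}-\mu_0^{\textup{r}}=\mathbb{E}(Y_i(1)-Y_i(0))$.

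The conditions of Assumption \ref{asu:ME} follow from the moment conditions in Assumption \ref{asu:completeRandom}. In particular, the derivative matrix is block triangular with nonsingular diagonal blocks $-e\,\mathrm{diag}(1,\V(x_i))$, $-(1-e)\mathrm{diag}(1,\V(x_i))$ and $-I$.

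\textbf{Step 2: influence function.} Inverting $\Gamma^{\textup{r}}$ explicitly, I will compute the influence function of $\hat\mu_1-\hat\mu_0$. Differentiating the $\mu_z$-equations with respect to $\mu_x$ gives $\partial/\partial\mu_x^{\mathrm T}=e\gamma_1^{\mathrm T}$ for the treated arm, and analogously $(1-e)\gamma_0^{\mathrm T}$ for the control arm. Because $\mathbb{E}\{Z_i\varepsilon_{i,\textsc{l}}^{\textup{r}}(1)\dot x_i\}=0$, the $\gamma$-blocks do not enter the $\mu$ rows. The result is
\[
\sqrt n(\hat\beta_{\textsc{l}}-\beta^{\textup{r}})=\frac{1}{\sqrt n}\sum_{i=1}^n\Big\{\tfrac{Z_i}{e}\varepsilon^{\textup{r}}_{i,\textsc{l}}(1)-\tfrac{1-Z_i}{1-e}\varepsilon^{\textup{r}}_{i,\textsc{l}}(0)+\dot x_i^{\mathrm T}(\gamma_1^{\textup{r}}-\gamma_0^{\textup{r}})\Big\}+o_{\mathbb P}(1).
\]
The three terms are mutually uncorrelated. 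The first two are orthogonal because $Z_i(1-Z_i)=0$. The cross term with $\dot x_i$ has mean $\mathbb{E}[\varepsilon^{\textup{r}}_{i,\textsc{l}}(z)\dot x_i]=0$ by the normal equations. Adding the three variances gives \eqref{eq:V_lin_random}, and the CLT yields asymptotic normality.

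\textbf{Step 3: variance estimators.} By Theorem \ref{thm:ME_random}, the full sandwich from \eqref{eq:augmented estimating equations} is consistent for $V^{\textup{r}}_{\textsc{l}}$. I will show that its $\beta$-component equals $n\hat V_{\textsc{ehw},\textsc{l}}+(\hat\gamma_1-\hat\gamma_0)^{\mathrm T}\Sigma_x(\hat\gamma_1-\hat\gamma_0)$ up to $o_{\mathbb P}(1)$. The argument has two parts.

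First, evaluated at $\hat\mu_x=\bar x$, the first four blocks of the estimating equations are exactly the OLS normal equations of \eqref{eq:Lin}, reparametrized arm by arm. The OLS-only sandwich therefore captures the first two influence terms: its $(2,2)$ entry is the sample analogue $n^{-1}\sum_i\{Z_i\hat\varepsilon_{i}^2/\hat e^2+(1-Z_i)\hat\varepsilon_{i}^2/(1-\hat e)^2\}$, which converges to $\V(\varepsilon_{i,\textsc{l}}^{\textup{r}}(1))/e+\V(\varepsilon_{i,\textsc{l}}^{\textup{r}}(0))/(1-e)$.

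Second, the last term is estimated by $(\hat\gamma_1-\hat\gamma_0)^{\mathrm T}\Sigma_x(\hat\gamma_1-\hat\gamma_0)$. Its consistency follows from $\hat\gamma_z\to\gamma_z^{\textup{r}}$ and $\Sigma_x\to\V(x_i)$ by the LLN. The empirical cross products between the residual terms and $\ddot x_i$ vanish exactly by the sample normal equations.

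Together these give consistency of $\hat V_{\textsc{ehw},\textsc{l},\text{adj}}$ and $n\hat V_{\textsc{ehw},\textsc{l}}=V^{\textup{r}}_{\textsc{l}}-B^{\textup{r}}_{\textsc{l}}+o_{\mathbb P}(1)$.

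\textbf{Main obstacle.} The delicate step is the explicit identification in Step 3 of the OLS $(2,2)$ entry with the arm-wise residual formula under the centered and interacted parametrization. This requires the block algebra showing that the centered regressors make the $Z_i$-coefficient's sandwich entry decouple from the slope blocks. I would first verify this via the equivalence with separate per-arm regressions on $(1,\ddot x_i)$, which reduces the entry to the sandwich for an intercept in each arm.
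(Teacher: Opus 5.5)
Your proposal is correct and follows essentially the paper's route: the paper also stacks the $\bar x$-equation onto the arm-wise OLS equations, and its two-step Lemma~\ref{lemma:ME_diff} is exactly your inversion of the block-triangular $\Gamma^{\textup{r}}$. It thereby obtains the extra influence term $\dot x_i^{\mathrm{T}}(\gamma^{\textup{r}}_1-\gamma^{\textup{r}}_0)$ and hence $V^{\textup{r}}_{\textsc{l}}$, and, in the part it defers to the clustered analogue, handles the variance estimators through the arm-wise reparametrization of the OLS sandwich plus the LLN for the correction term.

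One small slip: the $\gamma$-blocks drop out of the $\mu$-rows of $(\Gamma^{\textup{r}})^{-1}$ because $\mathbb{E}(Z_i\dot x_i)=0$ makes the $(\mu_z,\gamma_z)$ Jacobian entries vanish. The normal equation $\mathbb{E}\{\varepsilon^{\textup{r}}_{i,\textsc{l}}(z)\dot x_i\}=0$ is instead what you need, and correctly use, for the vanishing cross-covariances in the variance.
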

Under random design, the coefficient of $Z_i$ from the projection of $y_i$ on $x_{i,\textsc{l}}$ identifies the ATE.
The result of asymptotic normality in Theorem \ref{thm:FRA_random} is also proved in \citet[Theorem 5.1]{NegiWooldridge2021}, whereas our proof relies on the general $Z$-estimation framework developed in Section~\ref{sec:Z_random}.
\cite{NegiWooldridge2021} and \cite{ZhaoDing2021} similarly propose correcting the EHW variance estimator by adding the second term on the right-hand side of \eqref{eq:Lin_correction}; see also \citet[Page 131]{Ding2023}.

As noted above, under a random design, centering around $\bar{x}$ introduces additional uncertainty, and the theory of OLS with i.i.d. data does not apply here. 
Consequently, using the OLS-based variance estimator $\hat{V}_{\textsc{ehw},\textsc{l}}$ from \eqref{eq:EHW} results in an anti-conservative variance estimator.
To address this issue, a correction term in \eqref{eq:Lin_correction} is necessary.
This contrasts with the design-based framework of \citet{Lin2013}, where the
covariates are treated as fixed. In that setting, centering by the
sample mean $\bar x$ introduces no additional uncertainty, and the usual EHW
variance estimator for Lin's fully interacted regression is asymptotically
conservative for the randomization variance.

\begin{remark}\label{re:correction_Lin}
We can construct the variance estimator in two ways: by applying the HW variance formula to the augmented estimating equations in \eqref{eq:augmented estimating equations} and then using a plug-in approach as in \eqref{eq:EHW}, or by adding the correction term to the EHW variance estimator $\hat{V}_{\textsc{ehw},\textsc{l}}$ as in \eqref{eq:Lin_correction}. Although these two methods are asymptotically equivalent, they are not numerically identical due to small finite-sample differences.
\end{remark}

\paragraph*{Mixed Design}
We can apply the results of OLS in Section \ref{mixed} because the covariates $\{x_i\}_{i=1}^n$ are fixed.
Let $\gamma_z^{\textup{m}}$ be the coefficient of $x_{i}$ in the OLS fit of $y_{i}$ on $1$ and $\ddot{x}_{i}$ over $\{i: Z_i = z\}$ under mixed design.
Define $\varepsilon_{i,\textsc{l}}^{\textup{m}}(z) = (y_i(z) - n^{-1}\sum_{i=1}^n \mathbb{E}(y_i(z) \mid x_i)) - \ddot{x}_i^{\mathrm{T}}\gamma_z^{\textup{m}}$ for $z\in\{0, 1\}$.
By the general theory in Section \ref{mixed}, we derive the asymptotic variance of $\hat{\beta}_{\textsc{l}}$ under mixed design as
\begin{equation*}
V^{\textup{m}}_{\textsc{l}}
= \frac{1}{n} \sum_{i=1}^n 
\left( \frac{\mathbb{E}(\varepsilon^{\textup{m}}_{i,\textsc{l}}(1)^2 \mid x_i)}{e}
+ \frac{\mathbb{E}(\varepsilon^{\textup{m}}_{i,\textsc{l}}(0)^2 \mid x_i)}{1-e} 
- \mathbb{E}( \varepsilon^{\textup{m}}_{i,\textsc{l}}(1) - \varepsilon^{\textup{m}}_{i,\textsc{l}}(0) \mid x_i)^2
\right).  
\end{equation*}
Define the asymptotic bias of $\hat{V}_{\textsc{ehw},{\textsc{l}}}$ for $V^{\textup{m}}_{\textsc{l}}$ as
\begin{equation}
B_{\textsc{l}}^{\textup{m}} 
= \frac{1}{n}\sum_{i=1}^n \mathbb{E}(\varepsilon_{i,\textsc{l}}^{\textup{m}}(1) - \varepsilon_{i,\textsc{l}}^{\textup{m}}(0) \mid x_i)^2. 
\label{eq:bias_com_L_m} 
\end{equation}
\begin{theorem} \label{thm:FRA_m}
Under mixed design and Assumption \ref{asu:completeRandom} with \(\mathbb{E}_\circ = \mathbb{E}_{(y,Z)|x}\), we have
$\beta^{\textup{m}} = n^{-1} \sum_{i=1}^n \mathbb{E}(Y_i(1) - Y_i(0) \mid x_i )$ and 
\begin{equation*}
(V^{\textup{m}}_{\textsc{l}})^{-1/2} \sqrt{n}(\hat{\beta}_{\textsc{l}}-\beta^{\textup{m}} ) \mid  X \overset{\textup{d}}{\to } \mathcal{N}(0, 1 )
\text{ and }
n \hat{V}_{\textsc{ehw},{\textsc{l}}} 
= V_{\textsc{l}}^{\textup{m}} + B_{\textsc{l}}^{\textup{m}} + o(1;\mathbb{P}_{(y, Z) \mid x}).
\end{equation*}
\end{theorem}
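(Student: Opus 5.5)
Under the mixed design we condition on $X=(x_1,\ldots,x_n)$, so $\bar x$ and the centered covariates $\ddot x_i$ are deterministic and the randomness comes only from $(Z_i,Y_i(1),Y_i(0))\mid x_i$. Hence Lin's regression \eqref{eq:Lin} is an ordinary OLS fit with regressors $x_{i,\textsc{l}}=(1,Z_i,\ddot x_i,Z_i\ddot x_i)^{\mathrm{T}}$. Here $X_1$ is generated by $Z_i$ and $X_2$ by $\ddot x_i$, which is a fixed function of the conditioned covariates. I would therefore apply Theorem~\ref{thm:AsyDist_mixed} directly, without the augmented equations \eqref{eq:augmented estimating equations}. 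That theorem gives conditional asymptotic normality of the full coefficient vector, and $n\hat V_{\textsc{ehw}}=V^{\mathrm m}+B^{\mathrm m}+o(1)$ for the full sandwich. It then remains to (i) identify the estimand, and (ii) compute the $(2,2)$ entries of $V^{\mathrm m}$ and $B^{\mathrm m}$.

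\textbf{Step 1: the estimand.} I would reparametrize to the equivalent design $(Z_i,1-Z_i,Z_i\ddot x_i,(1-Z_i)\ddot x_i)$ with coefficients $(\mu_1,\mu_0,\gamma_1,\gamma_0)$, so that $\beta_{\textsc l}=\mu_1-\mu_0$. By Assumption~\ref{asu:completeRandom}(b), $Z_i$ is independent of $(Y_i(z),x_i)$ and $\mathbb E(Z_i\mid x_i)=e$. The population equations \eqref{eq:beta_m} then decouple by arm:
\[
\frac1n\sum_{i=1}^n e\,\mathbb E\bigl[(y_i(1)-\mu_1-\ddot x_i^{\mathrm T}\gamma_1)(1,\ddot x_i^{\mathrm T})^{\mathrm T}\mid x_i\bigr]=0,
\]
and similarly for arm $0$ with weight $1-e$. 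Because $\sum_i\ddot x_i=0$, the intercept equation gives $\mu_z=n^{-1}\sum_i\mathbb E(y_i(z)\mid x_i)$. Hence $\beta^{\mathrm m}=n^{-1}\sum_i\mathbb E(Y_i(1)-Y_i(0)\mid x_i)$, and $\gamma_z^{\mathrm m}$ is the fixed-design projection coefficient in arm $z$. With these values the residual is $\varepsilon_i^{\mathrm m}=Z_i\varepsilon^{\mathrm m}_{i,\textsc l}(1)+(1-Z_i)\varepsilon^{\mathrm m}_{i,\textsc l}(0)$.

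\textbf{Step 2: the sandwich entries.} In the reparametrized design the bread $n^{-1}\sum_i\mathbb E(x_ix_i^{\mathrm T}\mid x_i)$ is block diagonal: one block is $e\cdot\mathrm{diag}(1,\Sigma_x)$ for arm $1$, the other is $(1-e)\cdot\mathrm{diag}(1,\Sigma_x)$ for arm $0$. The zero off-diagonal entries again come from $\sum_i\ddot x_i=0$. The contrast $\mu_1-\mu_0$ therefore uses only the intercept rows, scaled by $1/e$ and $-1/(1-e)$. Its influence function is
\[
\phi_i=\frac{Z_i\varepsilon^{\mathrm m}_{i,\textsc l}(1)}{e}-\frac{(1-Z_i)\varepsilon^{\mathrm m}_{i,\textsc l}(0)}{1-e}.
\]
\begin{itemize}
\item Its conditional second moment is $\mathbb E(\varepsilon(1)^2\mid x_i)/e+\mathbb E(\varepsilon(0)^2\mid x_i)/(1-e)$.
\item Its conditional mean is $\mathbb E(\varepsilon(1)-\varepsilon(0)\mid x_i)$, using independence of $Z_i$.
\end{itemize}
So $V^{\mathrm m}=n^{-1}\sum_i\V(\phi_i\mid x_i)$, which is the displayed $V^{\mathrm m}_{\textsc l}$, and $B^{\mathrm m}=n^{-1}\sum_i\mathbb E(\phi_i\mid x_i)^2=B^{\mathrm m}_{\textsc l}$. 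The EHW estimator is invariant to nonsingular reparametrization of the regressors, since the sandwich transforms covariantly. So the $(2,2)$ entry in the original coordinates equals the contrast variance $a^{\mathrm T}\hat V a$ with $a=(1,-1,0,0)^{\mathrm T}$, and the coefficient $\hat\beta_{\textsc l}$ coincides as well.

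\textbf{Main obstacle.} The delicate point is checking that the hypotheses of Theorem~\ref{thm:AsyDist_mixed} hold conditionally on $X$ with the triangular, non-identically distributed structure. The required properties are:
\begin{itemize}
\item $n^{-1}\sum_i\mathbb E(x_{i,\textsc l}x_{i,\textsc l}^{\mathrm T}\mid x_i)$ is positive definite eventually, which holds a.s.\ because $\Sigma_x\to\V(x_i)>0$;
\item Lyapunov conditions for $\phi_i$ follow from the fourth moments in Assumption~\ref{asu:completeRandom}(d).
\end{itemize}
I would simply invoke Theorem~\ref{thm:AsyDist_mixed} as given, with these verifications noted briefly.
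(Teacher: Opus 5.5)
Your proposal is correct and follows essentially the same route as the paper. The paper applies Theorem~\ref{thm:ME_mixed} to the arm-separated estimating equations with $\bar x$ held fixed, which is exactly the OLS case you invoke through Theorem~\ref{thm:AsyDist_mixed}; it obtains the same block-diagonal bread $e\,\mathrm{diag}(1,\Sigma_x)\oplus(1-e)\,\mathrm{diag}(1,\Sigma_x)$ and uses the same reparametrization $x_{i,\textsc{l}}=R\tilde x_{i,\textsc{l}}$ to identify $n\hat V_{\textsc{ehw},\textsc{l}}$ with the $\mu_1-\mu_0$ contrast. Your scalar influence function $\phi_i$ just packages its entry-by-entry computation of $d_{11},d_{13},d_{33}$ and $h_{11},h_{13},h_{33}$ into one step.
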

The coefficient of $Z_i$ from the projection of $y_i$ on $x_{i,\textsc{l}}$ identifies empirical average of the conditional average treatment effects under mixed design.
Theorem \ref{thm:FRA_m} indicates that under mixed design, $\hat{V}_{\textsc{ehw},{\textsc{l}}}$ is conservative in general with asymptotic bias $B^{\textup{m}}_{\textsc{l}}$ in \eqref{eq:bias_com_L_m}.
The mixed design formulation here has the advantage of simplifying the estimation of the asymptotic variance of $\hat{\beta}_{\textsc{l}}$. 

Both the Fisher specification and the Lin specification target the same causal estimand under the random and mixed designs considered here. Their distinction therefore concerns efficiency rather than interpretation. Proposition \ref{prop:fisher-lin-mixed} shows that, under mixed design, the Lin specification is asymptotically no less efficient than the Fisher specification.

\begin{proposition}\label{prop:fisher-lin-mixed}
Under mixed design, $V_{\textsc{l}}^{\mathrm{m}}
\leq
V_{\textsc{f}}^{\mathrm{m}}$.
If $\Sigma_x$ is positive definite,
equality holds if and only if either $e=1/2$ or $\gamma_1^{\mathrm{m}}=\gamma_0^{\mathrm{m}}$.
\end{proposition}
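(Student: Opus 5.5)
The plan is to write both asymptotic variances as one quadratic function of the pair of covariate coefficients, evaluated at the Lin coefficients $(\gamma_1^{\mathrm m},\gamma_0^{\mathrm m})$ and at the Fisher coefficients $(\gamma_{\textsc f}^{\mathrm m},\gamma_{\textsc f}^{\mathrm m})$. Lin's choice minimizes this quadratic, which gives the inequality, and the explicit gap gives the equality conditions.

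\textbf{Step 1: decompose the potential outcomes.} For $z\in\{0,1\}$ set
\[
\bar m_z=n^{-1}\sum_{i=1}^n \mathbb{E}(y_i(z)\mid x_i),\qquad m_z(x_i)=\mathbb{E}(y_i(z)\mid x_i)-\bar m_z,\qquad u_i(z)=y_i(z)-\mathbb{E}(y_i(z)\mid x_i),
\]
so that $\mathbb{E}(u_i(z)\mid x_i)=0$. For generic coefficients $g_1,g_0$ define $\varepsilon_i(z;g_z)=y_i(z)-\bar m_z-\ddot x_i^{\mathrm T}g_z$ and $a_i(z)=m_z(x_i)-\ddot x_i^{\mathrm T}g_z$. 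Then $\varepsilon_i(z;g_z)=a_i(z)+u_i(z)$, and conditioning on $x_i$ gives
\[
\mathbb{E}(\varepsilon_i(z)^2\mid x_i)=\sigma_z^2(x_i)+a_i(z)^2,\qquad \mathbb{E}(\varepsilon_i(1)-\varepsilon_i(0)\mid x_i)=a_i(1)-a_i(0),
\]
where $\sigma_z^2(x_i)=\V(y_i(z)\mid x_i)$.

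Both $V^{\mathrm m}_{\textsc l}$ and $V^{\mathrm m}_{\textsc f}$ have the form
\[
V(g_1,g_0)=\frac1n\sum_{i=1}^n\Bigl(\frac{\sigma_1^2(x_i)}{e}+\frac{\sigma_0^2(x_i)}{1-e}\Bigr)+\frac1n\sum_{i=1}^n\Bigl(\frac{a_i(1)^2}{e}+\frac{a_i(0)^2}{1-e}-(a_i(1)-a_i(0))^2\Bigr).
\]
Lin corresponds to $g_z=\gamma_z^{\mathrm m}$ and Fisher to $g_1=g_0=\gamma^{\mathrm m}_{\textsc f}$. The first sum does not depend on $(g_1,g_0)$.

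\textbf{Step 2: reduce to a single least-squares criterion.} The algebraic identity
\[
\frac{a^2}{e}+\frac{b^2}{1-e}-(a-b)^2=\frac{\{(1-e)a+eb\}^2}{e(1-e)}
\]
turns the second sum into
\[
\frac{1}{e(1-e)}\,\frac1n\sum_{i=1}^n\bigl(h(x_i)-\ddot x_i^{\mathrm T}g\bigr)^2,\qquad h=(1-e)m_1+e\,m_0,\quad g=(1-e)g_1+e\,g_0 .
\]
This is a least-squares criterion in $g$. Since $\sum_i m_z(x_i)=0$, its minimizer is the empirical projection coefficient $\Sigma_x^{-1}n^{-1}\sum_i\ddot x_i h(x_i)$.

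By definition of $\gamma_z^{\mathrm m}$ as the mixed-design limit of the within-arm OLS slope, $\gamma_z^{\mathrm m}=\Sigma_x^{-1}n^{-1}\sum_i\ddot x_i m_z(x_i)$. Here I use that $Z_i\indep(y_i(z),x_i)$, so the within-arm normal equations conditional on $X$ reduce to these. Hence Lin's $g^{\textsc l}=(1-e)\gamma_1^{\mathrm m}+e\gamma_0^{\mathrm m}$ is exactly the minimizer.

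\textbf{Step 3: identify the Fisher slope.} This is the step I expect to require the most care. I would solve the population normal equations \eqref{eq:beta_m} for $\mathrm{lm}(y_i\sim 1+Z_i+x_i)$, using $\mathbb{E}(Z_i\mid x_i)=e$ and $\mathbb{E}(Z_i^2\mid x_i)=e$. The intercept and $Z_i$ equations pin down the arm means $\bar m_1,\bar m_0$. The $x_i$ equation then gives $\Sigma_x\gamma^{\mathrm m}_{\textsc f}=n^{-1}\sum_i\ddot x_i\{e\,m_1(x_i)+(1-e)m_0(x_i)\}$, that is,
\[
\gamma_{\textsc f}^{\mathrm m}=e\gamma_1^{\mathrm m}+(1-e)\gamma_0^{\mathrm m}.
\]
This also confirms that the centering constants in $\varepsilon^{\mathrm m}_{i,\textsc f}(z)$ are $\bar m_z$, matching Step 1. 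For Fisher, $g^{\textsc f}=\gamma^{\mathrm m}_{\textsc f}$.

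\textbf{Step 4: compute the gap.} By the Pythagorean identity for least squares,
\[
V^{\mathrm m}_{\textsc f}-V^{\mathrm m}_{\textsc l}=\frac{(g^{\textsc f}-g^{\textsc l})^{\mathrm T}\Sigma_x(g^{\textsc f}-g^{\textsc l})}{e(1-e)}=\frac{(1-2e)^2}{e(1-e)}(\gamma_1^{\mathrm m}-\gamma_0^{\mathrm m})^{\mathrm T}\Sigma_x(\gamma_1^{\mathrm m}-\gamma_0^{\mathrm m})\ge 0,
\]
using $g^{\textsc f}-g^{\textsc l}=(2e-1)(\gamma_1^{\mathrm m}-\gamma_0^{\mathrm m})$. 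If $\Sigma_x$ is positive definite, this quantity vanishes if and only if $e=1/2$ or $\gamma_1^{\mathrm m}=\gamma_0^{\mathrm m}$, which is the claimed equality condition.
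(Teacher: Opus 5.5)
Your proposal is correct and follows essentially the same route as the paper. Both use the identity $\frac{a^2}{e}+\frac{b^2}{1-e}-(a-b)^2=\frac{\{(1-e)a+eb\}^2}{e(1-e)}$ to reduce the slope-dependent part to one least-squares criterion in $(1-e)g_1+eg_0$, note that Lin's combination is the projection coefficient, use $\gamma_{\textsc f}^{\mathrm m}=e\gamma_1^{\mathrm m}+(1-e)\gamma_0^{\mathrm m}$, and read off the gap $(2e-1)^2(\gamma_1^{\mathrm m}-\gamma_0^{\mathrm m})^{\mathrm T}\Sigma_x(\gamma_1^{\mathrm m}-\gamma_0^{\mathrm m})/\{e(1-e)\}$. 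Your Steps 1 and 3 add detail the paper only asserts: that the conditional-variance part is common to both estimators, and that the Fisher slope and centering constants come from the mixed-design normal equations.
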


This result complements existing efficiency comparisons under alternative sources of randomness. In the finite-population, design-based framework, \cite{Freedman2008} shows that additive covariate adjustment need not improve precision relative to the difference-in-means estimator, whereas \cite{Lin2013} shows that fully interacted adjustment is asymptotically no less efficient, even under misspecification. Under random sampling, \cite{NegiWooldridge2021} establishes an analogous ranking: full regression adjustment is asymptotically no less efficient than either the difference-in-means estimator or pooled regression adjustment, without requiring linear conditional mean functions. Proposition~\ref{prop:fisher-lin-mixed} extends the comparison between additive and fully interacted adjustment to the mixed-design setting.

\section{Clustered Data}\label{sec:cluster}
Clustered data are common in empirical work, especially when observations are grouped by schools, classrooms, villages, firms, or geographic units.
In such settings, researchers routinely use cluster-robust standard errors to account for within-cluster dependence. 
\cite{AbadieAtheyImbens2023} emphasize that the interpretation of LZ cluster robust standard error depends on the source of randomness, and \cite{SuDing2021} provide design-based theory for regression estimators in cluster-randomized experiments.
We extend the $Z$-estimation framework in Section \ref{sec:ME} to clustered data and apply it to cluster-randomized experiments, focusing on parameter interpretation and robust inference under misspecified models and different sources of randomness.

\subsection{M-estimation}\label{sec:cluster_ME}

Parallel to Section \ref{sec:ME}, we develop a general \(Z\)-estimation framework for clustered data. We begin with the familiar random-design setting as a benchmark in Section \ref{sec:Z_random} and then extend the analysis to fixed- and mixed-design settings in Sections \ref{sec:Z_fixed} and \ref{sec:Z_mixed}, respectively.

\subsubsection{Random design}
\label{sec:Z_random_cluster}
Under random design, we observe a random sample of clusters.
Let $w_{ij}$ denote the observation for unit $j$ in cluster $i$, for $j=1,\ldots, n_i$, $i=1, \ldots M$, and let the total sample size be $N=\sum_{i=1}^M n_i$. 
Let $\sum_{i j}=\sum_{i=1}^M \sum_{j=1}^{n_i}$ denote the summation over all units.
We assume that $n_i$ is fixed for each $i$.
The parameter of interest \( \beta^\textup{r} \) is identified through a population moment condition
\[
\mathbb{E}\left[ \sum_{ij} \psi(w_{ij};\beta^\textup{r}) \right] = 0.  
\]
where we use $\beta^\textup{r}$ to denote the estimand under random design.
Let $\hat{\beta}$ be the solution to the following equation:
\begin{equation}
\bar{\psi}(W;\hat{\beta})
= \frac{1}{N} \sum_{i=1}^M \sum_{j=1}^{n_i} \psi(w_{ij};\hat{\beta}) = 0, 
\label{eq:ME_cluster}
\end{equation}
where $W=\{w_{ij}: j=1,\ldots,n_i; i=1,\ldots,M\}$ collects all observations.
Define $\psi_{i}(\hat{\beta})$ as an $n_{i} \times p$ matrix with row $j$ equaling $\psi(w_{ij}; \hat{\beta})$, for $j=1, \ldots, n_{i}$.  
A first-order Taylor
expansion of $\bar{\psi}(W;\hat{\beta})$ around $\beta$ gives
\[
0=\bar{\psi}(W;\hat{\beta}) \approx \bar{\psi}(W;{\beta^\textup{r}}) +\Gamma(\beta^\textup{r})\,(\hat\beta-\beta^\textup{r}),
\]
where ${\Gamma}(\beta^\textup{r})
= \left.\frac{\partial}{\partial b^{\mathrm{T}}} \bar{\psi}(W;{b}) \right|_{b=\beta^\textup{r}}$. 
This yields the linearization of $Z$-estimator as $\hat{\beta}
\approx \beta^\textup{r} - \Gamma(\beta^\textup{r})^{-1}\bar{\psi}(W;{\beta}^\textup{r})$, and the cluster-robust variance estimator:
\begin{equation}
\hat{V}_{\textsc{lz}} 
= \frac{1}{M} \left( \frac{1}{M} \sum_{ij} \frac{\partial \psi(w_{ij};\hat{\beta})}{\partial b^{\mathrm{T}}}  \right)^{-1}
\left( \frac{1}{M} \sum_{i=1}^{M} \psi_{i}(\hat{\beta}) \psi_{i}(\hat{\beta})^{\mathrm{T}} \right)
\left( \frac{1}{M} \sum_{ij} \frac{\partial \psi(w_{ij};\hat{\beta})}{\partial b^{\mathrm{T}}}  \right)^{-\mathrm{T}}.
\label{eq:V_LZ_mestimation}
\end{equation}
Throughout this section, we write $w_{ij}=(y_{ij},x_{ij})$ to emphasize the response $y_{ij}$ and covariate $x_{ij}$.
We impose the following assumptions on the clustered data.
\begin{assumption} \label{asu:cluster_ME}
\begin{enumerate}[(a)]
\item The variables $(y_{ij}, x_{ij})$ have the same marginal distribution across $(i = 1, \ldots, M; j = 1, \ldots, n_i)$;
\item The variables $\{(y_{ij}, x_{ij})\}_{j=1}^{n_i}$ are independent across cluster $i$, but can be arbitrarily correlated within the same cluster $i$.
\end{enumerate}
\end{assumption}

Specifically, we define the estimand $\beta^\textup{r}$ as the unique solution to the estimating equation under random design:
\[
\mathbb{E}\left[ \sum_{ij} \psi(y_{ij}, x_{ij};\beta^\textup{r}) \right] = 0.  
\]
Define the asymptotic variance of $\hat{\beta}$ under random design as
$V^\textup{r} = (\Gamma^\textup{r})^{-1}\Delta^{\textup{r}}(\Gamma^{\textup{r}})^{-\mathrm{T}}$ with 
\begin{align*}
\Gamma^\textup{r}
=& \frac{1}{M} \sum_{i=1}^M \mathbb{E}\left[ \sum_{j=1}^{n_i} \frac{\partial \psi(y_{ij}, x_{ij};\beta^\textup{r})}{\partial \beta^{\mathrm{T}}}  \right], \\
\Delta^\textup{r} 
=& \frac{1}{M} \sum_{i=1}^M \mathbb{E}\left[ \left( \sum_{j=1}^{n_i}\psi(y_{ij}, x_{ij}; \beta^\textup{r}) \right)^{\otimes 2} \right].
\end{align*}

Theorem \ref{thm:ME_random_cluster} below indicates that under random design, $\hat{V}_{\textsc{lz}}$ in \eqref{eq:V_LZ_mestimation} is consistent for $V^\textup{r}$. 
\begin{theorem}\label{thm:ME_random_cluster}
Under random design and Assumptions \ref{asu:ME} and \ref{asu:cluster_ME} with \(\mathbb{E}_\circ = \mathbb{E}_{(y,x)}\) and \(\mathbb{P}_\circ = \mathbb{P}_{(y,x)}\), we have 
\[
(V^\textup{r})^{-1/2}
\sqrt{M}(\hat{\beta} - \beta^\textup{r}) 
\overset{\textup{d}}{\to} \mathcal{N}\left(0, I \right)
\text{ and }
M \hat{V}_{\textsc{lz}}
= V^\textup{r} + o(1;\mathbb{P}_{(y,x)}).
\] 
\end{theorem}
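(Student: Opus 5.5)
The plan is to reduce the clustered problem to the i.i.d.\ $Z$-estimation of Theorem~\ref{thm:ME_random}. Take the cluster as the sampling unit and define the cluster-level estimating function
\[
\Psi_i(b)=\sum_{j=1}^{n_i}\psi(w_{ij};b).
\]
By Assumption~\ref{asu:cluster_ME}(b), the $\Psi_i$ are independent across $i$. Since $N\bar\psi(W;b)=\sum_{i=1}^M\Psi_i(b)$, the estimator $\hat\beta$ in \eqref{eq:ME_cluster} solves $M^{-1}\sum_i\Psi_i(b)=0$, and $\beta^{\textup{r}}$ solves $M^{-1}\sum_i\mathbb{E}[\Psi_i(b)]=0$. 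Because the $n_i$ are fixed and possibly heterogeneous, the $\Psi_i$ are independent but not identically distributed. So I will repeat the steps of the proof of Theorem~\ref{thm:ME_random} with Lindeberg--Feller and independent-non-identical laws of large numbers in place of their i.i.d.\ versions. I will assume $\sup_i n_i<\infty$ so that the moment bounds in Assumption~\ref{asu:ME}, which by Assumption~\ref{asu:cluster_ME}(a) hold for each $\psi(w_{ij};b)$, transfer to $\Psi_i$ uniformly in $i$. Minkowski's inequality gives $\mathbb{E}\sup_b\|\Psi_i(b)\|^{2+\delta}\le n_i^{2+\delta}\,\mathbb{E}\sup_b\|\psi(w,b)\|^{2+\delta}$.

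\emph{Consistency.} Using Assumption~\ref{asu:ME}(b) with the envelope bound and compactness, I will prove a uniform law of large numbers
\[
\sup_{b\in\Theta}\Bigl\|M^{-1}\sum_i\{\Psi_i(b)-\mathbb{E}\Psi_i(b)\}\Bigr\|=o_{\mathbb{P}}(1).
\]
The route is pointwise Chebyshev or Markov (via the $(1+\delta)$-moment, Kolmogorov-type LLN for independent, non-identical summands), followed by a bracketing or equicontinuity argument. Because the marginals are identical, $M^{-1}\sum_i\mathbb{E}\Psi_i(b)=(N/M)\,\mathbb{E}\psi(w,b)$. Its unique zero is $\beta^{\textup{r}}$ by Assumption~\ref{asu:ME}(a), provided $N/M$ stays bounded away from $0$ and $\infty$, which holds since $1\le n_i\le\sup_i n_i$. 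This identifiable uniqueness gives $\hat\beta\to\beta^{\textup{r}}$.

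\emph{Asymptotic normality.} On $\mathscr N$, I will take a mean-value expansion:
\[
0=M^{-1}\sum_i\Psi_i(\beta^{\textup{r}})+\Bigl(M^{-1}\sum_i\nabla_b\Psi_i(\tilde b)\Bigr)(\hat\beta-\beta^{\textup{r}}).
\]
The Jacobian converges to $\Gamma^{\textup{r}}$ by the uniform LLN on $\mathscr N$ (using the derivative envelope in Assumption~\ref{asu:ME}(d)) together with consistency, and $\Gamma^{\textup{r}}$ is nonsingular by Assumption~\ref{asu:ME}(e). The score $M^{-1/2}\sum_i\Psi_i(\beta^{\textup{r}})$ is a sum of independent mean-zero vectors, because $\mathbb{E}\Psi_i(\beta^{\textup{r}})=n_i\mathbb{E}\psi(w,\beta^{\textup{r}})=0$. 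Its covariance is $\Delta^{\textup{r}}$, and the $2+\delta$ moments give the Lyapunov condition. The Cramér--Wold device plus Slutsky then yield $(V^{\textup{r}})^{-1/2}\sqrt M(\hat\beta-\beta^{\textup{r}})\to\mathcal N(0,I)$. Here $\Gamma^{\textup{r}}$ and $\Delta^{\textup{r}}$ are understood as sequences, and I will assume their eigenvalues are bounded away from zero; I will normalize with $(V^{\textup{r}})^{-1/2}$ instead of taking limits.

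\emph{Variance estimator.} The bread of $\hat V_{\textsc{lz}}$ is exactly $M^{-1}\sum_i\nabla_b\Psi_i(\hat\beta)$, which is $\Gamma^{\textup{r}}+o_{\mathbb{P}}(1)$ as above. The meat is $M^{-1}\sum_i\Psi_i(\hat\beta)\Psi_i(\hat\beta)^{\mathrm T}$, matching the $\psi_i\psi_i^{\mathrm T}$ notation after summing rows. I will split it as
\[
M^{-1}\sum_i\Psi_i(\beta^{\textup{r}})^{\otimes2}+R_M .
\]
The first term minus $\Delta^{\textup{r}}$ is $o_{\mathbb{P}}(1)$ by an LLN for independent summands with $(1+\delta/2)$ moments. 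For $R_M$, I will use differentiability on $\mathscr N$ to bound
\[
\|\Psi_i(\hat\beta)-\Psi_i(\beta^{\textup{r}})\|\le \sup_{\mathscr N}\|\nabla\Psi_i\|\,\|\hat\beta-\beta^{\textup{r}}\|,
\]
and then apply Cauchy--Schwarz. The main obstacle is this remainder step. The derivative envelope in Assumption~\ref{asu:ME}(d) has only a first moment, while $R_M$ involves products of $\sup\|\nabla\Psi_i\|$ with $\|\Psi_i\|$. I would first try to exploit $\|\hat\beta-\beta^{\textup{r}}\|=O_{\mathbb{P}}(M^{-1/2})$ together with $\max_i\sup_b\|\Psi_i(b)\|=o_{\mathbb{P}}(M^{1/(2+\delta)})$. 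Failing that, I would use continuity of $b\mapsto\Psi_i(b)\Psi_i(b)^{\mathrm T}$ with the $2+\delta$ envelope and a uniform LLN for the meat on a shrinking neighborhood, the standard Newey--McFadden Lemma~4.3 argument. Combining bread and meat by the continuous mapping theorem gives $M\hat V_{\textsc{lz}}=V^{\textup{r}}+o(1;\mathbb{P}_{(y,x)})$.
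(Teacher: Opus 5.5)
Your proposal is correct and is essentially the paper's argument. The paper proves this theorem by applying Theorem~\ref{thm:ME_random} to the cluster-level (rescaled) sum of estimating functions, which is your $\Psi_i$. Your subsequent steps mirror the proof of Theorem~\ref{thm:ME_random}: a uniform LLN with identification, an expansion around $\beta^{\textup{r}}$, a Lyapunov CLT for the score, and the meat handled on $\mathscr N$ by continuity plus the $2+\delta$ envelope (Newey--McFadden Lemma~4.3, which the paper packages as Lemma~\ref{lemma:GMM_lemma}).

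Your extra observation is also right: fixed heterogeneous $n_i$ make the clusters independent but not identically distributed. That is why you need something like $\sup_i n_i<\infty$, a nondegenerate $\Delta^{\textup{r}}$, and i.n.i.d.\ limit theorems. On all of $\Theta$ only a first-moment envelope is available, so your appeal to a $(1+\delta)$-moment there is not justified; but identical marginals with bounded $n_i$ supply the uniform integrability a weak LLN needs. This care is precisely what makes the paper's one-line reduction rigorous, since Theorem~\ref{thm:ME_random} is stated for i.i.d.\ units.
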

Theorem \ref{thm:ME_random_cluster} extends the usual $Z$-estimation asymptotic normality result with i.i.d. data to the cluster-level setting; see, for example, \cite{LiangZeger1986} for estimating equations $Z$-estimation and \cite{NeweyMcFadden1994} for general large-sample theory for $Z$-estimation. We include it here to establish notation and to serve as the random-design benchmark for the fixed- and mixed-design results below.

\subsubsection{Fixed design}
\label{sec:Z_fixed_cluster}
Under fixed design, the $x_{ij}$'s are fixed, or, equivalently, we condition on them.
Let $X = (x_{ij})_{1\le i\le M, 1\le j \le n_i}$ denote the stacked covariate vector under fixed design.
Define the estimand $\beta^\textup{f}$ as the unique solution to
\[
\mathbb{E}\left[ \frac{1}{N} \sum_{ij} \psi(y_{ij},x_{ij};\beta^\textup{f} ) \mid {X} \right] = 0.   
\]
Define $V^\textup{f} = (\Gamma^\textup{f})^{-1}\Delta^\textup{f}(\Gamma^\textup{f})^{-\mathrm{T}}$ with 
\begin{align*}
\Gamma^\textup{f} 
&= \frac{1}{M} \sum_{i=1}^M \mathbb{E}\left[ \sum_{j=1}^{n_i}  \frac{\partial \psi(y_{ij},x_{ij};\beta^\textup{f} )}{\partial b^{\mathrm{T}}}  \mid x_i \right] 
\text{ and }
\Delta^\textup{f} 
= \frac{1}{M} \sum_{i=1}^M \V\left[ \sum_{j=1}^{n_i} \psi(y_{ij},x_{ij};\beta^\textup{f}) \mid x_i \right],
\end{align*}
and the asymptotic bias of $\hat{V}_{\textsc{lz}}$ in \eqref{eq:V_LZ_mestimation} for $V^\textup{f}$ as
\begin{equation}
B^{\textup{f}}
= (\Gamma^\textup{f})^{-1} \left( \frac{1}{M} \sum_{i=1}^M 
\left( \mathbb{E}\left[ \sum_{j=1}^{n_i} \psi(y_{ij},x_{ij};\beta^\textup{f}) \mid x_i \right] \right)^{\otimes 2}  \right) (\Gamma^{\textup{f} })^{-\mathrm{T}}. 
\label{eq:bias_me_fixed_cluster}
\end{equation}
\begin{theorem}\label{thm:ME_fixed_cluster}
Under fixed design and Assumptions \ref{asu:ME} and \ref{asu:cluster_ME} with \(\mathbb{E}_\circ = \mathbb{E}_{y|x}\) and \(\mathbb{P}_\circ = \mathbb{P}_{y|x}\), we have 
\[
(V^\textup{f})^{-1/2}
\sqrt{M}(\hat{\beta} - \beta^\textup{f}) \mid X
\overset{\textup{d}}{\to} \mathcal{N}\left(0, 1 \right)
\text{ and }
M \hat{V}_{\textsc{lz}} 
= V^\textup{f} + B^{\textup{f}}
+ o(1;\mathbb{P}_{y|x}).
\] 
\end{theorem}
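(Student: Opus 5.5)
We follow the template of Theorem \ref{thm:ME_fixed}, now treating each cluster as the sampling unit. Conditional on $X$, the cluster-level scores $S_i(b)=\sum_{j=1}^{n_i}\psi(y_{ij},x_{ij};b)$, $i=1,\ldots,M$, are independent by Assumption \ref{asu:cluster_ME}(b) but not identically distributed. Each $n_i$ is fixed, so every $S_i$ is a finite sum and inherits the moment bounds of Assumption \ref{asu:ME}(b)(d) under $\mathbb{P}_{y|x}$. The proof then has three steps: consistency, asymptotic normality, and the probability limit of the LZ variance estimator.

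\textbf{Step 1 (consistency).} Define $\bar\psi^{\textup f}(b)=M^{-1}\sum_{i}\mathbb{E}[S_i(b)\mid x_i]$. Compactness of $\Theta$, continuity of $\psi$, and the integrable envelope $\sup_{b}\|\psi\|$ allow a uniform weak law of large numbers for independent, non-identically distributed summands (a bracketing argument or a Kolmogorov-type LLN under the $2+\delta$ moment). This gives $\sup_{b\in\Theta}\|M^{-1}\sum_i S_i(b)-\bar\psi^{\textup f}(b)\|=o(1;\mathbb{P}_{y|x})$. The equation $M^{-1}\sum_i S_i(b)=0$ is equivalent to \eqref{eq:ME_cluster} up to the factor $N/M$. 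Since $\beta^{\textup f}$ is the unique zero of $\bar\psi^{\textup f}$ in the interior of $\Theta$, the standard argument of \citet{NeweyMcFadden1994} gives $\hat\beta\to\beta^{\textup f}$ in $\mathbb{P}_{y|x}$-probability.

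\textbf{Step 2 (linearization and CLT).} Take a mean-value expansion on $\mathscr N$:
\[
0=M^{-1}\textstyle\sum_i S_i(\hat\beta)=M^{-1}\sum_i S_i(\beta^{\textup f})+\tilde\Gamma(\hat\beta-\beta^{\textup f}).
\]
Here $\tilde\Gamma\to\Gamma^{\textup f}$ by the uniform LLN applied to $\nabla_b\psi$ together with consistency of $\hat\beta$. Hence
\[
\sqrt M(\hat\beta-\beta^{\textup f})=-(\Gamma^{\textup f})^{-1}M^{-1/2}\textstyle\sum_i S_i(\beta^{\textup f})+o_{\mathbb P}(1).
\]
By definition of $\beta^{\textup f}$ the summands have conditional means $\mu_i=\mathbb{E}[S_i(\beta^{\textup f})\mid x_i]$ that sum to zero, so $M^{-1/2}\sum_i S_i=M^{-1/2}\sum_i(S_i-\mu_i)$. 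This is a sum of independent mean-zero vectors with average covariance $\Delta^{\textup f}$. The Lyapunov condition follows from the $2+\delta$ moments, and the Cramér--Wold device gives the conditional normal limit. Pre-multiplying by $(V^{\textup f})^{-1/2}$ yields the first claim. This uses that $\Delta^{\textup f}$ stays nonsingular along the sequence of realized designs, which we assume almost surely.

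\textbf{Step 3 (LZ variance).} Write $\hat V_{\textsc{lz}}$ as in \eqref{eq:V_LZ_mestimation} with the bread averaged over clusters, so the bread converges to $\Gamma^{\textup f}$ as in Step 2. For the meat, expand $S_i(\hat\beta)=S_i(\beta^{\textup f})+O(\|\hat\beta-\beta^{\textup f}\|)\cdot\sup_{\mathscr N}\|\nabla_b S_i\|$. The cross and remainder terms vanish by Cauchy--Schwarz combined with the moment bounds and $\hat\beta-\beta^{\textup f}=O_{\mathbb P}(M^{-1/2})$. It remains to find the limit of $M^{-1}\sum_i S_iS_i^{\mathrm T}$. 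An LLN for independent summands with $1+\delta/2$ moments gives
\[
M^{-1}\textstyle\sum_i S_iS_i^{\mathrm T}=M^{-1}\sum_i\mathbb{E}[S_iS_i^{\mathrm T}\mid x_i]+o_{\mathbb P}(1)=\Delta^{\textup f}+M^{-1}\sum_i\mu_i\mu_i^{\mathrm T}+o_{\mathbb P}(1).
\]
The second equality is the variance decomposition $\mathbb E[SS^{\mathrm T}\mid x]=\V(S\mid x)+\mu\mu^{\mathrm T}$. Sandwiching with $(\Gamma^{\textup f})^{-1}$ gives $V^{\textup f}+B^{\textup f}$.

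The main obstacle is the non-identically distributed conditional setting. The LLNs and the uniform LLN must hold for almost every realized design sequence, and the moment bounds of Assumption \ref{asu:ME} under $\mathbb{E}_{y|x}$ must hold uniformly in $i$. I would handle this first by requiring $\sup_i\mathbb{E}[\sup_{\mathscr N}\|S_i\|^{2+\delta}\mid x_i]<\infty$, justified through the bounded $n_i$ and the i.i.d.-type marginal structure of Assumption \ref{asu:cluster_ME}(a). With that uniform bound in hand, the proof reduces to the same argument as Theorem \ref{thm:ME_fixed}, with clusters playing the role of observations.
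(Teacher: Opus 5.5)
Your proposal follows the paper's route. The paper's proof is the one-line reduction ``apply Theorem~\ref{thm:ME_fixed} to the cluster-level scores,'' and your Steps 1--3 are the proof of Theorem~\ref{thm:ME_fixed} rewritten with $S_i(b)=\sum_{j}\psi(y_{ij},x_{ij};b)$ as the unit. You are also right that this reduction silently needs something extra: clusters of unequal size are not identically distributed, and Assumption~\ref{asu:cluster_ME} does not bound $n_i$.

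The remedy you propose for that is not correct as stated. The condition $\sup_i\mathbb{E}[\sup_{\mathscr N}\|S_i\|^{2+\delta}\mid x_i]<\infty$ does not follow from bounded $n_i$ and identical marginals. The conditional moment is a function of $x_i$, and its maximum over $M$ independent clusters diverges whenever that function is unbounded. For example, already with singleton clusters and OLS, $\psi=x(y-x^{\mathrm T}b)$, the conditional moment grows like $\|x\|^{2+\delta}$ (when the conditional variance is bounded below), so with Gaussian covariates the supremum tends to infinity.

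You do not need the supremum. The Lyapunov ratio and your weak LLN for $M^{-1}\sum_i S_iS_i^{\mathrm T}$ only require the average bound $M^{-1}\sum_i\mathbb{E}[\sup_{\mathscr N}\|S_i\|^{2+\delta}\mid x_i]=O_{\mathbb P}(1)$. This does follow from $n_i\le\bar n$, via $\|S_i\|^{2+\delta}\le n_i^{1+\delta}\sum_j\|\psi_{ij}\|^{2+\delta}$, identical marginals, and Markov's inequality over the design. It is the same averaged bound used in the paper's proof of Theorem~\ref{thm:ME_fixed}.

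Step 3 has a similar issue. Bounding the cross and remainder terms by Cauchy--Schwarz requires $M^{-1}\sum_i\sup_{\mathscr N}\|\nabla_b S_i\|^2=O_{\mathbb P}(1)$. Assumption~\ref{asu:ME}(d) gives only a first moment of $\sup_{\mathscr N}\|\nabla_b\psi\|$, so this is not available. You can repair it in either of two ways:
\begin{itemize}
\item Use $\max_i\sup_{\mathscr N}\|\nabla_b S_i\|=o_{\mathbb P}(M)$, which gives $M^{-1}\sum_i\sup_{\mathscr N}\|\nabla_bS_i\|^2=o_{\mathbb P}(M)$, and combine it with your $M^{-1/2}$ rate.
\item Follow the paper and apply Lemma~\ref{lemma:GMM_lemma} to $a(w;b)=\psi\psi^{\mathrm T}$ with envelope $\sup_{\mathscr N}\|\psi\|^2$. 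This needs only consistency, no rate and no derivative moments.
\end{itemize}

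Finally, your Step 1, like the paper's argument, tacitly needs the zero of $\bar\psi^{\textup f}$ to be well separated uniformly in $M$, not merely unique for each $M$. That property is what the ``standard argument'' actually uses.
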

Theorem \ref{thm:ME_fixed_cluster} is the clustered analogue of the fixed-regressor
conservativeness result in \cite{AbadieImbensZheng2014} and Theorem \ref{sec:Z_fixed}. 
Theorem \ref{thm:ME_fixed_cluster} indicates that under fixed design, $\hat{V}_{\textsc{lz}}$ in \eqref{eq:V_LZ_mestimation} is conservative with asymptotic bias $B^{\textup{f}}$ in \eqref{eq:bias_me_fixed_cluster}.
Conditional on the
fixed regressors, the cluster-level estimating equations can have
nonzero conditional means under misspecification, so the LZ
middle matrix estimates the sum of the conditional variance and a
positive semidefinite approximation-error component.

\subsubsection{Mixed design}
\label{sec:Z_mixed_cluster}
We partition the regressors as $X = (X_1, X_2)$. Under mixed design, our analysis is based on conditioning on part of the regressors, i.e., ${X}_2$, and take the remaining regressors ${X}_1$ as random.
Define the estimand $\beta^\textup{m}$ as the solution to 
\[
\mathbb{E}\left[ \frac{1}{N} \sum_{ij} \psi(y_{ij},x_{ij};\beta^\textup{m})\mid {X}_2 \right] = 0. 
\]
Define 
$V^\textup{m} = (\Gamma^\textup{m})^{-1}\Delta^\textup{m}(\Gamma^\textup{m})^{-\mathrm{T}}$, with 
\begin{align*}
\Gamma^\textup{m} 
&= \frac{1}{M} \sum_{i=1}^M \mathbb{E}\left[ \sum_{j=1}^{n_i}  \frac{\partial \psi(y_{ij},x_{ij};\beta^\textup{m} )}{\partial b^{\mathrm{T}} } \mid x_{i2} \right] 
\text{ and }
\Delta^\textup{m} 
= \frac{1}{M} \sum_{i=1}^M \V\left[ \sum_{j=1}^{n_i} \psi(y_{ij},x_{ij};\beta^\textup{m}) \mid x_{i2} \right]
\end{align*}
and 
\begin{equation}
B^\textup{m}
= (\Gamma^\textup{m})^{-1} \left( \frac{1}{M} \sum_{i=1}^M \left( \mathbb{E}\left[ \sum_{j=1}^{n_i} \psi(y_{ij},x_{ij};\beta^\textup{m}) \mid x_{i2} \right] \right)^{\otimes 2}  \right) (\Gamma^\textup{m})^{- \mathrm{T}}.
\label{eq:bias_me_mixed_cluster}
\end{equation}

\begin{theorem} \label{thm:ME_mixed_cluster}
Under mixed design and Assumptions \ref{asu:ME} and \ref{asu:cluster_ME} with \(\mathbb{E}_\circ = \mathbb{E}_{(y,x_1)|x_2}\) and \(\mathbb{P}_\circ = \mathbb{P}_{(y,x_1)\mid x_2}\), we have 
\begin{equation*}
(V^\textup{m})^{-1/2} \sqrt{M}(\hat{\beta} - \beta^\textup{m}) \mid X_2
\overset{\textup{d}}{\to} \mathcal{N}\left(0, I \right)
\text{ and }
M \hat{V}_{\textsc{lz}}
=  V^\textup{m}
+ B^\textup{m}
+ o(1;\mathbb{P}_{(y,x_1)\mid x_2}). 
\end{equation*}
\end{theorem}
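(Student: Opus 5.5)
Conditional on $X_2$, the clusters are independent but not identically distributed, so I would treat them as an independent, heterogeneous triangular array. The plan mirrors the proofs of Theorems \ref{thm:ME_mixed} and \ref{thm:ME_fixed_cluster}, with the cluster as the sampling unit. Write the cluster score as $S_i(b)=\sum_{j=1}^{n_i}\psi(y_{ij},x_{ij};b)$ and set $\bar S(b)=M^{-1}\sum_{i=1}^M S_i(b)$. Since $N/M$ is a positive constant given $X_2$, the equation $\bar S(b)=0$ has the same root $\hat\beta$ as \eqref{eq:ME_cluster}. By definition, $\beta^{\textup{m}}$ solves $M^{-1}\sum_i\mathbb{E}[S_i(b)\mid x_{i2}]=0$.

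\textbf{Consistency.} Conditional on $X_2$, the $S_i(b)$ are independent across $i$. Assumption \ref{asu:ME}(b), together with the bounded cluster sizes $n_i$, gives a uniform law of large numbers for independent, non-identically distributed summands (via a bracketing or compactness argument using the integrable envelope $\sup_b\|\psi\|$):
\[
\sup_{b\in\Theta}\Bigl\|\bar S(b)-M^{-1}\textstyle\sum_i\mathbb{E}[S_i(b)\mid x_{i2}]\Bigr\|=o(1;\mathbb{P}_{(y,x_1)\mid x_2}).
\]
Identification in Assumption \ref{asu:ME}(a) and compactness then give $\hat\beta-\beta^{\textup{m}}=o(1;\mathbb{P}_{(y,x_1)\mid x_2})$.

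\textbf{Asymptotic normality.} A mean-value expansion on $\mathscr N$ gives
\[
0=\bar S(\beta^{\textup m})+\bigl(\Gamma^{\textup m}+o(1;\mathbb{P}_{(y,x_1)\mid x_2})\bigr)(\hat\beta-\beta^{\textup m}).
\]
Here the derivative term converges by a uniform law of large numbers on $\mathscr N$, using the envelope in Assumption \ref{asu:ME}(d) and continuity of the conditional expectation. This reduces the problem to
\[
\sqrt M(\hat\beta-\beta^{\textup m})=-(\Gamma^{\textup m})^{-1}\sqrt M\,\bar S(\beta^{\textup m})+o(1;\mathbb{P}_{(y,x_1)\mid x_2}).
\]
Because $\mathbb{E}[\bar S(\beta^{\textup m})\mid X_2]=0$, the variable $\sqrt M\bar S(\beta^{\textup m})$ is a normalized sum of independent, mean-zero-on-average vectors. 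Its conditional covariance is
\[
M^{-1}\sum_i\V[S_i\mid x_{i2}]+M^{-1}\sum_i \mathbb{E}[S_i\mid x_{i2}]^{\otimes2}-\bar m\bar m^{\mathrm T},
\]
where $\bar m=M^{-1}\sum_i\mathbb{E}[S_i\mid x_{i2}]=0$, so the centered sum $\sqrt M(\bar S-\mathbb{E}[\bar S\mid X_2])$ has covariance exactly $\Delta^{\textup m}$. I would then apply the Lyapunov CLT with the Cramér--Wold device. The $(2+\delta)$ moment bound in Assumption \ref{asu:ME}(d), with bounded $n_i$, supplies the Lyapunov condition. Since $x_{i2}$ are drawn i.i.d.\ across clusters, the empirical averages of conditional moments over $X_2$ are bounded almost surely by the strong law, which is what makes the conditional statement hold $\mathbb{P}$-a.s. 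This yields the first claim, normalized by $(V^{\textup m})^{-1/2}$.

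\textbf{Variance estimator.} The bread in \eqref{eq:V_LZ_mestimation} converges to $\Gamma^{\textup m}$ times $N/M$, and the factor cancels with the normalization of the score. For the meat, I would expand $S_i(\hat\beta)=S_i(\beta^{\textup m})+O(\|\hat\beta-\beta^{\textup m}\|)\cdot(\text{envelope})$. The cross terms vanish by the moment bounds and consistency, so
\[
M^{-1}\sum_i S_i(\hat\beta)^{\otimes2}=M^{-1}\sum_i S_i(\beta^{\textup m})^{\otimes2}+o(1;\mathbb{P}_{(y,x_1)\mid x_2}).
\]
By a law of large numbers for independent summands, which needs the $(2+\delta)$ moments, this equals
\[
M^{-1}\sum_i\mathbb{E}[S_i^{\otimes2}\mid x_{i2}]=\Delta^{\textup m}+M^{-1}\sum_i\mathbb{E}[S_i\mid x_{i2}]^{\otimes2}+o(1;\mathbb{P}_{(y,x_1)\mid x_2}).
\]
The second term is exactly the middle of $B^{\textup m}$ in \eqref{eq:bias_me_mixed_cluster}. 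Sandwiching with $(\Gamma^{\textup m})^{-1}$ gives $M\hat V_{\textsc{lz}}=V^{\textup m}+B^{\textup m}+o(1;\mathbb{P}_{(y,x_1)\mid x_2})$.

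\textbf{Main obstacle.} I expect the hardest part to be the uniform laws of large numbers and the Lyapunov step for the non-identically distributed conditional array. These require conditional moment bounds that hold uniformly along almost every realization of $X_2$. I would handle this by working with averages such as $M^{-1}\sum_i\mathbb{E}[\sup_b\|S_i(b)\|^{2+\delta}\mid x_{i2}]$, which converge almost surely by the strong law under Assumption \ref{asu:ME}(d). The nonsingularity of $\Gamma^{\textup m}$ is needed uniformly in $M$, and Assumption \ref{asu:ME}(e) supplies it.
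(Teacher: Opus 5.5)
Your proposal is correct in substance and follows essentially the paper's route. The paper proves the theorem by applying Theorem \ref{thm:ME_mixed} to the cluster-level estimating function $\frac{M}{N}\sum_{j}\psi(w_{ij};b)$, which unpacks into exactly your steps (conditional ULLN and consistency, Taylor expansion, conditional Lyapunov CLT for the centered cluster scores with covariance $\Delta^{\textup m}$, and the LZ meat converging to $\Delta^{\textup m}+M^{-1}\sum_i\mathbb{E}[S_i\mid x_{i2}]^{\otimes 2}$), and your extra regularity (bounded $n_i$, i.i.d.\ cluster-level $x_{i2}$) plays the role of the paper's implicit treatment of clusters as i.i.d.\ units.

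A few slips to fix, none of which changes your conclusions:
\begin{enumerate}[(i)]
\item The display you call the conditional covariance of $\sqrt M\,\bar S(\beta^{\textup m})$ is not its covariance. With $\bar m=0$ it equals $\Delta^{\textup m}+M^{-1}\sum_i\mathbb{E}[S_i\mid x_{i2}]^{\otimes2}$, which is the meat's limit; the actual covariance is $\Delta^{\textup m}$, as you then correctly conclude.
\item The bread in \eqref{eq:V_LZ_mestimation} converges to $\Gamma^{\textup m}$ itself, with no $N/M$ factor.
\item In the meat, consistency plus the first-moment derivative envelope of Assumption \ref{asu:ME}(d) does not by itself control your cross terms. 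Either invoke the $\sqrt M$ rate, or, as the paper does through Lemma \ref{lemma:GMM_lemma}, apply the uniform LLN directly to $b\mapsto S_i(b)^{\otimes 2}$ on $\mathscr N$ with envelope $\sup_{b\in\mathscr N}\|S_i(b)\|^2$.
\end{enumerate}
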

Theorem \ref{thm:ME_mixed_cluster} is the clustered analogue of the mixed-design result in Theorem \ref{thm:ME_mixed}. 
This setting is particularly relevant for cluster-randomized experiments with fixed or pre-determined covariates. 
The theorem shows that under mixed design, the usual LZ variance estimator $\hat{V}_{\textsc{lz}}$ in \eqref{eq:V_LZ_mestimation} remains conservative under misspecification, with the additional term $B^\textup{m}$ capturing the approximation-error component induced by conditioning on the fixed part of the regressors.

\begin{remark}
Analogous to Section \ref{sec:i.i.d.Theory}, the $Z$-estimation framework in Section \ref{sec:cluster_ME} applies directly to the OLS regression as a special case: $y_{ij} = x_{ij}^\mathrm{T}\beta + \varepsilon_{ij}$, where $\beta$ is the coefficient in the population linear
projection of \(y_{ij}\) on \(x_{ij}\).  
From that theory one can show that the cluster-robust variance estimator is consistent under a random design and remains conservative under fixed or mixed designs.  We omit the details to avoid repetitiveness and turn directly to its role in cluster randomized trials in Section \ref{sec:cluster_RCT}.
\end{remark}

\subsection{Cluster randomization and regression analysis} 
\label{sec:cluster_RCT}
Consider a study with $N$ units, clustered, for example, by classrooms or villages. Cluster $i$ has $n_i$ units $(i=1, \ldots, M)$, and the total number of units is $N=\sum_{i=1}^M n_i$. Let $(i, j)$ index the $j$th unit within cluster $i$ $(i=1, \ldots, M ; j=1, \ldots, n_i)$. Unit $(i, j)$ has covariates $x_{i j}$. 
Let $Z_i$ be the treatment indicator for cluster $i$ and $Z_{i j}$ be the treatment indicator for unit $(i, j)$. In a cluster-randomized experiment, units within a cluster receive identical treatment levels. So if cluster $i$ receives treatment, then $Z_{i j}=Z_i=1$; if cluster $i$ receives control, then $Z_{i j}=Z_i=0$. Let $\mathcal{T}= \{(i, j): Z_{i j}=1 \}$ be the indices of units under treatment and $\mathcal{C}=\{(i, j): Z_{i j}=0\}$ be the indices of units under control. Their cardinalities $n_{\mathcal{T}}=\sum_{i j} Z_{i j}$ and $n_{\mathcal{C}}=\sum_{i j}(1-Z_{i j})$ represent the total numbers of units under treatment and control, respectively.
Similar to Section \ref{sec:application}, we omit the fixed-design analysis, since it does not quantify the advantages of randomization with a misspecified linear model.

For unit $(i, j)$, let $y_{ij}(1)$ and $y_{ij}(0)$ be the potential outcomes under treatment and control, respectively.
The observed outcome is related to the potential outcomes through
$y_{ij}=Z_{ij}y_{ij}(1)+(1-Z_{ij})y_{ij}(0)$, for
$i=1,\ldots,M$ and $j=1,\ldots,n_i$.
A central goal in analysing a cluster-randomized
experiment is to make inference using the observed data $\{(Z_{ij}, y_{ij}, x_{ij}): i = 1, \ldots, M; j = 1, \ldots, n_i\}$.
\cite{SuDing2021} introduced the following notation to measure the heterogeneity of the cluster sizes:
\[
\omega_i= \frac{n_i}{N}, \quad \Omega=\max _{1 \leq i \leq M} \omega_i, \quad \tilde{\omega}_i=\omega_i M= \frac{n_i}{N / M}.  
\]
When all clusters have equal sizes, $\omega_i = \Omega = 1/M$.

In this subsection, we impose the following assumption. 
\begin{assumption} \label{asu:cluster}
\begin{enumerate}[(a)]
\item The variables $(y_{ij}(0),y_{ij}(1),x_{ij})$ have identical marginal distribution across $(i = 1, \ldots, M; j = 1, \ldots, n_i)$;
\item The variables $((y_{ij}(0),y_{ij}(1),x_{ij}):j = 1, \ldots, n_i)$ are independent across $i$ but allow for arbitrary dependence within cluster;
\item $(y_{ij}(0), y_{ij}(1), x_{ij}:j = 1, \ldots, n_i) \indep Z_i$ for $i = 1, \ldots, M$;
\item $\mathbb{P}_\circ(Z_i=1)= e$;
\item $\mathbb{E}_\circ(y_{ij}^4)<\infty$;
\item $\mathbb{E}_\circ\left( \|x_{ij}\|^4 \right) <\infty$;
\item $\Omega = o(M^{-2/3})$. 
\end{enumerate}
\end{assumption}

\subsubsection{Without covariates} 
We consider the OLS fit with individual-level data:
\begin{equation}
\text{lm}(y_{ij} \sim 1 + Z_{ij}). 
\label{eq:cluster_fit}
\end{equation}
Denote by $\hat{\beta}_{\textsc{i}}$ the coefficient of $Z_{ij}$ from the above OLS fit
and $\hat{\varepsilon}_{i j}$ the residual from the same OLS fit. 
Define $X_{i}$ as an $n_{i} \times 2$ matrix with row $j$ equaling $(1, Z_{i j}), j=1, \ldots, n_{i}$. 
Stack $X_{i}$ together to obtain an $n \times 2$ matrix $X$. 
Define an $n_{i} \times n_{i}$ matrix $\hat{U}_{i}=\left(\hat{\varepsilon}_{i j} \hat{\varepsilon}_{i k}\right)_{1 \leq j, k \leq n_{i}}$. 
The cluster-robust variance estimator of $\hat{\beta}_{\textsc{i}}$ can be derived as the $(2,2)$ entry of $\hat{V}_{\textsc{lz}}$ in \eqref{eq:V_LZ_mestimation}:
\begin{align}
\hat{V}_{\textsc{lz,i}} 
=& \frac{1}{M} \left[\left(\frac{1}{M}X^{\mathrm{T}} X\right)^{-1}\left(\frac{1}{M}\sum_{i=1}^M X_i^{\mathrm{T}} \hat{U}_i X_i\right)\left(\frac{1}{M}X^{\mathrm{T}} X\right)^{-1}\right]_{(2,2)}.
\label{eq:LZ_cluster_I}
\end{align}
Define $\varepsilon_{ij}(z) = y_{ij}(z) - \mathbb{E}(y_{ij}(z))$ and 
${\varepsilon}_{i\cdot,\textsc{i}}(z) = \sum_{j=1}^{n_i}\varepsilon_{ij}(z)M/N$ for $z=0,1$.
By the theory in Section \ref{sec:Z_random_cluster}, we derive the asymptotic variance of $\hat{\beta}_{\textsc{i}}$ as 
\begin{equation*}
V_{\textsc{i}}^{\textup{r}}
= \frac{1}{M} \sum_{i=1}^M \left( \frac{\V( {\varepsilon}_{i\cdot,\textsc{i}}(1) )}{e}  + \frac{\V ( {\varepsilon}_{i\cdot,\textsc{i}}(0))}{1-e} \right). 
\end{equation*}

\begin{theorem}  \label{thm:cluster_random}
Under random design and Assumption \ref{asu:cluster} with \(\mathbb{E}_\circ = \mathbb{E}_{(y,Z)}\) and \(\mathbb{P}_\circ = \mathbb{P}_{(y,Z)}\), we have
$\beta^{\textup{r}} = \mathbb{E}(y_{ij}(1) - y_{ij}(0))$
and
\begin{equation*}
\left( V_{\textsc{i}}^{\textup{r}} \right)^{-1/2} M^{1/2}(\hat{\beta}_{\textsc{i}} - \beta^{\textup{r}}) 
\overset{\textup{d}}{\to} \mathcal{N} \left( 0, 1  \right)
\text{ and }
M \hat{V}_{\textsc{lz,i}} =
V_{\textsc{i}}^{\textup{r}} + o(1;\mathbb{P}_{(y,Z)}).
\end{equation*}

\end{theorem}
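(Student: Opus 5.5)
The plan is to cast the individual-level OLS fit \eqref{eq:cluster_fit} as a cluster-level $Z$-estimator and invoke Theorem \ref{thm:ME_random_cluster}. The steps are: identify the estimand, compute the sandwich pieces at that estimand, and then handle the one new ingredient, namely that the effective sample size is the number of clusters $M$ rather than $N$.

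\textbf{Estimating equations and estimand.} Take $x_{ij}=(1,Z_{ij})^{\mathrm{T}}$ and $\psi(w_{ij};b)=x_{ij}(y_{ij}-x_{ij}^{\mathrm{T}}b)$ with $b=(\alpha,\beta)$, so that $\hat\beta_{\textsc{i}}$ is the second coordinate of the solution of \eqref{eq:ME_cluster}. Since $Z_{ij}=Z_i$ is independent of the potential outcomes by Assumption \ref{asu:cluster}(c), the population equations $\mathbb{E}[\sum_{ij}\psi(w_{ij};b)]=0$ reduce to two conditions:
\[
e\sum_{ij}\{\mathbb{E} y_{ij}(1)-\alpha-\beta\}=0
\quad\text{and}\quad
(1-e)\sum_{ij}\{\mathbb{E} y_{ij}(0)-\alpha\}=0 .
\]
Identical marginals (Assumption \ref{asu:cluster}(a)) then give $\alpha^{\textup{r}}=\mathbb{E}y_{ij}(0)$ and $\beta^{\textup{r}}=\mathbb{E}(y_{ij}(1)-y_{ij}(0))$.

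\textbf{Sandwich pieces.} Normalize by $M$ using weights $\tilde\omega_i=n_iM/N$, i.e. rescale $\psi$ by $M/N$. Then
\[
\Gamma^{\textup{r}}=-\frac{1}{M}\sum_i \tilde\omega_i\begin{pmatrix} 1& e\\ e& e\end{pmatrix},
\]
which is $-\begin{pmatrix}1&e\\e&e\end{pmatrix}$. At the estimand the residual is $\varepsilon_{ij}=Z_i\varepsilon_{ij}(1)+(1-Z_i)\varepsilon_{ij}(0)$, so the cluster score is
\[
\sum_j\psi_{ij}=\bigl(Z_i\varepsilon_{i\cdot,\textsc{i}}(1)+(1-Z_i)\varepsilon_{i\cdot,\textsc{i}}(0)\bigr)\,(1,Z_i)^{\mathrm{T}} .
\]
Its second moment, using $Z_i\indep$ outcomes and $\mathbb{E}\varepsilon_{i\cdot,\textsc{i}}(z)=0$, is
\[
e\,\V(\varepsilon_{i\cdot,\textsc{i}}(1))\begin{pmatrix}1&1\\1&1\end{pmatrix}+(1-e)\,\V(\varepsilon_{i\cdot,\textsc{i}}(0))\begin{pmatrix}1&0\\0&0\end{pmatrix}.
\]
Inverting $\Gamma^{\textup{r}}$ gives second-row contrast $(-1/(1-e),\,1/(e(1-e)))$, and the $(2,2)$ entry of the sandwich is $\V(\varepsilon_{i\cdot}(1))/e+\V(\varepsilon_{i\cdot}(0))/(1-e)$. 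Averaging over $i$ yields $V_{\textsc{i}}^{\textup{r}}$. Because the cluster score has mean zero, no bias term of the $B$ type appears, which is the mechanism behind consistency.

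\textbf{Main obstacle: asymptotics with heterogeneous cluster sizes.} Clusters are independent but not identically distributed, because $n_i$ varies. So I cannot quote Theorem \ref{thm:ME_random_cluster} verbatim. Instead I would redo its two ingredients directly, which is easy here because the estimator has closed form. The estimator is the difference of weighted means
\[
\hat\beta_{\textsc{i}}=\frac{\sum_i Z_i\sum_j y_{ij}}{\sum_i Z_i n_i}-\frac{\sum_i (1-Z_i)\sum_j y_{ij}}{\sum_i (1-Z_i) n_i}.
\]
\emph{Denominators.} $\sum_i Z_i\tilde\omega_i/M\to e$ in probability, by Chebyshev, since its variance is at most $M^{-2}\sum_i\tilde\omega_i^2\le\Omega M\cdot M^{-1}\to0$.
\emph{Numerators.} Linearizing, $\sqrt M(\hat\beta_{\textsc{i}}-\beta^{\textup{r}})=M^{-1/2}\sum_i\xi_i+o_{\mathbb{P}}(1)$, with independent mean-zero summands
\[
\xi_i=\frac{Z_i\varepsilon_{i\cdot,\textsc{i}}(1)}{e}-\frac{(1-Z_i)\varepsilon_{i\cdot,\textsc{i}}(0)}{1-e}.
\]
\emph{CLT.} I would apply the Lyapunov CLT with fourth moments. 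By Minkowski, $\mathbb{E}|\varepsilon_{i\cdot,\textsc{i}}|^4\le(\tilde\omega_i)^4 C$ using Assumption \ref{asu:cluster}(e), so the Lyapunov ratio is
\[
\frac{\sum_i\tilde\omega_i^4}{M^2}\le (M\Omega)^2\,\frac{\sum_i\tilde\omega_i^2}{M^2}\le (M\Omega)^3 .
\]
This is small exactly when $\Omega=o(M^{-2/3})$, i.e. Assumption \ref{asu:cluster}(g). This step is also where I would need $V_{\textsc{i}}^{\textup{r}}$ bounded away from zero, which I would add as an implicit nondegeneracy condition.

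\textbf{Variance estimator.} For $M\hat V_{\textsc{lz,i}}$, I replace $\hat\beta$ by $\beta^{\textup{r}}$ in the residuals. The error is $O_{\mathbb{P}}(M^{-1/2})$ times cluster sums bounded by $\tilde\omega_i$, and it is negligible after averaging by the same weight bounds. The remaining term $M^{-1}\sum_i(\sum_j\psi_{ij})^{\otimes2}$ is then a mean of independent terms. Its variance is bounded by $M^{-2}\sum_i\tilde\omega_i^4\cdot C\to0$, again using $\Omega=o(M^{-2/3})$, so a weak law via Chebyshev gives convergence to the middle matrix above. Combining this with the consistent bread yields the result.
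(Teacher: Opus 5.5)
Your proposal is correct. The estimand, the bread, the second moment of the cluster score and the $(2,2)$ contrast are exactly what the paper computes. The paper's proof invokes Theorem~\ref{thm:ME_random_cluster} with $\psi(w_{ij};a,b)=(1,Z_{ij})^{\mathrm T}(y_{ij}-a-bZ_{ij})$ and evaluates the sandwich as in the proof of Theorem~\ref{thm:cluster_add_r}. There $d_{22}=d_{12}$, and the $(2,2)$ entry is $(M/N)^2(d_{22}-2ed_{12}+e^2d_{11})/\{e^2(1-e)^2\}$, which matches your contrast calculation.

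You depart from the paper in the asymptotic step. The paper gets normality, and consistency of $M\hat V_{\textsc{lz,i}}$, from Theorem~\ref{thm:ME_random_cluster}, which it in turn reduces to the i.i.d.\ Theorem~\ref{thm:ME_random} applied to cluster scores. You note, correctly, that with fixed unequal $n_i$ those scores are independent but not identically distributed. You therefore argue directly from the closed form of $\hat\beta_{\textsc{i}}$:
\begin{itemize}
\item Chebyshev for $M^{-1}\sum_i Z_i\tilde\omega_i$;
\item a Lyapunov CLT with fourth moments for $M^{-1/2}\sum_i\xi_i$;
\item a Chebyshev weak law for the meat.
\end{itemize}
This is essentially the Su--Ding machinery the paper itself uses only for the consistency part of Theorem~\ref{thm:cluster_adj_r} (Lemmas~\ref{lemma:aij} and \ref{lemma:cluster_random}), specialized to no covariates.

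Your route is self-contained and shows exactly where Assumption~\ref{asu:cluster}(g) is needed. Its costs are an explicit nondegeneracy condition on $V^{\mathrm r}_{\textsc{i}}$, which the paper also needs implicitly for its normalization, and reliance on the linear closed form, so it does not transfer verbatim to nonlinear estimating equations. The paper's citation is shorter and generic, but it leaves the non-identically-distributed nature of the cluster scores implicit.

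\textbf{One arithmetic slip.} Since $\sum_i\tilde\omega_i=M$ and $\tilde\omega_i\le M\Omega$, you have $\sum_i\tilde\omega_i^2\le M^2\Omega$. Hence $M^{-2}\sum_i\tilde\omega_i^4\le(M\Omega)^2\,\Omega=M^2\Omega^3$, not $(M\Omega)^3$; the latter is always at least $1$ and so never vanishes. The condition $M^2\Omega^3\to0$ is precisely $\Omega=o(M^{-2/3})$, as you intended.

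\textbf{A point to tighten.} The quantity $M^{-1}\sum_i\tilde\omega_i^2$ can grow like $M\Omega$, so the meat need not be bounded. If you track that factor, the residual-replacement and bread-error remainders are both $O_{\mathbb P}(M\Omega^{3/2})=o_{\mathbb P}(1)$, and your conclusion stands.
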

The coefficient of $Z_i$ from individual-level OLS fit in \eqref{eq:cluster_fit} identifies the ATE.
Theorem \ref{thm:cluster_random} provides a random-design counterpart to
existing design-based results for cluster-randomized experiments. The
identification of $\beta^{\textup{r}}$ follows from cluster-level random assignment,
while its asymptotic normality and the consistency of the LZ
variance estimator follow from clustered-regression asymptotics; see
\citet{LiangZeger1986} and \citet{HansenLee2019a}. 
Related design-based results for individual-level regressions in cluster-randomized experiments appear in \cite{Schochet2013} and \cite{SuDing2021}.

\subsubsection{With additive covariates}
We consider the OLS fit with additive regressors:
\begin{equation}
\text{lm}(Y_{ij} \sim 1 + Z_{ij} + {x}_{ij}). 
\label{eq:cluster_add}
\end{equation}
Let $\hat{\beta}_{\textsc{f}}$ denote the coefficient of $Z_{ij}$ from the above OLS fit
and $\hat{\varepsilon}_{ij,\textsc{f}}$ denote the residual from the same OLS fit.
Define $X_{i,\textsc{f}}$ as an $n_{i} \times(2+ p_{x})$ matrix with row $j$ equaling $(1, Z_{i j}, {x}_{i j}^{\mathrm{T}}), j=1, \ldots, n_{i}$. Stack $X_{i,\textsc{f}}$ together to obtain an $n \times(2+p_{x})$ matrix $X_{\textsc{f}}$. Define $\hat{\varepsilon}_{ij,\textsc{f}}$ as the residual from the OLS fit in \eqref{eq:cluster_add}. 
Define an $n_{i} \times n_{i}$ matrix $\hat{U}_{i,\textsc{f}}=(\hat{\varepsilon}_{ij,\textsc{f}} \hat{\varepsilon}_{ik,\textsc{f}})_{1 \leq j, k \leq n_{i}}$. 
The cluster-robust variance estimator of $\hat{\beta}_{\textsc{f}}$ equals
\begin{equation}
\hat{V}_{\textsc{lz,f}} 
= \frac{1}{M} \left[\left( \frac{1}{M} X_{\textsc{f}}^{\mathrm{T}} X_{\textsc{f}}\right)^{-1}\left( \frac{1}{M} \sum_{i=1}^{M} X_{i,\textsc{f}}^{\mathrm{T}} \hat{U}_{i,\textsc{f}} X_{i,\textsc{f}}\right)\left( \frac{1}{M} X_{\textsc{f}}^{\mathrm{T}} X_{\textsc{f}}\right)^{-1}\right]_{(2,2)}.
\label{eq:V_LZ_F}
\end{equation}

\paragraph*{Random Design}
Define ${\varepsilon}^{\textup{r}}_{i\cdot,\textsc{f}}(z) = \sum_{j=1}^{n_i} (\varepsilon^{\textup{r}}_{i j}(z) -\dot{x}_{ij}^{\mathrm{T}} \gamma^{\textup{r}}_{\textsc{f}}) M/N $ for $z\in\{0,1\}$ with $\gamma^{\textup{r}}_{\textsc{f}}$ being the probability limit of the coefficient of ${x}_{ij}$ from the OLS fit in \eqref{eq:cluster_add}. 
By the theory in Section \ref{sec:Z_random_cluster}, we derive the asymptotic variance of $\hat{\beta}_{\textsc{f}}$ under random design as 
\begin{equation*}
V_{\textsc{f}}^{\textup{r}}
= \frac{1}{M} \sum_{i=1}^M
\left( \frac{\V( {\varepsilon}^{\textup{r}}_{i\cdot,\textsc{f}}(1))}{e} + \frac{\V( {\varepsilon}^{\textup{r}}_{i\cdot,\textsc{f}}(0))}{1-e} \right).
\end{equation*}
\begin{theorem} \label{thm:cluster_add_r}
Under random design and Assumption \ref{asu:cluster} with \(\mathbb{E}_\circ = \mathbb{E}_{(y,Z,x)}\) and \(\mathbb{P}_\circ = \mathbb{P}_{(y,Z,x)}\), we have
$\beta^{\textup{r}} = \mathbb{E}(y_{ij}(1) - y_{ij}(0))$
and
\begin{equation*}
\left( V_{{\textsc{f}}}^{\textup{r}} \right)^{-1/2} M^{1/2}(\hat{\beta}_{\textsc{f}} - \beta^{\textup{r}})
\overset{\textup{d}}{\to} \mathcal{N} \left( 0, 1 \right)
\text{ and }
M \hat{V}_{\textsc{lz,f}} 
= V_{{\textsc{f}}}^{\textup{r}} 
+ o(1;\mathbb{P}_{(y,Z,x)}),
\end{equation*}
\end{theorem}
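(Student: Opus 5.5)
The plan is to treat the regression \eqref{eq:cluster_add} as a clustered $Z$-estimator and apply Theorem \ref{thm:ME_random_cluster}. The estimating function is $\psi(w_{ij};b)=x_{ij,\textsc{f}}(y_{ij}-x_{ij,\textsc{f}}^{\mathrm{T}}b)$ with $x_{ij,\textsc{f}}=(1,Z_{ij},x_{ij}^{\mathrm{T}})^{\mathrm{T}}$, and the coefficient of interest is $\hat\beta_{\textsc{f}}$. The work then splits into three steps: identify the population solution, compute the $(2,2)$ entry of the sandwich, and check the regularity conditions.

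\textbf{Step 1: identify the population projection.} Under Assumption \ref{asu:cluster}(a)--(d), write $\mu_z=\mathbb{E}(y_{ij}(z))$ and $\mu_x=\mathbb{E}(x_{ij})$. Because $Z_{ij}=Z_i$ is independent of $(y_{ij}(0),y_{ij}(1),x_{ij})$, I will verify directly that the candidate
\[
(\alpha,\beta,\gamma)=\bigl(\mu_0-\mu_x^{\mathrm{T}}\gamma^{\textup{r}}_{\textsc{f}},\ \mu_1-\mu_0,\ \gamma^{\textup{r}}_{\textsc{f}}\bigr)
\]
solves $\mathbb{E}[\sum_{ij}\psi]=0$, where $\gamma^{\textup{r}}_{\textsc{f}}=\V(x_{ij})^{-1}\{e\,\mathrm{Cov}(x_{ij},y_{ij}(1))+(1-e)\,\mathrm{Cov}(x_{ij},y_{ij}(0))\}$.

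The moment condition for the intercept holds by construction of $\alpha$. The condition for $Z_{ij}$ reads $e\,\mathbb{E}[y_{ij}(1)-\alpha-\beta-x_{ij}^{\mathrm{T}}\gamma]=0$, using $Z_{ij}\indep x_{ij}$; it is satisfied because $\alpha+\beta+\mu_x^{\mathrm{T}}\gamma=\mu_1$. The condition for $x_{ij}$ reduces to the normal equation defining $\gamma^{\textup{r}}_{\textsc{f}}$. Uniqueness follows because the Gram matrix $\mathbb{E}(x_{ij,\textsc{f}}x_{ij,\textsc{f}}^{\mathrm{T}})$ is positive definite, since $Z$ is independent of $x$ and $e\in(0,1)$. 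This gives $\beta^{\textup{r}}=\mathbb{E}(y_{ij}(1)-y_{ij}(0))$.

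The projection residual is then $\varepsilon^{\textup{r}}_{ij}=Z_{ij}\{\varepsilon_{ij}(1)-\dot x_{ij}^{\mathrm{T}}\gamma^{\textup{r}}_{\textsc{f}}\}+(1-Z_{ij})\{\varepsilon_{ij}(0)-\dot x_{ij}^{\mathrm{T}}\gamma^{\textup{r}}_{\textsc{f}}\}$.

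\textbf{Step 2: compute the $(2,2)$ entry of $V^{\textup{r}}$.} Recentre the regressors as $(1,Z_{ij}-e,\dot x_{ij})$; this is an invertible linear reparametrization that leaves the $Z$ coefficient unchanged. Independence of $Z$ and $x$ makes the limiting Gram matrix block diagonal, so the $Z$-row of $(\Gamma^{\textup{r}})^{-1}$ is $\{e(1-e)\}^{-1}$ times the $Z$-component of the estimating equation. Thus $\hat\beta_{\textsc{f}}-\beta^{\textup{r}}$ is linearized by
\[
\frac{1}{e(1-e)}\,\frac{1}{N}\sum_{ij}(Z_{ij}-e)\,\varepsilon^{\textup{r}}_{ij}.
\]
Grouping by cluster and multiplying by $M/N$, each cluster contributes $(Z_i-e)\{Z_i\varepsilon^{\textup{r}}_{i\cdot,\textsc{f}}(1)+(1-Z_i)\varepsilon^{\textup{r}}_{i\cdot,\textsc{f}}(0)\}$. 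Its second moment equals $e(1-e)^2\,\mathbb{E}\varepsilon_{i\cdot}(1)^2+(1-e)e^2\,\mathbb{E}\varepsilon_{i\cdot}(0)^2$, and these cluster-level residuals have mean zero. Dividing by $e^2(1-e)^2$ and averaging over clusters gives exactly $V^{\textup{r}}_{\textsc{f}}$.

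\textbf{Step 3: asymptotic normality and consistency of $\hat V_{\textsc{lz,f}}$.} Both claims follow from Theorem \ref{thm:ME_random_cluster}, since $\hat V_{\textsc{lz,f}}$ is the $(2,2)$ entry of \eqref{eq:V_LZ_mestimation} for this $\psi$.

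The main obstacle is that the clusters are not identically distributed, because the $n_i$ are heterogeneous. The law of large numbers and the central limit theorem must therefore be justified for independent, non-identical cluster sums. I would handle this with the following estimates.

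\begin{enumerate}[(i)]
\item Apply Lyapunov's condition to the normalized cluster terms $\tilde\omega_i(\cdot)$. Minkowski's inequality and Assumption \ref{asu:cluster}(e)--(f) bound the fourth moment of each cluster sum by $C n_i^4$ times a constant. The resulting Lyapunov ratio is of order $M\Omega^2$ relative to the order-one variance, which vanishes under (g).
\item Control the meat matrix with a weak law of large numbers. Its variance is bounded by $\sum_i \omega_i^4 M^4/M^2\le M^2\Omega^3\to0$, which is exactly where $\Omega=o(M^{-2/3})$ is used.
\item Handle the replacement of $\varepsilon^{\textup{r}}_{ij}$ by $\hat\varepsilon_{ij,\textsc{f}}$ by expanding $\hat\varepsilon_{ij,\textsc{f}}=\varepsilon^{\textup{r}}_{ij}-x_{ij,\textsc{f}}^{\mathrm{T}}(\hat b-b^{\textup{r}})$. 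The cross and quadratic terms are bounded via Cauchy--Schwarz by the same weighted moment sums times $\|\hat b-b^{\textup{r}}\|=O_P(M^{-1/2})$.
\item Finally, require that $V^{\textup{r}}_{\textsc{f}}$ be bounded away from zero, an implicit nondegeneracy condition I would state explicitly.
\end{enumerate}
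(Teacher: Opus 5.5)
Your proposal follows the paper's proof. You cast \eqref{eq:cluster_add} as a clustered $Z$-estimator, identify $\beta^{\textup{r}}$ as the ATE from $Z_i \indep (y_{ij}(0),y_{ij}(1),x_{ij})$, and read $V^{\textup{r}}_{\textsc{f}}$ and the consistency of $\hat V_{\textsc{lz,f}}$ off the $(2,2)$ entry of the sandwich in Theorem \ref{thm:ME_random_cluster}; recentring to $(1,Z_{ij}-e,\dot x_{ij})$ is just a tidier version of the paper's explicit inversion of $A^{\textup{r}}_{\textsc{f}}$.

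Your regularity checks for unequal cluster sizes go beyond the paper, which simply invokes Theorem \ref{thm:ME_random_cluster}, but two claims there need repair:
\begin{enumerate}[(a)]
\item Fourth moments of $\psi=x_{ij,\textsc{f}}\varepsilon^{\textup{r}}_{ij}$ are not implied by Assumption \ref{asu:cluster}(e)--(f). Restrict the CLT and the law of large numbers to the $Z$-component, as in the proof of Theorem \ref{thm:cluster_adj_r}.
\item $\hat b-b^{\textup{r}}$ is in general only $O_{\mathbb{P}}(\Omega^{1/2})$, as in Lemma \ref{lemma:QIz}, not $O_{\mathbb{P}}(M^{-1/2})$. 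This still suffices in your step (iii), because the cross term becomes $O_{\mathbb{P}}(M\Omega^{3/2})=o_{\mathbb{P}}(1)$ under (g).
\end{enumerate}
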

The coefficient of $Z_{ij}$ from individual-level OLS fit in \eqref{eq:cluster_add} identifies the ATE.
Under random design, Theorem \ref{thm:cluster_add_r} further establishes the asymptotic normality of $\hat\beta_{\textsc{f}}$ and the consistency of $\hat{V}_{\textsc{lz,f}}$ for $V_{\textsc{f}}^{\textup{r}}$. 
As in Theorem \ref{thm:cluster_random}, these conclusions follow from the general clustered-sample asymptotic theory of \cite{LiangZeger1986} and \citet{HansenLee2019a}.

\paragraph*{Mixed Design}
Stack the covariates in cluster $i$ to obtain $x_i = (x_{ij}: j = 1, \ldots, n_i)$. 
Define ${\varepsilon}^{\textup{m}}_{i\cdot,\textsc{f}}(z) = \sum_{j=1}^{n_i} (\varepsilon^{\textup{m}}_{i j}(z) - \ddot{x}_{ij}^{\mathrm{T}} \gamma^{\textup{m}}_{\textsc{f}})  M/N$ for $z\in\{0,1\}$ with $\gamma^{\textup{m}}_{\textsc{f}}$ being the probability limit of the coefficient of $\ddot{x}_{ij}$ from the OLS fit in \eqref{eq:cluster_add}. 
By the theory in Section 
\ref{sec:Z_mixed_cluster}, we derive the asymptotic variance of $\hat{\beta}_{\textsc{f}}$ under mixed design as 
\begin{align*}
V^{\textup{m}}_{\textsc{f}}
= \frac{1}{M} \sum_{i=1}^M \left( 
\frac{ \mathbb{E}\left( {\varepsilon}^{\textup{m}}_{i\cdot,\textsc{f}}(1)^2 \mid x_i \right)}{e}
+ \frac{\mathbb{E}\left( {\varepsilon}^{\textup{m}}_{i\cdot,\textsc{f}}(0)^2 \mid x_i \right)}{1-e}  - \mathbb{E}\left( {\varepsilon}^{\textup{m}}_{i\cdot,\textsc{f}}(1) - {\varepsilon}^{\textup{m}}_{i\cdot,\textsc{f}}(0) \mid x_i \right)^2 \right) 
\end{align*}
and the asymptotic bias of $\hat{V}_{\textsc{lz,f}} $ for $V^{\textup{m}}_{\textsc{f}}$ as
\begin{equation}
B_{{\textsc{f}}}^{\textup{m}}
= \frac{1}{M} \sum_{i=1}^M \mathbb{E}
\left(
{\varepsilon}^{\textup{m}}_{i\cdot,\textsc{f}}(1) - {\varepsilon}^{\textup{m}}_{i\cdot,\textsc{f}}(0) \mid x_i \right)^2.\label{eq:bias_cluster_F_m}
\end{equation}
\begin{theorem} \label{thm:cluster_add_m}
Under mixed design and Assumption \ref{asu:cluster} with \(\mathbb{E}_\circ = \mathbb{E}_{(y,Z)|x}\) and \(\mathbb{P}_\circ = \mathbb{P}_{(y,Z)|x}\), we have 
$\beta^{\textup{m}} = N^{-1}\sum_{ij}\mathbb{E}\left(y_{ij}(1) - y_{ij}(0) \mid x_{ij} \right)$
and
\begin{equation*}
\left( V_{{\textsc{f}}}^{\textup{m}} \right)^{-1/2} M^{1/2}(\hat{\beta}_{{\textsc{f}}} - \beta^{\textup{m}}) \mid X
\overset{\textup{d}}{\to} \mathcal{N} \left( 0, 1 \right)
\text{ and }
M \hat{V}_{\textsc{lz,f}} 
= V_{{\textsc{f}}}^{\textup{m}} + B_{{\textsc{f}}}^{\textup{m}} + o(1;\mathbb{P}_{(y,Z)|x}).
\end{equation*}
\end{theorem}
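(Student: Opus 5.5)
We prove Theorem \ref{thm:cluster_add_m} by specializing Theorem \ref{thm:ME_mixed_cluster} to the OLS estimating function. The regressors are $x_{ij,\textsc{f}} = (1, Z_{ij}, x_{ij}^{\mathrm{T}})^{\mathrm{T}}$ and $\psi(y_{ij}, x_{ij,\textsc{f}}; b) = x_{ij,\textsc{f}}(y_{ij} - x_{ij,\textsc{f}}^{\mathrm{T}} b)$. The randomized part is $X_1 = (Z_i)_{i}$; the conditioned part is $X_2 = X$, the stacked covariates. The regularity conditions in Assumption \ref{asu:ME} follow from Assumption \ref{asu:cluster}(e)--(f): the fourth moments give the $2+\delta$ moment bound on $\psi$, which is quadratic in the data, and $\nabla_b\psi = -x_{ij,\textsc{f}}x_{ij,\textsc{f}}^{\mathrm{T}}$ has bounded first moment. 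Nonsingularity of $\Gamma^{\textup{m}}$ requires $e\in(0,1)$ and a nondegenerate empirical covariance of the $x_{ij}$.

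Assumption \ref{asu:cluster_ME} is not literally satisfied, because cluster sizes vary and the conditional design is not identically distributed across clusters. We therefore replace the i.i.d.\ CLT in that theorem's proof by a Lyapunov CLT over independent clusters. Condition (g), $\Omega = o(M^{-2/3})$, supplies the Lyapunov ratio: the third-moment sum over clusters is bounded by $\sum_i \tilde\omega_i^3/M^{3/2} \le M\Omega \cdot M^{2}\Omega^2 \cdot M^{-3/2}\to 0$, up to moment constants. The same condition gives the weak laws of large numbers used for the Hessian and the meat matrix, following the argument of \citet{SuDing2021}.

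Next we identify the estimand. Conditional on $X$, $Z_i$ is independent of the potential outcomes and has mean $e$. The population normal equations conditional on $X$ then decouple. The intercept and $Z$ equations give $\alpha + \bar x^{\mathrm{T}}\gamma = N^{-1}\sum_{ij}\mathbb{E}(y_{ij}(0)\mid x_i)$ and $\alpha+\beta+\bar x^{\mathrm{T}}\gamma = N^{-1}\sum_{ij}\mathbb{E}(y_{ij}(1)\mid x_i)$. Hence $\beta^{\textup{m}} = N^{-1}\sum_{ij}\mathbb{E}(y_{ij}(1)-y_{ij}(0)\mid x_{ij})$. Here we use that the conditional mean given the cluster's covariates reduces to one given $x_{ij}$; if this is problematic, we keep $x_i$, which matches the paper's definition. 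The $x$ equations determine $\gamma^{\textup{m}}_{\textsc{f}}$ as the coefficient on centered covariates of the pooled $e$-weighted conditional mean.

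We then compute the $(2,2)$ entries of $V^{\textup{m}}$ and $B^{\textup{m}}$ using a Frisch--Waugh reduction. Conditional on $X$, the residualized treatment regressor is $Z_{ij}-e$, asymptotically, since $Z$ is independent of $X$ and has mean $e$. The $(2,2)$ influence contribution of cluster $i$ is therefore proportional to $(Z_i - e)\sum_j \varepsilon^{\textup{m}}_{ij,\textsc{f}}/(e(1-e))$ scaled by $M/N$, where $\varepsilon^{\textup{m}}_{ij,\textsc{f}} = Z_i\varepsilon_{ij}(1)+(1-Z_i)\varepsilon_{ij}(0)$ after the adjustment by $\ddot x_{ij}^{\mathrm{T}}\gamma^{\textup{m}}_{\textsc{f}}$. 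This equals $\varepsilon^{\textup{m}}_{i\cdot,\textsc{f}}(1)/e$ on treated clusters and $-\varepsilon^{\textup{m}}_{i\cdot,\textsc{f}}(0)/(1-e)$ on controls.

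Taking the conditional second moment over $Z_i$ and the outcomes given $x_i$ gives $\mathbb{E}(\varepsilon_{i\cdot}(1)^2\mid x_i)/e + \mathbb{E}(\varepsilon_{i\cdot}(0)^2\mid x_i)/(1-e)$. Its squared conditional mean is $\mathbb{E}(\varepsilon_{i\cdot}(1)-\varepsilon_{i\cdot}(0)\mid x_i)^2$. Subtracting the latter yields the conditional variance $V^{\textup{m}}_{\textsc{f}}$, and the squared mean itself is the bias $B^{\textup{m}}_{\textsc{f}}$ of Theorem \ref{thm:ME_mixed_cluster}.

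\emph{Main obstacle.} The hardest step is the exact bookkeeping of the $(2,2)$ entry in the presence of covariates. One must show that the off-diagonal blocks involving $x$ vanish, or contribute only $o(1)$, in the sandwich. Population orthogonality of $Z-e$ with $(1,\ddot x)$ conditional on $X$ gives this, but the LZ meat uses the estimated $\hat\beta$ with sample-centered $\bar x$. I would handle this by rewriting $\hat V_{\textsc{lz,f}}$ via the Frisch--Waugh residual $\check Z_{ij}$, showing $\check Z_{ij} = Z_{ij}-\hat e + O_P(M^{-1/2})$. I would then bound the resulting remainder terms in the meat using the $\Omega$ condition.
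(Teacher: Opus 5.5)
Your core computation is the paper's own. The paper also specializes Theorem~\ref{thm:ME_mixed_cluster} to the OLS moments and notes that $A^{\textup{m}}_{\textsc{f}}$ is exactly block diagonal, since $\sum_{ij}\ddot x_{ij}=0$ and $\mathbb{E}(Z_i\mid X)=e$. The second row of its inverse is therefore $\frac{M}{N}\{e(1-e)\}^{-1}(-e,1,0)$. It then reads off $V^{\textup{m}}_{\textsc{f}}$ and $B^{\textup{m}}_{\textsc{f}}$ from the conditional second moment and the squared conditional mean of $\frac{M}{N}(Z_i-e)\sum_j\varepsilon^{\textup{m}}_{ij,\textsc{f}}/\{e(1-e)\}$; these are its $d_{11},d_{12},d_{22}$ and $h_{11},h_{12},h_{22}$. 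That part of your proposal is correct, and so is your reading of the estimand with $\mathbb{E}(\cdot\mid x_i)$. You are also right that unequal $n_i$ breaks the i.i.d.\ reduction behind Theorem~\ref{thm:ME_mixed_cluster}, which the paper's proof simply invokes.

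The two rate estimates you propose for that patch are wrong, and they are exactly where condition (g) enters.

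First, the Lyapunov bound. The inequality $\sum_i\tilde\omega_i^3\le M^3\Omega^3$ is false: by the power-mean inequality $\sum_i\tilde\omega_i^3\ge M$, while $M^3\Omega^3=o(M)$ under (g). The correct bound is $\sum_i\tilde\omega_i^3\le(\max_i\tilde\omega_i)^2\sum_i\tilde\omega_i=M^3\Omega^2$. This gives only the third-moment bound $M^{3/2}\Omega^2$, which does not vanish for, e.g., $\Omega=M^{-0.7}=o(M^{-2/3})$. Use fourth moments instead: $M^{-2}\sum_i\tilde\omega_i^4\le M^{-2}(M\Omega)^3M=M^2\Omega^3=o(1)$, the same quantity that drives the variance bounds in Lemma~\ref{lemma:cluster_random}.

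Relatedly, fourth moments of the data do not give $2+\delta$ moments of $\psi$, whose $x$-block is quadratic. They do give fourth moments of the $(1,Z_{ij})$ components, which are linear in the data. Only those need a CLT: the $x$-block enters $\hat\beta_{\textsc{f}}$ through the $O_{\mathbb{P}}(\Omega^{1/2})$ off-diagonal block of the sample Jacobian, contributing $O_{\mathbb{P}}(\Omega^{1/2})\cdot O_{\mathbb{P}}((M\Omega)^{1/2})=o_{\mathbb{P}}(1)$.

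Second, the Frisch--Waugh rate. The claim $\check Z_{ij}=Z_{ij}-\hat e+O_{\mathbb{P}}(M^{-1/2})$ is false for unequal cluster sizes. Exactly, $\check Z_{ij}=Z_{ij}-\hat e-\ddot x_{ij}^{\mathrm{T}}\hat b$, with $\hat b$ driven by $N^{-1}\sum_i Z_i\sum_j\ddot x_{ij}$. Its conditional variance, $e(1-e)N^{-2}\sum_i(\sum_j\ddot x_{ij})^2$, can be of order $\Omega\ge M^{-1}$. Likewise $\hat e-e=O_{\mathbb{P}}(\Omega^{1/2})$ (Lemma~\ref{lemma:aij}). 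With the correct rate, the meat remainders are $O_{\mathbb{P}}(\Omega^{1/2})\cdot O_{\mathbb{P}}(M\Omega)=O_{\mathbb{P}}(M\Omega^{3/2})$. These are $o_{\mathbb{P}}(1)$ precisely because $\Omega=o(M^{-2/3})$, whereas your stated rate would hide why (g) is needed.
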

The coefficient of $Z_{ij}$ from individual-level OLS fit in \eqref{eq:cluster_add} identifies the ATE.
In parallel to Theorem \ref{thm:PRA_m},
Theorem \ref{thm:cluster_add_m} indicates that under mixed design, $\hat{V}_{\textsc{lz,f}}$ in \eqref{eq:V_LZ_F} is conservative with asymptotic bias $B^{\textup{m}}_{\textsc{f}}$ in \eqref{eq:bias_cluster_F_m}.

\subsubsection{With fully-interacted covariates}
Define $\bar{x} = N^{-1} \sum_{ij} x_{ij}$ and $\ddot{x}_{ij} = x_{ij} - \bar{x}$. Consider the OLS fit with fully-interacted covariates:
\begin{equation}
\text{lm}(Y_{ij} \sim 1 + Z_{ij} + \ddot{x}_{ij} + Z_{ij} \ddot{x}_{ij}). 
\label{eq:cluster_adj}
\end{equation}
Let $\hat{\beta}_{\textsc{l}}$ denote the coefficient of $Z_{ij}$ from the above OLS fit
and $\hat{\varepsilon}_{ij,\textsc{l}}$ denote the residual from the same fit.
Define $X_{i,\textsc{l}}$ as an $n_{i} \times (2+2 p_{x})$ matrix with row $j$ equaling $(1, Z_{i j}, \ddot{x}_{i j}^{\mathrm{T}}, Z_{ij} \ddot{x}_{i j}^{\mathrm{T}}), j=1, \ldots, n_{i}$. Stack $X_{i,\textsc{l}}$ together to obtain an $n \times(2+2 p_{x})$ matrix $X_{\textsc{l}}$. 
Define $\hat{\varepsilon}_{ij,\textsc{l}}$ as the residual from the above OLS fit. Define an $n_{i} \times n_{i}$ matrix $\hat{U}_{i,\textsc{l}}=(\hat{\varepsilon}_{ij,\textsc{l}} \hat{\varepsilon}_{i k,\textsc{l}})_{1 \leq j, k \leq n_{i}}$.

\paragraph*{Random Design}
We apply the $Z$-estimation theory under random design in Section \ref{sec:Z_random_cluster} to obtain the asymptotic variance of the estimator $\hat{\beta}_{\textsc{l}}$. 
Define $\varepsilon_{ij}^{\textup{r}}(z) = y_{ij}(z) - \mathbb{E}(y_{ij}(z) )$
and let $\gamma^{\textup{r}}_z$ be the coefficient of $\ddot{x}_{ij}$ in the OLS fit of $\varepsilon^{\textup{r}}_{ij}(z)$ on $\ddot{x}_{ij}$ over $\{i: Z_i = z\}$.
Define ${\varepsilon}^{\textup{r}}_{i\cdot,\textsc{l}}(z) = \sum_{j=1}^{n_i} (\varepsilon^{\textup{r}}_{ij}(z) - \ddot{x}_{i j}^{\mathrm{T}} \gamma^{\textup{r}}_z )M/N$ and ${x}_{i\cdot} = \sum_{j=1}^{n_i} x_{i j}(z) M/N$. 
By the theory in Section \ref{sec:Z_random_cluster}, we derive the asymptotic variance of $\hat{\beta}_{\textsc{l}}$ under random design as 
\begin{equation}
V^{\textup{r}}_{\textsc{l}} 
= \frac{1}{M} \sum_{i=1}^M
\left( \frac{\V( {\varepsilon}^{\textup{r}}_{i\cdot,\textsc{l}}(1) )}{e} 
+ \frac{\V( {\varepsilon}^{\textup{r}}_{i\cdot,\textsc{l}}(0) )}{1-e}
+ (\gamma^{\textup{r}}_1 - \gamma^{\textup{r}}_0)^{\mathrm{T}}
\V\left( {x}_{i\cdot} \right)
(\gamma^{\textup{r}}_1 - \gamma^{\textup{r}}_0) \right),
\end{equation}
which is the the asymptotic variance of $\hat{\beta}_{\textsc{l}}$ under random design, as established in Theorem \ref{thm:cluster_adj_r}.
We consider the following variance estimator for $V^{\textup{r}}_{\textsc{l}} $: 
\begin{align}
\hat{V}_{\textsc{lz},\textsc{l},\text{adj}}
= \hat{V}_{\textsc{lz},\textsc{l}}
+ (\hat{\gamma}_1 - \hat{\gamma}_0)^{\mathrm{T}}
\frac{1}{M^2} \sum_{i=1}^M ({x}_{i\cdot}-\bar{x})({x}_{i\cdot}-\bar{x})^{\mathrm{T}} 
(\hat{\gamma}_1 - \hat{\gamma}_0),
\label{eq:Lin_correction_cluster}
\end{align}
where
\begin{equation}
\hat{V}_{\textsc{lz,l}} 
= \frac{1}{M} \left[\left( \frac{1}{M} X_{\textsc{l}}^{\mathrm{T}} X_{\textsc{l}}\right)^{-1}
\left( \frac{1}{M} \sum_{i=1}^{M} X_{i,\textsc{l}}^{\mathrm{T}} \hat{U}_{i,\textsc{l}} X_{i,\textsc{l}}\right)
\left( \frac{1}{M} X_{\textsc{l}}^{\mathrm{T}} X_{\textsc{l}}\right)^{-1}\right]_{(2,2)}
\label{eq:V_LZ_L}
\end{equation}
is the LZ variance estimator from regression in \eqref{eq:cluster_adj}.
Similar to Theorem~\ref{thm:FRA_random}, we require a correction term in the variance estimator to account for the additional uncertainty introduced by centering the covariates. This issue cannot be resolved by directly applying Theorem~\ref{thm:ME_random_cluster} to the OLS fit in \eqref{eq:cluster_adj}. Accordingly, we employ augmented estimating equations to derive the consistent variance estimator in \eqref{eq:Lin_correction_cluster}, in parallel with the development in Section \ref{sec:LinRCT}.

\begin{theorem} \label{thm:cluster_adj_r}
Under random design and Assumption \ref{asu:cluster} with \(\mathbb{E}_\circ = \mathbb{E}_{(y,Z,x)}\) and \(\mathbb{P}_\circ = \mathbb{P}_{(y,Z,x)}\), we have
$\beta^{\textup{r}} = \mathbb{E}(y_{ij}(1) - y_{ij}(0))$
and
\[
\left( V_{\textsc{l}}^{\textup{r}} \right)^{-1/2} M^{1 / 2}\left(\hat{\beta}_{\textsc{l}}-\beta^{\textup{r}} \right) 
\overset{\textup{d}}{\to} \mathcal{N}(0,1)
\text{ and }
M \hat{V}_{\textsc{lz},\textsc{l},\text{adj}}
= V^{\textup{r}}_{\textsc{l}} + o(1;\mathbb{P}_{(y,Z,x)}).
\]
Moreover,  
\[
M \hat{V}_{\textsc{lz,l}} 
= V_{\textsc{l}}^{\textup{r}}
- B_{\textsc{l}}^{\textup{r}}
+ o(1;\mathbb{P}_{(y,Z,x)})
\] 
with \begin{equation}
B_{\textsc{l}}^{\textup{r}} = 
(\gamma^{\textup{r}}_1 - \gamma^{\textup{r}}_0)^{\mathrm{T}}
\frac{1}{M} \sum_{i=1}^M \V\left( {x}_{i\cdot} \right)
(\gamma^{\textup{r}}_1 - \gamma^{\textup{r}}_0). 
\label{eq:bias_cluster_L_r}
\end{equation}
\end{theorem}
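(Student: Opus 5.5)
The plan is to cast $\hat{\beta}_{\textsc{l}}$ as a cluster-level $Z$-estimator with augmented estimating equations, the clustered analogue of \eqref{eq:augmented estimating equations}. Write $\theta=(\mu_1,\gamma_1,\mu_0,\gamma_0,\mu_x)$. For cluster $i$, the estimating function is $\sum_{j=1}^{n_i}\psi(w_{ij};\theta)$, where the first two blocks are $Z_{ij}\{y_{ij}-(x_{ij}-\mu_x)^{\mathrm T}\gamma_1-\mu_1\}(1,(x_{ij}-\mu_x)^{\mathrm T})^{\mathrm T}$ and the control analogue, and the last block is $x_{ij}-\mu_x$. The sample solution is $(\hat{Y}_{\textsc{l}}(1),\hat\gamma_1,\hat{Y}_{\textsc{l}}(0),\hat\gamma_0,\bar{x})$, and $\hat\beta_{\textsc{l}}=\hat\mu_1-\hat\mu_0$. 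We apply Theorem \ref{thm:ME_random_cluster}, after verifying Assumption \ref{asu:ME} from Assumption \ref{asu:cluster}: moments from (e)--(f), and nonsingularity from $e\in(0,1)$ and a positive definite $\V(x_{ij})$. This gives $\sqrt{M}(\hat\theta-\theta^{\textup r})$ asymptotically normal with sandwich variance $(\Gamma^{\textup r})^{-1}\Delta^{\textup r}(\Gamma^{\textup r})^{-\mathrm T}$. We then use the delta method with the contrast $c=(1,0,-1,0,0)$.

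\textbf{Identification.} By independence (c) and identical marginals (a), the population equations give $\mu_x=\mathbb{E}(x_{ij})$. The $Z$-block then reduces to the population OLS of $y_{ij}(1)$ on $(1,\dot x_{ij})$, so $\mu_1=\mathbb{E}\,y_{ij}(1)$ and $\gamma_1=\gamma_1^{\textup r}$. The same holds for the control block, hence $\beta^{\textup r}=\mathbb{E}(y_{ij}(1)-y_{ij}(0))$.

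\textbf{Asymptotic variance.} The Jacobian $\Gamma^{\textup r}$ is block-triangular. Differentiating the $\mu_z$-rows in $\mu_x$ gives $\pm\,\mathbb{E}[Z]\gamma_z^{\mathrm T}$ (respectively with $1-e$), and cross terms vanish because $\mathbb{E}(x_{ij}-\mu_x)=0$. Solving, the influence function of $\hat\mu_1-\hat\mu_0$ at the cluster level is
\[
\frac{M}{N}\sum_{j=1}^{n_i}\Big\{\frac{Z_i}{e}\varepsilon^{\textup r}_{ij,\textsc l}(1)-\frac{1-Z_i}{1-e}\varepsilon^{\textup r}_{ij,\textsc l}(0)+(\gamma_1^{\textup r}-\gamma_0^{\textup r})^{\mathrm T}\dot x_{ij}\Big\}.
\]
Its second moment splits into three pieces: the two residual terms; the covariate term $(\gamma_1-\gamma_0)^{\mathrm T}\V(x_{i\cdot})(\gamma_1-\gamma_0)$; and cross terms. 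The cross terms vanish because $\mathbb{E}[Z_i\mid\text{cluster}]=e$, which gives a cross term proportional to $\mathbb{E}[\sum_j\varepsilon_{ij}(1)\,x_{i\cdot}^{\mathrm T}]-\mathbb{E}[\sum_j\varepsilon_{ij}(0)\,x_{i\cdot}^{\mathrm T}]$. \emph{This step is the main obstacle.} In the i.i.d.\ case these vanish by the OLS normal equations, but with clustering the normal equations only give $\mathbb{E}\sum_j\varepsilon_{ij}(z)\dot x_{ij}=0$, not orthogonality to the cluster total $x_{i\cdot}$. I would try to show that the combination still cancels in the $\mu$ contrast after using the projection definition of $\gamma_z^{\textup r}$ pooled over clusters. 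If it does not, I would absorb the resulting cross term into $V^{\textup r}_{\textsc l}$, in which case the displayed formula holds only when it is zero. The unequal cluster sizes are handled through the weights $\tilde\omega_i$. Under $\Omega=o(M^{-2/3})$, a Lyapunov CLT for independent non-identical cluster sums applies, in the spirit of \citet{SuDing2021} and \citet{HansenLee2019a}, which justifies using $M^{-1}\sum_i$ rather than a single expectation.

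\textbf{Variance estimators.} The $(\mu,\gamma)$-blocks of the augmented LZ estimator coincide with the OLS LZ estimator \eqref{eq:V_LZ_L} of fit \eqref{eq:cluster_adj}, by the Frisch--Waugh argument with $\bar x$ plugged in. Consistency of the sandwich pieces, via a uniform LLN over independent clusters using (e)--(g), then gives $M\hat V_{\textsc{lz,l}}\to$ the residual part, i.e.\ $V^{\textup r}_{\textsc l}-B^{\textup r}_{\textsc l}$. The added term in \eqref{eq:Lin_correction_cluster} is $\hat\gamma_1-\hat\gamma_0$ (consistent) times the empirical cluster-total covariance $M^{-1}\sum_i(x_{i\cdot}-\bar x)^{\otimes2}$. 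By a cluster LLN this covariance is consistent for $M^{-1}\sum_i\V(x_{i\cdot})$ (reading $\bar x$ there as the mean of $x_{i\cdot}$, which equals $\bar x$ scaled appropriately). The added term therefore converges to $B^{\textup r}_{\textsc l}$, which yields consistency of $M\hat V_{\textsc{lz,l,adj}}$.
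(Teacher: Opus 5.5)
Your route is the paper's: stack the cluster-level moments for $(\mu_1,\gamma_1,\mu_0,\gamma_0,\mu_x)$, apply Theorem \ref{thm:ME_random_cluster}, use the block-triangular Jacobian, and take the contrast. The step you flagged cannot be closed, though, and you should not try to show that it cancels. It is exactly where the displayed $V^{\textup r}_{\textsc l}$ fails.

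Write $X_i=(M/N)\sum_{j}\dot x_{ij}$, $r_{ij}(z)=y_{ij}(z)-\mathbb{E}\,y_{ij}(z)-\dot x_{ij}^{\mathrm T}\gamma^{\textup r}_z$ and $R_i(z)=(M/N)\sum_j r_{ij}(z)$. Your influence function then gives the asymptotic variance
\[
V^{\textup r}_{\textsc l}+\frac{2}{M}\sum_{i=1}^{M}(\gamma^{\textup r}_1-\gamma^{\textup r}_0)^{\mathrm T}\,\mathbb{E}\bigl[X_i\{R_i(1)-R_i(0)\}\bigr].
\]
The unit-level normal equations kill only the $j=j'$ terms of $\mathbb{E}[X_iR_i(z)]$; the terms $\mathbb{E}[r_{ij}(z)\dot x_{ij'}]$ with $j\neq j'$ survive in general.

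\textbf{Counterexample.} Take $n_i=2$, $e=1/2$, $x_{ij}$ i.i.d.\ $\mathcal N(0,1)$, $y_{ij}(0)=0$ and $y_{ij}(1)=x_{ij}-x_{ij'}$ with $j'\neq j$. Every part of Assumption \ref{asu:cluster} holds. Then $\gamma^{\textup r}_1=1$, $\gamma^{\textup r}_0=0$, $R_i(1)=-X_i$, and the influence function is $X_i(1-2Z_i)$. The true asymptotic variance is therefore $1/2$. The displayed formula instead gives $V^{\textup r}_{\textsc l}=3/2$, and $M\hat V_{\textsc{lz,l}}\to 1$. Consequently:
\begin{itemize}
\item the normalized statistic in the theorem tends to $\mathcal N(0,1/3)$, not $\mathcal N(0,1)$;
\item $\hat V_{\textsc{lz},\textsc{l},\text{adj}}$ overstates the variance threefold;
\item $\hat V_{\textsc{lz,l}}$ is conservative rather than anti-conservative.
\end{itemize}
The identity $M\hat V_{\textsc{lz,l}}=V^{\textup r}_{\textsc l}-B^{\textup r}_{\textsc l}+o(1)$ itself survives, since it only describes the estimator's limit. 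What fails is its comparison with the true variance.

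\textbf{Where the paper's proof goes wrong.} It removes the cross term by setting the covariance between the cluster-level $\mu_z$-moments and the $\mu_x$-moment to zero. To do so it evaluates $M^{-1}\sum_i\mathbb{E}[Z_iR_i(1)X_i^{\mathrm T}]$ as the single-unit quantity $e\{\mathbb{E}[(Y_{ij}(1)-\mu^{\textup r}_1)\dot x_{ij}^{\mathrm T}]-(\gamma^{\textup r}_1)^{\mathrm T}\V(x_{ij})\}=0$. That keeps only the diagonal of $\sum_j\sum_{j'}$, which is justified only when $\mathbb{E}[r_{ij}(z)\dot x_{ij'}]=0$ for $j\neq j'$. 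Assumption \ref{asu:cluster}(b) does not imply this. Your fallback is therefore the right conclusion: the stated normality and the consistency of $\hat V_{\textsc{lz},\textsc{l},\text{adj}}$ hold only when the cross term vanishes, for example if $n_i\equiv 1$ or $\gamma^{\textup r}_1=\gamma^{\textup r}_0$.

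\textbf{Two corrections to your estimator step.}

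First, the OLS LZ estimator \eqref{eq:V_LZ_L} is not the $(\mu,\gamma)$ block of the augmented LZ sandwich. It is the sandwich built from the $(\mu,\gamma)$ sub-blocks of the bread and meat, so it drops every term involving the $\mu_x$-moment. That includes the quadratic term in $\gamma_1-\gamma_0$, which \eqref{eq:Lin_correction_cluster} adds back, and the cross term above, which nothing adds back. The full augmented LZ sandwich is consistent for the true variance by Theorem \ref{thm:ME_random_cluster}, so that is the estimator you should analyze. When the cross term is nonzero it is not asymptotically equivalent to $\hat V_{\textsc{lz},\textsc{l},\text{adj}}$.

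Second, your treatment of the centering in \eqref{eq:Lin_correction_cluster} is confused. The average of $x_{i\cdot}$ over clusters is exactly $\bar x$, but $\mathbb{E}\,x_{i\cdot}=\tilde\omega_i\mu_x$ depends on $i$. So with unequal cluster sizes and $\mu_x\neq 0$, $M^{-1}\sum_i(x_{i\cdot}-\bar x)^{\otimes 2}$ exceeds $M^{-1}\sum_i\V(x_{i\cdot})$ by about $M^{-1}\sum_i(\tilde\omega_i-1)^2\mu_x\mu_x^{\mathrm T}$. You need $x_{i\cdot}-\tilde\omega_i\bar x=(M/N)\sum_j(x_{ij}-\bar x)$, which the augmented meat uses automatically.
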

The coefficient of $Z_i$ from individual-level OLS fit in \eqref{eq:cluster_fit} identifies the ATE.
Theorem \ref{thm:cluster_adj_r} indicates that under random design, $\hat{V}_{\textsc{lz,l},\text{adj}}$ in \eqref{eq:Lin_correction_cluster} is consistent.
Importantly, the adjusted estimator $\hat V_{\textsc{lz},\textsc{l},\text{adj}}$ in \eqref{eq:Lin_correction_cluster} is a novel contribution.
Moreover, under random design, the cluster-robust variance estimator $\hat{V}_{\textsc{lz},\textsc{l}}$ in \eqref{eq:V_LZ_L} is anti-conservative with asymptotic bias $B_{\textsc{l}}^{\textup{r}}$ in \eqref{eq:bias_cluster_L_r}.
\cite{SuDing2021}
establish the validity of $\hat V_{\textsc{lz,l}}$ under design-based framework.
where $x$ is fixed.

\begin{remark}
Similar to Remark \ref{re:correction_Lin}, we can also obtain consistent variance estimator based on the cluster-robust covariance estimator with the augmented estimating equations, though with finite sample difference to $\hat{V}_{\textsc{lz},\textsc{l},\text{adj}}$. 
\end{remark}

\paragraph*{Mixed Design}
Define $\varepsilon_{ij}^{\textup{m}}(z) = y_{ij}(z) - N^{-1} \sum_{ij} \mathbb{E}(y_{ij}(z) \mid  x_{i} )$
and let $\gamma^{\textup{m}}_z$ be the coefficient of $\ddot{x}_{ij}$ in the OLS fit of $\varepsilon^{\textup{m}}_{ij}(z)$ on $\ddot{x}_{ij}$ over $\{i: Z_i = z\}$.
Define ${\varepsilon}^{\textup{m}}_{i\cdot,\textsc{l}}(z) = \sum_{j=1}^{n_i} (\varepsilon^{\textup{m}}_{ij}(z) - \ddot{x}_{i j}^{\mathrm{T}} \gamma^{\textup{m}}_z )M/N$.
By the theory in Section \ref{sec:Z_mixed_cluster}, we derive the asymptotic variance of $\hat{\beta}_{\textsc{l}}$ under mixed design as 
\begin{align*}
V^{\textup{m}}_{\textsc{l}}
= \frac{1}{M} \sum_{i=1}^M 
\left( 
\frac{ \mathbb{E}\left( {\varepsilon}^{\textup{m}}_{i\cdot,\textsc{l}}(1)^2 \mid x_i \right)}{e}
+ \frac{\mathbb{E}\left( {\varepsilon}^{\textup{m}}_{i\cdot,\textsc{l}}(0)^2 \mid x_i \right)}{1-e}  - \mathbb{E}\left( {\varepsilon}^{\textup{m}}_{i\cdot,\textsc{l}}(1) - {\varepsilon}^{\textup{m}}_{i\cdot,\textsc{l}}(0) \mid x_i \right)^2 
\right) 
\end{align*}
and the asymptotic bias of $\hat{V}_{\textsc{lz,l}}$ in \eqref{eq:V_LZ_L} for $V^{\textup{m}}_{\textsc{l}}$ as
\begin{equation}
B_{\textsc{l}}^{\textup{m}}
= \frac{1}{M} \sum_{i=1}^M \mathbb{E}\left( {\varepsilon}^{\textup{m}}_{i\cdot,\textsc{l}}(1) - {\varepsilon}^{\textup{m}}_{i\cdot,\textsc{l}}(0) \mid x_i \right)^2. \label{eq:bias_cluster_L_m}
\end{equation}
\begin{theorem} \label{thm:cluster_adj_m}
Under mixed design and Assumption \ref{asu:cluster} with \(\mathbb{E}_\circ = \mathbb{E}_{(y,Z)|x}\) and \(\mathbb{P}_\circ = \mathbb{P}_{(y,Z)|x}\), we have
$\beta^{\textup{m}} = N^{-1}\sum_{ij}\mathbb{E}\left(y_{ij}(1) - y_{ij}(0) \mid x_{i} \right)$ 
and
\begin{equation*}
\left( V_{\textsc{l}}^{\textup{m}} \right)^{-1/2} M^{1 / 2}\left(\hat{\beta}_{\textsc{l}}-\beta^{\textup{m}} \right)  \mid X
\overset{\textup{d}}{\to}
\mathcal{N}(0,1)
\text{ and }
M \hat{V}_{\textsc{lz,l}} 
= V_{\textsc{l}}^{\textup{m}} 
+ B_{\textsc{l}}^{\textup{m}}
+ o(1;\mathbb{P}_{(y,Z)|x}).
\end{equation*}
\end{theorem}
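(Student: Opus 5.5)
Under mixed design the covariates $x_i=(x_{ij})_j$ are conditioned on, so the sample mean $\bar x$ and the centered covariates $\ddot x_{ij}$ are fixed. The only randomness comes from $(Z_i,\{y_{ij}(z)\}_j)$ given $x_i$. We can therefore treat the Lin regression \eqref{eq:cluster_adj} as an ordinary clustered OLS $Z$-estimation problem with $\psi(w_{ij};b)=x_{ij,\textsc{l}}(y_{ij}-x_{ij,\textsc{l}}^{\mathrm{T}}b)$, where $x_{ij,\textsc{l}}=(1,Z_{ij},\ddot x_{ij}^{\mathrm{T}},Z_{ij}\ddot x_{ij}^{\mathrm{T}})^{\mathrm{T}}$, $X_1=Z$ and $X_2=x$. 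No augmented estimating equation is needed.

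The plan is to invoke Theorem \ref{thm:ME_mixed_cluster} (its OLS specialization). This gives $(V^{\mathrm m})^{-1/2}\sqrt M(\hat b-b^{\mathrm m})\mid X\to\mathcal N(0,I)$ and $M\hat V_{\textsc{lz}}=V^{\mathrm m}+B^{\mathrm m}+o(1)$. The moment conditions of Assumption \ref{asu:ME} follow from the fourth moments in Assumption \ref{asu:cluster}. The Lyapunov condition for heterogeneous cluster sums uses $\Omega=o(M^{-2/3})$, as in \cite{SuDing2021}. The remaining work is to extract the $(2,2)$ entry and identify it with the stated formulas.

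\textbf{Step 1 (estimand).} Because $Z_i\indep$ everything given $x_i$ and the design is full-interaction, the conditional normal equations \eqref{eq:beta_m} decouple into two arm-specific regressions: $y_{ij}(z)$ on $(1,\ddot x_{ij})$ with weights $e$ or $1-e$. The intercept equation yields $\mu_z=N^{-1}\sum_{ij}\mathbb E(y_{ij}(z)\mid x_i)-\bar{\ddot x}^{\mathrm{T}}\gamma_z^{\mathrm m}$. Since $\sum_{ij}\ddot x_{ij}=0$, this reduces to the average conditional mean. The coefficient on $Z_{ij}$ is $\mu_1-\mu_0$, which gives the stated $\beta^{\mathrm m}$. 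The slopes equal $\gamma^{\mathrm m}_z$, again using centering so that the slope equation matches the regression of $\varepsilon^{\mathrm m}_{ij}(z)$ on $\ddot x_{ij}$.

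\textbf{Step 2 (variance).} Conditional on $x$, $\Gamma^{\mathrm m}$ is block-structured: the intercept/slope blocks for the treated arm are $e$ times the matrix $\frac1M\sum_{ij}(1,\ddot x_{ij}^{\mathrm{T}})^{\otimes2}$ and, for control, $(1-e)$ times the same matrix. Reparametrize $(\alpha,\beta)$ to $(\mu_0,\mu_1)$, and use $\sum_{ij}\ddot x_{ij}=0$, so the $(1,\ddot x)$ Gram matrix is block diagonal. Then the influence function of $\hat\beta_{\textsc l}$ is
\[
\frac{1}{M}\sum_{i=1}^M\Big(\frac{Z_i}{e}\varepsilon^{\mathrm m}_{i\cdot,\textsc l}(1)-\frac{1-Z_i}{1-e}\varepsilon^{\mathrm m}_{i\cdot,\textsc l}(0)\Big),
\]
with $\varepsilon^{\mathrm m}_{i\cdot,\textsc l}(z)$ as defined before Theorem \ref{thm:cluster_adj_m}; the factor $M/N$ comes from normalizing by $N$. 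Its conditional variance given $x_i$ is $\mathbb E(\varepsilon(1)^2\mid x_i)/e+\mathbb E(\varepsilon(0)^2\mid x_i)/(1-e)-\mathbb E(\varepsilon(1)-\varepsilon(0)\mid x_i)^2$. This uses $Z_i(1-Z_i)=0$ and the fact that the conditional mean of the summand is $\mathbb E(\varepsilon(1)-\varepsilon(0)\mid x_i)$. Averaging over $i$ gives $V^{\mathrm m}_{\textsc l}$. Likewise, the $(2,2)$ entry of $B^{\mathrm m}$ is the average squared conditional mean, which is $B^{\mathrm m}_{\textsc l}$ in \eqref{eq:bias_cluster_L_m}.

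\textbf{Step 3 (estimator).} Apply the same linear transformation to $\hat V_{\textsc{lz}}$ so that its $(2,2)$ entry equals $\hat V_{\textsc{lz,l}}$ in \eqref{eq:V_LZ_L}. Then consistency of the middle matrix from Theorem \ref{thm:ME_mixed_cluster} transfers directly. I expect the main obstacle to be Step 2: verifying that the conditional Jacobian indeed block-diagonalizes, since in finite samples $Z$ is random and the interaction columns are correlated with $\ddot x$ only through $Z$. This needs care because the sample Gram matrix converges to $\Gamma^{\mathrm m}$ only at rate $o(1)$. I would handle it by a law of large numbers for independent, non-identically distributed cluster sums under $\Omega=o(M^{-2/3})$.
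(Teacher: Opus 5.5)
Your proposal is correct and follows essentially the paper's route. The paper applies Theorem~\ref{thm:ME_mixed_cluster} to the unaugmented Lin estimating equations and uses $\sum_{ij}\ddot x_{ij}=0$ and $\mathbb{E}(Z_i\mid x_i)=e$ to make the conditional Jacobian block diagonal in the arm-specific parametrization. It then reads off $V^{\textup{m}}_{\textsc{l}}$ and $B^{\textup{m}}_{\textsc{l}}$ from the $(1,-1)$ contrast of the intercept components. The only difference is that the paper starts directly from the arm-specific equations of Lemma~\ref{lemma:Lin_beta_cluster} and does block-matrix algebra, whereas you write out the influence function; this is the same computation.

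One refinement: block-diagonality of the conditional Jacobian is an exact identity, not an obstacle. Beyond the Lyapunov step you already mention, the condition $\Omega=o(M^{-2/3})$ does its real work elsewhere. It ensures that the $O_{\mathbb{P}}(\Omega^{1/2})$ error of the sample Gram matrix, multiplied against an LZ middle matrix whose blocks are only $O_{\mathbb{P}}(M\Omega)$, is negligible. A bare $o(1)$ rate would not suffice here, which is why the paper argues this way for Theorem~\ref{thm:cluster_adj_r}.
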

The coefficient of $Z_{ij}$ from individual-level OLS fit in \eqref{eq:cluster_adj} identifies the conditional average treatment effect.
Theorem \ref{thm:cluster_adj_m} indicates that under mixed design, $\hat{V}_{\textsc{lz,l}}$ in \eqref{eq:V_LZ_L} is conservative with asymptotic bias $B^{\textup{m}}_{\textsc{l}}$ in \eqref{eq:bias_cluster_L_m}.
Unlike in the random-design case, centering around covariates introduces additional uncertainty, whereas under a mixed design, no such issue arises, and the variance estimator remains conservative.

\section{Simulation} \label{sec:Simulation}
We report simulation results for complete randomization and cluster randomization.
For each setting, we consider: random and mixed designs, because the fixed design formulation assumes away the benefits of randomization. 
In the random design, all random variables are independently redrawn in each simulation iteration. 
In the mixed design, we hold $\{x_i\}_{i=1}^n$ fixed while redrawing all other random variables. To examine robustness to model misspecification, the outcome equations include higher order of covariate $x_i$.
For each case, we report the estimand (``estimand''), the simulation estimate (``estimate''), the asymptotic standard error (``asym SE''), the estimated standard error (EHW for i.i.d. data under ``EHW SE'' and LZ for clustered data under ``LZ SE''), and the empirical coverage of the 95\% confidence interval based on the corresponding variance estimator (``coverage'').
Rows labeled ``Fisher'' correspond to regressions with covariates, and rows labeled ``Lin'' correspond to regressions with fully interacted covariates.
For the fully interacted specification, we also report the corrected EHW or LZ variance estimator and the coverage of the associated 95\% CI, marked with an asterisk (*).

\subsection{Complete randomization}
We conduct a Monte Carlo simulation with $n = 1000$ observations per sample and $B = 5000$ simulation replications. 
We generate the covariate $x_i$ independently from a standard normal distribution: $x_i \overset{\text{i.i.d.}}{\sim} \mathcal{N}(0,1)$.
Treatment assignment is randomized such that: $\mathbb{P}(Z_i = 1) = 0.5$.
Potential outcomes are generated as follows:
\begin{align*}
Y_i(1) &= \mu_1 + \gamma_1 x_i^2 + \varepsilon_i(1), \quad \varepsilon_i(1) \sim \mathcal{N}(0,1), \\
Y_i(0) &= \mu_0 + \gamma_0 x_i^3 + \varepsilon_i(0), \quad \varepsilon_i(0) \sim \mathcal{N}(0,1),
\end{align*}
where the parameters are set as $(\mu_1, \mu_0, \gamma_1, \gamma_0) = (3,2,2,1)$.
The observed outcome is defined as: 
\[
y_i = Z_i Y_i(1) + (1 - Z_i) Y_i(0).
\]
We present the results for three OLS regression specifications: without covariates, with covariates, and with fully interacted covariates.
As established in Theorem \ref{thm:y_Z_r}, the difference-in-means estimator is consistent under the random design, and the EHW variance estimator is also consistent in this setting.
Similarly, Theorems \ref{thm:PRA_r} and \ref{thm:PRA_m} verifies that the estimator from the regression with covariates is consistent under both the random and mixed designs. The EHW variance estimator remains consistent under the random design but is conservative under the mixed design.
For the regression with fully interacted covariates, Theorems \ref{thm:FRA_random} and \ref{thm:FRA_m} confirms that the estimator is consistent under both random and mixed designs. However, the EHW variance estimator is anti-conservative under the random design and conservative under the mixed design. Nonetheless, the corrected EHW variance estimator is consistent under the random design, as verified by Theorem \ref{thm:FRA_random}.

\subsection{Cluster randomization}
We conduct a Monte Carlo simulation with $M = 160$ clusters per sample and $B = 5000$ simulation replications. 
The cluster sizes are drawn from a uniform distribution.
Each cluster $i$ contains $n_i$ units where 
$n_i=\operatorname{round}\!\left(\frac{1000}{M}U_i\right)$ with $U_i \sim \operatorname{Unif}(0.6, 1.4)$.
where: 
The cluster sizes $n_i$ are fixed across simulation replications.
Each unit $(i, j)$ has a covariate drawn independently from a standard normal distribution: $x_{i j} \overset{\text{i.i.d.}}{\sim} \mathcal{N}(0,1)$.
The covariance matrix $\Sigma_i$ for each cluster $i$ is generated as:
\[
\Sigma_{i} = \frac{(\tilde{\Sigma}_{ij}: j=1,\ldots,n_i)(\tilde{\Sigma}_{ij}: j=1,\ldots,n_i)^{\mathrm{T}}}{\sqrt{(\tilde{\Sigma}_{ij}: j=1,\ldots,n_i)(\tilde{\Sigma}_{ij}: j=1,\ldots,n_i)^{\mathrm{T}}}_{ii}},
\]
where the elements $\tilde{\Sigma}_{ij}$ are drawn independently from a uniform distribution: $\tilde{\Sigma}_{ij} \overset{\text{i.i.d.}}{\sim} \text{Unif}[0,1]$.
The potential outcomes for each unit in cluster $i$ are generated as:
\begin{align*}
(Y_{i j}(1))_{j=1}^{n_i} 
&~=~ 
\mu_1 + \gamma_1 \cdot (x_{ij}^2)_{j=1}^{n_i} + \mathcal{N}(0, 4\Sigma_{i}), \\
(Y_{i j}(0))_{j=1}^{n_i} 
&~=~ 
\mu_0 + \gamma_0 \cdot (x_{ij}^3)_{j=1}^{n_i} + \mathcal{N}(0, \Sigma_{i}).
\end{align*}
The error terms are drawn from multivariate normal distributions with covariance matrices that vary by treatment status, with the treated potential outcome having a larger variance structure than the control.

We present the results for three OLS regression specifications: without covariates, with covariates, and with fully interacted covariates.
As established in Theorem \ref{thm:cluster_random}, the difference-in-means estimator is consistent under the random design, and the LZ variance estimator is also consistent in this setting.
Similarly, Theorems \ref{thm:cluster_add_r} and \ref{thm:cluster_add_m} verify that the estimator from the regression with covariates is consistent under both the random and mixed designs. The LZ variance estimator remains consistent under the random design but is conservative under the mixed design.
For the regression with fully interacted covariates, Theorems \ref{thm:cluster_adj_r} and \ref{thm:cluster_adj_m} confirm that the estimator is consistent under both random and mixed designs. However, the LZ variance estimator is anti-conservative under the random design and conservative under the mixed design. Nonetheless, the corrected LZ variance estimator is consistent under the random design, as verified by Theorem \ref{thm:cluster_adj_r}.

\newcolumntype{P}[1]{>{\centering\arraybackslash}p{#1}}
\newcolumntype{Y}{>{\centering\arraybackslash}X}

\begin{table}[ht]
\caption{Simulation results} 
\label{table:simulation}
\centering

\small
\setlength{\tabcolsep}{3pt}
\renewcommand{\arraystretch}{1.05}

\begin{subtable}[t]{0.99\textwidth}
\centering
\caption{Complete randomization}

\begin{tabularx}{0.99\textwidth}{
@{}
P{0.18\textwidth}|
P{0.09\textwidth}|
P{0.10\textwidth}|
Y|Y|Y|Y
@{}
} 
\hline
Specification 	&	 Design 	&	estimand	&	estimate	&	asym SE	&	EHW SE	&	coverage	\\
\hline
no $X$ &	Random &	3.000	&	3.001	&	0.223	&	0.223	&	0.953		\\
\cline{1-7}	
\multirow{2}{*}{Fisher}	&	Random &	3.000	&	2.998	&	0.202	&	0.201	&	0.948	\\
\cline{2-7} 
&	Mixed	&	3.032	&	3.028	&	0.108	&	0.144	&	0.988		\\
\cline{1-7}		
\multirow{3}{*}{Lin}	&	\multirow{2}{*}{Random}	&	\multirow{2}{*}{3.000}	&	\multirow{2}{*}{2.998}	&	\multirow{2}{*}{0.202}	&	0.177	&	0.915		\\
\cline{6-7}
&		&		&		&		&	0.201*	&	0.948*	\\
\cline{2-7}	
&	Mixed	&	3.032	&	3.027	&	0.108	&	0.141	&	0.987	\\
\bottomrule
\end{tabularx}
\end{subtable}

\vspace{0.8em}
\begin{subtable}[t]{0.99\textwidth}
\centering
\caption{Cluster randomization}
\label{tab:overview-cluster}

\begin{tabularx}{0.99\textwidth}{
@{}
P{0.18\textwidth}|
P{0.09\textwidth}|
P{0.10\textwidth}|
Y|Y|Y|Y
@{}
} 
\hline
Specification 	&	 Design 	&	estimand	&	estimate	&	asym SE	&	LZ SE	&	coverage	\\
\hline
no $X$	&	Random	&	4.000	&	3.995	&	0.261	&	0.258	&	0.944		\\
\cline{1-7}			
\multirow{2}{*}{Fisher}	&	Random	&	4.000	&	3.990	&	0.242	&	0.240	&	0.945		\\
\cline{2-7}	
&	Mixed	&	3.863	&	3.861	&	0.192	&	0.231	&	0.981	\\
\cline{1-7}	
\multirow{3}{*}{Lin} &	\multirow{2}{*}{Random} &	\multirow{2}{*}{4.000}	&	\multirow{2}{*}{3.990}	&	\multirow{2}{*}{0.242}	&	0.220	&	0.924	\\
\cline{6-7}		
&		&		&		&		&	0.239*	&	0.947*	\\
\cline{2-7}		
&	Mixed	&	3.863	&	3.861	&	0.191	&	0.213	&	0.970 \\																		
\bottomrule
\end{tabularx}
\end{subtable}
\caption*{ \footnotesize Note: We use * to denote the corrected EHW or LZ SE and the coverage of the associated 95\% CI. }
\end{table}

\section{Discussion}
\label{sec:Discussion}
We unify the literature by developing a general theory for $Z$-estimation with random, fixed, and mixed regressors, covering both independent and clustered data. We clarify how regression coefficients should be interpreted under misspecification and different sources of randomness, and derive the corresponding inference theory. The usual robust variance estimator is consistent under random design but generally conservative under fixed and mixed designs. We apply these results to OLS and regression adjustment in completely randomized and cluster-randomized experiments, characterize the associated causal estimands, and provide corrections for conventional variance estimators in fully interacted specifications with random covariates.

Our framework complements the design-based inference literature \cite{AbadieAtheyImbens2020} by clarifying how different sources of sampling randomness affects both parameter interpretation and robust inference. 
Although robust variance estimators may be conservative under both mixed and design-based analyses, the sources of conservativeness differ. In our mixed-design framework, the additional term arises from nonzero conditional means of the estimating equations under misspecification, whereas in design-based inference conservativeness typically reflects unidentified treatment-effect heterogeneity.

Although our applications focus on linear and instrumental-variable regressions, the $Z$-estimation inference result applies more broadly to nonlinear models. In such settings, however, the interpretation of the resulting parameters is more challenging and requires further study.
Under misspecification, such models generally target pseudo-true parameters defined by their population objectives or estimating equations. Their interpretation depends on the estimation criterion: likelihood-based
estimators minimize the expected Kullback--Leibler divergence from the true conditional distribution, whereas nonlinear least squares provides an $L^2$ approximation to the conditional mean. Under mixed design, these objectives are evaluated conditional on the fixed regressors. The resulting pseudo-true parameter may therefore depend on which regressors are treated as random and which are conditioned on.
This distinction also matters for inference. 
Under likelihood misspecification
\citep{White1982}, the sandwich variance accounts separately for the curvature of
the objective and the variance of the score. More generally, for $Z$-estimation, it accounts separately for the sensitivity and variability of the estimating equations.
Under mixed design, the additional term $B^{\mathrm m}$ arises when the estimating equations have nonzero conditional means given the fixed regressors.
Valid inference for a pseudo-true parameter, however, does not by itself provide that parameter with a causal or otherwise substantive interpretation.

\citet{AngristChernozhukovFernandez-Val2006} study the inference and interpretation of quantile regression under random design, whereas \citet{AbadieImbensZheng2014} consider a covariate-conditional estimand but derive unconditional asymptotic results. Extending these ideas to fixed and mixed designs is a natural direction. Under misspecification, quantile-regression coefficients can be viewed as weighted linear approximations to conditional quantile functions. In a mixed-design framework, however, the approximation target is evaluated conditional on the realized fixed regressors. Hence both the interpretation and the asymptotic variance may depend on the source of randomness. Because quantile regression is nonsmooth, this extension requires separate asymptotic arguments beyond our differentiable \(Z\)-estimation framework.

\section*{Acknowledgement}
Peng Ding is partially supported by the U.S. National Science Foundation \# 2514234.

\section*{Supplementary material}
The supplementary material includes the results for the local average treatment effect framework and proofs of all theorems.

\bibliographystyle{ecta}
\bibliography{MRMR}

\appendix

\renewcommand{\theproposition}{S\arabic{proposition}}
\renewcommand{\theexample}{S\arabic{example}}
\renewcommand{\thetable}{S\arabic{table}}
\renewcommand{\theequation}{S\arabic{equation}}
\renewcommand{\thesection}{S\arabic{section}}
\renewcommand{\thecorollary}{S\arabic{corollary}}
\renewcommand{\theremark}{S\arabic{remark}}

\setcounter{equation}{0}
\renewcommand {\theequation} {S\arabic{equation}}
\setcounter{definition}{0}
\renewcommand {\thedefinition} {S\arabic{definition}}
\setcounter{example}{0}
\renewcommand {\theexample} {S\arabic{example}}
\setcounter{proposition}{0}
\renewcommand {\theproposition} {S\arabic{proposition}}
\setcounter{corollary}{0}
\renewcommand {\thecorollary} {S\arabic{corollary}}

\newpage

\begin{center}
\Large\bfseries{Supplementary Material}
\end{center}

Section~\ref{sec:add} presents the results for the local average treatment effect framework.

Section~\ref{sec:Proof of sec ME} gives the proof of the results in Section \ref{sec:ME}.

Section~\ref{sec:proof of i.i.d.Theory} gives the proof of the results in Section \ref{sec:i.i.d.Theory}. 

Section~\ref{sec:proof of application} gives the proof of the results in Sections \ref{sec:application} and \ref{sec:add}. 

Section \ref{sec:Proof of cluster} gives the proof of the results in Section \ref{sec:cluster}.

\paragraph{Notation:}
Let $[\cdot]_{(1,1)+(2,2)-2(1,2)}$ denote the sum of the $(1,1)$th and the $(2,2)$th elements minus twice the $(1,2)$th element of the matrix $[\cdot]$.
Let $[\cdot]_{(1:2,1:2)}$ denote the submatrix of the first two rows and the first two columns.
We use * to denote the terms that are irrelevant in the matrix. 
We use \(\operatorname{tsls}(y_i \sim x_i \mid z_i)\) to denote the two-stage least-squares regression of \(y_i\) on \(x_i\), using \(z_i\) as the instrument for \(x_i\).
We use \(\V\) to denote variance and \(\mathbb{C}\) to denote covariance.
We use ``LLN'' and ``CLT'' to denote ``law of large numbers'' and ``central limit theorem,'' respectively.
We will use ``$\leq_{\mathrm{HI}}$'' for the steps invoking H{\"o}lder's\ inequality.

\section{IV regression } \label{sec:add}
In this section, we consider the local average treatment effect (LATE) framework.

\subsection{Main results} \label{sec:IV}
Consider the setting where the treatment choice $D_i$ is endogenous and we use $Z_i$ as the instrumental variable for $D_i$. 
We assume both $Z_i$ and $D_i$ are binary.
We consider potential outcome models for both outcomes and treatment decisions. For each participant $i=1,\dots,n$, we use $Y_i(d)$ to denote the potential outcome of participant $i$ if he/she makes treatment decision $d$, and we use $D_i(z)$ to denote the potential treatment decision of participant $i$ if he/she has assigned treatment $z$. Their observed counterparts are:
\begin{align*}
D_i &~=~ D_i(1) Z_i + D_i(0) (1-Z_i), \\
y_i &~=~ Y_i(1) D_i + Y_i(0) (1-D_i). 
\end{align*}
Following the usual classification in the local average treatment effect framework in \cite{ImbensAngrist1994} and \cite{AngristImbensRubin1996}, each participant can be one of four types: complier ($\textup{c}$), always taker ($\textup{a}$), never taker ($\textup{n}$), or a defier ($\textup{d}$). An individual $i$ is said to be a complier if $\{D_i(0)=0,D_i(1)=1\}$, an always taker if $\{D_i(0)=D_i(1)=1\}$, a never taker if $\{D_i(0)=D_i(1)=0\}$, and a defier if $\{D_i(0)=1,D_i(1)=0\}$.

We consider the two-stage least-squares (2SLS) fit for estimation, which is a special case of $Z$-estimation.
Similar to OLS analysis in Section \ref{sec:application}, we omit the discussion of the fixed design theory, because it does not quantify the advantages of randomization with a misspecified linear instrumental variable model. 

In this section, we impose the following assumption. 
\begin{assumption}\label{asu:IV}
$\{W_i\}_{i=1}^n = \left\{(Z_i, Y_i(0), Y_i(1),D_i(0),D_i(1),x_i)\right\}_{i=1}^n$ is an i.i.d.\ sample that satisfies
\begin{enumerate}[(a)]
\item $Z_i \indep \left\{(Y_i(0),Y_i(1),D_i(0),D_i(1),x_i)\right\}$ and  
$\mathbb{P}(Z_i=1)=e\in(0,1)$;
\item $\mathbb{E}_\circ[ Y_{i}( d)^{4}]<\infty $ for all $d\in \{0,1\}$;
\item $\mathbb{P}_\circ( D_{i}( 0) =1,D_{i}(1) =0) =0$ or, equivalently, $\mathbb{P}_\circ( D_{i}(1)\geq D_{i}(0)) =1$;
\item $\mathbb{E}_\circ[D_i(1) - D_i(0)] > 0$;
\item $\mathbb{E}_\circ(x_ix_i^{\mathrm{T}})$ is positive definite.
\end{enumerate}
\end{assumption}
As usual in the literature, Assumption \ref{asu:IV}(c) imposes that there are no defiers in our population of participants in order to identify the LATE. We denote the types of compliance behavior by $U_i\in\{\textup{c},\textup{a},\textup{d}\}$.

\subsubsection{Without covariates}
We first consider the 2SLS fit without covariates:
\begin{equation}
\operatorname{tsls}\bigl(y_i \sim 1+D_i \mid 1+Z_i\bigr), 
\label{eq:IV_no_x}
\end{equation}
which corresponds to the moment function
\begin{align}
\psi\bigl(w_i;\alpha_{\textsc{iv}},\beta_{\textsc{iv}} \bigr)
= \tilde Z_{i} 
\left(y_i-\alpha_{\textsc{iv}} -\beta_{\textsc{iv}} D_i  \right)
\quad \text{with} \quad 
\tilde Z_{i} = (1, Z_i)^{\mathrm{T}}.
\label{eq:IV_wc}
\end{align}
Let $\hat{\alpha}_{\textsc{iv}}$ and $\hat{\beta}_{\textsc{iv}}$ denote the intercept and coefficient of $Z_i$ from the 2SLS fit in \eqref{eq:IV_no_x}. Define $\hat{\varepsilon}_i = y_i - \hat{\alpha}_{\textsc{iv}} - \hat{\beta}_{\textsc{iv}} D_i$ as the residual from the same fit.
Define $\hat{V}_{\textsc{hw,iv}}$ as the HW variance estimator for $\hat\beta_{\textsc{iv}}$, obtained as the $(2,2)$ entry of
$\hat V_{\textsc{hw}}$ in \eqref{eq:EHW_ME},
where
\[
\psi\bigl(w_i;\hat\alpha_{\textsc{iv}},\hat\beta_{\textsc{iv}}\bigr)
= \tilde Z_i\,\hat\varepsilon_i.
\]
Importantly, $\hat{V}_{\textsc{hw,iv}}$ comes not from the second-stage OLS but from the $Z$-estimation framework developed in Section \ref{eq:EHW_ME} \citep{AngristPischke2009, Ding2024}.

Let $V_{\textsc{iv}}^{\text{r}}$ denote the asymptotic variance of $\hat{\beta}_{\textsc{iv}}$, which can be derived by applying the $Z$-estimation theorem under random design in Section \ref{sec:Z_random} with the moment function defined in \eqref{eq:IV_wc} and \(\mathbb{E}_\circ = \mathbb{E}_{(y,Z)}\).

\begin{theorem} \label{thm:AsyDist_IV_random}
Under random design and Assumption \ref{asu:IV} with \(\mathbb{E}_\circ = \mathbb{E}_{(y,D, Z)}\) and \(\mathbb{P}_\circ = \mathbb{P}_{(y,D,Z)}\), we have 
$\beta_{\textsc{iv}}^{\textup{r}}  
= \mathbb{E}[Y_i(1) - Y_i(0) \mid  D_i(1) > D_i(0)]$ and
\begin{equation*}
(V_{\textsc{iv}}^{\textup{r}} )^{-1/2} \sqrt{n}(\hat{\beta}_{\textsc{iv}}-\beta_{\textsc{iv}}^{\textup{r}} )
\overset{\textup{d}}{\to} \mathcal{N} (0, 1)
\text{ and }
n \hat{V}_{\textsc{hw,iv}}
= V_{\textsc{iv}}^{\textup{r}} + o(1;\mathbb{P}_{(y,Z,x)}).
\end{equation*}
\end{theorem}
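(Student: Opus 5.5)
The plan is to treat the 2SLS fit in \eqref{eq:IV_no_x} as a just-identified $Z$-estimator with moment function \eqref{eq:IV_wc}. Both conclusions will then come from Theorem \ref{thm:ME_random}, once I check Assumption \ref{asu:ME} and compute the estimand. The work splits into three steps: identification of $\beta^{\textup{r}}_{\textsc{iv}}$, verification of the regularity conditions, and reading off the $(2,2)$ entry of the sandwich.

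\textbf{Identification.} The population equations $\mathbb{E}[\tilde Z_i(y_i-\alpha-\beta D_i)]=0$ are linear in $(\alpha,\beta)$. Their unique solution is the Wald ratio
\[
\beta^{\textup{r}}_{\textsc{iv}}=\frac{\mathbb{E}(y_i\mid Z_i=1)-\mathbb{E}(y_i\mid Z_i=0)}{\mathbb{E}(D_i\mid Z_i=1)-\mathbb{E}(D_i\mid Z_i=0)}
=\frac{\mathbb{C}(Z_i,y_i)}{\mathbb{C}(Z_i,D_i)},
\]
provided the denominator is nonzero. By Assumption \ref{asu:IV}(a), independence of $Z_i$ from the potential outcomes and types lets me replace each conditional mean by an unconditional one:
\[
\mathbb{E}(y_i\mid Z_i=z)=\mathbb{E}[Y_i(D_i(z))]
\quad\text{and}\quad
\mathbb{E}(D_i\mid Z_i=z)=\mathbb{E}[D_i(z)].
\]
By no defiers (c), the difference $Y_i(D_i(1))-Y_i(D_i(0))$ equals $\{Y_i(1)-Y_i(0)\}1\{D_i(1)>D_i(0)\}$, and $D_i(1)-D_i(0)=1\{D_i(1)>D_i(0)\}$. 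So the numerator is $\mathbb{E}[\{Y_i(1)-Y_i(0)\}1(U_i=\textup{c})]$ and the denominator is $\mathbb{P}(U_i=\textup{c})>0$ by (d). Their ratio is the LATE, exactly as in \citet{ImbensAngrist1994}.

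\textbf{Regularity conditions.} The moment function is linear in $b=(\alpha,\beta)$, so parts (b) and (c) of Assumption \ref{asu:ME} are immediate. Its Jacobian is the constant matrix $-\tilde Z_i(1,D_i)$, which is bounded, giving the derivative half of (d). The $2+\delta$ moment bound in (d) follows from $\mathbb{E}[Y_i(d)^4]<\infty$ and boundedness of $\tilde Z_i,D_i$, uniformly over a compact set. Part (a) holds by taking $\Theta$ a large compact ball containing the unique root.

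\textbf{Nonsingular Jacobian (main obstacle).} The step needing real care is (e). Here $\Gamma^{\textup{r}}=-\mathbb{E}[\tilde Z_i(1,D_i)]$, and its determinant equals $-\{\mathbb{E}(Z_iD_i)-e\,\mathbb{E}(D_i)\}=-\mathbb{C}(Z_i,D_i)$. This covariance is $e(1-e)\mathbb{P}(U_i=\textup{c})$, which is nonzero by (a) and (d). This is the only place the first-stage strength enters, and it also guarantees that the sample estimating equations have a unique solution with probability tending to one.

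\textbf{Conclusion.} Theorem \ref{thm:ME_random} then delivers asymptotic normality of $(\hat\alpha_{\textsc{iv}},\hat\beta_{\textsc{iv}})$ with sandwich variance $(\Gamma^{\textup{r}})^{-1}\Delta^{\textup{r}}(\Gamma^{\textup{r}})^{-\mathrm T}$. It also gives consistency of $n\hat V_{\textsc{hw}}$ for that matrix. Taking the $(2,2)$ entry and applying the continuous mapping theorem yields both claims, with $V^{\textup{r}}_{\textsc{iv}}$ defined as that entry. If an explicit form is wanted, it is $\V\{Z_i\varepsilon^{\textup{r}}_i-e\,\varepsilon^{\textup{r}}_i\}/\mathbb{C}(Z_i,D_i)^2$ with $\varepsilon^{\textup{r}}_i=y_i-\alpha^{\textup{r}}_{\textsc{iv}}-\beta^{\textup{r}}_{\textsc{iv}}D_i$, but the theorem does not require it.
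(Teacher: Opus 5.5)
Your proposal is correct and follows essentially the paper's route. The paper also treats the 2SLS fit as a just-identified $Z$-estimator, gets the LATE as the root of the population moments by direct algebra, inverts the Jacobian using $\mathbb{E}(Z_iD_i)-e\,\mathbb{E}(D_i)=e(1-e)(\pi_{D(1)}-\pi_{D(0)})$, and reads off the $(2,2)$ entry via Theorem \ref{thm:ME_random}, deferring details to the proof of Theorem \ref{thm:AsyDist_IV_random_x}. One harmless slip: since $\Gamma^{\textup{r}}$ is $2\times 2$, $\det\Gamma^{\textup{r}}=+\mathbb{C}(Z_i,D_i)$ rather than $-\mathbb{C}(Z_i,D_i)$, which does not affect nonsingularity.
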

The coefficient of $Z_i$ from the 2SLS fit in \eqref{eq:IV_no_x} identifies the LATE. 
Theorem \ref{thm:AsyDist_IV_random} indicates that under random design, $\hat{V}_{\textsc{hw,iv}}$ is consistent for $V_{\textsc{iv}}^{\textup{r}}$. 
We apply the $Z$-estimation theorem from Section \ref{sec:ME} to prove Theorem \ref{thm:AsyDist_IV_random}.  
Its asymptotic normality result is a special case of \citet[Theorem 4.1]{BugniGao2023} with a single stratum and complete randomization.
\citet{ImbensAngrist1994} derive the classical LATE identification result and
the asymptotic distribution of the IV estimator under i.i.d. sampling. Theorem \ref{thm:AsyDist_IV_random} restates the binary
instrument case in the usual 2SLS regression form within our random-design
framework and records the consistency of the corresponding HW variance
estimator.

\subsubsection{With covariates} \label{sec:add_IV}
Now we consider the 2SLS fit with covariates:
\begin{equation}
\operatorname{tsls}\bigl(y_i \sim 1+D_i+x_i \mid 1+Z_i+x_i\bigr),
\label{eq:IV}
\end{equation}
which corresponds to the moment function
\begin{align}
\psi\bigl(w_i;\alpha_{\textsc{iv,f}},\beta_{\textsc{iv,f}}, {\gamma}_{\textsc{iv,f}}\bigr)
= \tilde Z_{i,\textsc{f}} 
\left(y_i-\alpha_{\textsc{iv,f}} -\beta_{\textsc{iv,f}} D_i - x_i^{\mathrm{T}} \gamma_{\textsc{iv,f}}  \right)
\text{ with } 
\tilde Z_{i,\textsc{f}} = (1, Z_i, x_i)^{\mathrm{T}}.
\label{eq:IV_c}
\end{align}
Let $\hat{\alpha}_{\textsc{iv,f}}$, $\hat{\beta}_{\textsc{iv,f}}$ and $\hat{\gamma}_{\textsc{iv,f}}^{\mathrm{T}}$ denote the intercept and coefficients of $D_i$ and $x_i$ from the 2SLS fit in \eqref{eq:IV}.
Define the residual as $\hat{\varepsilon}_{i,\textsc{f}} = y_i - \hat{\alpha}_{\textsc{iv,f}} - \hat{\beta}_{\textsc{iv,f}} D_i - {x}_i^{\mathrm{T}} \hat{\gamma}_{\textsc{iv,f}} $.
Define $\hat{V}_{\textsc{hw,iv,f}}$ as the HW variance estimator for $\hat\beta_{\textsc{iv}}$, obtained as the $(2,2)$ entry of
$\hat V_{\textsc{hw}}$ in \eqref{eq:EHW_ME},
where
\[
\psi\bigl(w_i;\hat\alpha_{\textsc{iv,f}},\hat\beta_{\textsc{iv,f}}, \hat{\gamma}_{\textsc{iv,f}}\bigr)
= \tilde Z_{i,\textsc{f}} \hat\varepsilon_{i,\textsc{f}}.
\]

\paragraph*{Random Design}
Let $V_{\textsc{iv,f}}^{\text{r}}$ denote the asymptotic variance of $\hat{\beta}_{\textsc{iv,f}}$, which can be derived by applying the estimating-equations theorem under random design in Section \ref{sec:Z_random} with the moment function defined in \eqref{eq:IV_c} and \(\mathbb{E}_\circ = \mathbb{E}_{(y,Z,x)}\).

\begin{theorem} \label{thm:AsyDist_IV_random_x}
Under random design and Assumption \ref{asu:IV} with \(\mathbb{E}_\circ = \mathbb{E}_{(y,D,Z,x)}\) and \(\mathbb{P}_\circ = \mathbb{P}_{(y,D, Z,x)}\), we have 
$\beta_{\textsc{iv}}^{\textup{r}}  
= \mathbb{E}[Y_i(1) - Y_i(0) \mid  D_i(1) > D_i(0)]$
and
\begin{equation*}
(V_{\textsc{iv,f}}^{\textup{r}} )^{-1/2} 
\sqrt{n}(\hat{\beta}_{\textsc{iv,f}}-\beta_{\textsc{iv}}^{\textup{r}} )
\overset{\textup{d}}{\to} \mathcal{N} (0, 1)
\text{ and }
n \hat{V}_{\textsc{hw,iv,f}}
= V_{\textsc{iv,f}}^{\textup{r}} + o(1;\mathbb{P}_{(y,Z,x)}).
\end{equation*}
\end{theorem}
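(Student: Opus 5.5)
The plan is to apply Theorem~\ref{thm:ME_random} to the just-identified moment function in \eqref{eq:IV_c}, with parameter $\theta=(\alpha,\beta,\gamma^{\mathrm{T}})^{\mathrm{T}}$. Then $\psi(w_i;\theta)=\tilde Z_{i,\textsc{f}}(y_i-\alpha-\beta D_i-x_i^{\mathrm{T}}\gamma)$, which is linear in $\theta$, and its Jacobian is $-\tilde Z_{i,\textsc{f}}(1,D_i,x_i^{\mathrm{T}})$. Asymptotic normality of $\hat\beta_{\textsc{iv,f}}$ and consistency of $\hat V_{\textsc{hw,iv,f}}$ (the $(2,2)$ entry of \eqref{eq:EHW_ME}) will then follow once I have checked Assumption~\ref{asu:ME}. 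The only substantive new work is to identify the probability limit $\beta^{\textup{r}}_{\textsc{iv}}$ of the $D_i$ coefficient as the LATE.

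\emph{Regularity conditions.} Because $\psi$ is linear in $\theta$, items (b)--(d) of Assumption~\ref{asu:ME} reduce to moment bounds. These follow from fourth moments of $Y_i(d)$, boundedness of $D_i,Z_i$, and finite fourth moments of $x_i$ (assumed as in Assumption~\ref{asu:completeRandom}(d)). Compactness of the parameter space can be imposed on a large ball containing the unique root.

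For item (e) I need $\Gamma^{\textup{r}}=-\mathbb{E}[\tilde Z_{i,\textsc{f}}(1,D_i,x_i^{\mathrm{T}})]$ to be nonsingular. I would show this by partialling out $(1,x_i)$. Since $Z_i\indep x_i$, the residual of $Z_i$ on $(1,x_i)$ is $Z_i-e$. The Schur complement of the $(1,x)$ block is therefore proportional to $\mathbb{E}[(Z_i-e)D_i]=e(1-e)\,\mathbb{E}[D_i(1)-D_i(0)]$, which is positive by Assumption~\ref{asu:IV}(d). The $(1,x)$ block itself is positive definite by Assumption~\ref{asu:IV}(e). The same block computation gives uniqueness of the root, i.e.\ item (a).

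\emph{Identification.} The population equations are $\mathbb{E}[\tilde Z_{i,\textsc{f}}\,\varepsilon_i]=0$ with $\varepsilon_i=y_i-\alpha-\beta D_i-x_i^{\mathrm{T}}\gamma$. By Frisch--Waugh for just-identified IV, $\beta$ solves
\[
\mathbb{E}[(Z_i-e)(y_i-\beta D_i)]=0,
\]
where I use that the projection residual of $Z_i$ on $(1,x_i)$ is exactly $Z_i-e$ by independence. This gives $\beta=\mathbb{C}(Z_i,y_i)/\mathbb{C}(Z_i,D_i)$, the Wald ratio. Assumption~\ref{asu:IV}(a) yields $\mathbb{C}(Z_i,y_i)=e(1-e)\{\mathbb{E}[y_i\mid Z_i=1]-\mathbb{E}[y_i\mid Z_i=0]\}$, and similarly for $D_i$. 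Monotonicity (c) then gives the standard \citet{ImbensAngrist1994} decomposition: the numerator difference equals $\mathbb{P}(\text{complier})\,\mathbb{E}[Y_i(1)-Y_i(0)\mid \text{complier}]$, and the denominator difference equals $\mathbb{P}(\text{complier})$. Hence $\beta^{\textup{r}}_{\textsc{iv}}$ equals the LATE, matching the no-covariate case of Theorem~\ref{thm:AsyDist_IV_random}. The point to verify carefully is that partialling $x_i$ out of $Z_i$ is exact rather than approximate, even though the linear model in $x_i$ may be misspecified. This holds because $Z_i$ is independent of $x_i$, so misspecification enters only $\gamma$, not $\beta$.

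\emph{Conclusion.} With the assumptions verified, Theorem~\ref{thm:ME_random} gives $\sqrt n(\hat\theta-\theta^{\textup{r}})\to\mathcal N(0,(\Gamma^{\textup{r}})^{-1}\Delta^{\textup{r}}(\Gamma^{\textup{r}})^{-\mathrm{T}})$ together with consistency of the sandwich. Taking the $(2,2)$ entry defines $V^{\textup{r}}_{\textsc{iv,f}}$ and delivers both displayed claims; positivity of $V^{\textup{r}}_{\textsc{iv,f}}$ is needed to normalize. The main obstacle is the nonsingularity and identification step, specifically handling the block inversion cleanly. The rest is routine.
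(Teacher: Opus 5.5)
Your proposal is correct and follows the paper's route. Both arguments apply Theorem~\ref{thm:ME_random} to the just-identified 2SLS moments. The paper writes those moments in terms of $\dot{x}_i$, which makes $A^{\textup{r}}_{\textsc{f}}$ block lower-triangular with $(2,2)$ inverse entry $1/\{e(1-e)(\pi_{D(1)}-\pi_{D(0)})\}$; this is exactly your Schur-complement and partialling-out step based on $Z_i \indep x_i$.

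The paper additionally derives a closed form of $V^{\textup{r}}_{\textsc{iv,f}}$ in terms of $\tilde{\mu}(d,z)$ and $\tilde{\sigma}^2(d,z)$. The statement does not require this, since $V^{\textup{r}}_{\textsc{iv,f}}$ is defined as the $(2,2)$ sandwich entry. One small point: both your argument and the paper's tacitly need $\V(x_i)$, not merely $\mathbb{E}(x_i x_i^{\mathrm{T}})$, to be nonsingular, so Assumption~\ref{asu:IV}(e) alone does not give the positive-definite $(1,x)$ block you cite.
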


The coefficient of $D_i$ from the 2SLS fit in \eqref{eq:IV} identifies the LATE. 
Theorem \ref{thm:AsyDist_IV_random_x} indicates that under random design, $\hat{V}_{\textsc{hw,iv,f}}$ is consistent for $V_{\textsc{iv,f}}^{\textup{r}}$. 

\paragraph*{Mixed Design}
Define $\pi_{\textup{c}}(x_i) = \mathbb{P}(D_i(1) > D_i(0) \mid x_i)$ as the conditional probability of being a complier given covariates  
and 
$\tau_{\textup{c}}(x_i) = \mathbb{E}(Y_i(1)-Y_i(0)\mid D_i(1) > D_i(0), x_i)$ as the conditional average treatment effect among compliers.
The target parameter under mixed design is
\begin{equation}
\beta_{\textsc{iv}}^{\textup{m}} 
= \frac{ n^{-1} \sum_{i=1}^{n} \tau_{\textup{c}}(x_i) \pi_{\textup{c}}(x_i)  }{ n^{-1} \sum_{i=1}^{n} \pi_{\textup{c}}(x_i) },
\label{eq:beta_IV_m}
\end{equation}
which averages the conditional complier treatment
effects over the fixed covariate values in the sample, weighting each $x_i$
in proportion to its conditional complier probability $\pi_c(x_i)$.
Define the asymptotic bias for $V_{\textsc{iv}}^{\textup{m}}$ as
\begin{equation}
B_{\textsc{iv,f}}^{\textup{m}}
= \frac{ n^{-1} \sum_{i=1}^n 
\pi_{\textup{c}}(x_i)^2
(\tau_{\textup{c}}(x_i) - \beta_{\textsc{iv}}^{\textup{m}} )^2 }{ ( n^{-1} \sum_{i=1}^n \pi_{\textup{c}}(x_i))^2}. 
\label{eq:bias_IV}
\end{equation}
Let $V_{\textsc{iv,f}}^{\text{m}}$ denote the asymptotic variance of $\hat{\beta}_{\textsc{iv,f}}$, which can be derived by applying the $Z$-estimation theorem under mixed design in Section \ref{sec:Z_mixed} with the moment function defined in \eqref{eq:IV_c} and \(\mathbb{E}_\circ = \mathbb{E}_{(y,Z)|x}\).

\begin{theorem} \label{thm:AsyDist_IV_mixed}
Under mixed design and Assumption \ref{asu:IV} with \(\mathbb{E}_\circ = \mathbb{E}_{(y,D,Z)|x}\) and \(\mathbb{P}_\circ = \mathbb{P}_{(y,D,Z)|x}\), we have
\begin{equation*}
(V_{\textsc{iv,f}}^{\textup{m}})^{-1/2} \sqrt{n}(\hat{\beta}_{\textsc{iv,f}}-\beta_{\textsc{iv}}^{\textup{m}} ) \mid X 
\overset{\textup{d}}{\to} \mathcal{N}(0, 1 )
\text{ and }
n \hat{V}_{\textsc{hw,iv,f}} 
= V_{\textsc{iv,f}}^{\textup{m}}
+ B^{\textup{m}}_{\textsc{iv,f}} + o(1;\mathbb{P}_{(y,Z)|x}).
\end{equation*}
\end{theorem}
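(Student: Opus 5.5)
The plan is to obtain Theorem \ref{thm:AsyDist_IV_mixed} as a special case of Theorem \ref{thm:ME_mixed}. The moment function is \eqref{eq:IV_c}, the conditioning set is $X_2 = X$, and the random part is $(Z_i, Y_i(\cdot), D_i(\cdot))$ given $x_i$. Three things then remain. First, solve the conditional estimating equations
\[
\frac1n\sum_{i=1}^n \mathbb{E}\bigl[\tilde Z_{i,\textsc{f}}(y_i-\alpha-\beta D_i-x_i^{\mathrm{T}}\gamma)\mid x_i\bigr]=0
\]
and show that the $\beta$-component equals $\beta^{\textup{m}}_{\textsc{iv}}$ in \eqref{eq:beta_IV_m}. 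Second, read off the $(2,2)$ entry of $V^{\textup{m}}$ as $V^{\textup{m}}_{\textsc{iv,f}}$. Third, compute the $(2,2)$ entry of $B^{\textup{m}}$ in \eqref{eq:bias_me_mixed} and match it to \eqref{eq:bias_IV}. The regularity conditions of Assumption \ref{asu:ME} follow from Assumption \ref{asu:IV}. The moment is linear in the parameters, so continuity and differentiability are trivial and the moment bounds follow from fourth moments. I will also need a fourth-moment condition on $x_i$, analogous to Assumption \ref{asu:completeRandom}(d), which I take as implicit. Nonsingularity of $\Gamma^{\textup{m}}$ follows from $\mathbb{E}_\circ[D_i(1)-D_i(0)]>0$ together with positive definiteness of $\Sigma_x$, holding asymptotically along the fixed sequence $X$.

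For the identification step I will use $Z_i \indep (Y_i(\cdot),D_i(\cdot),x_i)$. Given $x_i$, $Z_i$ is still Bernoulli$(e)$ and independent of the potential quantities. The rows of the equation for $1$ and $x_i$ yield $\frac1n\sum_i (1,x_i^{\mathrm{T}})^{\mathrm{T}}\{\mathbb{E}(y_i\mid x_i)-\alpha-\beta\mathbb{E}(D_i\mid x_i)-x_i^{\mathrm{T}}\gamma\}=0$. Subtract $e$ times the intercept row from the $Z_i$ row. Because $\mathbb{E}(Z_i g\mid x_i)=e\,\mathbb{E}(g\mid Z_i=1,x_i)$, the $x_i^{\mathrm{T}}\gamma$ and $\alpha$ terms cancel, which leaves
\[
\frac1n\sum_i\bigl\{\mathbb{E}(y_i\mid Z_i=1,x_i)-\mathbb{E}(y_i\mid Z_i=0,x_i)-\beta[\mathbb{E}(D_i\mid Z_i=1,x_i)-\mathbb{E}(D_i\mid Z_i=0,x_i)]\bigr\}=0.
\]
Under monotonicity, the conditional Wald numerator is $\tau_{\textup{c}}(x_i)\pi_{\textup{c}}(x_i)$ and the denominator is $\pi_{\textup{c}}(x_i)$. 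This gives \eqref{eq:beta_IV_m}.

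For the bias term, write $g_i=\mathbb{E}[\psi(w_i;\theta^{\textup{m}})\mid x_i]$ with components $(g_{i0}, g_{i1}, g_{ix})$. Let $r_z(x_i)=\mathbb{E}(y_i-\beta^{\textup{m}}D_i\mid Z_i=z,x_i)-\alpha^{\textup{m}}-x_i^{\mathrm{T}}\gamma^{\textup{m}}$. Then $g_{i0}=e r_1+(1-e)r_0$, $g_{i1}=e r_1$, and $g_{ix}=x_i g_{i0}$. Also $r_1-r_0=\pi_{\textup{c}}(x_i)(\tau_{\textup{c}}(x_i)-\beta^{\textup{m}})$. I then need the second row of $(\Gamma^{\textup{m}})^{-1}$. $\Gamma^{\textup{m}}=-\frac1n\sum_i\mathbb{E}[\tilde Z_{i,\textsc{f}}(1,D_i,x_i^{\mathrm{T}})\mid x_i]$, and after partialling out $(1,x_i)$ from $Z_i$ the residualized instrument is $Z_i-e$, exactly as in the OLS case. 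So the relevant row is $-\{e(1-e)\bar\pi_{\textup{c}}\}^{-1}(-e,1,0)$ up to sign. Here $\bar\pi_{\textup{c}}=n^{-1}\sum_i\pi_{\textup{c}}(x_i)$, and $e(1-e)\bar\pi_{\textup{c}}$ is the residualized first-stage covariance. This row applied to $g_i$ gives $(g_{i1}-e g_{i0})/(e(1-e)\bar\pi_{\textup{c}})=(r_1-r_0)/\bar\pi_{\textup{c}}$. Squaring and averaging yields exactly \eqref{eq:bias_IV}.

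The main obstacle is verifying that the $(2,2)$ entry of $(\Gamma^{\textup{m}})^{-1}$ has this clean form, i.e. that the partialled-out instrument is $Z_i-e$. This requires $\frac1n\sum_i \mathbb{E}(Z_i\mid x_i)(1,x_i^{\mathrm{T}})=e\frac1n\sum_i(1,x_i^{\mathrm{T}})$, which holds by independence. The residualized denominator $\frac1n\sum_i\mathbb{E}[(Z_i-e)D_i\mid x_i]=e(1-e)\bar\pi_{\textup{c}}$ must also be handled carefully. I will do this with the block-inverse (Frisch--Waugh) formula applied to the $3\times3$ block partition of $\Gamma^{\textup{m}}$. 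The normality statement and the remainder term then follow directly from Theorem \ref{thm:ME_mixed}.
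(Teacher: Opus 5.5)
Your proposal is correct and follows the paper's proof. Both arguments invoke Theorem \ref{thm:ME_mixed}, identify $\beta^{\textup{m}}_{\textsc{iv}}$ from the conditional moment equations, and obtain $B^{\textup{m}}_{\textsc{iv,f}}$ by applying the second row $(-e,1,0)/\{e(1-e)\,n^{-1}\sum_{i}\pi_{\textup{c}}(x_i)\}$ of the inverse Jacobian to $\mathbb{E}[\psi(w_i;\theta^{\textup{m}})\mid x_i]$, using $e\,\mathbb{E}(\eta_\alpha\mid x_i)-\mathbb{E}(\eta_\beta\mid x_i)=-e(1-e)\pi_{\textup{c}}(x_i)\{\tau_{\textup{c}}(x_i)-\beta^{\textup{m}}_{\textsc{iv}}\}$.

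The differences are cosmetic. The paper centers the covariates at $\bar x$ so that the Jacobian is block triangular; you instead check directly that $(-e,1,0)$ annihilates the intercept and covariate columns, which works equally well because the $\beta$-entry is invariant to that reparametrization. The paper also writes out an explicit closed form for $V^{\textup{m}}_{\textsc{iv,f}}$, which the statement does not require.
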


The coefficient of $D_i$ from the 2SLS fit in \eqref{eq:IV} identifies $\beta_{\textsc{iv}}^{\textup{m}}$ under mixed design. 
Theorem \ref{thm:AsyDist_IV_mixed} indicates that under mixed design, $\hat{V}_{\textsc{hw,iv,f}}$ is conservative for $V_{\textsc{iv,f}}^{\textup{m}}$ with asymptotic bias $B^{\textup{m}}_{\textsc{iv,f}}$ in \eqref{eq:bias_IV}.

One can also consider the interacted 2SLS specification with
complier-centered covariates,
\begin{equation}
\operatorname{tsls}\left(
y_i \sim 1+D_i+\tilde x_i + D_i\tilde x_i 
\mid
1+Z_i+\tilde x_i + Z_i\tilde x_i 
\right),
\label{eq:IV_L}
\end{equation}
where 
\[
\tilde x_i=x_i-\mu_c,
\qquad
\mu_c=\mathbb E[x_i\mid D_i(1)>D_i(0)].
\]
This specification instruments both $D_i$ and its interactions
$D_i\tilde x_i$ by $Z_i$ and $Z_i\tilde x_i$, respectively. 
\citet{DingFellerMiratrix2019} study fully interacted regression and 2SLS fit from
a finite-population, randomization-based perspective. By contrast,
\citet{ZhaoDingLi2026} analyze interacted 2SLS in an i.i.d. superpopulation
framework with conditionally valid instruments.
As in Lin's fully interacted adjustment for completely randomized experiments, centering the covariates creates an additional source of uncertainty under random design. Therefore, the variance estimator obtained by mechanically applying the usual HW formula to the displayed 2SLS moments generally ignores the uncertainty from estimating $\bar x$ and is anti-conservative. A consistent random-design variance estimator can instead be obtained by augmenting the $Z$-estimation to include the moment condition for $\bar x$, analogously to the correction in Theorem \ref{thm:FRA_random}.

Under mixed design, the covariates are conditioned on, so centering does not create additional sampling uncertainty. In this case, the usual HW variance estimator based on the 2SLS moments remains conservative, in the same sense as the mixed-design results above. Since this fully interacted IV specification is less central to our paper, we focus on the additive-covariate IV regression in Theorems \ref{thm:AsyDist_IV_random_x} and \ref{thm:AsyDist_IV_mixed}.

\subsection{Simulation results}
We conduct a Monte Carlo simulation with $n = 1000$ observations per sample and $B = 5000$ simulation replications. 
The treatment is randomized with probability: $\mathbb{P}(Z_i = 1) = 0.5$.
We assume that the population consists of three latent groups: Compliers ($\textup{c}$), Always Takers ($\textup{a}$), and Never Takers (\textup{n}), i.e., $U_i \in \{\textup{c},\textup{a},\textup{n}\}$.
These groups occur with probabilities: $(\mathbb{P}(U_i = \textup{c}), \mathbb{P}(U_i = \textup{a}), \mathbb{P}(U_i = \textup{n})) = (0.7,0.2,0.1)$. 
The covariate $x_i$ is drawn independently of $U_i$, with
$x_i\overset{\mathrm{i.i.d.}}{\sim}\operatorname{Unif}(0,1)$.
The potential outcomes are defined as:
\begin{align*}
Y_{\textup{c},i}(1) 
&= \mu_\textup{c}(1) + \gamma_1 x_i^2 + \varepsilon_{\textup{c},i}(1), \quad \varepsilon_{\textup{c},i}(1) \sim \mathcal{N}(0,1), \\
Y_{\textup{c},i}(0) 
&= \mu_\textup{c}(0) + \gamma_0 x_i + \varepsilon_{\textup{c},i}(0) , \quad \varepsilon_{\textup{c},i}(0)  \sim \mathcal{N}(0,1), \\
Y_{\textup{a},i}(1) 
&= \mu_{\textup{a}}(1) + \gamma_1 x_i^2 + \varepsilon_{\textup{a},i}(1) , \quad \varepsilon_{\textup{a},i}(1)  \sim \mathcal{N}(0,1), \\
Y_{\textup{n},i}(0) 
&= \mu_{\textup{n}}(0) + \gamma_0 x_i^2 + \varepsilon_{\textup{n},i}(0), \quad \varepsilon_{\textup{n},i}(0) \sim \mathcal{N}(0,1),
\end{align*}
where the parameters are set as $(\mu_{\textup{c}}(1), \mu_{\textup{c}}(0), \mu_{\textup{a}}(1), \mu_{\textup{n}}(0)) = (2,1,3,0)$ and $(\gamma_0,\gamma_1)=(1.5,2)$.
The observed outcome is defined as: 
\[
Y_i = Z_i 1\{U_i = \textup{c}\} Y_{\textup{c},i}(1) + (1-Z_i) 1\{U_i = \textup{c}\} Y_{\textup{c},i}(0) + 1\{U_i = \textup{a}\} Y_{\textup{c},i}(1) + 1\{U_i = \textup{n}\} Y_{\textup{n},i}(0).
\]
We present the results for the IV regression without and with covariates in Table \ref{table:IV_simulation}.
As established in Theorem \ref{thm:AsyDist_IV_random}, the estimator from the 2SLS fit without covariates is consistent under the random design, and the HW variance estimator is also consistent in this setting.
Similarly, Theorems \ref{thm:AsyDist_IV_random_x} and \ref{thm:AsyDist_IV_mixed} verifies that the estimator from the 2SLS fit with covariates is consistent under both the random and mixed designs. 
The HW variance estimator remains consistent under the random design but is conservative under the mixed design.

\newcolumntype{P}[1]{>{\centering\arraybackslash}p{#1}}
\newcolumntype{Y}{>{\centering\arraybackslash}X}

\begin{table}[t]
\caption{Simulation results of IV regression}
\label{table:IV_simulation}
\centering
\small

\setlength{\tabcolsep}{3pt}
\renewcommand{\arraystretch}{1.05}

\begin{tabularx}{0.99\textwidth}{
@{}
P{0.18\textwidth}|
P{0.09\textwidth}|
P{0.10\textwidth}|
Y|Y|Y|Y
@{}
} 
\hline	
Specification 
& Design
& estimand
& estimate
& asym.\ SE
& HW SE
& coverage \\
\hline	
no $X$	&	Random	&	2.167	&	2.166	&	0.204	&	0.205	&	0.954	\\
\cline{1-7}	
\multirow{2}{*}{Fisher}	&	Random	&	2.167	&	2.162	&	0.148	&	0.147	&	0.950	\\
\cline{2-7}	
&	Mixed	&	2.337	&	2.333	&	0.126	&	0.138	&	0.969	\\
\bottomrule
\end{tabularx}
\end{table}

\section{Proof of results in Section \ref{sec:ME}}\label{sec:Proof of sec ME}

\begin{lemma}
\label{lem:conditional-ulln}
Let $\{(w_i)\}_{i=1}^n$ be
i.i.d. Suppose that $a(w;\beta)$ is continuous
in $\beta\in\Theta$ almost surely, where $\Theta$
is compact, and $\mathbb{E}_{\circ}[\sup_{b \in\Theta} \|a(w;\beta)\|]< \infty$.
Then
\[
\sup_{b \in\Theta}
\left\|
\frac1n\sum_{i=1}^n
\left[
a(w_i; b)
-
\mathbb{E}_{\circ}\{
a(w_i; b)
\}
\right]
\right\|
=
o(1;\mathbb{P}_{\circ})
\]
with probability measure $\mathbb{P}_{\circ}$.
\end{lemma}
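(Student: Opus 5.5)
This is a uniform law of large numbers under the measure $\mathbb{P}_\circ$. I would prove it with the standard bracketing argument of \citet{NeweyMcFadden1994} (their Lemma 2.4), applied conditionally.

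Under the fixed and mixed designs, the $w_i$ are independent but not identically distributed given the conditioned regressors. The centering $\mathbb{E}_\circ\{a(w_i;b)\}$ then depends on $i$ through $x_i$ or $x_{i2}$. The argument must therefore rely only on independence and a uniform domination condition, not on identical distribution. I read the domination hypothesis as
\[
\frac1n\sum_{i=1}^n \mathbb{E}_\circ\Bigl[\sup_{b\in\Theta}\|a(w_i;b)\|\Bigr]
\]
being bounded, and its tail versions as uniformly integrable. Under the random design this is just $\mathbb{E}[\sup_b\|a(w;b)\|]<\infty$ for i.i.d.\ data.

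\textbf{Step 1: envelopes.} Fix $\epsilon>0$. For $b\in\Theta$ and $\rho>0$, define
\[
u(w;b,\rho)=\sup_{\|b'-b\|\le\rho}\|a(w;b')-a(w;b)\|.
\]
Almost-sure continuity gives $u(w;b,\rho)\to 0$ as $\rho\to 0$. The domination $u\le 2\sup_{b}\|a(w;b)\|$ then lets dominated convergence give $\mathbb{E}_\circ u(w_i;b,\rho)\to0$, averaged over $i$.

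\textbf{Step 2: finite cover.} By compactness of $\Theta$, choose finitely many centers $b_1,\dots,b_K$ with radii $\rho_k$ such that
\[
\frac1n\sum_i\mathbb{E}_\circ u(w_i;b_k,\rho_k)<\epsilon\quad\text{for each }k.
\]
For $b$ in the $k$th ball,
\[
\Bigl\|\frac1n\sum_i\{a(w_i;b)-\mathbb{E}_\circ a(w_i;b)\}\Bigr\|\le \Bigl\|\frac1n\sum_i\{a(w_i;b_k)-\mathbb{E}_\circ a(w_i;b_k)\}\Bigr\| +\frac1n\sum_i u(w_i;b_k,\rho_k)+\frac1n\sum_i\mathbb{E}_\circ u(w_i;b_k,\rho_k).
\]

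\textbf{Step 3: pointwise LLN.} Apply a weak LLN for independent, non-identically distributed variables at each of the finitely many points $b_k$. The same LLN applies to the variables $u(w_i;b_k,\rho_k)$, which lets me replace $\frac1n\sum_i u$ by its mean up to $o(1;\mathbb{P}_\circ)$. The supremum over $\Theta$ is then at most $2\epsilon+o(1;\mathbb{P}_\circ)$, and letting $\epsilon\downarrow0$ finishes the proof.

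\textbf{Main obstacle.} The difficulty lies in Steps 1 and 3 under conditioning. I need a uniform integrability condition so that the Markov/truncation LLN (as in Feller's or Chung's WLLN for triangular arrays) holds conditionally on $X$ or $X_2$ for almost every realization of the conditioned regressors. I also need the dominated-convergence step to hold uniformly over $i$.

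My first approach would be to use the i.i.d.\ structure of the unconditional data. Since $(w_i)$ are i.i.d., the conditional laws given $x_{i2}$ are draws of a single kernel. Quantities such as $\frac1n\sum_i \mathbb{E}_\circ[\sup_b\|a\|\,1(\sup_b\|a\|>M)]$ are then empirical averages of a fixed integrable function of $x_{i2}$. By the ordinary strong LLN these converge a.s.\ to $\mathbb{E}[\sup_b\|a\|1(\cdot>M)]$, which tends to $0$ as $M\to\infty$. This gives the needed conditional uniform integrability almost surely, and likewise the a.s.\ convergence of the averaged envelope means in Step 1. Truncation at level $M$ then yields a Chebyshev bound for the truncated parts, and the conditional-UI bound controls the remainders.
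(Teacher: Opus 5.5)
Your argument is correct, but it takes a different route from the paper.

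\textbf{The paper's route.} The paper never works under the conditional (independent, non-identically distributed) law. It views $g(w;b)=a(w;b)-\mathbb{E}_\circ\{a(w;b)\}$ as a fixed function of the full observation $w_i$, which contains the conditioning variables. Under the joint measure $\mathbb{P}$ the $g(w_i;b)$ are then i.i.d.\ with mean zero and integrable envelope $\sup_b\|a(w;b)\|+\mathbb{E}_\circ[\sup_b\|a(w;b)\|]$. Newey--McFadden's Lemma~2.4 gives $A_n=\sup_b\|n^{-1}\sum_i g(w_i;b)\|=o_{\mathbb{P}}(1)$ unconditionally. Markov's inequality, $\mathbb{P}\{\mathbb{P}_\circ(A_n>\varepsilon)>\eta\}\le\eta^{-1}\mathbb{P}(A_n>\varepsilon)\to0$, then transfers this to $\mathbb{P}_\circ$.

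This is far shorter and needs no triangular-array LLN or conditional uniform integrability. However, it only yields $\mathbb{P}_\circ(A_n>\varepsilon)\to0$ in $\mathbb{P}$-probability over the conditioning variables. Also, applying Lemma~2.4 to $g$ tacitly requires $b\mapsto\mathbb{E}_\circ\{a(w;b)\}$ to be continuous; this is true by conditional dominated convergence, but the paper does not state it.

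\textbf{What your route buys.} You run the bracketing argument under $\mathbb{P}_\circ$, with the strong LLN over $x_{i2}$ supplying the Ces\`aro uniform integrability and the averaged envelope bounds. This gives the stronger conclusion that $\mathbb{P}_\circ(A_n>\varepsilon)\to0$ for almost every realization of the conditioning sequence. That matches the almost-sure notion of conditional convergence the paper uses in its main theorems. It also needs no continuity of the conditional mean, because you bound $\|\mathbb{E}_\circ a(w_i;b)-\mathbb{E}_\circ a(w_i;b_k)\|$ by $\mathbb{E}_\circ u(w_i;b_k,\rho_k)$ directly.

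\textbf{One point to tighten.} Step~2 as phrased lets the cover depend on $n$. Instead, choose the finite cover from the unconditional means $\mathbb{E}\,u(w;b,\rho)$, so that it depends neither on $n$ nor on $X_2$. Only then invoke the strong LLN, over countably many $\epsilon$, $M$, and balls, to get $n^{-1}\sum_i\mathbb{E}_\circ u(w_i;b_k,\rho_k)<\epsilon$ for all large $n$ almost surely.

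Finally, both proofs genuinely use the unconditional bound $\mathbb{E}[\sup_b\|a(w;b)\|]<\infty$ of Assumption~\ref{asu:ME}(b), not merely the conditional version written in the statement.
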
 

\begin{proof}
Define $g(w_i;\beta)
=
a(w_i,\beta)
-
\mathbb{E}_\circ\{
a(w_i;\beta)
\}$.
Under the joint probability measure $\mathbb P$, the class $\left\{
g(\cdot; b):
b\in\Theta
\right\}$
is i.i.d. and has the integrable envelope
\[
\sup_{b\in\Theta}
\|a(w_i; b)\|
+
\mathbb{E}_{\circ}\!\left[
\sup_{b\in\Theta}
\|a(w_i;  b)\|
\right].
\]
Indeed, the expectation of this envelope is bounded by $2\mathbb{E}\!\left[
\sup_{\boldsymbol b\in\Theta}
\|a(w_i;  b)\|
\right]
<\infty$.
Therefore, the dominated uniform law of
large numbers \citep[Lemma~2.4]{NeweyMcFadden1994} gives
\[
A_n
:=
\sup_{b\in \Theta}
\left\|
\frac1n\sum_{i=1}^n
g(w_i;b)
\right\|
=o_\mathbb{P}(1).
\]
For every $\varepsilon,\eta>0$,
\[
\begin{aligned}
&\mathbb{P}\left\{
\mathbb{P}_{\circ}(A_n>\varepsilon)
>\eta
\right\}
\leq
\frac1\eta
\mathbb{E}\left[
\mathbb{P}_{\circ}(A_n>\varepsilon)
\right]
=
\frac1\eta\mathbb{P}(A_n>\varepsilon)
\to 0.
\end{aligned}
\]
This proves the result.
\end{proof}

\begin{lemma}\label{lemma:GMM_lemma}
If $\{w_i\}_{i=1}^n$ is i.i.d., $a(w; b)$ is continuous at $b \in \Theta$ with probability one, and there is a neighborhood $\mathscr{N}$ of $\beta$ such that $\mathbb{E}[\sup_{\beta\in\mathscr{N}} \|a(w;\beta)\|]< \infty$,
then for any ${\beta}_n = \beta + o(1;\mathbb{P}_{\circ})$, then $\frac1n\sum_{i=1}^n
a(w_i;{\beta}_n)
=
\frac1n\sum_{i=1}^n
\mathbb{E}_\circ\left[
a(w_i;\beta)
\right]
+
o(1;\mathbb{P}_{\circ})$
with probability measure $\mathbb{P}_{\circ}$.
If $\tilde{\beta}_n
-
\beta_n
= o(1;\mathbb{P}_{\circ})$, then
$\frac1n\sum_{i=1}^n
a(w_i;\tilde{\beta}_n)
=
\frac1n\sum_{i=1}^n
\mathbb{E}_\circ\left[
a(w_i;\beta_n)
\right]
+
o(1;\mathbb{P}_{\circ})$
with probability measure $\mathbb{P}_{\circ}$.
\end{lemma}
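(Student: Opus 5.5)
The plan is the standard three-term decomposition: sampling fluctuation at the true parameter, continuity in $b$ near $\beta$, and the drift of $\beta_n$. Write
\[
\frac1n\sum_{i=1}^n a(w_i;\beta_n)-\frac1n\sum_{i=1}^n \mathbb{E}_\circ[a(w_i;\beta)]
= \frac1n\sum_{i=1}^n\{a(w_i;\beta_n)-a(w_i;\beta)\}
+ \frac1n\sum_{i=1}^n\{a(w_i;\beta)-\mathbb{E}_\circ[a(w_i;\beta)]\}.
\]

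The second term is a centered average of independent terms under $\mathbb{P}_\circ$ (i.i.d.\ unconditionally; conditionally independent given the conditioned regressors in the fixed and mixed designs). Lemma~\ref{lem:conditional-ulln} applied with the singleton parameter set $\{\beta\}$ gives that it is $o(1;\mathbb{P}_\circ)$. That lemma is used exactly as stated, with its envelope $\|a(w;\beta)\|\le \sup_{b\in\mathscr{N}}\|a(w;b)\|$ being integrable, and its proof already transfers the unconditional $o_{\mathbb{P}}(1)$ to $o(1;\mathbb{P}_\circ)$ via Markov's inequality.

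For the first term, define the modulus $\Delta(w,\delta)=\sup_{\|b-\beta\|\le\delta}\|a(w;b)-a(w;\beta)\|$ for $\delta$ small enough that the ball lies in $\mathscr{N}$. By almost-sure continuity at $\beta$, $\Delta(w,\delta)\to0$ a.s.\ as $\delta\downarrow0$, and $\Delta(w,\delta)\le 2\sup_{b\in\mathscr{N}}\|a(w;b)\|$. Dominated convergence then gives $\mathbb{E}[\Delta(w,\delta)]\to0$. Fix $\epsilon,\eta>0$ and choose $\delta$ with $\mathbb{E}[\Delta(w,\delta)]\le\epsilon\eta/2$. On the event $\{\|\beta_n-\beta\|\le\delta\}$, the first term is bounded in norm by $n^{-1}\sum_i\Delta(w_i,\delta)$. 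Hence
\[
\mathbb{P}(\|\text{first term}\|>\epsilon)\le \mathbb{P}(\|\beta_n-\beta\|>\delta)+\eta/2,
\]
using Markov's inequality for the second piece. The first piece tends to zero because $\beta_n\to\beta$ in probability: $\mathbb{P}_\circ$-convergence implies unconditional convergence by bounded convergence on the conditional probabilities. Converting back to $o(1;\mathbb{P}_\circ)$ uses the same Markov-on-conditional-probability step as in Lemma~\ref{lem:conditional-ulln}.

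For the second statement, write
\[
\frac1n\sum_i a(w_i;\tilde\beta_n)-\frac1n\sum_i\mathbb{E}_\circ[a(w_i;\beta_n)]
=\Big[\frac1n\sum_i a(w_i;\tilde\beta_n)-\frac1n\sum_i\mathbb{E}_\circ a(w_i;\beta)\Big]
-\Big[\frac1n\sum_i\mathbb{E}_\circ a(w_i;\beta_n)-\frac1n\sum_i\mathbb{E}_\circ a(w_i;\beta)\Big].
\]
Here I read $\beta_n$ as a sequence converging to $\beta$, as in the first part, so $\tilde\beta_n\to\beta$ as well and the first bracket is $o(1;\mathbb{P}_\circ)$ by the first statement. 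The second bracket is bounded by $n^{-1}\sum_i\mathbb{E}_\circ[\Delta(w_i,\|\beta_n-\beta\|)]$. When $\beta_n$ is nonrandom, or a function only of the conditioned variables, this is controlled by dominated convergence as above.

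The main obstacle is this last step when $\beta_n$ is itself random under $\mathbb{P}_\circ$, for example $\beta_n=\beta^{\textup{m}}$ depending on $X_2$. In that case I would bound the bracket by $\sup_{\|b-\beta\|\le\delta} n^{-1}\sum_i\mathbb{E}_\circ\|a(w_i;b)-a(w_i;\beta)\|$. That supremum is small by the same modulus argument, provided the conditional expectation of the envelope $n^{-1}\sum_i\mathbb{E}_\circ\sup_{b\in\mathscr{N}}\|a(w_i;b)\|$ is stochastically bounded, which holds by Markov's inequality since its unconditional expectation is finite.
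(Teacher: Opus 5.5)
Your argument is correct. For the fixed and mixed designs it takes a different route from the paper.

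\textbf{The paper's route.} It applies Lemma~\ref{lem:conditional-ulln} uniformly over a compact neighborhood $\mathscr{N}\subset\Theta$ (display~\eqref{eq:sup}). It then asserts, without proof, that $b\mapsto n^{-1}\sum_i\mathbb{E}_\circ[a(w_i;b)]$ is stochastically equicontinuous at $\beta$ (display~\eqref{eq:converge_con}), and evaluates the uniform bound at $\beta_n$. It invokes Newey--McFadden's Lemma~4.3 only for the random design.

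\textbf{Your route.} You run the Lemma~4.3 modulus-of-continuity argument in every design, using a law of large numbers only at the single point $\beta$. You also make explicit that $o(1;\mathbb{P}_\circ)$, meaning conditional probabilities tending to zero in probability, is equivalent to unconditional $o_{\mathbb{P}}(1)$: bounded convergence gives one direction and Markov the other.

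\textbf{What each buys.} Your route needs almost-sure continuity only at $\beta$. Lemma~\ref{lem:conditional-ulln} applied over $\mathscr{N}$ needs it on the whole neighborhood. Your route also actually proves the equicontinuity step that the paper leaves as an assertion. The paper's route delivers a uniform-in-$b$ statement, which is more than the lemma needs. For the second statement, your decomposition coincides with the paper's.

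\textbf{A small correction to your last paragraph.} $\beta^{\textup{m}}$ depends only on $X_2$, so under $\mathbb{P}_{(y,x_1)\mid x_2}$ it falls in the case you had already settled. A genuinely random $\beta_n$ needs nothing extra either, reading $\mathbb{E}_\circ[a(w_i;\beta_n)]$ as the map $b\mapsto\mathbb{E}_\circ[a(w_i;b)]$ evaluated at $\beta_n$. On $\{\|\beta_n-\beta\|\le\delta\}$ the bracket is bounded by $n^{-1}\sum_i\mathbb{E}_\circ[\Delta(w_i,\delta)]$, whose unconditional mean is $\mathbb{E}[\Delta(w,\delta)]$. The same Markov bound therefore closes it. Stochastic boundedness of the envelope's conditional mean is not the operative fact.
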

\begin{proof}
Under random design with \(\beta^{\diamond} = \beta^{\textup{r}}\), and \(\mathbb{E}_\circ = \mathbb{E}_{(y,x)}\) and \(\mathbb{P}_\circ = \mathbb{P}_{(y,x)}\), the first conclusion follows from Lemma 4.3 of \cite{NeweyMcFadden1994}. 
Under mixed and fixed designs, choose a compact neighborhood
$\mathscr{N}\subset\Theta$ of $\beta$.
By Lemma~\ref{lem:conditional-ulln},
\begin{equation}
\sup_{b\in\mathscr{N}}
\left\|
\frac1n\sum_{i=1}^n
a(w_i,\beta)
-
\frac1n\sum_{i=1}^n
\mathbb{E}_\circ[a(w_i,\beta)]
\right\|
=
o(1;\mathbb{P}_{\circ}).
\label{eq:sup}
\end{equation}
Moreover, continuity in $b \in \Theta$ and the integrable local
envelope imply that $\frac1n\sum_{i=1}^n
\mathbb{E}_\circ[a(w_i,b)]$ is stochastically
equicontinuous at $\beta$. In particular, if
$\beta_n=\beta+o(1;\mathbb{P}_{\circ})$, then
\begin{equation}
\frac1n\sum_{i=1}^n
\mathbb{E}_\circ[a(w_i,\beta_n)]
=
\frac1n\sum_{i=1}^n
\mathbb{E}_\circ[a(w_i,\beta)]
+
o(1;\mathbb{P}_{\circ}).
\label{eq:converge_con}
\end{equation}
Since $\beta_n\in\mathscr{N}$ with probability approaching one, evaluating \eqref{eq:sup} at $\beta_n$ and using
\eqref{eq:converge_con} gives
\[
\frac1n\sum_{i=1}^n
a(w_i;\beta_n)
=
\frac1n\sum_{i=1}^n
\mathbb{E}_\circ[a(w_i; \beta)]
+
o(1;\mathbb{P}_{\circ}).
\]
Finally, if
$\tilde{\beta}_n-\beta_n
=o(1;\mathbb{P}_{\circ})$, then
$\tilde{\beta}_n
=\beta+o(1;\mathbb{P}_{\circ})$.
Applying the first result to
$\tilde{\beta}_n$ and using \eqref{eq:converge_con} yields
\[
\begin{aligned}
\frac1n\sum_{i=1}^n
a(w_i;\tilde{\beta}_n)
&=
\frac1n\sum_{i=1}^n
\mathbb{E}_\circ[a(w_i,\beta)]
+
o(1;\mathbb{P}_{\circ})
=
\frac1n\sum_{i=1}^n
\mathbb{E}_\circ[a(w_i,\beta_n)]
+
o(1;\mathbb{P}_{\circ}),
\end{aligned}
\]
which proves the result under mixed and fixed designs.
\end{proof}

\begin{proof}[Proof of Theorem \ref{thm:ME_random}]
The asymptotic normality is a standard result.
We adapt the proof of Theorems 2.6 and 3.4 in \cite{NeweyMcFadden1994} on the consistency and the asymptotic normality of the GMM estimator to the $Z$-estimation.

By Assumption~\ref{asu:ME}(b) and the dominated uniform law of
large numbers \citep[Lemma~2.4]{NeweyMcFadden1994},
\[
\sup_{b\in\Theta}
\left\|
\bar\psi_n(b)-\mathbb{E}[\psi(w_i,b)]
\right\|
= o_{\mathbb{P}}(1).
\]
By Assumptions~\ref{asu:ME}(a)--(b)
for every $\varepsilon>0$, $c_\varepsilon
:=
\inf_{\substack{b\in\Theta\\
\|b-\beta^\textup{r}\|\geq\varepsilon}}
\|\mathbb{E}[\psi(w_i,b)] \|
>0$.
Since by definition, $\bar\psi_n(\hat\beta)=0$ with probability approaching one, the
uniform convergence above implies $\hat\beta = \beta^\textup{r} + o_{\mathbb{P}}(1)$.
The integral form of Taylor's
theorem gives
\[
0=\bar\psi_n(\hat\beta)
=
\bar\psi_n(\beta^\textup{r})
+ \tilde\Gamma_n (\hat\beta-\beta^\textup{r}),
\]
where
\[
\tilde\Gamma_n
=
\int_0^1
\nabla_b \bar\psi_n\!\left(\beta^\textup{r}+t(\hat\beta-\beta^\textup{r})\right)
\,\textup{d}t .
\]
Multiplying through by $\sqrt n$ and solving yields
\[
\sqrt n(\hat\beta-\beta^\textup{r})
=
-\tilde\Gamma_n^{-1}\sqrt n\,\bar\psi_n(\beta^\textup{r}).
\]
By Assumptions~\ref{asu:ME}(c)--(d), the dominated uniform law of large numbers
\citep[Lemma~2.4]{NeweyMcFadden1994} gives, for a neighborhood
$\mathscr N$ of $\beta^\textup{r}$,
\[
\sup_{b\in\mathscr N}
\left\|
\hat\Gamma_n(b)-\Gamma(b)
\right\|
=o_{\mathbb P}(1).
\]
Since $\hat\beta=\beta^\textup{r}+o_{\mathbb P}(1)$, the line segment between
$\beta^\textup{r}$ and $\hat\beta$ lies in $\mathscr N$ with probability approaching
one. Hence, 
we have $\tilde\Gamma_n
=
\Gamma(\beta^\textup{r})+o_{\mathbb P}(1)$.
The asymptotic normality then follows by the Slutsky's theorem and Assumption~\ref{asu:ME}(e).

It remains to establish consistency of the HW variance estimator $\hat{V}_{\textsc{hw}}$.
By Lemma \ref{lemma:GMM_lemma} with $a(w; \beta) = \psi(w; \beta)\psi(w; \beta)^{\mathrm{T}}$, we have $\hat{\Delta} = \Delta^\textup{r} + o_{\mathbb{P}}(1)$.
By Assumption \ref{asu:ME}(d) and consistency of $\hat{\beta}$, with probability approaching one,
\[
\|\hat{\Gamma}-\Gamma^\textup{r}\| \leqslant\|\hat{\Gamma}-\Gamma(\hat{\beta})\|+\|\Gamma(\hat{\beta})-\Gamma^\textup{r}\| \leqslant \sup _{b \in \mathscr{N}}\left\| \frac{\partial \bar{\psi}(b)}{\partial b^{\mathrm{T}}} - \Gamma(b)\right\|+ \|\Gamma(\hat{\beta})-\Gamma^\textup{r} \| = o_{\mathbb{P}}(1), 
\]
so that $\hat{\Gamma} = \Gamma^\textup{r} + o_{\mathbb{P}}(1)$.
The conclusion then follows from continuity of matrix inversion and multiplication.
\end{proof}

\begin{proof}[Proof of Theorem \ref{thm:ME_fixed}]
Define the conditional population moment function
\[
\Psi^{\mathrm f}_n(b)
=
\frac1n\sum_{i=1}^n
\mathbb{E}\left[
\psi(y_i,x_i;b)
\mid x_i
\right].
\]
We suppress its dependence on $n$ and $X$ below. By
definition, $\Psi^{\mathrm f}_n(\beta^{\mathrm f})
=
0$.
By Assumption \ref{asu:ME}(b) and Lemma \ref{lem:conditional-ulln},
\[
\sup_{b\in\Theta}
\left\|
\bar\psi_n(b)
-
\Psi^{\mathrm f}_n(b)
\right\|
=
o(1;{\mathbb{P}}_{y\mid x}).
\]
By Assumptions \ref{asu:ME}(a)--(b), for every $\varepsilon>0$, $c_{n,\varepsilon}^{\mathrm f}
:=
\inf_{\substack{b\in\Theta\\
\|b-\beta^{\mathrm f}\|
\geq\varepsilon}}
\left\|
\Psi^{\mathrm f}_n(b)
\right\|$
is bounded away from zero with probability approaching one.
Since
$\bar\psi_n(\hat{\beta})= 0$
with probability approaching one, the preceding uniform convergence
implies $\hat{\beta}
=
\beta^{\mathrm f}
+
o(1;{\mathbb{P}}_{y\mid x})$.
Using the integral form of Taylor's theorem,
\[
0=\bar\psi_n(\hat\beta)
=
\bar\psi_n(\beta^{\mathrm f})
+
\tilde\Gamma_n(\hat\beta-\beta^{\mathrm f}),
\]
where
\[
\tilde\Gamma_n
=
\int_0^1
\nabla_b\bar\psi_n\!\left(\beta^{\mathrm f}
+t(\hat\beta-\beta^{\mathrm f})\right)\,\textup{d}t.
\]
Multiplying through by $\sqrt n$ and solving gives
\begin{equation}
\sqrt n(\hat\beta-\beta^{\mathrm f})
=
-\tilde\Gamma_n^{-1}\sqrt n\,\bar\psi_n(\beta^{\mathrm f}).
\label{eq:normal}
\end{equation}
By Assumptions~\ref{asu:ME}(c)--(d), Lemma \ref{lem:conditional-ulln} applied locally to
$\nabla_b\psi$ gives, for a neighborhood $\mathscr N$ of $\beta^{\mathrm f}$,
\[
\sup_{b\in\mathscr N}
\left\|
\nabla_b\bar\psi_n(b)-\Gamma_n^{\mathrm f}(b)
\right\|
=
o(1;\mathbb P_{y\mid x}).
\]
Since $\hat\beta=\beta^{\mathrm f}+o(1;\mathbb P_{y\mid x})$, the line
segment between $\beta^{\mathrm f}$ and $\hat\beta$ lies in $\mathscr N$
with probability approaching one. 
By the local continuity condition in
Assumption~\ref{asu:ME}(c),
\[
\sup_{t\in[0,1]}
\left\|
\Gamma^{\mathrm f}\!\left(\beta^{\mathrm f}
+t(\hat\beta-\beta^{\mathrm f})\right)
-\Gamma^{\mathrm f}(\beta^{\mathrm f})
\right\|
=o(1;\mathbb P_{y\mid x}).
\]
Therefore, $\tilde\Gamma_n
=
\Gamma^{\mathrm f}
+
o(1;\mathbb P_{y\mid x})$.
Let $\mu_i^{\mathrm f}
=
\mathbb{E}\left[
\psi(y_i,x_i;\beta^{\mathrm f})
\mid x_i
\right]$.
Since $\Psi_n^{\mathrm f}(\beta^{\mathrm f})
=
0$,
we have
\begin{equation}
\sqrt n\,
\bar\psi_n(\beta^{\mathrm f})
=
\frac1{\sqrt n}
\sum_{i=1}^n
\left( \psi(y_i,x_i;\beta^{\mathrm f})
- \mu_i^{\mathrm f} \right).
\end{equation}
Conditional on $X$, the variables
$\{\psi(y_i,x_i;\beta^{\mathrm f})
- \mu_i^{\mathrm f}\}_{i=1}^n$ are independent, have
conditional mean zero, and satisfy
\[
\frac1n\sum_{i=1}^n
\mathbb{E}\left[
\left( \psi(y_i,x_i;\beta^{\mathrm f})
- \mu_i^{\mathrm f} \right)
\left( \psi(y_i,x_i;\beta^{\mathrm f})
- \mu_i^{\mathrm f} \right)^{\mathrm{T}}
\mid x_i
\right]
=
\Delta^{\mathrm f}.
\]
Moreover, by Assumption~\ref{asu:ME}(d), for some finite constant $C$,
\[
\begin{aligned}
&\frac1{n^{1+\delta/2}}
\sum_{i=1}^n
\mathbb{E}\left[
\|\psi(y_i,x_i;\beta^{\mathrm f})
- \mu_i^{\mathrm f}\|^{2+\delta}
\mid x_i
\right]
\leq
\frac{C}{n^{\delta/2}}
\frac1n\sum_{i=1}^n
\mathbb{E}\left[
\sup_{b\in \mathscr{N}}
\|\psi(y_i,x_i;b)\|^{2+\delta}
\mid x_i
\right]
=o(1;{\mathbb{P}}_{y\mid x}).
\end{aligned}
\]
Thus, the conditional Lyapunov condition holds, and the
Lindeberg--Feller central limit theorem gives
\begin{equation}
(\Delta^{\mathrm f})^{-1/2}
\sqrt n\,
\bar\psi_n(\beta^{\mathrm f})
\mid X
\overset{\textup{d}}{\to}
\mathcal{N}(0, I).
\label{eq:Lindeberg--Feller}
\end{equation}
By Assumptions~\ref{asu:ME}(c)--(d), Lemma \ref{lem:conditional-ulln} applied locally to
$\nabla_{b}\psi$, and the consistency of
$\hat{\beta}$, $\hat{\Gamma}_n(\bar{\beta})
=
\Gamma^{\mathrm f}
+
o(1;{\mathbb{P}}_{y\mid x})$.
Combining \eqref{eq:normal}--\eqref{eq:Lindeberg--Feller} and applying Slutsky's theorem
gives
\[
(V^{\mathrm f})^{-1/2}
\sqrt n
\left(
\hat{\beta}
-
\beta^{\mathrm f}
\right)
\mid X
\overset{\textup{d}}{\to}
\mathcal{N}(0,I).
\] 
Then we prove the conservativeness of HW variance estimator.
By Lemma \ref{lemma:GMM_lemma} with $a(w;\beta) = \psi(w; \beta)\psi(w; \beta)^{\mathrm{T}}$, $\hat{\Delta} = n^{-1}\sum_{i=1}^n  \mathbb{E}\left[\psi(y_i, x_i;\beta^\textup{f})\psi(y_i, x_i;\beta^\textup{f})^{\mathrm{T}} \mid x_{i} \right] + o(1;{\mathbb{P}}_{y\mid x})$. 
By direct algebra,
\begin{align*}
\Delta^\textup{f} 
= \frac{1}{n}\sum_{i=1}^n \mathbb{E}\left[\psi(y_i, x_i;\beta^\textup{f})\psi(y_i, x_i;\beta^\textup{f})^{\mathrm{T}} \mid x_{i} \right] 
-  \frac{1}{n} \sum_{i=1}^n \mathbb{E}\left[ \psi(y_i,x_i;\beta^\textup{f}) \mid x_{i} \right] \mathbb{E}\left[ \psi(y_i,x_i;\beta^\textup{f}) \mid x_{i} \right]^{\mathrm{T}}.
\end{align*}
Similarly, Lemma \ref{lemma:GMM_lemma} applied to $a(w;\beta) = \nabla_b\psi(w,b)$ gives $\hat{\Gamma} = \Gamma^\textup{f} + o(1;{\mathbb{P}}_{y\mid x})$.
The conclusion then follows from continuity of matrix inversion and multiplication.
\end{proof}

\begin{proof}[Proof of Theorem \ref{thm:ME_mixed}]
The proof follows from a similar argument to Theorem \ref{thm:ME_fixed} but with \(\beta^{\diamond} = \beta^{\textup{m}}\), \(\mathbb{E}_\circ = \mathbb{E}_{(y,x_1)|x_2}\) and \(\mathbb{P}_\circ = \mathbb{P}_{(y,x_1)|x_2}\), so we omit it here.  
\end{proof}

\section{Proof of results in Section \ref{sec:i.i.d.Theory}} \label{sec:proof of i.i.d.Theory}

\begin{proof}[Proof of Theorem \ref{thm:random}]
The asymptotic normality of $\hat{\beta}$ follows from applying Theorem \ref{thm:ME_random} with $Z$-estimation $\psi(w_i;b) = x_i (y_i-x_i^{\mathrm{T}} b)$.
Theorem \ref{thm:ME_random} also ensures the consistency of $\hat{V}_{\textsc{ehw}}$ for $V^{\textup{r}}$.

Under random design, the estimand $\beta^{\textup{r}}$ is the solution to the $Z$-estimation:
\begin{align}
0 = \frac{1}{n} \sum_{i=1}^n \mathbb{E} \left[ x_i (y_i-x_i^{\mathrm{T}} b) \right]
\end{align}
with projection error $\varepsilon_i^{\textup{r}} = y_i-x_i^{\mathrm{T}}\beta^{\textup{r}}$.
The asymptotic normality of $\hat{\beta}$ is a standard result with random regressors, which follows from LLN, CLT and Slutsky's theorem. For the middle part of $\hat{V}_{\textsc{ehw}}$, we have
\begin{align}
& \frac{1}{n}\sum_{i=1}^n \hat{\varepsilon}_i^2 x_ix_i^{\mathrm{T}}
= \frac{1}{n}\sum_{i=1}^n \left(y_i-x_i^{\mathrm{T}}{\beta}^{\mathrm{r}} + x_i^{\mathrm{T}}({\beta}^{\mathrm{r}} - \hat{\beta})\right)^2 x_ix_i^{\mathrm{T}} \notag \\
=& \frac{1}{n}\sum_{i=1}^n (y_i-x_i^{\mathrm{T}}{\beta}^{\mathrm{r}})^2 x_ix_i^{\mathrm{T}}
+ \frac{2}{n}\sum_{i=1}^n (y_i-x_i^{\mathrm{T}}{\beta}^{\mathrm{r}})x_i^{\mathrm{T}}({\beta}^{\mathrm{r}} - \hat{\beta}) x_ix_i^{\mathrm{T}}
+ \frac{1}{n}\sum_{i=1}^n \left(x_i^{\mathrm{T}}({\beta}^{\mathrm{r}} - \hat{\beta})\right)^2 x_ix_i^{\mathrm{T}} \notag \\
=& \mathbb{E}\left[(y_i-x_i^{\mathrm{T}}{\beta}^{\mathrm{r}})^2 x_ix_i^{\mathrm{T}}\right] 
+ o(1;\mathbb{P}_{(y,x)}), \label{eq:consistencyEHW_r}
\end{align}
where the last equality follows from LLN and $\hat{\beta} = {\beta}^{\mathrm{r}} + o(1;\mathbb{P}_{(y,x)})$ follows from \eqref{eq:consistencyEHW_r}, LLN, Slutsky's theorem and Assumption \ref{asu:i.i.d.}.

\end{proof}

\begin{proof}[Proof of Theorem \ref{thm:fixed}]
The asymptotic normality of $\hat{\beta}$ follows from applying Theorem \ref{thm:ME_fixed} with $Z$-estimation $\psi(w_i;b) = x_i (y_i-x_i^{\mathrm{T}} b)$.
Theorem \ref{thm:ME_fixed} also ensures that $\hat{V}_{\textsc{ehw}} = V^{\textup{f}} + B^{\textup{f}} +  o(1;\mathbb{P}_{y\mid x})$.

The asymptotic normality is a standard result with non-stochastic regressors, which follows from LLN, CLT and Slutsky's theorem. 
For the middle part of $\hat{V}_{\textsc{ehw}}$, we have
\begin{align}
& \frac{1}{n}\sum_{i=1}^n (y_i-x_i^{\mathrm{T}}\hat{\beta})^2 x_ix_i^{\mathrm{T}}
= \frac{1}{n}\sum_{i=1}^n \left(y_i-x_i^{\mathrm{T}}{\beta}^{\mathrm{f}} + x_i^{\mathrm{T}}({\beta}^{\mathrm{f}} - \hat{\beta})\right)^2 x_ix_i^{\mathrm{T}} \notag \\
=& \frac{1}{n}\sum_{i=1}^n (y_i-x_i^{\mathrm{T}}{\beta}^{\mathrm{f}})^2 x_ix_i^{\mathrm{T}}
+ \frac{2}{n}\sum_{i=1}^n (y_i-x_i^{\mathrm{T}}{\beta}^{\mathrm{f}})x_i^{\mathrm{T}}({\beta}^{\mathrm{f}} - \hat{\beta}) x_ix_i^{\mathrm{T}}
+ \frac{1}{n}\sum_{i=1}^n (x_i^{\mathrm{T}}({\beta}^{\mathrm{f}} - \hat{\beta}))^2  x_ix_i^{\mathrm{T}} \notag \\
=& \frac{1}{n}\sum_{i=1}^n \sigma^2(x_i) x_ix_i^{\mathrm{T}}
+ \frac{1}{n}\sum_{i=1}^n (\mu(x_i)-x_i^{\mathrm{T}}\beta^{\textup{f}})^2 x_ix_i^{\mathrm{T}}
+ o(1;\mathbb{P}_{y\mid x}), \label{eq:consistencyEHW_f}
\end{align}
where the last equality follows from LLN and $\hat{\beta} - {\beta}^{\mathrm{f}} = o(1;\mathbb{P}_{y\mid x})$. The conservative result that $\hat{V}_{\textsc{ehw}} = V^{\textup{f}} + B^{\textup{f}} +  o(1;\mathbb{P}_{y\mid x})$ follows from \eqref{eq:consistencyEHW_f}, Slutsky's theorem and Assumption \ref{asu:i.i.d.}.

\end{proof}

\begin{proof}[Proof of Theorem \ref{thm:AsyDist_mixed}]
The asymptotic normality of $\hat{\beta}$ follows from applying Theorem \ref{thm:ME_mixed} with $Z$-estimation $\psi(w_i;b) = x_i (y_i-x_i^{\mathrm{T}} b)$.
Theorem \ref{thm:ME_mixed} also ensures that $\hat{V}_{\textsc{ehw}} = V^{\textup{m}} + B^{\textup{m}} +  o(1;\mathbb{P}_{(y,x_1)\mid x_2})$.

Notice that
\[
\sqrt{n}(\hat{\beta} - \beta^{\mathrm{m}})
= \left( \frac{1}{n} \sum_{i=1}^n x_ix_i^{\mathrm{T}} \right)^{-1}  \frac{1}{\sqrt{n}} \sum_{i=1}^n x_i (y_i-x_i^{\mathrm{T}}\beta^{\textup{m}}).
\]
To derive the asymptotic distribution of $\hat{\beta}$ under mixed design,
since $\eta(x_{i2}) = \mathbb{E}[x_i (y_i-x_i^{\mathrm{T}}\beta^{\textup{m}} ) \mid x_{i2}] \ne 0$, we need to center each term:
\begin{align*}
\sqrt{n}(\hat{\beta} - \beta^{\textup{m}})
&= \left( \frac{1}{n} \sum_{i=1}^n x_ix_i^{\mathrm{T}} \right)^{-1} 
\left( \frac{1}{\sqrt{n}} \sum_{i=1}^n \left(x_i (y_i-x_i^{\mathrm{T}}\beta^{\textup{m}}) - \eta(x_{i2}) \right)  + \frac{1}{\sqrt{n}} \sum_{i=1}^n \eta(x_{i2}) \right) \\
&= \left( \frac{1}{n} \sum_{i=1}^n x_ix_i^{\mathrm{T}} \right)^{-1}
\frac{1}{\sqrt{n}} \sum_{i=1}^n \left(x_i (y_i-x_i^{\mathrm{T}}\beta^{\textup{m}}) - \eta(x_{i2})\right),
\end{align*} 
where the second equality follows from the fact that $\sum_{i=1}^n \eta(x_{i2}) = 0$.
Then we derive the asymptotic distribution of $n^{-1/2} \sum_{i=1}^n (x_i (y_i-x_i^{\mathrm{T}}\beta^{\textup{m}}) - \eta(x_{i2}))$.
Conditioning on $x_2$, we have
\begin{align*}
\left( \frac{1}{n}\sum_{i=1}^n \V(x_i \varepsilon_i^{\textup{m}} \mid x_{i2}) \right)^{-1}
\frac{1}{\sqrt{n}} \sum_{i=1}^n \left(x_i (y_i-x_i^{\mathrm{T}}\beta^{\textup{m}}) - \eta(x_{i2})\right) \mid X_2
&\overset{\textup{d}}{\to} \mathcal{N}\left(0, 1 \right),
\end{align*}
where $\varepsilon_i^{\textup{m}} = y_i-x_i^{\mathrm{T}}\beta^{\textup{m}}$.
The asymptotic normality of $\hat{\beta}$ follows from the Slutsky's Theorem:
\[
(V^{\textup{m}})^{-1/2} \sqrt{n}(\hat{\beta} - \beta^{\textup{m}}) \mid X_2
\overset{\textup{d}}{\to}
\mathcal{N}\left(0, I \right)
\]
where 
\begin{equation*}
V^{\mathrm{m}} 
= \left( \frac{1}{n} \sum_{i=1}^n \mathbb{E}(x_ix_i^{\mathrm{T}} \mid x_{i2}) \right)^{-1}
\frac{1}{n} \sum_{i=1}^n \V(x_i \varepsilon_i^{\textup{m}} \mid x_{i2})
\left( \frac{1}{n} \sum_{i=1}^n \mathbb{E}(x_ix_i^{\mathrm{T}} \mid x_{i2}) \right)^{-1}.
\end{equation*}
For the middle part of $\hat{V}_{\textsc{ehw}}$, we have
\begin{align}
& \frac{1}{n}\sum_{i=1}^n (y_i-x_i^{\mathrm{T}}\hat{\beta})^2 x_ix_i^{\mathrm{T}}
~=~ \frac{1}{n}\sum_{i=1}^n \left(y_i-x_i^{\mathrm{T}}{\beta}^{\mathrm{m}} + x_i^{\mathrm{T}}({\beta}^{\mathrm{m}} - \hat{\beta})\right)^2 x_ix_i^{\mathrm{T}} \notag \\
~=~& \frac{1}{n}\sum_{i=1}^n (y_i-x_i^{\mathrm{T}}{\beta}^{\mathrm{m}})^2 x_ix_i^{\mathrm{T}}
+ \frac{2}{n}\sum_{i=1}^n (y_i-x_i^{\mathrm{T}}{\beta}^{\mathrm{m}})x_i^{\mathrm{T}}({\beta}^{\mathrm{m}} - \hat{\beta}) x_ix_i^{\mathrm{T}}
+ \frac{1}{n}\sum_{i=1}^n (x_i^{\mathrm{T}}({\beta}^{\mathrm{m}} - \hat{\beta}) )^2 x_ix_i^{\mathrm{T}} \notag \\
~\overset{(1)}{=}~ & \frac{1}{n}\sum_{i=1}^n \mathbb{E}\left((\varepsilon_i^{\mathrm{m}})^2 x_ix_i^{\mathrm{T}} \mid x_{i2}\right)
+ o(1;\mathbb{P}_{(y,x_1)\mid x_2}) \notag \\
~=~& \frac{1}{n}\sum_{i=1}^n \V(x_i \varepsilon_i^{\textup{m}} \mid x_{i2})
+ \frac{1}{n}\sum_{i=1}^n \eta(x_{i2}) \eta(x_{i2})^{\mathrm{T}}
+ o(1;\mathbb{P}_{(y,x_1)\mid x_2}), \label{eq:consistencyEHW_m}
\end{align}
where the equality (1) follows from LLN and $\hat{\beta} - {\beta}^{\mathrm{m}} = o(1;\mathbb{P}_{(y,x_1)\mid x_2})$. 
The conservative result 
$\hat{V}_{\textsc{ehw}} = V^{\mathrm{m}} 
+ B^{\textup{m}} + o(1;\mathbb{P}_{(y,x_1)\mid x_2})$ follows from \eqref{eq:consistencyEHW_m}, Slutsky's theorem and Assumption \ref{asu:i.i.d.}.
\end{proof}

\section{Proof of results in Section \ref{sec:application}} \label{sec:proof of application}

\subsection{Estimating the ATE under complete randomization}

\subsubsection{Simple difference in means and its regression implementation}
In the two-regressor specification $x_i=(1,Z_i)^\mathrm{T}$, the EHW variance
estimator for the treatment coefficient is
\begin{equation}
\hat{V}_{\textsc{ehw}}
= \frac{1}{n} \left[ \left( \frac{1}{n}\sum_{i=1}^n x_{i} x_{i}^{\mathrm{T}} \right)^{-1}
\left(\frac{1}{n}\sum_{i=1}^n \hat{\varepsilon}_{i}^2 x_{i} x_{i}^{\mathrm{T}} \right)
\left(\frac{1}{n}\sum_{i=1}^n x_{i} x_{i}^{\mathrm{T}} \right)^{-1} \right]_{(2,2)}. 
\label{eq:complete_EHW}
\end{equation}

\begin{proof}[Proof of Theorem \ref{thm:y_Z_r}]
The result is a special case of Theorem \ref{thm:random} with $x_{i} = (1, Z_i)$ where constant $1$ is a degenerated random variable.

\end{proof}

\subsubsection{Fisher's analysis of covariance}
Below we give the detailed form of $\hat{V}_{\textsc{ehw},\textsc{f}}$:
\begin{equation}
\hat{V}_{\textsc{ehw},\textsc{f}}
= \frac{1}{n} \left[
\left( \frac{1}{n}\sum_{i=1}^n x_{i,\textsc{f}}x_{i,\textsc{f}}^{\mathrm{T}} \right)^{-1}
\left( \frac{1}{n}\sum_{i=1}^n \hat{\varepsilon}_{i,\textsc{f}}^2 x_{i,\textsc{f}}x_{i,\textsc{f}}^{\mathrm{T}} \right)
\left( \frac{1}{n}\sum_{i=1}^n x_{i,\textsc{f}}x_{i,\textsc{f}}^{\mathrm{T}} \right)^{-1}
\right]_{(2,2)}. \label{eq:complete_EHW_F}
\end{equation}

\begin{proof}[Proof of Theorem \ref{thm:PRA_r}]
The estimands are $(\alpha_{\textsc{f}}^{\textup{r}}, \beta_{\textsc{f}}^{\textup{r}}, (\gamma_{\textsc{f}}^{\textup{r}})^{\mathrm{T}})^{\mathrm{T}} = \mathbb{E}(x_{i,\textsc{f}} x_{i,\textsc{f}}^{\mathrm{T}} )^{-1}\mathbb{E}(x_{i,\textsc{f}} y_i )$ where $\beta_{\textsc{f}}^{\textup{r}} = \beta^{\textup{r}}$.
Without loss of generality, we assume $\mathbb{E}(x_i)=0$.
Define $\varepsilon_{i,\textsc{f}}^{\textup{r}} = y_i - \beta^{\textup{r}}Z_i - \mathbb{E}(Y_i(0))  - \dot{x}_i^{\mathrm{T}} \gamma_{\textsc{f}}^{\textup{r}}$.
Apply Theorem \ref{thm:random} with $x_{i1} = (1, Z_i)$ and $x_{i2} = \dot{x}_i$:
\begin{align*}
\sqrt{n}(\hat{\beta}_{\textsc{f}} - \beta^{\textup{r}}) 
& \overset{\textup{d}}{\to} 
\mathcal{N}\left(0, \frac{1}{e^2(1-e)^2} \frac{1}{n}\sum_{i=1}^n \V\left((Z_i-e)\varepsilon_{i,\textsc{f}}^{\textup{r}} \right) \right),
\end{align*}
where 
\begin{align*}
(Z_i-e)\varepsilon_{i,\textsc{f}}^{\textup{r}}
=& (Z_i-e)\left(Y_i(1) - \beta^{\textup{r}}Z_i - \mathbb{E}(Y_i(0) \mid x_i)  - \dot{x}_i^{\mathrm{T}} \gamma_{\textsc{f}}^{\textup{r}} \right) \\
=& (1-e)Z_i \varepsilon_{i,\textsc{f}}^{\textup{r}}(1)
- e (1-Z_i)\varepsilon_{i,\textsc{f}}^{\textup{r}}(0).
\end{align*}
Thus, the asymptotic variance of $\hat{\beta}_{\textsc{f}}$ is 
\[
V^{\textup{r}}_{\textsc{f}} 
= \frac{\V(\varepsilon^{\textup{r}}_i(1))}{e} 
+ \frac{\V(\varepsilon^{\textup{r}}_i(0))}{1-e}
\]
with $\varepsilon_i^{\textup{r}}(z)
= Y_i(z) - \mathbb{E}(Y_i(z)) - \dot{x}_i\gamma_\textsc{f}^{\textup{r}}$ for $z\in\{0, 1\}$. 
Theorem \ref{thm:random} ensures that $\hat{V}_{\textsc{ehw,f}} = V_\textsc{f}^{\mathrm{r}} + o(1;\mathbb{P}_{(y,Z, x)})$.

\end{proof}

\begin{proof}[Proof of Theorem \ref{thm:PRA_m}]
Define the estimands as
\[
(\alpha_{\textsc{f}}^{\textup{m}}, \beta_{\textsc{f}}^{\textup{m}}, (\gamma_{\textsc{f}}^{\textup{m}})^{\mathrm{T}})^{\mathrm{T}} 
= \left(\frac{1}{n}\sum_{i=1}^n \mathbb{E}(x_{i,\textsc{f}} x_{i,\textsc{f}}^{\mathrm{T}} \mid x_i) \right)^{-1}
\frac{1}{n} \sum_{i=1}^n \mathbb{E}(x_{i,\textsc{f}} y_i \mid x_i ).
\]
By direct algebra, we have $\alpha_{\textsc{f}}^{\textup{m}} = n^{-1} \sum_{i=1}^n\mathbb{E}(Y_i(0) \mid x_i)$ and $\beta_{\textsc{f}}^{\textup{m}} = \beta^{\textup{m}}$.
Define $\varepsilon_{i,\textsc{f}}^{\textup{m}} = y_i - \beta^{\textup{m}}Z_i - n^{-1} \sum_{i=1}^n\mathbb{E}(Y_i(0) \mid x_i)  - \ddot{x}_i^{\mathrm{T}} \gamma_{\textsc{f}}^{\textup{m}}$.
Apply Theorem \ref{thm:AsyDist_mixed} with $x_{i1} = (1, Z_i)$ and $x_{i2} = \ddot{x}_i$:
\begin{align*}
\sqrt{n}(\hat{\beta}_{\textsc{f}} - \beta^{\textup{m}}) 
& \overset{\textup{d}}{\to} 
\mathcal{N}\left(0, \frac{1}{e^2(1-e)^2} \frac{1}{n}\sum_{i=1}^n \V\left((Z_i-e)\varepsilon_{i,\textsc{f}}^{\textup{m}} \mid {x}_i \right) \right),
\end{align*}
where 
\begin{align*}
(Z_i-e)\varepsilon_{i,\textsc{f}}^{\textup{m}}
=& (Z_i-e)\left(Y_i(1) - \beta^{\textup{m}}Z_i -  \frac{1}{n}\sum_{i=1}^n\mathbb{E}(Y_i(0) \mid x_i)  - \ddot{x}_i^{\mathrm{T}} \gamma_{\textsc{f}}^{\textup{m}} \right) \\
=& (1-e)Z_i\varepsilon_{i,\textsc{f}}^{\textup{m}}(1)
- e (1-Z_i)\varepsilon_{i,\textsc{f}}^{\textup{m}}(0).
\end{align*}
Thus, the asymptotic variance of $\hat{\beta}_{\textsc{f}}$ is 
\begin{align*}
V_{\textsc{f}}^{\textup{m}}
= \frac{1}{n}\sum_{i=1}^n \left( \frac{\mathbb{E}\left(\varepsilon^{\textup{m}}_{i,\textsc{f}}(1)^2 \mid x_i\right)}{e}  
+ \frac{\mathbb{E}\left(\varepsilon^{\textup{m}}_{i,\textsc{f}}(0)^2\mid x_i\right)}{1-e}  
- \mathbb{E}\left(\varepsilon_{i,\textsc{f}}^{\textup{m}}(1) - \varepsilon_{i,\textsc{f}}^{\textup{m}}(0) \mid x_i \right)^2  \right).
\end{align*}
Then we prove the conservativeness of $\hat{V}_{\textsc{ehw},\textsc{f}}$.
By direct algebra, we have
\begin{align*}
\hat{V}_{\textsc{ehw},\textsc{f}}
=&  \left[
\left( \frac{1}{n}\sum_{i=1}^n x_{i,\textsc{f}}x_{i,\textsc{f}}^{\mathrm{T}} \right)^{-1}
\left( \frac{1}{n}\sum_{i=1}^n \hat{\varepsilon}_{i,\textsc{f}}^2 x_{i,\textsc{f}}x_{i,\textsc{f}}^{\mathrm{T}} \right)
\left( \frac{1}{n}\sum_{i=1}^n x_{i,\textsc{f}}x_{i,\textsc{f}}^{\mathrm{T}} \right)^{-1}
\right]_{(2,2)} \\
=& \frac{1}{e^2}\frac{1}{n}\sum_{i=1}^n \hat{\varepsilon}_{i,\textsc{f}}^2Z_i
+ \frac{1}{(1-e)^2}\frac{1}{n}\sum_{i=1}^n \hat{\varepsilon}_{i,\textsc{f}}^2(1-Z_i)
+ o(1;\mathbb{P}_{(y,Z) \mid x}).
\end{align*}
Theorem \ref{thm:AsyDist_mixed} ensures that 
\[
\hat{V}_{\textsc{ehw,f}} = V_\textsc{f}^{\mathrm{m}} 
+ B_\textsc{f}^{\mathrm{m}} + o(1;\mathbb{P}_{(y,Z)\mid x})
\]
where 
\begin{equation*}
B_\textsc{f}^{\textup{m}}
= \left[ \left( \frac{1}{n}\sum_{i=1}^n \mathbb{E}(x_{i,\textsc{f}}x_{i,\textsc{f}}^{\mathrm{T}}\mid x_{i}) \right)^{-1}
\left( \frac{1}{n}\sum_{i=1}^n \mathbb{E}[x_{i,\textsc{f}} \varepsilon_{i,\textsc{f}}^{\textup{m}}\mid x_{i}] \mathbb{E}[x_{i,\textsc{f}} \varepsilon_{i,\textsc{f}}^{\textup{m}}\mid x_{i}]^{\mathrm{T}} \right)
\left( \frac{1}{n}\sum_{i=1}^n \mathbb{E}(x_{i,\textsc{f}}x_{i,\textsc{f}}^{\mathrm{T}}\mid x_{i}) \right)^{-1} \right]_{(2,2)}.
\end{equation*}
We compute each piece respectively. \\
(i) With
\begin{align*}
\frac{1}{n}\sum_{i=1}^n \mathbb{E}(x_{i,\textsc{f}}x_{i,\textsc{f}}^{\mathrm{T}}\mid x_{i})
=& \left(
\begin{array}{ccc}
1 & e & 0  \\
e & e & 0  \\
0 & 0 & \frac{1}{n}\sum_{i=1}^n \ddot{x}_i\ddot{x}_i^{\mathrm{T}}   
\end{array}
\right),
\end{align*}
by direct algebra, we have 
\begin{align*}
\left( \frac{1}{n}\sum_{i=1}^n \mathbb{E}(x_{i,\textsc{f}}x_{i,\textsc{f}}^{\mathrm{T}}\mid x_{i})\right)^{-1}
=& \left(
\begin{array}{ccc}
\frac{e}{e(1-e)} & -\frac{e}{e(1-e)} & 0  \\
-\frac{e}{e(1-e)} & \frac{1}{e(1-e)} & 0  \\
0 & 0 & (\frac{1}{n}\sum_{i=1}^n \ddot{x}_i\ddot{x}_i^{\mathrm{T}} )^{-1}  
\end{array}
\right).
\end{align*}
(ii) By direct algebra,
\begin{align*}
\mathbb{E}[x_{i,\textsc{f}} \varepsilon_{i,\textsc{f}}^{\textup{m}}\mid x_{i}] 
=& 
\left(
\begin{array}{c}
\mathbb{E}[ \varepsilon_{i,\textsc{f}}^{\textup{m}} \mid x_{i}]  \\
\mathbb{E}[ Z_i \varepsilon_{i,\textsc{f}}^{\textup{m}}\mid x_{i}]  \\
x_{i}  \mathbb{E}[ \varepsilon_{i,\textsc{f}}^{\textup{m}} \mid x_{i}] 
\end{array}
\right) 
= \left(
\begin{array}{c}
e \mathbb{E}[ \varepsilon_{i,\textsc{f}}^{\textup{m}}(1) \mid x_{i}] 
+ (1-e) \mathbb{E}[ \varepsilon_{i,\textsc{f}}^{\textup{m}}(0) \mid x_{i}] \\
e \mathbb{E}[ \varepsilon_{i,\textsc{f}}^{\textup{m}}(1) \mid x_{i}]  \\
x_{i}  (e \mathbb{E}[ \varepsilon_{i,\textsc{f}}^{\textup{m}}(1) \mid x_{i}] 
+ (1-e) \mathbb{E}[ \varepsilon_{i,\textsc{f}}^{\textup{m}}(0) \mid x_{i}])
\end{array}
\right)
\end{align*}
and we have
\begin{align*}
\frac{1}{n}\sum_{i=1}^n \mathbb{E}[x_{i,\textsc{f}} \varepsilon_{i,\textsc{f}}^{\textup{m}}\mid x_{i}]  \mathbb{E}[x_{i,\textsc{f}} \varepsilon_{i,\textsc{f}}^{\textup{m}}\mid x_{i}] ^{\mathrm{T}}
=& 
\left(
\begin{array}{ccc}
h_{11} & h_{12} & h_{13}  \\
h_{12}^{\mathrm{T}} & h_{22} & h_{23} \\
h_{13}^{\mathrm{T}} & h_{23}^{\mathrm{T}} & h_{33}  
\end{array}
\right)
\end{align*}
where 
\begin{align*}
h_{11} =& \frac{1}{n}\sum_{i=1}^n (e \mathbb{E}[ \varepsilon_{i,\textsc{f}}^{\textup{m}}(1) \mid x_{i}] 
+ (1-e) \mathbb{E}[ \varepsilon_{i,\textsc{f}}^{\textup{m}}(0) \mid x_{i}])^2, \\
h_{12} =& e \frac{1}{n}\sum_{i=1}^n (e \mathbb{E}[ \varepsilon_{i,\textsc{f}}^{\textup{m}}(1) \mid x_{i}] 
+ (1-e) \mathbb{E}[ \varepsilon_{i,\textsc{f}}^{\textup{m}}(0) \mid x_{i}]) \mathbb{E}[ \varepsilon_{i,\textsc{f}}^{\textup{m}}(1) \mid x_{i}], \\
h_{22} =& e^2 \frac{1}{n}\sum_{i=1}^n \mathbb{E}[ \varepsilon_{i,\textsc{f}}^{\textup{m}}(1) \mid x_{i}]^2.
\end{align*}
We combine these pieces and derive the asymptotic bias as
\begin{align*}
B_{\textsc{f}}^{\textup{m}}
=& \left[ \left( \frac{1}{n}\sum_{i=1}^n \mathbb{E}(x_{i,\textsc{f}}x_{i,\textsc{f}}^{\mathrm{T}}\mid x_{i}) \right)^{-1}
\left( \frac{1}{n}\sum_{i=1}^n \eta(x_{i}) \eta(x_{i})^{\mathrm{T}} \right)
\left( \frac{1}{n}\sum_{i=1}^n \mathbb{E}(x_{i,\textsc{f}}x_{i,\textsc{f}}^{\mathrm{T}}\mid x_{i}) \right)^{-1} \right]_{(2,2)} \\
=& \left[ 
\left(
\begin{array}{ccc}
* & * & 0  \\
\frac{h_{12}-eh_{11}}{e(1-e)} & \frac{h_{22}-eh_{12}}{e(1-e)} & \frac{h_{23}-eh_{13}}{e(1-e)}  \\
* & * & *  
\end{array}
\right)
\left(
\begin{array}{ccc}
\frac{e}{e(1-e)} & -\frac{e}{e(1-e)} & 0  \\
-\frac{e}{e(1-e)} & \frac{1}{e(1-e)} & 0  \\
0 & 0 & *  
\end{array}
\right) \right]_{(2,2)} \\
=& \frac{h_{22}-eh_{12} - e(h_{12}-eh_{11})}{e^2(1-e)^2} \\
=& \frac{1}{n}\sum_{i=1}^n \mathbb{E}\left(\varepsilon_{i,\textsc{f}}^{\textup{m}}(1) - \varepsilon_{i,\textsc{f}}^{\textup{m}}(0) \mid x_{i} \right)^2.
\end{align*}
Thus, we complete the proof.
\end{proof}

\subsubsection{Lin's fully interacted  adjustment with known parameters}
\label{subsec:known-parameters}

Before proving Theorem \ref{thm:FRA_random}, we discuss the case where part of the parameters is known. 
Consider a general type of estimator $\hat{\theta}$ that has as special cases most examples of interest is one that, with probability approaching one, which solves an equation
\begin{equation}
\frac{1}{n}\sum_{i=1}^n g(w_i; \theta, \hat{\gamma})=0, \label{eq:second}
\end{equation}
where $g(w; \theta, {\gamma})$ is a $p_{\theta} \times 1$ vector of functions with the same dimension as $\theta$, and $\hat{\gamma}$ is a first-step estimator.
The estimator can be treated as part of a joint moment estimator if $\hat{\gamma}$ also satisfies a moment condition of the form, with
probability approaching one
\begin{equation}
\frac{1}{n}\sum_{i=1}^n m(w_i; {\gamma})=0 \label{eq:first}
\end{equation}
where $m(w; {\gamma})=0$ is a vector with the same dimension as $\gamma$.
We stack $g(w; \theta, {\gamma})$ and $m(w; {\gamma})$ to form $\tilde{g}(w; \theta,\gamma) = [g(w; \theta, {\gamma})^{\mathrm{T}}, m(w;{\gamma})^{\mathrm{T}}]^{\mathrm{T}}$, then \eqref{eq:second} and \eqref{eq:first} are simply the two components of the joint moment equation:
\begin{equation}
\frac{1}{n}\sum_{i=1}^n \tilde{g}(w_i; \theta, {\gamma})=0. \label{eq:joint}
\end{equation}
Let 
\begin{align}
& G_\theta = \mathbb{E}\left[\frac{\partial}{\partial \theta} g\left(w; \theta_0, \gamma_0\right)\right], 
\quad 
G_\gamma = \mathbb{E}\left[ \frac{\partial}{\partial \gamma} g\left(w; \theta_0, \gamma_0\right)\right], 
\quad 
g(w)=g\left(w; \theta_0, \gamma_0\right), \\
& M=\mathbb{E}\left[ \frac{\partial}{\partial \gamma} m\left(w; \gamma_0\right)\right], 
\quad 
\psi(w)=-M^{-1} m\left(w; \gamma_0\right), \\
& G_{\theta\gamma} = \mathbb{E}\left[\frac{\partial}{\partial (\theta, \gamma)} \tilde{g}\left(w; \theta_0, \gamma_0\right)\right], 
\quad 
\tilde{g}(w)=\tilde{g}\left(w; \theta_0, \gamma_0\right). \label{eq:M_G}
\end{align}

\begin{lemma}\label{lemma:ME_diff}
If we assume $\gamma$ is known and obtain $\hat{\theta}$ by solving $\eqref{eq:second}$, then 
\begin{align*}
(V_{\text{second}})^{-1/2}\sqrt{n}(\hat{\theta}_{\text{second}} - \theta_0) \overset{\textup{d}}{\to} \mathcal{N}(0, I)
\end{align*}
where $V_{\text{second}}
= G_\theta^{-1} \mathbb{E}\left[g(w)g(w)^{\mathrm{T}} \right] G_\theta^{-\mathrm{T}}$.
When $\gamma$ is unknown and we obtain $\hat{\theta}$ by solving $\eqref{eq:joint}$, then 
\begin{align*}
(V_{\text{joint}})^{-1/2}\sqrt{n}(\hat{\theta}_{\text{joint}} - \theta_0) \overset{\textup{d}}{\to} \mathcal{N}(0, I)
\end{align*}
where 
\begin{equation*}
V_{\text{joint}}
= G_\theta^{-1} 
\mathbb{E}\left[(g(w)+G_\gamma \psi(w))(g(w)+G_\gamma \psi(w))^{\mathrm{T}} \right]  
G_\theta^{-\mathrm{T}}.
\end{equation*}

\end{lemma}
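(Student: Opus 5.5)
The plan is to treat both parts as applications of the random-design $Z$-estimation result in Theorem \ref{thm:ME_random}, followed by a block-matrix computation for the joint system. Throughout, I assume the regularity conditions of Assumption \ref{asu:ME} hold for $g$ with $\gamma_0$ fixed, and for the stacked function $\tilde g$ at $(\theta_0,\gamma_0)$. I also assume that $G_\theta$ and $M$ are nonsingular.

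For the known-$\gamma$ case, $\hat\theta_{\text{second}}$ solves $n^{-1}\sum_i g(w_i;\theta,\gamma_0)=0$. This is a standard $Z$-estimator with Jacobian $G_\theta$ and meat $\mathbb{E}[g(w)g(w)^{\mathrm{T}}]$. Theorem \ref{thm:ME_random} then gives the first display directly, with $V_{\text{second}}=G_\theta^{-1}\mathbb{E}[g(w)g(w)^{\mathrm{T}}]G_\theta^{-\mathrm{T}}$. No further work is needed here.

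For the unknown-$\gamma$ case, I apply Theorem \ref{thm:ME_random} to the joint estimating equation \eqref{eq:joint} with parameter $(\theta,\gamma)$. This yields
\[
\sqrt n\,\bigl((\hat\theta,\hat\gamma)-(\theta_0,\gamma_0)\bigr)=-G_{\theta\gamma}^{-1}\frac{1}{\sqrt n}\sum_{i=1}^n\tilde g(w_i)+o_{\mathbb P}(1).
\]
Since $m$ does not depend on $\theta$, the Jacobian $G_{\theta\gamma}$ is block upper triangular, with diagonal blocks $G_\theta$ and $M$, upper-right block $G_\gamma$, and a zero lower-left block. It is therefore nonsingular, and its inverse is available explicitly:
\[
G_{\theta\gamma}^{-1}=\begin{pmatrix} G_\theta^{-1} & -G_\theta^{-1}G_\gamma M^{-1}\\ 0 & M^{-1}\end{pmatrix}.
\]
Reading off the $\theta$-block gives the influence representation
\[
\sqrt n(\hat\theta_{\text{joint}}-\theta_0)=-G_\theta^{-1}\frac{1}{\sqrt n}\sum_{i=1}^n\bigl(g(w_i)-G_\gamma M^{-1}m(w_i;\gamma_0)\bigr)+o_{\mathbb P}(1).
\]
Using the definition $\psi(w)=-M^{-1}m(w;\gamma_0)$, the summand equals $g(w_i)+G_\gamma\psi(w_i)$. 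This term is i.i.d. with mean zero, because $\mathbb{E}[g(w)]=0$ and $\mathbb{E}[m(w;\gamma_0)]=0$.

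The multivariate CLT together with Slutsky's theorem then gives asymptotic normality with variance $V_{\text{joint}}=G_\theta^{-1}\mathbb{E}[(g+G_\gamma\psi)(g+G_\gamma\psi)^{\mathrm{T}}]G_\theta^{-\mathrm{T}}$, and standardizing by $V_{\text{joint}}^{-1/2}$ gives the stated result. Equivalently, $V_{\text{joint}}$ is the $\theta$-block of the full sandwich $G_{\theta\gamma}^{-1}\mathbb{E}[\tilde g\tilde g^{\mathrm{T}}]G_{\theta\gamma}^{-\mathrm{T}}$.

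The main obstacle is bookkeeping rather than analysis. The statement does not list regularity conditions explicitly, so I will state that they are inherited from Assumption \ref{asu:ME} for $\tilde g$, and I will note that invertibility of $G_{\theta\gamma}$ follows from the invertibility of $G_\theta$ and $M$ via the block-triangular structure.
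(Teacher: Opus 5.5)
Your proposal is correct and follows the paper's proof: both apply Theorem~\ref{thm:ME_random} first to $g$ with $\gamma_0$ plugged in and then to the stacked system $\tilde g$, invert the block upper-triangular $G_{\theta\gamma}$ explicitly, and extract the $\theta$-block. The only cosmetic difference is that you read off the $\theta$-row of the linearization, $-G_\theta^{-1}(g+G_\gamma\psi)$, before applying the CLT, whereas the paper multiplies out $G_{\theta\gamma}^{-1}\mathbb{E}(\tilde g\tilde g^{\mathrm{T}})G_{\theta\gamma}^{-\mathrm{T}}$ and regroups the four resulting terms; the two computations are equivalent.
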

\begin{proof}[Proof of Lemma \ref{lemma:ME_diff}]
The asymptotic normality of $\hat{\theta}_{\text{second}}$ follows from applying Theorem \ref{thm:ME_random}. 
By solving the joint moment equations, Theorem \ref{thm:ME_random} ensures that 
\begin{align}
\sqrt{n}
\left\{\left(\begin{array}{c}
\hat{\theta}_{\text{joint}} \\
\hat{\gamma}_{\text{joint}} 
\end{array}\right)
-\left(\begin{array}{c}
\theta_0 \\
\gamma_0
\end{array}\right)\right\} \overset{\textup{d}}{\to} 
\mathcal{N}\left(0, G_{\theta\gamma}^{-1} \mathbb{E}(\tilde{g}(w)\tilde{g}(w)^{\mathrm{T}} )  (G_{\theta\gamma}^{-1})^{\mathrm{T}} \right).
\end{align}
By direct algebra, \\
(i) 
\begin{align*}
G_{\theta\gamma}
=& \mathbb{E}\left[\frac{\partial}{\partial (\theta, \gamma)} \tilde{g}\left(w; \theta_0, \gamma_0\right)\right] 
= \begin{bmatrix}
G_{\theta} & G_{\gamma} \\
0 & M
\end{bmatrix},
\quad 
G_{\theta\gamma}^{-1}
= \mathbb{E}\left[\frac{\partial}{\partial (\theta, \gamma)} \tilde{g}\left(w; \theta_0, \gamma_0\right)\right] 
= \begin{bmatrix}
G_{\theta}^{-1} & -G_{\theta}^{-1} G_{\gamma} M^{-1} \\
0 & M^{-1}
\end{bmatrix}
\end{align*}
(ii)
\begin{align*}
\mathbb{E}(\tilde{g}(w)\tilde{g}(w)^{\mathrm{T}} )
=& 
\begin{bmatrix}
\mathbb{E}(g(w)g(w)^{\mathrm{T}} ) & \mathbb{E}(g(w)m(w,\gamma_0)^{\mathrm{T}} ) \\
\mathbb{E}(g(w)m(w,\gamma_0)^{\mathrm{T}} )^{\mathrm{T}} & \mathbb{E}(m(w,\gamma_0)m(w,\gamma_0)^{\mathrm{T}} )
\end{bmatrix}.
\end{align*}
Therefore,
\begin{align*}
(V_{\text{joint}})^{-1/2}\sqrt{n}(\hat{\theta}_{\text{joint}} - \theta_0) \overset{\textup{d}}{\to} \mathcal{N}(0, I)
\end{align*}
where 
\begin{align*}
& V_{\text{joint}}
= \left[ G_{\theta\gamma}^{-1} \mathbb{E}(\tilde{g}(w)\tilde{g}(w)^{\mathrm{T}} )  (G_{\theta\gamma}^{-1})^{\mathrm{T}}  \right]_{(1:p_\theta, 1:p_\theta)} \\
=& \left[ 
\begin{pmatrix}
G_{\theta}^{-1} & -G_{\theta}^{-1} G_{\gamma} M^{-1} \\
0 & M^{-1}
\end{pmatrix} 
\begin{pmatrix}
\mathbb{E}(g(w)g(w)^{\mathrm{T}} ) & \mathbb{E}(g(w)m(w,\gamma_0)^{\mathrm{T}} ) \\
\mathbb{E}(g(w)m(w,\gamma_0)^{\mathrm{T}} )^{\mathrm{T}} & \mathbb{E}(m(w,\gamma_0)m(w,\gamma_0)^{\mathrm{T}} )
\end{pmatrix} 
\begin{pmatrix}
G_{\theta}^{-\mathrm{T}} & 0 \\
-M^{-1}  G_{\gamma} G_{\theta}^{-1}  & M^{-\mathrm{T}}
\end{pmatrix}  
\right]_{(1:p_\theta, 1:p_\theta)} \\
=& G_{\theta}^{-1} \mathbb{E}(g(w)g(w)^{\mathrm{T}} ) (G_{\theta}^{-1})^{\mathrm{T}}  -G_{\theta}^{-1} G_{\gamma} M^{-1} \mathbb{E}(g(w)m(w,\gamma_0)^{\mathrm{T}} )^{\mathrm{T}} (G_{\theta}^{-1})^{\mathrm{T}} \\
&- G_{\theta}^{-1} \mathbb{E}(g(w)m(w,\gamma_0)^{\mathrm{T}} )  M^{-1}  G_{\gamma} G_{\theta}^{-1}
+ G_{\theta}^{-1} G_{\gamma} M^{-1} \mathbb{E}(m(w,\gamma_0)m(w,\gamma_0)^{\mathrm{T}} )^{\mathrm{T}} M^{-1} G_{\gamma} G_{\theta}^{-1} \\
=& G_\theta^{-1} 
\mathbb{E}\left[(g(w)+G_\gamma \psi(w))(g(w)+G_\gamma \psi(w))^{\mathrm{T}} \right] 
G_\theta^{-1}.
\end{align*}
\end{proof}

If $\mu_x$ is known, then we run the following OLS fit instead:
\begin{equation}
y_i \sim 1 + Z_i + \dot{x}_i + Z_i\cdot \dot{x}_i. \label{eq:Lin_oracle}
\end{equation}
We denote the coefficient of $Z_i$ by $\hat{\beta}_{\textsc{l}}^*$ and $\hat{\varepsilon}_{i,\textsc{l}}^*$ as the residual from the above OLS fit. 
Define $x_{i,\textsc{l}}^* = (1, Z_i, \dot{x}_i^{\mathrm{T}}, Z_i\dot{x}_i^{\mathrm{T}} )^{\mathrm{T}}$.
Let $\mu_1^{\textup{r}} = \mathbb{E}(Y_i(1))$, $\mu_0^{\textup{r}} = \mathbb{E}(Y_i(0))$, $\mu_x = \mathbb{E}(x_i)$, $\gamma_1^{\textup{r}} = \{\operatorname{cov}(x_i)\}^{-1} \operatorname{cov}\{x_i, Y_i(1)\}$ and $\gamma_0^{\textup{r}} = \{\operatorname{cov}(x_i)\}^{-1} \operatorname{cov}\{x_i, Y_i(0)\}$.
\begin{lemma}\label{lemma:Lin_known_x}
Recall $\beta^{\textup{r}} = \mathbb{E}(Y_i(1) - Y_i(0) )$.
Under random design and Assumption \ref{asu:i.i.d.}, we have
\[
(V^{* \textup{r}}_{\textsc{l}})^{-1/2}\sqrt{n}(\hat{\beta}_{\textsc{l}}^* - \beta^{\textup{r}})  \overset{\textup{d}}{\to} \mathcal{N}(0, I)
\text{ and }
\hat{V}_{\textsc{ehw},\textsc{l}}^*
= V_{\textsc{l}}^{* \textup{r}}
+ o(1;\mathbb{P}_{(y,Z,x)}),
\]
where 
\begin{equation*}
V^{* \textup{r}}_{\textsc{l}}
= \frac{\mathbb{E}\left[ \left\{Y_i(1)-\left(x_i-\mu_x\right)^{\mathrm{T}} \gamma_1^{\textup{r}}-\mu_1^{\textup{r}}\right\}^2\right]}{e}
+\frac{\mathbb{E}\left[ \left\{Y_i(0)-\left(x_i-\mu_x\right)^{\mathrm{T}} \gamma_0^{\textup{r}}-\mu_0^{\textup{r}}\right\}^2\right]}{1-e}
\end{equation*}
and
\begin{equation}
\hat{V}_{\textsc{ehw},\textsc{l}}^*
= \left[
\left( \frac{1}{n}\sum_{i=1}^n x_{i,\textsc{l}}^* x_{i,\textsc{l}}^{* \top} \right)^{-1}
\left( \frac{1}{n}\sum_{i=1}^n \hat{\varepsilon}_{i,\textsc{l}}^{*2} x_{i,\textsc{l}}^*x_{i,\textsc{l}}^{* \top} \right)
\left( \frac{1}{n}\sum_{i=1}^n x_{i,\textsc{l}}^* x_{i,\textsc{l}}^{* \top} \right)^{-1}
\right]_{(2,2)}. \label{eq:EHW_Lin_oracle}
\end{equation}

\end{lemma}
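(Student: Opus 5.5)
Since $\mu_x=\mathbb{E}(x_i)$ is known, the regressors $x_{i,\textsc{l}}^*=(1,Z_i,\dot{x}_i^{\mathrm{T}},Z_i\dot{x}_i^{\mathrm{T}})^{\mathrm{T}}$ in \eqref{eq:Lin_oracle} are fixed functions of i.i.d.\ data with no estimated component. The fit is therefore an ordinary i.i.d.\ OLS problem, and I will apply Theorem \ref{thm:random} directly with $x_i$ replaced by $x_{i,\textsc{l}}^*$. Assumption \ref{asu:i.i.d.} holds: the fourth moments follow from Assumption \ref{asu:completeRandom}(d), and positive definiteness of $\mathbb{E}(x_{i,\textsc{l}}^*x_{i,\textsc{l}}^{*\mathrm{T}})$ follows from $e\in(0,1)$, $Z_i\indep x_i$ and $\operatorname{cov}(x_i)>0$. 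Theorem \ref{thm:random} then gives asymptotic normality of the whole coefficient vector and consistency of the full EHW matrix, so the $(2,2)$ entry $\hat V^*_{\textsc{ehw},\textsc{l}}$ is consistent for the $(2,2)$ entry of the sandwich $V^{\textup{r}}$ in \eqref{eq:Vr}. What remains is to identify the estimand and simplify that entry.

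\emph{Identifying the estimand.} I reparametrize the regression as $Z_i(\mu_1+\dot x_i^{\mathrm{T}}\gamma_1)+(1-Z_i)(\mu_0+\dot x_i^{\mathrm{T}}\gamma_0)$. This is a nonsingular linear transformation of the original coefficients, with the coefficient of $Z_i$ equal to $\mu_1-\mu_0$. The population normal equations then split into separate projections within each arm. By $Z_i\indep(Y_i(0),Y_i(1),x_i)$, the arm-$z$ projection of $Y_i(z)$ on $(1,\dot x_i)$ has intercept $\mathbb{E}Y_i(z)=\mu_z^{\textup{r}}$ because $\mathbb{E}\dot x_i=0$, and slope $\gamma_z^{\textup{r}}=\operatorname{cov}(x_i)^{-1}\operatorname{cov}(x_i,Y_i(z))$. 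Hence the coefficient of $Z_i$ is $\beta^{\textup{r}}=\mu_1^{\textup{r}}-\mu_0^{\textup{r}}$.

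\emph{Computing the variance.} The projection error is $\varepsilon_i=Z_i\varepsilon_i(1)+(1-Z_i)\varepsilon_i(0)$, where $\varepsilon_i(z)=Y_i(z)-\mu_z^{\textup{r}}-\dot x_i^{\mathrm{T}}\gamma_z^{\textup{r}}$. In the reparametrized basis the bread matrix is block diagonal, $\operatorname{diag}\{e\,\mathbb{E}(\tilde x\tilde x^{\mathrm{T}}),(1-e)\mathbb{E}(\tilde x\tilde x^{\mathrm{T}})\}$ with $\tilde x=(1,\dot x_i^{\mathrm{T}})^{\mathrm{T}}$. The meat is also block diagonal, because $Z_i(1-Z_i)=0$. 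Within each arm, $\mathbb{E}(\tilde x\tilde x^{\mathrm{T}})=\operatorname{diag}(1,\operatorname{cov}(x_i))$, so the intercept row of the inverse is $(1,0)$. The variance of $\hat\mu_z$ is therefore $\mathbb{E}[\varepsilon_i(z)^2]/e$ for $z=1$ and $\mathbb{E}[\varepsilon_i(0)^2]/(1-e)$ for $z=0$.

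The variance of $\hat\beta^*_{\textsc{l}}=\hat\mu_1-\hat\mu_0$ is the sum of these two terms, since the arms are asymptotically independent; this is exactly $V^{*\textup{r}}_{\textsc{l}}$. Consistency of $\hat V^*_{\textsc{ehw},\textsc{l}}$ transfers through the reparametrization, because EHW is equivariant under nonsingular linear reparametrization of the regressors, which also holds at the sample level. The main point requiring care is this equivariance and the block-diagonal algebra. I also note that the statement's scaling omits the factor $n$: $\hat V^*_{\textsc{ehw},\textsc{l}}$ as defined in \eqref{eq:EHW_Lin_oracle} carries no $n^{-1}$, which matches $n\hat V$ in Theorem \ref{thm:random}.
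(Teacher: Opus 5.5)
Your proposal is correct and follows essentially the same route as the paper. The paper also reparametrizes to the arm-wise regressors $\tilde{x}_{i,\textsc{l}}^* = (Z_i, Z_i\dot{x}_i^{\mathrm{T}}, 1-Z_i, (1-Z_i)\dot{x}_i^{\mathrm{T}})^{\mathrm{T}}$, uses the block-diagonal bread and meat to read off the intercept variances, and carries the EHW estimator back to the original regression through the nonsingular matrix $R$ in \eqref{eq:R}; the only cosmetic difference is that it invokes Theorem \ref{thm:ME_random} on the stacked arm-wise estimating equations instead of Theorem \ref{thm:random}, which is the same result here because those equations are exactly the normal equations of the reparametrized OLS fit.
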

\begin{proof}[Proof of Lemma \ref{lemma:Lin_known_x}]
Let $\hat{\gamma}_1^*$ and $\hat{\gamma}_0^*$ denote the coefficient vectors of $x_i-\mu_x$ from the OLS fits of
\begin{align}
y_i \sim 1+\left(x_i-\mu_x\right) & \text { over }\left\{i: Z_i=1\right\}  \label{eq:Lin_1_oracle}, \\
y_i \sim 1+\left(x_i-\mu_x\right) & \text { over }\left\{i: Z_i=0\right\}, \label{eq:Lin_0_oracle}
\end{align}
respectively. Let $\hat{Y}_{\textsc{l}}^*(1)$ and $\hat{Y}_{\textsc{l}}^*(0)$ denote the intercepts from \eqref{eq:Lin_1_oracle} and \eqref{eq:Lin_0_oracle}, respectively.
That $\hat{\beta}_{\textsc{l}}^* = \hat{Y}_{\textsc{l}}^*(1)-\hat{Y}_{\textsc{l}}^*(0)$ follows from properties of least squares. To verify the explicit form of $\hat{Y}_{\textsc{l}}(1)$ and $\hat{Y}_{\textsc{l}}(0)$, observe that the residual from \eqref{eq:Lin_1_oracle} equals $Y_i(1)-\left(x_i-\mu_{x}\right)^{\mathrm{T}} \hat{\gamma}_1^*-\hat{Y}_{\textsc{l}}^*(1)$ for units with $Z_i=1$. 
The first-order condition ensures
$$
\sum_{i=1}^n Z_i \cdot\left(Y_i(1)-\left(x_i-\mu_x\right)^{\mathrm{T}} \hat{\gamma}_1^*-\hat{Y}_{\textsc{l}}^*(1)\right)=0
$$
This gives the expression of $\hat{Y}_{\textsc{l}}^*(1)$:
\begin{align*}
\hat{Y}_{\textsc{l}}^*(1)
=& \frac{\sum_{i=1}^n Z_i \cdot\left(Y_i(1)-\left(x_i-\mu_x\right)^{\mathrm{T}} \hat{\gamma}_1^*\right)}{\sum_{i=1}^n Z_i}.
\end{align*}
The expression of $\hat{Y}_{\textsc{l}}^*(0)$ follows by symmetry:
\begin{equation*}
\hat{Y}_{\textsc{l}}^*(0)
= \frac{\sum_{i=1}^n \left(1-Z_i\right) \cdot\left(Y_i(0)-\left(x_i-\mu_x \right)^{\mathrm{T}} \hat{\gamma}_0^*\right)}{\sum_{i=1}^n \left(1-Z_i\right)}.
\end{equation*}
The first-order conditions of \eqref{eq:Lin_1_oracle} and \eqref{eq:Lin_0_oracle} ensure that $\left(\mu_1, \gamma_1\right)=(\hat{Y}_{\textsc{l}}^*(1), \hat{\gamma}_1^*)$ and $\left(\mu_0, \gamma_0\right)=(\hat{Y}_{\textsc{l}}^*(0), \hat{\gamma}_0^*)$ solve
$$
0=\sum_{i=1}^n Z_i \cdot\left(Y_i(1)-\left(x_i-\mu_x\right)^{\mathrm{T}} \gamma_1-\mu_1\right)\left(\begin{array}{c}
1 \\
x_i-\mu_x
\end{array}\right).
$$
This ensures that $\theta = \left(\mu_1, \gamma_1, \mu_0, \gamma_0\right)= \left(\hat{Y}_{\textsc{l}}^*(1), \hat{\gamma}_1^*, \hat{Y}_{\textsc{l}}^*(0), \hat{\gamma}_0^*\right)$ jointly solves
$$
0=n^{-1} \sum_{i=1}^n \eta\left(Y_i(1), Y_i(0), x_i, Z_i; \theta\right)
=n^{-1} \sum_{i=1}^n \left(\begin{array}{c}
\eta_1\left(Y_i(1), x_i, Z_i; \mu_1, \gamma_1\right) \\
\eta_0\left(Y_i(0), x_i, Z_i; \mu_0, \gamma_0\right) 
\end{array}\right)
$$
and
\begin{align}
\eta^*=
\left(\begin{array}{c}
\eta_1 \\
\eta_0 
\end{array}\right) 
\text { with } 
\quad \begin{aligned}
\eta_1 
& =Z_i \cdot\left\{y_i-\left(x_i-\mu_x\right)^{\mathrm{T}} \gamma_1-\mu_1\right\}
\left(\begin{array}{c}
1 \\
x_i-\mu_x
\end{array}\right), \\
\eta_0 
& =(1-Z_i)  \cdot\left\{y_i-\left(x_i-\mu_x\right)^{\mathrm{T}} \gamma_0-\mu_0\right\}
\left(\begin{array}{c}
1 \\
x_i-\mu_x
\end{array}\right).
\end{aligned} 
\label{eq:Lin_ME_oracle}
\end{align}
Direct algebra ensures that $\theta=\theta^{\textup{r}}=\left(\mu_1^{\textup{r}}, \gamma_1^{\textup{r}}, \mu_0^{\textup{r}}, \gamma_0^{\textup{r}}\right)$ solves $\mathbb{E}\{\eta^*(\theta)\}=0$. 
Theorem \ref{thm:ME_random} ensures that 
\begin{align}
\sqrt{n}\left\{\left(\begin{array}{c}
\hat{Y}_{\textsc{l}}^*(1) \\
\hat{\gamma}_1^* \\
\hat{Y}_{\textsc{l}}^*(0) \\
\hat{\gamma}_0^* 
\end{array}\right)-\left(\begin{array}{c}
\mu_1^{\textup{r}} \\
\gamma_1^{\textup{r}} \\
\mu_0^{\textup{r}} \\
\gamma_0^{\textup{r}} 
\end{array}\right)\right\} \overset{\textup{d}}{\to} 
\mathcal{N}\left(0,( A_{\textsc{l}}^{*\textup{r}})^{-1}  D_{\textsc{l}}^{*\textup{r}} \left(A_{\textsc{l}}^{*\textup{r}} \right)^{-\mathrm{T}}\right), \label{eq:Lin_normal_oracle}
\end{align}
where $A_{\textsc{l}}^{*\textup{r}}$ and $D_{\textsc{l}}^{*\textup{r}} $ are the values of $-\mathbb{E}\left(\frac{\partial}{\partial\left(\mu_1, \gamma_1, \mu_0, \gamma_0\right)} \eta^*\right)$ and $\mathbb{E}\left(\eta^* \eta^{* \top}\right)$ evaluated at $\theta=\theta^{\textup{r}}$. 
We compute below $A_{\textsc{l}}^{*\textup{r}}$ and $D_{\textsc{l}}^{*\textup{r}} $, respectively.

\noindent \textbf{Computing $D_{\textsc{L}}^{*\textup{r}} $:}
Let $\left(\eta^{\textup{r}}, \eta_1^{\textup{r}}, \eta_0^{\textup{r}} \right)$ denote the value of $\left(\eta, \eta_1, \eta_0 \right)$ evaluated at $\theta=\theta^{\textup{r}}$. 
From \eqref{eq:Lin_ME_oracle}, we have
\begin{align}
\eta^{\textup{r}}
=\left(\begin{array}{c}
\eta_1^{\textup{r}} \\
\eta_0^{\textup{r}} 
\end{array}\right) \text { with } \quad 
\begin{aligned}
\eta_1^{\textup{r}} 
& =Z_i \cdot\left\{Y_i(1)-\left(x_i-\mu_x\right)^{\mathrm{T}} \gamma_1^{\textup{r}}-\mu_1^{\textup{r}}\right\}\left(\begin{array}{c}
1 \\
x_i-\mu_x
\end{array}\right), \\
\eta_0^{\textup{r}} 
& =(1-Z_i) \cdot\left\{Y_i(0)-\left(x_i-\mu_x\right)^{\mathrm{T}} \gamma_0^{\textup{r}}-\mu_0^{\textup{r}}\right\}\left(\begin{array}{c}
1 \\
x_i-\mu_x
\end{array}\right).
\end{aligned} \label{eq:Lin_eta_oracle}
\end{align}
The $D_{\textsc{l}}^{*\textup{r}}$ matrix equals
\begin{align}
D_{\textsc{l}}^{*\textup{r}} 
= \left.\mathbb{E}\left(\eta^* (\eta^{*})^\mathrm{T}\right)\right|_{\theta=\theta^{\textup{r}}}
= \mathbb{E}\left(\eta^{*\textup{r}} (\eta^{*{\textup{r}} })^\mathrm{T}\right)
= \left(\begin{array}{cc|cc}
d_{11} & d_{12} & 0 & 0  \\
d_{12}^{\mathrm{T}} & d_{22} & 0 & 0 \\
\hline 0 & 0 & d_{33} & d_{34}   \\
0 & 0 & d_{34}^{\mathrm{T}} & d_{44} 
\end{array}\right), 
\label{eq:Lin_B_oracle}
\end{align}
where
\begin{align}
d_{11} 
& = e \cdot \mathbb{E}\left[ \left\{Y_i(1)-\left(x_i-\mu_x\right)^{\mathrm{T}} \gamma_1^{\textup{r}}-\mu_1^{\textup{r}}\right\}^2\right], \label{eq:d_11} \\
d_{33} 
& = (1-e) \cdot \mathbb{E}\left[ \left\{Y_i(0)-\left(x_i-\mu_x\right)^{\mathrm{T}} \gamma_0^{\textup{r}}-\mu_0^{\textup{r}}\right\}^2\right], \label{eq:d_33}
\end{align} 
by symmetry. 

\noindent \textbf{Compute $A_{\textsc{L}}^{*\textup{r}}$:}
With a slight abuse of notation, let $\frac{\partial}{\partial \mu_1} \eta_1^{\textup{r}}$ denote the value of $\frac{\partial}{\partial \mu_1} \eta_1$ evaluated at $\theta^{\textup{r}}$. Similarly define other partial derivatives. 
From \eqref{eq:Lin_ME_oracle}, we have
$$
\left.\frac{\partial}{\partial\left(\mu_1, \gamma_1^{\mathrm{T}}, \mu_0, \gamma_0^{\mathrm{T}} \right)} \eta^* \right|_{\theta=\theta^{\textup{r}}}
=\left(\begin{array}{cc}
\frac{\partial}{\partial\left(\mu_1, \gamma_1^{\mathrm{T}}\right)} \eta_1^{\textup{r}} & 0  \\
0 & \frac{\partial}{\partial\left(\mu_0, \gamma_0^{\mathrm{T}}\right)} \eta_0^{\textup{r}} 
\end{array}\right),
$$
where
\begin{equation}
\frac{\partial}{\partial\left(\mu_1, \gamma_1^{\mathrm{T}}\right)} \eta_1^{\textup{r}} 
=-Z_i \cdot\left(\begin{array}{c}
1 \\
x_i-\mu_x
\end{array}\right)\left(1,\left(x_i-\mu_x\right)^{\mathrm{T}}\right).  \label{eq:eta_mu_oracle}
\end{equation}
Accordingly, we have
\begin{align*}
A_{\textsc{l}}^{*\textup{r}}=-\left.\mathbb{E}\left(\frac{\partial}{\partial\left(\mu_1, \gamma_1^{\mathrm{T}}, \mu_0, \gamma_0^{\mathrm{T}} \right)} \eta^r\right)\right|_{\theta=\theta^r}
=& -\mathbb{E}\left(\left.\frac{\partial}{\partial\left(\mu_1, \gamma_1^{\mathrm{T}}, \mu_0, \gamma_0^{\mathrm{T}} \right)} \eta^*\right|_{\theta=\theta^*}\right) \\
=& \left(\begin{array}{cc|cc}
g_{11} & g_{12} & 0 & 0   \\
g_{21} & g_{22} & 0 & 0    \\
\hline 0 & 0 & g_{33} & g_{34} \\
0 & 0 & g_{43} & g_{44} 
\end{array}\right),
\end{align*}
where
\begin{align}
\left(\begin{array}{ll}
g_{11} & g_{12} \\
g_{21} & g_{22}
\end{array}\right)
=& -\mathbb{E}\left(\frac{\partial}{\partial (\mu_1, \gamma_1^{\mathrm{T}})} \eta_1^{\textup{r}}\right) 
= \mathbb{E}\left\{Z_i  \cdot\left(\begin{array}{c}
1 \\
x_i-\mu_x
\end{array}\right)\left(1,\left(x_i-\mu_x\right)^{\mathrm{T}}\right)\right\} \notag \\
=& 
e \cdot \mathbb{E}\left\{\left(\begin{array}{c}
1 \\
x_i-\mu_x
\end{array}\right)\left(1,\left(x_i-\mu_x\right)^{\mathrm{T}}\right)\right\}
=e \cdot\left(\begin{array}{cc}
1 & 0 \\
0 & \operatorname{cov}\left(x_i\right)
\end{array}\right), \label{eq:g11_oracle}
\end{align}
and
\begin{equation}
\left(\begin{array}{ll}
g_{33} & g_{34} \\
g_{43} & g_{44}
\end{array}\right)
= (1-e) \left(\begin{array}{cc}
1 & 0 \\
0 & \operatorname{cov}\left(x_i\right)
\end{array}\right). \label{eq:g33_oracle}
\end{equation}
This ensures
$$
A_{\textsc{l}}^{*\textup{r}}=\left(\begin{array}{cccc}
g_{11} & 0 & 0 & 0  \\
0 & g_{22} & 0 & 0  \\
0 & 0 & g_{33} & 0  \\
0 & 0 & 0 & g_{44} 
\end{array}\right),
$$
where $g_{11}=e $ and $ g_{33}=1-e$. 
By the formula of block matrix inverse, we have
$$
(A_{\textsc{l}}^{*\textup{r}} )^{-1}
=\left(\begin{array}{cccc}
g_{11}^{-1} & 0 & 0 & 0   \\
0 & g_{22}^{-1} & 0 & 0   \\
0 & 0 & g_{33}^{-1} & 0   \\
0 & 0 & 0 & g_{44}^{-1} 
\end{array}\right)
$$
with
\begin{align}
\left(\begin{array}{llll}
1 & 0 & 0 & 0   \\
0 & 0 & 1 & 0  
\end{array}\right) A_{\textsc{l}}^{*\textup{r}-1}  
& =
\left(\begin{array}{cccc}
g_{11}^{-1} & 0 & 0 & 0 \\
0 & 0 & g_{33}^{-1} & 0
\end{array}\right). 
\label{eq:Lin_A_oracle}
\end{align}
\paragraph*{Compute $A_{\textsc{L}}^{*\textup{r}-1} D_{\textsc{L}}^{*\textup{r}} \left(A_{\textsc{L}}^{*\textup{r}-\mathrm{T}} \right)$:}
Direct algebra ensures
\begin{align}
& \left(\begin{array}{llll}
1 & 0 & 0 & 0  \\
0 & 0 & 1 & 0  
\end{array}\right)
\left(\begin{array}{cccc}
d_{11} & d_{12} & 0 & 0    \\
d_{12}^{\mathrm{T}} & d_{22} & 0  & 0  \\
0 & 0 & d_{33} & d_{34}  \\
0 & 0 & d_{34}^{\mathrm{T}} & d_{44}   
\end{array}\right)\left(\begin{array}{cc}
1 & 0 \\
0 & 0 \\
0 & 1 \\
0 & 0 
\end{array}\right) 
= \left(\begin{array}{cc}
d_{11} & 0 \\
0 & d_{33}
\end{array}\right). \label{eq:Lin_meat_oracle}
\end{align}
Equations \eqref{eq:Lin_normal_oracle}, \eqref{eq:Lin_B_oracle}, \eqref{eq:Lin_A_oracle}, and \eqref{eq:Lin_meat_oracle} together ensure
\begin{align*}
V_{\textsc{l}}^*
=& \operatorname{cov}\left\{\left(\begin{array}{c}
\hat{Y}_{\textsc{l}}^*(1) \\
\hat{Y}_{\textsc{l}}^*(0)
\end{array}\right)\right\}
= \operatorname{cov}
\left\{\left(\begin{array}{llll}
1 & 0 & 0 & 0  \\
0 & 0 & 1 & 0 
\end{array}\right)
\left(\begin{array}{c}
\hat{Y}_{\textsc{l}}^*(1) \\
\hat{\gamma}_1 \\
\hat{Y}_{\textsc{l}}^*(0) \\
\hat{\gamma}_0 
\end{array}\right)\right\} \\
=& \left(\begin{array}{llll}
1 & 0 & 0 & 0   \\
0 & 0 & 1 & 0 
\end{array}\right) 
(A_{\textsc{l}}^{*\textup{r}})^{-1}  D_{\textsc{l}}^{*\textup{r}} \left(A_{\textsc{l}}^{*\textup{r}} \right)^{-\mathrm{T}}
\left(\begin{array}{ll}
1 & 0 \\
0 & 0 \\
0 & 1 \\
0 & 0 
\end{array}\right) \\
=& \left(\begin{array}{cccc}
g_{11}^{-1} & 0 & 0 & 0 \\
0 & 0 & g_{33}^{-1} & 0
\end{array}\right)
\left(\begin{array}{cc}
  d_{11} & 0 \\
  0 & d_{33}
  \end{array}\right)
\left(\begin{array}{cc}
g_{11}^{-1} & 0  \\
0 & 0 \\
0 & g_{33}^{-1} \\
0 & 0
\end{array}\right) \\
=& \left(\begin{array}{cc}
\frac{d_{11}}{e^2} & 0 \\
0 & \frac{d_{33}}{(1-e)^2}
\end{array}\right).
\end{align*}
This ensures
\begin{align*} 
& V_{\textsc{l}}^{*\textup{r}}
=\operatorname{var}\left(\hat{\beta}_{\textsc{l}}\right) 
=\operatorname{var}\left\{\hat{Y}_{\textsc{l}}^*(1)-\hat{Y}_{\textsc{l}}^*(0)\right\}
=\operatorname{var}\left\{(1,-1)\left(\begin{array}{c}
\hat{Y}_{\textsc{l}}^*(1) \\
\hat{Y}_{\textsc{l}}^*(0)
\end{array}\right)\right\} 
=(1,-1) V_{\textsc{l}}^*
\left(\begin{array}{c}
1 \\
-1
\end{array}\right) \\
& =\frac{\mathbb{E}\left[ \left\{Y_i(1)-\left(x_i-\mu_x\right)^{\mathrm{T}} \gamma_1^{\textup{r}}-\mu_1^{\textup{r}}\right\}^2\right]}{e}
+\frac{\mathbb{E}\left[ \left\{Y_i(0)-\left(x_i-\mu_x\right)^{\mathrm{T}} \gamma_0^{\textup{r}}-\mu_0^{\textup{r}}\right\}^2\right]}{1-e}.
\end{align*}
Theorem \ref{thm:ME_random} ensures that $\hat{V}_{\textsc{ehw},\textsc{l}}^* = V_{\textsc{l}}^{*\textup{r}} + o(1;\mathbb{P}_{(y,Z,x)})$.
\end{proof}

The EHW variance estimator in \eqref{eq:EHW_Lin_oracle} has an equivalent form.
Define $\tilde{x}_{i,\textsc{l}}^* = [Z_i, Z_i\dot{x}_i, 1-Z_i, (1-Z_i)\dot{x}_i]^{\mathrm{T}}$. 
Applying the EHW covariance estimator in \eqref{eq:EHW_ME} with moment equations in \eqref{eq:Lin_ME_oracle}, we have 
\begin{align}
\hat{V}_{\textsc{ehw}}^*
=& \left[
\left( \frac{1}{n}\sum_{i=1}^n 
\tilde{x}_{i,\textsc{l}}^* \tilde{x}_{i,\textsc{l}}^{*\mathrm{T}} 
\right)^{-1}
\left( \frac{1}{n}\sum_{i=1}^n \hat{\varepsilon}_{i,\textsc{l}}^{*2} 
\tilde{x}_{i,\textsc{l}}^* \tilde{x}_{i,\textsc{l}}^{*\mathrm{T}}
\right)
\left( \frac{1}{n}\sum_{i=1}^n 
\tilde{x}_{i,\textsc{l}}^* \tilde{x}_{i,\textsc{l}}^{*\mathrm{T}} 
\right)^{-1}
\right]_{(1,1)+(3,3)-2(1,3)}. 
\label{eq:EHW_oracle}
\end{align} 
Define
\begin{align}
R=\left(\begin{array}{cccc}
1 & 0 & 1 & 0 \\
1 & 0 & 0 & 0 \\
0 & I_{p_x} & 0 & I_{p_x} \\
0 & I_{p_x} & 0 & 0
\end{array}\right), 
\quad R^{-1}=\left(\begin{array}{cccc}
0 & 1 & 0 & 0 \\
0 & 0 & 0 & I_{p_x} \\
1 & -1 & 0 & 0 \\
0 & 0 & I_{p_x} & -I_{p_x}
\end{array}\right), 
\quad 
x_{i,\textsc{l}}^* = R \tilde{x}_{i,\textsc{l}}^*. 
\label{eq:R}
\end{align}
Define
$$
\begin{aligned}
\eqref{eq:EHW_Lin_oracle}
& =\left[ \left( \frac{1}{n}\sum_{i=1}^n x_{i,\textsc{l}}^* x_{i,\textsc{l}}^{* \top} \right)^{-1}
\left( \frac{1}{n}\sum_{i=1}^n \hat{\varepsilon}_{i,\textsc{l}}^{*2} x_{i,\textsc{l}}^*x_{i,\textsc{l}}^{* \top} \right)
\left( \frac{1}{n}\sum_{i=1}^n x_{i,\textsc{l}}^* x_{i,\textsc{l}}^{* \top} \right)^{-1} \right]_{(2,2)} \\
& =\left[ R^{-1}
\left( \frac{1}{n}\sum_{i=1}^n 
\tilde{x}_{i,\textsc{l}}^* \tilde{x}_{i,\textsc{l}}^{*\mathrm{T}} 
\right)^{-1}
\left( \frac{1}{n}\sum_{i=1}^n \hat{\varepsilon}_{i,\textsc{l}}^{*2} 
\tilde{x}_{i,\textsc{l}}^* \tilde{x}_{i,\textsc{l}}^{*\mathrm{T}}
\right)
\left( \frac{1}{n}\sum_{i=1}^n 
\tilde{x}_{i,\textsc{l}}^* \tilde{x}_{i,\textsc{l}}^{*\mathrm{T}} 
\right)^{-1} R^{-1}
\right]_{(2,2)}
= \eqref{eq:EHW_oracle}. 
\end{aligned}
$$
Theorem \ref{thm:ME_random} ensures that $\hat{V}_{\textsc{ehw}}^* = V_{\textsc{l}}^{*\textup{r}} + o(1;\mathbb{P}_{(y,Z,x)})$.

\subsubsection{Lin's fully interacted  adjustment with unknown parameters}
\label{subsec:unknown-parameters}
Now we consider the case where $\mu_x$ is unknown. 
Let $\hat{\gamma}_1$ and $\hat{\gamma}_0$ denote the coefficient vectors of $x_i-\bar{x}$ from the OLS fits of
\begin{align}
y_i \sim 1 + \left(x_i-\bar{x}\right) & \text { over }\left\{i: Z_i=1\right\}  \label{eq:Lin_1} \\
y_i \sim 1 + \left(x_i-\bar{x}\right) & \text { over }\left\{i: Z_i=0\right\}, \label{eq:Lin_0}
\end{align}
respectively. Let $\hat{Y}_{\textsc{l}}(1)$ and $\hat{Y}_{\textsc{l}}(0)$ denote the intercepts from \eqref{eq:Lin_1} and \eqref{eq:Lin_0}, respectively.

Lemma \ref{lemma:Lin_beta} below is a standard algebraic characterization of Lin's
regression-adjusted estimator; see, for example, Proposition 1 of \citet{ZhaoDing2021}.
\begin{lemma}\label{lemma:Lin_beta}
$\hat{\beta}_{\textsc{l}} = \hat{Y}_{\textsc{l}}(1)-\hat{Y}_{\textsc{l}}(0)$ with
\begin{align*}
\hat{Y}_{\textsc{l}}(1)
=& \frac{\sum_{i=1}^n Z_i \cdot\left(Y_i(1)-\left(x_i-\bar{x}\right)^{\mathrm{T}} \hat{\gamma}_1\right)}{\sum_{i=1}^n Z_i}, \\
\hat{Y}_{\textsc{l}}(0)
=& \frac{\sum_{i=1}^n \left(1-Z_i\right) \cdot\left(Y_i(0)-\left(x_i-\bar{x}\right)^{\mathrm{T}} \hat{\gamma}_0\right)}{\sum_{i=1}^n \left(1-Z_i\right)} .
\end{align*}
\end{lemma}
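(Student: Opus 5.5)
The plan is to treat Lemma~\ref{lemma:Lin_beta} as a purely algebraic statement about least squares: we reparametrize the fully interacted regression \eqref{eq:Lin}, then read off its normal equations. First, we note that the column space of $x_{i,\textsc{l}} = (1, Z_i, \ddot{x}_i^{\mathrm{T}}, Z_i\ddot{x}_i^{\mathrm{T}})^{\mathrm{T}}$ coincides with that of $\tilde{x}_{i,\textsc{l}} = (Z_i, Z_i\ddot{x}_i^{\mathrm{T}}, 1-Z_i, (1-Z_i)\ddot{x}_i^{\mathrm{T}})^{\mathrm{T}}$. Indeed, $x_{i,\textsc{l}} = R\tilde{x}_{i,\textsc{l}}$, where $R$ is the invertible matrix in \eqref{eq:R}, with $\dot x_i$ replaced by $\ddot x_i$. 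Hence the fitted values and residuals of the two fits agree, and the coefficients are related by $\hat b = R^{-\mathrm{T}}\hat{\tilde b}$. Reading off the second row of $R^{-\mathrm{T}}$ (equivalently, matching $\alpha + \beta Z_i + \ldots$ with $\mu_1 Z_i + \mu_0(1-Z_i) + \ldots$) shows that the coefficient of $Z_i$ in \eqref{eq:Lin} equals $\tilde{\mu}_1 - \tilde{\mu}_0$. Here $\tilde\mu_z$ is the coefficient of the group-indicator column in the reparametrized fit.

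Second, we note that the regressors in the reparametrized fit are supported on disjoint groups. Its Gram matrix $\sum_i \tilde{x}_{i,\textsc{l}}\tilde{x}_{i,\textsc{l}}^{\mathrm{T}}$ is therefore block diagonal, since $Z_i(1-Z_i)=0$. The normal equations then decouple into two separate least-squares problems, namely the fits \eqref{eq:Lin_1} over $\{i:Z_i=1\}$ and \eqref{eq:Lin_0} over $\{i:Z_i=0\}$. It follows that $(\tilde\mu_1, \tilde\gamma_1) = (\hat Y_{\textsc{l}}(1), \hat\gamma_1)$ and $(\tilde\mu_0,\tilde\gamma_0) = (\hat Y_{\textsc{l}}(0),\hat\gamma_0)$. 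Combined with the first step, this gives $\hat\beta_{\textsc{l}} = \hat Y_{\textsc{l}}(1) - \hat Y_{\textsc{l}}(0)$.

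Third, to obtain the explicit formulas, we use the first-order condition for the intercept in \eqref{eq:Lin_1}:
\[
\sum_{i=1}^n Z_i\bigl(y_i - (x_i-\bar x)^{\mathrm{T}}\hat\gamma_1 - \hat Y_{\textsc{l}}(1)\bigr)=0 .
\]
Since $y_i = Y_i(1)$ when $Z_i=1$, solving for $\hat Y_{\textsc{l}}(1)$ gives the stated expression, and the control arm follows by symmetry. This mirrors the argument already given for the oracle version in the proof of Lemma~\ref{lemma:Lin_known_x}, with $\mu_x$ replaced by $\bar x$. Nothing there used that the centering constant was deterministic, since all statements are conditional on the realized data.

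The only step requiring care is the first: verifying that $R$ is invertible and correctly identifying which linear combination of reparametrized coefficients equals the $Z_i$ coefficient. This is checked directly from the identity $x_{i,\textsc{l}}=R\tilde x_{i,\textsc{l}}$ and the explicit $R^{-1}$ in \eqref{eq:R}. We must also assume both arms contain enough units for the within-group Gram matrices to be invertible; this holds with probability approaching one under Assumption~\ref{asu:completeRandom}.
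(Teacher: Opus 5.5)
Your proposal is correct and follows the paper's proof. The paper asserts $\hat{\beta}_{\textsc{l}} = \hat{Y}_{\textsc{l}}(1)-\hat{Y}_{\textsc{l}}(0)$ from ``properties of least squares'' and then solves the intercept first-order condition of \eqref{eq:Lin_1}, exactly as in your third step; your reparametrization $x_{i,\textsc{l}} = R\tilde{x}_{i,\textsc{l}}$ with the block-diagonal Gram matrix (from $Z_i(1-Z_i)=0$) correctly fills in the step the paper leaves implicit, including identifying the $Z_i$ coefficient as $\tilde\mu_1-\tilde\mu_0$.
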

\begin{proof}[Proof of Lemma \ref{lemma:Lin_beta}]
That $\hat{\beta}_{\textsc{l}} = \hat{Y}_{\textsc{l}}(1)-\hat{Y}_{\textsc{l}}(0)$ follows from properties of least squares. To verify the explicit form of $\hat{Y}_{\textsc{l}}(1)$ and $\hat{Y}_{\textsc{l}}(0)$, observe that the residual from \eqref{eq:Lin_1} equals $Y_i(1)-\left(x_i-\bar{x}\right)^{\mathrm{T}} \hat{\gamma}_1-\hat{Y}_{\textsc{l}}(1)$ for units with $Z_i=1$. The first-order condition ensures
$$
\sum_{i=1}^n Z_i \cdot\left(Y_i(1)-\left(x_i-\bar{x}\right)^{\mathrm{T}} \hat{\gamma}_1-\hat{Y}_{\textsc{l}}(1)\right)=0
$$
This verifies the expression of $\hat{Y}_{\textsc{l}}(1)$. The expression of $\hat{Y}_{\textsc{l}}(0)$ follows by symmetry.

\end{proof}

Below we give the detailed form of $\hat{V}_{\textsc{ehw},\textsc{l}}$:
\begin{align}
\hat{V}_{\textsc{ehw},\textsc{l}}
~=~& \frac{1}{n} \left[
\left( \frac{1}{n}\sum_{i=1}^n x_{i,\textsc{l}}x_{i,\textsc{l}}^{\mathrm{T}} \right)^{-1}
\left( \frac{1}{n}\sum_{i=1}^n \hat{\varepsilon}_{i,\textsc{l}}^2 x_{i,\textsc{l}}x_{i,\textsc{l}}^{\mathrm{T}} \right)
\left( \frac{1}{n}\sum_{i=1}^n x_{i,\textsc{l}}x_{i,\textsc{l}}^{\mathrm{T}} \right)^{-1}
\right]_{(2,2)}. 
\label{eq:EHW_L} 
\end{align}

\begin{proof}[Proof of Theorem \ref{thm:FRA_random}]
\label{proof:thm:FRA_random}
The asymptotic normality of $\hat{\beta}_{\textsc{l}}$ is proved by \citet[Theorem 5.1]{NegiWooldridge2021}. Here we prove it by applying the theory of $Z$-estimation with augmented moment equations.
\paragraph*{Asymptotic normality}
Recall from Lemma \ref{lemma:Lin_beta} that $(\hat{Y}_{\textsc{l}}(1), \hat{\gamma}_1)$ and $(\hat{Y}_{\textsc{l}}(0), \hat{\gamma}_0)$ are the intercepts and coefficient vectors of $\left(x_i-\bar{x}\right)$ from the OLS fits of \eqref{eq:Lin_1} and \eqref{eq:Lin_0}, respectively, with $\hat{\beta}_{\textsc{l}} =\hat{Y}_{\textsc{l}}(1)-\hat{Y}_{\textsc{l}}(0)$. The first-order conditions of \eqref{eq:Lin_1} and \eqref{eq:Lin_0} ensure that $\left(\mu_1, \gamma_1\right)=(\hat{Y}_{\textsc{l}}(1), \hat{\gamma}_1)$ and $\left(\mu_0, \gamma_0\right)=(\hat{Y}_{\textsc{l}}(0), \hat{\gamma}_0)$ solve
$$
0=\sum_{i=1}^n Z_i \cdot\left(Y_i(1)-\left(x_i-\bar{x}\right)^{\mathrm{T}} \gamma_1-\mu_1\right)\left(\begin{array}{c}
1 \\
x_i-\bar{x}
\end{array}\right).
$$
This ensures that $\left(\mu_1, \gamma_1, \mu_0, \gamma_0, \mu_x\right)= \left(\hat{Y}_{\textsc{l}}(1), \hat{\gamma}_1, \hat{Y}_{\textsc{l}}(0), \hat{\gamma}_0, \bar{x}\right)$ jointly solves
$$
0=n^{-1} \sum_{i=1}^n \eta\left(Y_i(1), Y_i(0), x_i, Z_i; \theta\right)
=n^{-1} \sum_{i=1}^n \left(\begin{array}{c}
\eta_1\left(Y_i(1), x_i, Z_i; \mu_1, \gamma_1, \mu_x\right) \\
\eta_0\left(Y_i(0), x_i, Z_i; \mu_0, \gamma_0, \mu_x\right) \\
\eta_x\left(x_i ; \mu_x\right) 
\end{array}\right)
$$
where $\theta=\left(\mu_1, \gamma_1, \mu_0, \gamma_0,\mu_x\right)$ and
\begin{align}
\eta=
\left(\begin{array}{c}
\eta^* \\
\eta_x 
\end{array}\right) 
\text { with } 
\quad \begin{aligned}
\eta_x 
&= x_i-\mu_x
\end{aligned} \label{eq:Lin_ME}
\end{align}
and $\eta^*$ is defined in \eqref{eq:Lin_ME_oracle}.
Direct algebra ensures that $\theta=\theta^{\textup{r}}=\left(\mu_1^{\textup{r}}, \gamma_1^{\textup{r}}, \mu_0^{\textup{r}}, \gamma_0^{\textup{r}},\mu_x\right)$ solves $\mathbb{E}\{\eta(\theta)\}=0$. 
Lemma \ref{lemma:ME_diff} ensures that 
\begin{align}
\sqrt{n}\left\{\left(\begin{array}{c}
\hat{Y}_{\textsc{l}}(1) \\
\hat{\gamma}_1 \\
\hat{Y}_{\textsc{l}}(0) \\
\hat{\gamma}_0 
\end{array}\right)-\left(\begin{array}{c}
\mu_1^{\textup{r}} \\
\gamma_1^{\textup{r}} \\
\mu_0^{\textup{r}} \\
\gamma_0^{\textup{r}} 
\end{array}\right)\right\} \overset{\textup{d}}{\to} 
\mathcal{N}\left(0, A_{\textsc{l}}^{\textup{r}-1}  D_{\textsc{l}}^{\textup{r}} \left(A_{\textsc{l}}^{\textup{r}-\mathrm{T}} \right) \right), 
\label{eq:Lin_normal}
\end{align}
where
\begin{align*}
A_{\textsc{l}}^{\textup{r}}
=& - \mathbb{E}\left(\frac{\partial}{\partial\left(\mu_1, \gamma_1, \mu_0, \gamma_0 \right)} \eta^*  \right)
= A_{\textsc{l}}^{*\textup{r}} \\
D_{\textsc{l}}^{\textup{r}}
=& \mathbb{E}\left( \left(\eta^* - \mathbb{E}\left[\frac{\partial \eta^*}{\partial\mu_x }\right] \mathbb{E}\left[ \frac{\partial}{\partial \mu_x} \eta_x \right]^{-1} \eta_x \right)\left( \eta^* - \mathbb{E}\left[\frac{\partial \eta^*}{\partial\mu_x }\right] \mathbb{E}\left[ \frac{\partial}{\partial \mu_x} \eta_x \right]^{-1} \eta_x \right)^{\mathrm{T}} \right) 
\end{align*}
evaluated at $\theta=\theta^{\textup{r}}$. We compute below $D_{\textsc{l}}^{\textup{r}} $.

\noindent \textbf{Computing $D_{\textsc{L}}^{\textup{r}} $:}
With a slight abuse of notation, let $\frac{\partial}{\partial \mu_1} \eta_1^{\textup{r}}$ denote the value of $\frac{\partial}{\partial \mu_1} \eta_1$ evaluated at $\theta^{\textup{r}}$. 
Observe that
\begin{align}
\frac{\partial}{\partial \mu_x^{\mathrm{T}}} \eta_1^{\textup{r}} 
=& \left\{Y_i(1)-\left(x_i-\mu_x\right)^{\mathrm{T}} \gamma_1^{\textup{r}} - \mu_1^{\textup{r}} \right\}\left(\begin{array}{c}
0 \\
-I_J
\end{array}\right)+\left(\begin{array}{c}
1 \\
x_i-\mu_x
\end{array}\right) (\gamma_1^{\textup{r}})^{\mathrm{T}} \notag \\
=& Z_i 
\left(\begin{array}{c}
(\gamma_1^{\textup{r}})^{\mathrm{T}} \\
-\left\{Y_i(1)-\left( x_i-\mu_x \right)^{\mathrm{T}} \gamma_1^{\textup{r}}-\mu_1^{\textup{r}} \right\} I_J+\left( x_i-\mu_x \right) (\gamma_1^{\textup{r}})^{\mathrm{T}}
\end{array}\right) \\
\frac{\partial}{\partial\left(\mu_1^{\mathrm{T}}, \gamma_1^{\mathrm{T}}\right)} \eta_1^{\textup{r}} 
=& -Z_i \cdot\left(\begin{array}{c}
1 \\
x_i-\mu_x
\end{array}\right)\left(1,\left(x_i-\mu_x \right)^{\mathrm{T}}\right), \\
\frac{\partial}{\partial \mu_x^{\mathrm{T}}} \eta_x^{\textup{r}} 
& =-I_J .
\end{align}
Therefore,
\begin{align*}
& \eta^{\textup{r}} - \mathbb{E}\left[\frac{\partial \eta^{\textup{r}}}{\partial\mu_x }\right] \mathbb{E}\left[ \frac{\partial}{\partial \mu_x} \eta_x \right]^{-1} \eta_x 
= \eta^{\textup{r}} + 
\left(\begin{array}{c}
e (\gamma_1^{\textup{r}})^{\mathrm{T}} \dot{x}_i \\
0 \\
(1-e) (\gamma_0^{\textup{r}})^{\mathrm{T}} \dot{x}_i \\
0
\end{array}\right).
\end{align*}
The $D_{\textsc{l}}^{\textup{r}} $ matrix equals
\begin{align*}
D_{\textsc{l}}^{\textup{r}} 
=& \left(\begin{array}{cc|cc}
d_{11} & d_{12} & 0 & 0   \\
d_{12}^{\mathrm{T}} & d_{22} & 0 & 0  \\
\hline 0 & 0 & d_{33} & d_{34}   \\
0 & 0 & d_{34}^{\mathrm{T}} & d_{44}   
\end{array}\right)
+ \left(\begin{array}{cccc}
e^2 (\gamma_1^{\textup{r}})^{\mathrm{T}} \operatorname{cov}\left(x_i\right) \gamma_1^{\textup{r}} & 0 & e(1-e) (\gamma_1^{\textup{r}})^{\mathrm{T}} \operatorname{cov}\left(x_i\right) \gamma_0^{\textup{r}} & 0   \\
0 & 0 & 0 & 0  \\
e(1-e) (\gamma_0^{\textup{r}})^{\mathrm{T}} \operatorname{cov}\left(x_i\right) \gamma_1^{\textup{r}} & 0 & (1-e)^2 (\gamma_1^{\textup{r}})^{\mathrm{T}} \operatorname{cov}\left(x_i\right) \gamma_1^{\textup{r}} & 0  \\
0 & 0 & 0 & 0
\end{array}\right) 
\end{align*}
where (i) $d_{11}$ is defined in \eqref{eq:d_11} and $d_{22}$ is defined in \eqref{eq:d_33},
(ii)
\begin{align*}
\mathbb{E}\left[\left\{Y_i(1)-\mu_1^{\textup{r}}\right\}\left(x_i-\mu_x\right)^{\mathrm{T}}\right]-(\gamma_1^{{\textup{r}}} )^\mathrm{T} \mathbb{E}\left\{\left(x_i-\mu_x \right)\left(x_i-\mu_x\right)^{\mathrm{T}}\right\}=0.
\end{align*}

\noindent \textbf{Compute $A_{\textsc{L}}^{\textup{r}-1}  D_{\textsc{L}}^{\textup{r}} \left(A_{\textsc{L}}^{\textup{r}-\mathrm{T}} \right)$:}
Equations \eqref{eq:Lin_B_oracle}, \eqref{eq:Lin_normal}, \eqref{eq:Lin_A_oracle}, and \eqref{eq:Lin_meat_oracle} together ensure
\begin{align*}
V_{\textsc{l}}
=& \operatorname{cov}\left\{\left(\begin{array}{c}
\hat{Y}_{\textsc{l}}(1) \\
\hat{Y}_{\textsc{l}}(0)
\end{array}\right)\right\}
= \operatorname{cov}\left\{\left(\begin{array}{llll}
1 & 0 & 0 & 0   \\
0 & 0 & 1 & 0 
\end{array}\right)\left(\begin{array}{c}
\hat{Y}_{\textsc{l}}(1) \\
\hat{\gamma}_1 \\
\hat{Y}_{\textsc{l}}(0) \\
\hat{\gamma}_0 
\end{array}\right)\right\} \\
=& \left(\begin{array}{llll}
1 & 0 & 0 & 0   \\
0 & 0 & 1 & 0  
\end{array}\right) 
A_{\textsc{l}}^{\textup{r}-1}  D_{\textsc{l}}^{\textup{r}} \left(A_{\textsc{l}}^{\textup{r}-\mathrm{T}} \right) \left(\begin{array}{ll}
1 & 0 \\
0 & 0 \\
0 & 1 \\
0 & 0 
\end{array}\right) \\
=&  \left(\begin{array}{cccc}
g_{11}^{-1} & 0 & 0 & 0 \\
0 & 0 & g_{33}^{-1} & 0
\end{array}\right)
D_{\textsc{l}}^{\textup{r}}
\left(\begin{array}{cc}
g_{11}^{-1} & 0 \\
0 & 0 \\
0 & g_{33}^{-1} \\
0 & 0 
\end{array}\right)  \\
=& \left(\begin{array}{cc}
\frac{d_{11}}{e^2} & 0 \\
0 & \frac{d_{33}}{(1-e)^2}
\end{array}\right) + 
\left(\gamma_1^{\textup{r}}, \gamma_0^{\textup{r}}\right)^\mathrm{T}
\operatorname{cov}\left(x_i\right)
\left(\gamma_1^{\textup{r}}, \gamma_0^{\textup{r}}\right).
\end{align*}
This ensures
\begin{align}
& V_{\textsc{l}}^{\textup{r}}
=\operatorname{var}\left(\hat{\beta}_{\textsc{l}}\right) 
=\operatorname{var}\left\{\hat{Y}_{\textsc{l}}(1)-\hat{Y}_{\textsc{l}}(0)\right\}
=\operatorname{var}\left\{(1,-1)\left(\begin{array}{c}
\hat{Y}_{\textsc{l}}(1) \\
\hat{Y}_{\textsc{l}}(0)
\end{array}\right)\right\} 
=(1,-1) V_{\textsc{l}}\left(\begin{array}{c}
1 \\
-1
\end{array}\right) \notag \\
& =\frac{\mathbb{E}\left[ \left\{Y_i(1)-\left(x_i-\mu_x\right)^{\mathrm{T}} \gamma_1^{\textup{r}}-\mu_1^{\textup{r}}\right\}^2\right]}{e}
+ \frac{\mathbb{E}\left[ \left\{Y_i(0)-\left(x_i-\mu_x\right)^{\mathrm{T}} \gamma_0^{\textup{r}}-\mu_0^{\textup{r}}\right\}^2\right]}{1-e}
+ \left(\gamma_1^{\textup{r}}-\gamma_0^{\textup{r}}\right)^{\mathrm{T}} \operatorname{cov}\left(x_i\right)\left(\gamma_1^{\textup{r}}-\gamma_0^{\textup{r}}\right) \notag \\
&= V_{\textsc{l}}^{*\textup{r}} + \left(\gamma_1^{\textup{r}}-\gamma_0^{\textup{r}}\right)^{\mathrm{T}} \operatorname{cov}\left(x_i\right)\left(\gamma_1^{\textup{r}}-\gamma_0^{\textup{r}}\right). \label{eq:correction}
\end{align}

\paragraph*{Consistency:}
The proof that $\hat{V}_{\textsc{ehw},\textsc{l},\text{adj}} = V_{\textsc{l}}^{\textup{r}} + o(1;\mathbb{P}_{(y,Z,x)})$ is similar to that of Theorem \ref{thm:cluster_adj_r}, so we omit it here.

\paragraph*{Anti-conservativeness}
The anti-conservativeness of $\hat{V}_{\textsc{ehw},\textsc{l}}$ follows from Theorem \ref{thm:FRA_random}, i.i.d. of $x_i$ and consistency of $\hat{\gamma}_0$ and $\hat{\gamma}_1$.

\end{proof}

\begin{proof}[Proof of Theorem \ref{thm:FRA_m}]
Recall Lemma \ref{lemma:Lin_beta}, $\theta = \left(\mu_1, \gamma_1, \mu_0, \gamma_0\right)= \left(\hat{Y}_{\textsc{l}}(1), \hat{\gamma}_1, \hat{Y}_{\textsc{l}}(0), \hat{\gamma}_0\right)$ jointly solves
$$
0=n^{-1} \sum_{i=1}^n \eta\left(Y_i(1), Y_i(0), x_i, Z_i; \theta\right)
=n^{-1} \sum_{i=1}^n \left(\begin{array}{c}
\eta_{1}(Y_i(1), x_i, Z_i; \mu_1, \gamma_1 ) \\
\eta_{0}(Y_i(0), x_i, Z_i; \mu_0, \gamma_0 ) 
\end{array}\right)
$$
and
\begin{align}
\eta=
\left(\begin{array}{c}
\eta_1 \\
\eta_0 
\end{array}\right) 
\text { with } 
\quad \begin{aligned}
\eta_1 
& =Z_i \cdot\left\{y_i-\left(x_i-\bar{x}\right)^{\mathrm{T}} \gamma_1-\mu_1\right\}
\left(\begin{array}{c}
1 \\
x_i-\bar{x}
\end{array}\right), \\
\eta_0 
& =(1-Z_i)  \cdot\left\{y_i-\left(x_i-\bar{x}\right)^{\mathrm{T}} \gamma_0-\mu_0\right\}
\left(
\begin{array}{c}
1 \\
x_i-\bar{x}
\end{array}
\right).
\end{aligned} 
\label{eq:Lin_ME_mixed}
\end{align}
Direct algebra ensures that $\theta=\theta^\textup{m}=\left(\mu_1^\textup{m}, \gamma_1^\textup{m}, \mu_0^\textup{m}, \gamma_0^\textup{m} \right)$ solves $n^{-1}\sum_{i=1}^n \mathbb{E}\{\eta(\theta)\mid x_i\}=0$. Theorem \ref{thm:ME_mixed} ensures that 
\begin{align}
\sqrt{n}\left\{\left(\begin{array}{c}
\hat{Y}_{\textsc{l}}(1) \\
\hat{\gamma}_1 \\
\hat{Y}_{\textsc{l}}(0) \\
\hat{\gamma}_0 
\end{array}\right)-\left(\begin{array}{c}
\mu_1^\textup{m} \\
\gamma_1^\textup{m} \\
\mu_0^\textup{m} \\
\gamma_0^\textup{m} 
\end{array}\right)\right\} 
\overset{\textup{d}}{\to} 
\mathcal{N}\left(0, (A_{\textsc{l}}^{\textup{m}-1})  D_{\textsc{l}}^{\textup{m}} \left(A_{\textsc{l}}^{\textup{m}-\mathrm{T}} \right) \right), 
\label{eq:Lin_normal_mixed}
\end{align}
where $A_{\textsc{l}}^{\textup{m}}$ and $D_{\textsc{l}}^{\textup{m}} $ are the values of $-n^{-1}\sum_{i=1}^n \mathbb{E}\left(\frac{\partial}{\partial\left(\mu_1, \gamma_1, \mu_0, \gamma_0 \right)} \eta\mid x_i\right)$ and $n^{-1}\sum_{i=1}^n \V(\eta(\theta) \mid x_i)$ evaluated at $\theta=\theta^{\textup{m}}$. We compute below $A_{\textsc{l}}^{\textup{m}}$ and $D_{\textsc{l}}^{\textup{m}} $, respectively.

\paragraph*{Computing $D_{\textsc{L}}^{\textup{m}} $:}
Let $\left(\eta^{\textup{m}}, \eta_{1}^{\textup{m}}, \eta_{0}^{\textup{m}} \right)$ denote the value of $\left(\eta, \eta_1, \eta_0\right)$ evaluated at $\theta=\theta^{\textup{m}}$. From \eqref{eq:Lin_ME_mixed}, we have
\begin{align}
\eta^{\textup{m}}=
\left(\begin{array}{c}
\eta_1^{\textup{m}} \\
\eta_0^{\textup{m}} 
\end{array}\right) 
\text { with } 
\quad \begin{aligned}
\eta_1^{\textup{m}} 
& =Z_i \cdot\left\{Y_i(1)-\left(x_i-\bar{x}\right)^{\mathrm{T}} \gamma_1^{\textup{m}}-\mu_1^{\textup{m}}\right\}
\left(\begin{array}{c}
1 \\
x_i-\bar{x}
\end{array}\right), \\
\eta_0^{\textup{m}} 
& =(1-Z_i)  \cdot\left\{Y_i(0)-\left(x_i-\bar{x}\right)^{\mathrm{T}} \gamma_0^{\textup{m}}-\mu_0^{\textup{m}}\right\}
\left(\begin{array}{c}
1 \\
x_i-\bar{x}
\end{array}\right).
\end{aligned} \label{eq:Lin_eta_mixed}
\end{align}
Define $\varepsilon_{i}^{\textup{m}}(z) = Y_i(z)-\left(x_i-\bar{x}\right)^{\mathrm{T}} \gamma_z^{\textup{m}}-\mu_z^{\textup{m}}$.
The $D_{\textsc{l}}^{\textup{m}} $ matrix equals
\begin{equation}
D_{\textsc{l}}^{\textup{m}} 
=\frac{1}{n}\sum_{i=1}^n \V\left(\eta^{\textup{m}} \mid x_i \right)
=\left(\begin{array}{cc|cc}
d_{11} & d_{12} & d_{13} & d_{14}   \\
d_{12}^{\mathrm{T}} & d_{22} & d_{23} & d_{24}  \\
\hline 
d_{13}^{\mathrm{T}} & d_{23}^{\mathrm{T}} & d_{33} & d_{34} \\
d_{14}^{\mathrm{T}} & d_{24}^{\mathrm{T}} & d_{34}^{\mathrm{T}} & d_{44}  
\end{array}\right)
=\left(\begin{array}{cc}
D_{11} & D_{12}  \\
D_{12}^{\mathrm{T}} & D_{22}  
\end{array}\right), \label{eq:Lin_B_mixed}
\end{equation}
where
\begin{align}
d_{11} 
&= \frac{1}{n}\sum_{i=1}^n \left(e \mathbb{E}((\varepsilon_i^{\textup{m}}(1))^2 \mid x_i) - e^2 \mathbb{E}(\varepsilon_i^{\textup{m}}(1) \mid x_i)^2 \right), \label{eq:d_11_mixed} \\
d_{33} 
&= \frac{1}{n}\sum_{i=1}^n \left( (1-e) \mathbb{E}((\varepsilon_i^{\textup{m}}(0))^2 \mid x_i) - (1-e)^2 \mathbb{E}(\varepsilon_i^{\textup{m}}(0) \mid x_i)^2 \right), \label{eq:d_33_mixed}  \\
d_{13} 
&= - e(1-e) \frac{1}{n}\sum_{i=1}^n \mathbb{E}(\varepsilon_i^{\textup{m}}(1) \mid x_i) \mathbb{E}(\varepsilon_i^{\textup{m}}(0) \mid x_i). 
\label{eq:d_13_mixed}
\end{align}

\noindent \textbf{Compute $A_{\textsc{L}}^{\textup{m}}$:}
With a slight abuse of notation, let $\frac{\partial}{\partial \mu_1} \eta_1^*$ denote the value of $\frac{\partial}{\partial \mu_1} \eta_1$ evaluated at $\theta^*$. Similarly define other partial derivatives. 
From \eqref{eq:Lin_ME_mixed}, we have
$$
\left.\frac{\partial}{\partial\left(\mu_1, \gamma_1^{\mathrm{T}}, \mu_0, \gamma_0^{\mathrm{T}} \right)} \eta\right|_{\theta=\theta^{\textup{m}}}
=\left(\begin{array}{ccc}
\frac{\partial}{\partial\left(\mu_1, \gamma_1^{\mathrm{T}}\right)} \eta_1^{\textup{m}} & 0  \\
0 & \frac{\partial}{\partial\left(\mu_0, \gamma_0^{\mathrm{T}}\right)} \eta_0^{\textup{m}}
\end{array}\right),
$$
where
$$
\begin{aligned}
\frac{\partial}{\partial\left(\mu_1, \gamma_1^{\mathrm{T}}\right)} \eta_1^{\textup{m}} 
& =-Z_i \cdot\left(\begin{array}{c}
1 \\
x_i-\bar{x}
\end{array}\right)
\left(1,\left(x_i-\bar{x} \right)^{\mathrm{T}}\right).
\end{aligned}
$$
Accordingly, we have
\begin{align}
A_{\textsc{l}}^{\textup{m}} 
= - \frac{1}{n}\sum_{i=1}^n \left.\mathbb{E}\left(\frac{\partial}{\partial\left(\mu_1, \gamma_1^{\mathrm{T}}, \mu_0, \gamma_0^{\mathrm{T}}  \right)} \eta \mid x_i\right)\right|_{\theta=\theta^{\textup{m}}}
= \left(\begin{array}{cc|cc}
g_{11} & g_{12} & 0 & 0 \\
g_{21} & g_{22} & 0 & 0 \\
\hline 0 & 0 & g_{33} & g_{34}  \\
0 & 0 & g_{43} & g_{44} 
\end{array}\right), 
\label{eq:A_mixed_L}
\end{align}
where
\begin{align*}
\left(\begin{array}{ll}
g_{11} & g_{12} \\
g_{21} & g_{22}
\end{array}\right)
=& -\frac{1}{n}\sum_{i=1}^n \mathbb{E}\left(\frac{\partial}{\partial\left(\mu_1, \gamma_1^{\mathrm{T}}\right)} \eta_1^{\textup{m}}\mid x_i \right) 
= \frac{1}{n}\sum_{i=1}^n \mathbb{E}\left\{Z_i  \cdot\left(\begin{array}{c}
1 \\
x_i-\bar{x}
\end{array}\right)\left(1,\left(x_i-\bar{x}\right)^{\mathrm{T}}\right) \mid x_i \right\} \\
=& e \cdot\left(\begin{array}{cc}
1 & 0 \\
0 & \frac{1}{n} \sum_{i=1}^n (x_i-\bar{x})\left(x_i-\bar{x}\right)^{\mathrm{T}}
\end{array}\right), 
\end{align*}
and by symmetry,
$$
\left(\begin{array}{ll}
g_{33} & g_{34} \\
g_{43} & g_{44}
\end{array}\right)
= (1-e) \left(\begin{array}{cc}
1 & 0 \\
0 & \frac{1}{n} \sum_{i=1}^n (x_i-\bar{x})\left(x_i-\bar{x}\right)^{\mathrm{T}}
\end{array}\right).
$$
By the formula of block matrix inverse, we have
$$
A_{\textsc{l}}^{\textup{m}-1} 
=\left(\begin{array}{cccc}
g_{11}^{-1} & 0 & 0 & 0  \\
0 & g_{22}^{-1} & 0 & 0   \\
0 & 0 & g_{33}^{-1} & 0   \\
0 & 0 & 0 & g_{44}^{-1} 
\end{array}\right)
$$
with
\begin{align}
\left(\begin{array}{llll}
1 & 0 & 0 & 0  \\
0 & 0 & 1 & 0 
\end{array}\right) A_{\textsc{l}}^{\textup{m}-1}  
=\left(\begin{array}{cc}
g_{11}^{-1} & 0 \\
0 & g_{33}^{-1}
\end{array}\right)\left(\begin{array}{cccc}
1 & 0 & 0 & 0  \\
0 & 0 & 1 & 0  
\end{array}\right). 
\label{eq:Lin_A_mixed}
\end{align}

\noindent\textbf{Compute $A_{\textsc{L}}^{\textup{m}-1}  D_{\textsc{L}}^{\textup{m}} \left(A_{\textsc{L}}^{\textup{m}-\mathrm{T}} \right)$:}
Direct algebra ensures
\begin{align}
& \left(\begin{array}{llll}
1 & 0 & 0 & 0  \\
0 & 0 & 1 & 0 
\end{array}\right)
\left(\begin{array}{cc|cc}
d_{11} & d_{12} & d_{13} & d_{14}   \\
d_{12}^{\mathrm{T}} & d_{22} & d_{23} & d_{24}  \\
\hline 
d_{13}^{\mathrm{T}} & d_{23}^{\mathrm{T}} & d_{33} & d_{34} \\
d_{14}^{\mathrm{T}} & d_{24}^{\mathrm{T}} & d_{34}^{\mathrm{T}} & d_{44}  
\end{array}\right)
\left(\begin{array}{cc}
1 & 0 \\
0 & 0 \\
0 & 1 \\
0 & 0 
\end{array}\right) 
= \left(\begin{array}{cc}
d_{11} & d_{13} \\
d_{13}^{\mathrm{T}} & d_{33}
\end{array}\right). 
\label{eq:Lin_meat_mixed}
\end{align}
Equations \eqref{eq:Lin_normal_mixed}, \eqref{eq:Lin_B_mixed}, \eqref{eq:Lin_A_mixed}, and \eqref{eq:Lin_meat_mixed} together ensure
\begin{align*}
& V_{\textsc{l}}
= \operatorname{cov}\left\{\left(\begin{array}{c}
\hat{Y}_{\textsc{l}}(1) \\
\hat{Y}_{\textsc{l}}(0)
\end{array}\right) \mid x \right\}
= \operatorname{cov}\left\{\left(\begin{array}{llll}
1 & 0 & 0 & 0   \\
0 & 0 & 1 & 0 
\end{array}\right)\left(\begin{array}{c}
\hat{Y}_{\textsc{l}}(1) \\
\hat{\gamma}_1 \\
\hat{Y}_{\textsc{l}}(0) \\
\hat{\gamma}_0 
\end{array}\right) \mid x \right\} \\
=& \left(\begin{array}{llll}
1 & 0 & 0 & 0  \\
0 & 0 & 1 & 0 
\end{array}\right) (A_{\textsc{l}}^{\textup{m}-1})  D_{\textsc{l}}^{\textup{m}} \left(A_{\textsc{l}}^{\textup{m}-\mathrm{T}} \right) \left(\begin{array}{ll}
1 & 0 \\
0 & 0 \\
0 & 1 \\
0 & 0 
\end{array}\right) \\
=& \left(\begin{array}{cc}g_{11}^{-1} & 0 \\ 0 & g_{33}^{-1}\end{array}\right) 
\left(\begin{array}{cccc}
1 & 0 & 0 & 0  \\
0 & 0 & 1 & 0  
\end{array}\right)
\left(\begin{array}{cccc}
d_{11} & d_{12} & d_{13} & d_{14}   \\
d_{12}^{\mathrm{T}} & d_{22} & d_{23} & d_{24}  \\
d_{13}^{\mathrm{T}} & d_{23}^{\mathrm{T}} & d_{33} & d_{34} \\
d_{14}^{\mathrm{T}} & d_{24}^{\mathrm{T}} & d_{34}^{\mathrm{T}} & d_{44}  
\end{array}\right)
\left(\begin{array}{ccc}
1 & 0 \\
0 & 0 \\
0 & 1 \\
0 & 0 
\end{array}\right) 
\left(\begin{array}{cc}
g_{11}^{-1} & 0 \\
0 & g_{33}^{-1}
\end{array}\right) \\
=& \left(\begin{array}{cc}
g_{11}^{-1} & 0 \\
0 & g_{33}^{-1}
\end{array}\right) 
\left(\begin{array}{cc}
d_{11} & d_{13} \\
d_{13}^{\mathrm{T}} & d_{33}
\end{array}\right)
\left(\begin{array}{cc}
g_{11}^{-1} & 0 \\
0 & g_{33}^{-1}
\end{array}\right) \\
=& \left(\begin{array}{cc}
\frac{d_{11}}{e^2} & \frac{d_{13}}{e(1-e)} \\
\frac{d_{13}^{\mathrm{T}}}{e(1-e)} & \frac{d_{33}}{(1-e)^2}
\end{array}\right).
\end{align*}
This ensures
\begin{align*}
& V_{\textsc{l}}^{\textup{m}}
=\operatorname{var}\left(\hat{\beta}_{\textsc{l}} \mid x \right) 
=\operatorname{var}\left\{\hat{Y}_{\textsc{l}}(1)-\hat{Y}_{\textsc{l}}(0) \mid x\right\}
=\operatorname{var}\left\{(1,-1)\left(\begin{array}{c}
\hat{Y}_{\textsc{l}}(1) \\
\hat{Y}_{\textsc{l}}(0)
\end{array}\right) \mid x \right\} 
=(1,-1) V_{\textsc{l}}\left(\begin{array}{c}
1 \\
-1
\end{array}\right) \\
=& \frac{d_{11}}{e^2} - 2\frac{d_{13}}{e(1-e)} + \frac{d_{33}}{(1-e)^2} \\
=& \frac{\frac{1}{n}\sum_{i=1}^n \left(e \mathbb{E}((\varepsilon_{i,\textsc{l}}^{\textup{m}}(1))^2 \mid x_i) - e^2 \mathbb{E}(\varepsilon_{i,\textsc{l}}^{\textup{m}}(1) \mid x_i)^2 \right)}{e^2} + \frac{\frac{1}{n}\sum_{i=1}^n \left((1-e) \mathbb{E}((\varepsilon_{i,\textsc{l}}^{\textup{m}}(0))^2 \mid x_i) - (1-e)^2 \mathbb{E}(\varepsilon_{i,\textsc{l}}^{\textup{m}}(0) \mid x_i)^2 \right)}{(1-e)^2} \\
&- 2 \frac{e(1-e) \frac{1}{n}\sum_{i=1}^n \mathbb{E}(\varepsilon_{i,\textsc{l}}^{\textup{m}}(1) \mid x_i) \mathbb{E}(\varepsilon_{i,\textsc{l}}^{\textup{m}}(0) \mid x_i)}{e(1-e)} \\
=& \frac{\frac{1}{n}\sum_{i=1}^n \mathbb{E}((\varepsilon_{i,\textsc{l}}^{\textup{m}}(1))^2 \mid x_i) }{e} 
+ \frac{\frac{1}{n}\sum_{i=1}^n \mathbb{E}((\varepsilon_{i,\textsc{l}}^{\textup{m}}(0))^2 \mid x_i) }{1-e}
- \frac{1}{n}\sum_{i=1}^n \mathbb{E}(\varepsilon_{i,\textsc{l}}^{\textup{m}}(1) - \varepsilon_{i,\textsc{l}}^{\textup{m}}(0) \mid x_i)^2.
\end{align*}

\paragraph*{Conservativeness:}
Define $\tilde{x}_{i,\textsc{l}} = (Z_i, Z_i\ddot{x}_i, 1-Z_i, (1-Z_i)\ddot{x}_i)^{\mathrm{T}}$. 
Applying the EHW variance estimator in \eqref{eq:EHW_ME} with moment equations in \eqref{eq:Lin_ME_mixed}, we have 
\begin{align}
\tilde{V}_{\textsc{ehw},\textsc{l}}
=& \left[
\left( \frac{1}{n}\sum_{i=1}^n 
\tilde{x}_{i,\textsc{l}} \tilde{x}_{i,\textsc{l}}^{\mathrm{T}} 
\right)^{-1}
\left( \frac{1}{n}\sum_{i=1}^n \hat{\varepsilon}_{i,\textsc{l}}^{2} 
\tilde{x}_{i,\textsc{l}} \tilde{x}_{i,\textsc{l}}^{\mathrm{T}}
\right)
\left( \frac{1}{n}\sum_{i=1}^n 
\tilde{x}_{i,\textsc{l}} \tilde{x}_{i,\textsc{l}}^{\mathrm{T}} 
\right)^{-1}
\right]_{(1,1)+(3,3)-2(1,3)}. 
\label{eq:EHW_lin_equi}
\end{align} 
Recall $R$ and $R^{-1}$ in \eqref{eq:R} and we have 
$x_{i,\textsc{l}} = R \tilde{x}_{i,\textsc{l}}$.
The EHW variance estimator of $\hat{\beta}_{\textsc{l}}$ has the following equivalent forms:
\begin{align*}
\eqref{eq:EHW_L}
& =\left[ R^{-1}
\left( \frac{1}{n}\sum_{i=1}^n 
\tilde{x}_{i,\textsc{l}} \tilde{x}_{i,\textsc{l}}^{\mathrm{T}} 
\right)^{-1}
\left( \frac{1}{n}\sum_{i=1}^n \hat{\varepsilon}_{i,\textsc{l}}^{2} 
\tilde{x}_{i,\textsc{l}} \tilde{x}_{i,\textsc{l}}^{\mathrm{T}}
\right)
\left( \frac{1}{n}\sum_{i=1}^n 
\tilde{x}_{i,\textsc{l}} \tilde{x}_{i,\textsc{l}}^{\mathrm{T}} 
\right)^{-1} R^{-1}
\right]_{(2,2)}
= \eqref{eq:EHW_lin_equi}. 
\end{align*}
Theorem \ref{thm:ME_mixed} ensures that 
\[
\tilde{V}_{\textsc{ehw},\textsc{l}}
= V_{\textsc{l}}^\textup{m}
+ B_{\textsc{l}}^\textup{m}
+ o(1;\mathbb{P}_{(y,x_1)\mid x_2})  
\]
with the bias term
\begin{equation*}
B_{\textsc{l}}^\textup{m}
= \left[(A_{\textsc{l}}^\textup{m})^{-1} \left( \frac{1}{n} \sum_{i=1}^n \mathbb{E}\left[ \eta(\theta^\textup{m}) \mid x_{i} \right] \mathbb{E}\left[ \eta(\theta^\textup{m}) \mid x_{i} \right]^{\mathrm{T}}  \right) (A_{\textsc{l}}^\textup{m})^{-\mathrm{T}} \right]_{(1,1)+(3,3)-2(1,3)}.
\end{equation*}
We compute the ``middle'' matrix:
\begin{align*}
& \frac{1}{n} \sum_{i=1}^n \mathbb{E}\left[ \eta(\theta^\textup{m}) \mid x_{i} \right] \mathbb{E}\left[ \eta(\theta^\textup{m}) \mid x_{i} \right]^{\mathrm{T}}
= \left(
\begin{array}{cccc}
h_{11} & h_{12} & h_{13} & h_{14} \\
h_{12}^{\mathrm{T}} & h_{22} & h_{23} & h_{24} \\
h_{13}^{\mathrm{T}} & h_{23}^{\mathrm{T}} & h_{33} & h_{34} \\
h_{14}^{\mathrm{T}} & h_{24}^{\mathrm{T}} & h_{34}^{\mathrm{T}} & h_{44} \\
\end{array}
\right)
\end{align*}
where 
\begin{align*}
h_{11}
~=~ & e^2 \frac{1}{n} \sum_{i=1}^n \mathbb{E}(\varepsilon_{i,\textsc{l}}^{\textup{m}}(1) \mid x_{i})^2 \\
h_{33}
~=~ & (1-e)^2 \frac{1}{n} \sum_{i=1}^n \mathbb{E}(\varepsilon_{i,\textsc{l}}^{\textup{m}}(0) \mid x_{i})^2 \\
h_{13}
~=~ & e(1-e) \frac{1}{n} \sum_{i=1}^n \mathbb{E}(\varepsilon_{i,\textsc{l}}^{\textup{m}}(1) \mid x_{i}) \mathbb{E}(\varepsilon_{i,\textsc{l}}^{\textup{m}}(0) \mid x_{i}).
\end{align*}
Therefore,
\begin{align*}
&  \left[(A_{\textsc{l}}^\textup{m})^{-1} \left( \frac{1}{n} \sum_{i=1}^n \mathbb{E}\left[ \eta(\theta^\textup{m}) \mid x_{i} \right] \mathbb{E}\left[ \eta(\theta^\textup{m}) \mid x_{i} \right]^{\mathrm{T}}  \right) (A_{\textsc{l}}^{\textup{m}})^{-{\mathrm{T}}} \right]_{(1,1)+(3,3)-2(1,3)} \\
=& (1,-1) \left(\begin{array}{llll}
1 & 0 & 0 & 0  \\
0 & 0 & 1 & 0 
\end{array}\right) A_{\textsc{l}}^{\textup{m}-1}  \left( \frac{1}{n} \sum_{i=1}^n \mathbb{E}\left[ \eta(\theta^\textup{m}) \mid x_{i} \right] \mathbb{E}\left[ \eta(\theta^\textup{m}) \mid x_{i} \right]^{\mathrm{T}}  \right) \left(A_{\textsc{l}}^{\textup{m}-{\mathrm{T}}} \right)\left(\begin{array}{ll}
1 & 0 \\
0 & 0 \\
0 & 1 \\
0 & 0 
\end{array}\right)
\left(\begin{array}{c}
1 \\
-1
\end{array}\right) \\
=&  (1,-1) \left(\begin{array}{cc}
g_{11}^{-1} & 0 \\
0 & g_{33}^{-1}
\end{array}\right) 
\left(\begin{array}{cc}
h_{11} & h_{13} \\
h_{13}^{\mathrm{T}} & h_{33}
\end{array}\right)
\left(\begin{array}{cc}
g_{11}^{-1} & 0 \\
0 & g_{33}^{-1}
\end{array}\right)
\left(\begin{array}{c}
1 \\
-1
\end{array}\right) \\
=& \frac{h_{11}}{e^2} + \frac{h_{33}}{(1-e)^2} - 2\frac{h_{13}}{e(1-e)}  \\
=& \frac{1}{n} \sum_{i=1}^n \mathbb{E}(\varepsilon_{i,\textsc{l}}^{\textup{m}}(1) \mid x_{i})^2 
+ \frac{1}{n} \sum_{i=1}^n \mathbb{E}(\varepsilon_{i,\textsc{l}}^{\textup{m}}(0) \mid x_{i})^2
- 2 \frac{1}{n} \sum_{i=1}^n \mathbb{E}(\varepsilon_{i,\textsc{l}}^{\textup{m}}(1) \mid x_{i}) \mathbb{E}(\varepsilon_{i,\textsc{l}}^{\textup{m}}(0) \mid x_{i}) \\
=& \frac{1}{n} \sum_{i=1}^n \mathbb{E}(\varepsilon_{i,\textsc{l}}^{\textup{m}}(1) - \varepsilon_{i,\textsc{l}}^{\textup{m}}(0) \mid x_{i})^2.
\end{align*}
Then we verify the form of the bias term.

\end{proof}

\begin{proof}[Proof of Proposition~\ref{prop:fisher-lin-mixed}]
Define
\[
\mu_{iz}
=
\mathbb{E}\{y_i(z)\mid x_i\}
-
\frac{1}{n}\sum_{j=1}^n \mathbb{E}\{y_j(z)\mid x_j\},
\qquad z\in\{0,1\}.
\]
For arbitrary vectors $b_1$ and $b_0$, let $r_{iz}(b_z)=\mu_{iz}-\ddot{x}_i^{\mathrm{T}}b_z$.
The component of the asymptotic variance that depends on
$(b_1,b_0)$ is
\begin{align*}
&\frac{1}{n}\sum_{i=1}^n
\left\{
\frac{r_{i1}(b_1)^2}{e}
+
\frac{r_{i0}(b_0)^2}{1-e}
-
\bigl(r_{i1}(b_1)-r_{i0}(b_0)\bigr)^2
\right\} 
=
\frac{1}{e(1-e)}
\frac{1}{n}\sum_{i=1}^n
\left\{
(1-e)r_{i1}(b_1)+e r_{i0}(b_0)
\right\}^2.
\end{align*}
For Lin's fully interacted adjustment,
$b_z=\gamma_z^{\mathrm{m}}$, so the coefficient used to linearly
approximate
\[
(1-e)\mu_{i1}+e\mu_{i0}
\]
is $(1-e)\gamma_1^{\mathrm{m}}+e\gamma_0^{\mathrm{m}}$.
For Fisher's additive adjustment, the common slope is
\[
\gamma_{\textsc{f}}^{\mathrm{m}}
=
(1-e)\gamma_0^{\mathrm{m}}
+
e\gamma_1^{\mathrm{m}}.
\]
Because
$(1-e)\gamma_1^{\mathrm{m}}+e\gamma_0^{\mathrm{m}}$
is the least-squares projection coefficient of
$(1-e)\mu_{i1}+e\mu_{i0}$ on $\ddot{x}_i$, the projection theorem gives
\begin{align*}
V_{\textsc{f}}^{\mathrm{m}}-V_{\textsc{l}}^{\mathrm{m}}
&=
\frac{1}{e(1-e)}
\left[
\gamma_{\textsc{f}}^{\mathrm{m}}
-
\left\{
(1-e)\gamma_1^{\mathrm{m}}
+
e\gamma_0^{\mathrm{m}}
\right\}
\right]^{\mathrm{T}}
\Sigma_x 
\times
\left[
\gamma_{\textsc{f}}^{\mathrm{m}}
-
\left\{
(1-e)\gamma_1^{\mathrm{m}}
+
e\gamma_0^{\mathrm{m}}
\right\}
\right].
\end{align*}
Finally,
\[
\gamma_{\textsc{f}}^{\mathrm{m}}
-
\left\{
(1-e)\gamma_1^{\mathrm{m}}
+
e\gamma_0^{\mathrm{m}}
\right\}
=
(2e-1)
\left(\gamma_1^{\mathrm{m}}-\gamma_0^{\mathrm{m}}\right),
\]
which proves the result.
\end{proof}

\subsection{IV Regression} \label{sec:proof_iv}

\subsubsection{Without covariates} \label{sec:proof_iv_noX}
Below we give the closed-form of the HW variance estimator for $\hat{\beta}_{\textsc{iv}}$ without covariates:
\begin{align}
\hat{V}_{\textsc{hw,iv}}
=& \frac{1}{n} \left[
\left( \frac{1}{n}\sum_{i=1}^n \tilde{Z}_i \tilde{D}_i^{\mathrm{T}} \right)^{-1}
\left( \frac{1}{n}\sum_{i=1}^n \hat{\varepsilon}_i^2 \tilde{Z}_i \tilde{Z}_i^{\mathrm{T}} \right)
\left( \frac{1}{n}\sum_{i=1}^n \tilde{D}_i \tilde{Z}_i^{\mathrm{T}} \right)^{-1}
\right]_{(2,2)}
\label{eq:EHW_IV}
\end{align}
where $\tilde{D}_i = (1, D_i)^{\mathrm{T}}$.

\begin{proof}[Proof of Theorem \ref{thm:AsyDist_IV_random}]
To simplify notations, define $\mu(d,z)=\mathbb{E}({Y}_i(d)\mid D_i(z)=d)$, $\sigma^{2}(d,z)=\V({Y}_i(d)\mid D_i(z)=d)$, and $\pi_{D(z)} = \mathbb{P}(D_i(z)=1 )$ for $z\in\{0,1\}$.
The proof is the same as the proof of Theorem \ref{thm:AsyDist_IV_random_x}. So we omit it.

\end{proof}

\subsubsection{With covariates} \label{sec:proof_iv_withX}
Below we give the closed-form of the HW variance estimator for $\hat{\beta}_{\textsc{iv}}$ with covariates:
\begin{equation}
\hat{V}_{\textsc{hw,iv,f}}
= \frac{1}{n} \left[
\left( \frac{1}{n}\sum_{i=1}^n \tilde{Z}_{i,\textsc{f}} \tilde{D}_{i,\textsc{f}}^{\mathrm{T}} \right)^{-1}
\left( \frac{1}{n}\sum_{i=1}^n \hat{\varepsilon}_{i,\textsc{f}}^2 \tilde{Z}_{i,\textsc{f}} \tilde{Z}_{i,\textsc{f}}^{\mathrm{T}} \right)
\left( \frac{1}{n}\sum_{i=1}^n \tilde{D}_{i,\textsc{f}} \tilde{Z}_{i,\textsc{f}}^{\mathrm{T}} \right)^{-1}
\right]_{(2,2)}
\label{eq:EHW_IV_x}
\end{equation}
where $\tilde{D}_{i,\textsc{f}} = (1, D_i, {x}_i^{\mathrm{T}})^{\mathrm{T}}$.

\begin{proof}[Proof of Theorem \ref{thm:AsyDist_IV_random_x}]

We prove the results by applying the theory of $Z$-estimation.

\paragraph*{Asymptotic normality.}
Recall that $(\hat{\alpha}_{\textsc{iv,f}}, \hat{\beta}_{\textsc{iv,f}}, \hat{\gamma}^{\mathrm{T}}_{\textsc{iv,f}})$ are the intercept and coefficients of $D_i$ and $x_i$ from the 2SLS fits of \eqref{eq:IV}. The first-order conditions of \eqref{eq:IV} ensure that $\theta=(\alpha, \beta, \gamma)=(\hat{\alpha}_{\textsc{iv,f}}, \hat{\beta}_{\textsc{iv,f}}, \hat{\gamma}^{\mathrm{T}}_{\textsc{iv,f}})$ jointly solves
$$
0=n^{-1} \sum_{i=1}^n \eta(y_i, Z_i, D_i, x_i; \theta)
=n^{-1} \sum_{i=1}^n \left(\begin{array}{c}
\eta_{\alpha}(y_i, Z_i, D_i, x_i; \theta) \\
\eta_{\beta}(y_i, Z_i, D_i, x_i; \theta) \\
\eta_{\gamma}(y_i, Z_i, D_i, x_i; \theta) 
\end{array}\right)
$$
and
\begin{align}
\eta=
\left(\begin{array}{c}
\eta_{\alpha} \\
\eta_{\beta} \\
\eta_{\gamma}
\end{array}\right) 
\text { with } 
\quad \begin{aligned}
\eta_{\alpha}
& = y_i- \alpha - \beta D_i - \dot{x}_i^{\mathrm{T}} \gamma, \\
\eta_{\beta} 
& = (y_i- \alpha - \beta D_i - \dot{x}_i^{\mathrm{T}} \gamma) Z_i, \\
\eta_{\gamma}
&= (y_i- \alpha - \beta D_i - \dot{x}_i^{\mathrm{T}} \gamma) \dot{x}_i. 
\end{aligned} \label{eq:IV_ME}
\end{align}
Direct algebra ensures that $\theta=\theta^{\textup{r}}=\left(\alpha_{\textsc{iv,f}}^{\textup{r}}, \beta_{\textsc{iv,f}}^{\textup{r}}, \gamma_{\textsc{iv,f}}^{\textup{r}}\right)$ solves $\mathbb{E}(\eta(\theta))=0$:
\begin{align*}
\alpha_{\textsc{iv,f}}^{\textup{r}} 
=& \frac{\pi_{D(1)} (\pi_{D(0)}\mu(1,0) + (1-\pi_{D(0)})\mu(0,0)) }{\pi_{D(1)} - \pi_{D(0)}}  
- \frac{\pi_{D(0)} (\pi_{D(1)}\mu(1,1) + (1-\pi_{D(1)})\mu(0,1))}{\pi_{D(1)} - \pi_{D(0)}}, \\
\beta_{\textsc{iv,f}}^{\textup{r}} 
=& \mathbb{E}[Y_i(1) - Y_i(0) \mid  D_i(1) > D_i(0)]
= \beta_{\textsc{iv}}^{\textup{r}} , \\
\gamma_{\textsc{iv,f}}^{\textup{r}}
=& \mathbb{E}(\dot{x}_i\dot{x}_i^{\mathrm{T}})^{-1}\mathbb{E}(x_i(y_i- \alpha_{\textsc{iv,f}}^{\textup{r}} - \beta_{\textsc{iv}}^{\textup{r}} D_i)). 
\end{align*}
Define $\tilde{Y}^{\textup{r}}_i(d) = Y_i(d) - x_i^{\mathrm{T}}\gamma_{\textsc{iv,f}}^{\textup{r}}$, $\tilde{\mu}(d,z)=\mathbb{E}(\tilde{Y}_i(d)\mid D_i(z)=d)$ and $\tilde{\sigma}^{2}(d,z)=\V(\tilde{Y}_i(d)\mid D_i(z)=d)$.
Theorem \ref{thm:ME_random} ensures
\begin{align}
\sqrt{n}\left\{\left(\begin{array}{c}
\hat{\alpha}_{\textsc{iv,f}} \\
\hat{\beta}_{\textsc{iv,f}} \\
\hat{\gamma}_{\textsc{iv,f}} 
\end{array}\right)-\left(\begin{array}{c}
\alpha_{\textsc{iv,f}}^{\textup{r}} \\
\beta_{\textsc{iv}}^{\textup{r}} \\
\gamma_{\textsc{iv,f}}^{\textup{r}} 
\end{array}\right)\right\} \overset{\textup{d}}{\to} 
\mathcal{N}\left(0, (A_{\textsc{f}}^{{\textup{r}}-1}) D_{\textsc{f}}^{\textup{r}}\left(A_{\textsc{f}}^{{\textup{r}}-\mathrm{T}}\right)\right), \label{eq:IV_normal_random}
\end{align}
where $A_{\textsc{f}}^{\textup{r}}$ and $D_{\textsc{f}}^{\textup{r}}$ are the values of $-\mathbb{E}\left(\frac{\partial}{\partial\left(\alpha, \beta, \gamma \right)} \eta\right)$ and $\mathbb{E}\left(\eta \eta^{\mathrm{T}}\right)$ evaluated at $\theta=\theta^{\textup{r}}$. 
We compute below $A_{\textsc{f}}^{\textup{r}}$ and $D_{\textsc{f}}^{\textup{r}}$, respectively.

\noindent\textbf{Compute $D_{\textsc{F}}^{\textup{r}}$:}
Let $(\eta^{\textup{r}}, \eta_{\alpha}^{\textup{r}}, \eta_{\beta}^{\textup{r}}, \eta_{\gamma}^{\textup{r}})$ denote the value of $(\eta, \eta_{\alpha}, \eta_{\beta}, \eta_{\gamma})$ evaluated at $\theta=\theta^{\textup{r}}$. 
From \eqref{eq:IV_ME}, we have
\begin{align}
\eta^{\textup{r}}
=\left(\begin{array}{c}
\eta_{\alpha}^{\textup{r}} \\
\eta_{\beta}^{\textup{r}} \\
\eta_{\gamma}^{\textup{r}} 
\end{array}\right) \text { with } \quad 
\begin{aligned}
\eta_{\alpha}^{\textup{r}}
& = y_i- \alpha_{\textsc{iv,f}}^{\textup{r}} - \beta_{\textsc{iv}}^{\textup{r}} D_i - \dot{x}_i^{\mathrm{T}} \gamma_{\textsc{iv,f}}^{\textup{r}}, \\
\eta_{\beta}^{\textup{r}}
& = (y_i- \alpha_{\textsc{iv,f}}^{\textup{r}} - \beta_{\textsc{iv}}^{\textup{r}} D_i - \dot{x}_i^{\mathrm{T}} \gamma_{\textsc{iv,f}}^{\textup{r}}) Z_i, \\
\eta_{\gamma}^{\textup{r}}
&= (y_i- \alpha_{\textsc{iv,f}}^{\textup{r}} - \beta_{\textsc{iv}}^{\textup{r}} D_i - \dot{x}_i^{\mathrm{T}} \gamma_{\textsc{iv,f}}^{\textup{r}}) \dot{x}_i. 
\end{aligned} \label{eq:IV_eta}
\end{align}
The $D_{\textsc{f}}^{\textup{r}}$ matrix equals
\begin{equation}
D_{\textsc{f}}^{\textup{r}} 
=\left.\mathbb{E}\left(\eta \eta^{\mathrm{T}}\right)\right|_{\theta=\theta^{\textup{r}}}=\mathbb{E}\left(\eta^{\textup{r}} (\eta^{{\textup{r}} })^\mathrm{T} \right)
=\left(\begin{array}{ccc}
d_{11} & d_{12} & d_{13}  \\
d_{12}^{\mathrm{T}} & d_{22} & d_{23}  \\
d_{13}^{\mathrm{T}} & d_{23}^{\mathrm{T}} & d_{33} 
\end{array}\right). \label{eq:IV_B}
\end{equation}
Observe that
\begin{align*}
d_{11}
=& \mathbb{E}\left(\eta_{\alpha}^{\textup{r}} (\eta_{\alpha}^{{\textup{r}}})^\mathrm{T}\right) 
= \mathbb{E}\left[ (y_i- \alpha_{\textsc{iv,f}}^{\textup{r}} - \beta_{\textsc{iv}}^{\textup{r}} D_i - \dot{x}_i^{\mathrm{T}} \gamma_{\textsc{iv,f}}^{\textup{r}})^2 \right] \\
=& \left(
\begin{array}{l}
\mathbb{E}\left[ Z_iD_i(y_i- \alpha_{\textsc{iv,f}}^{\textup{r}} - \beta_{\textsc{iv}}^{\textup{r}} D_i - \dot{x}_i^{\mathrm{T}} \gamma_{\textsc{iv,f}}^{\textup{r}})^2 \right] \\
+ \mathbb{E}\left[ Z_i(1-D_i)(y_i- \alpha_{\textsc{iv,f}}^{\textup{r}} - \beta_{\textsc{iv}}^{\textup{r}} D_i - \dot{x}_i^{\mathrm{T}} \gamma_{\textsc{iv,f}}^{\textup{r}})^2 \right] \\
+ \mathbb{E}\left[ (1-Z_i)D_i(y_i- \alpha_{\textsc{iv,f}}^{\textup{r}} - \beta_{\textsc{iv}}^{\textup{r}} D_i - \dot{x}_i^{\mathrm{T}} \gamma_{\textsc{iv,f}}^{\textup{r}})^2 \right] \\
+ \mathbb{E}\left[ (1-Z_i)(1-D_i)(y_i- \alpha_{\textsc{iv,f}}^{\textup{r}} - \beta_{\textsc{iv}}^{\textup{r}} D_i - \dot{x}_i^{\mathrm{T}} \gamma_{\textsc{iv,f}}^{\textup{r}})^2 \right] 
\end{array}
\right).
\end{align*}
We only derive the result of $\mathbb{E}\left[ Z_iD_i(y_i- \alpha_{\textsc{iv,f}}^{\textup{r}} - \beta_{\textsc{iv}}^{\textup{r}} D_i - \dot{x}_i^{\mathrm{T}} \gamma_{\textsc{iv,f}}^{\textup{r}})^2 \right]$, as the proof of the remaining three terms follows from analogous arguments. By direct algebra, 
\begin{align*}
&  \mathbb{E}\left[ Z_iD_i(y_i- \alpha_{\textsc{iv,f}}^{\textup{r}} - \beta_{\textsc{iv}}^{\textup{r}} D_i - \dot{x}_i^{\mathrm{T}} \gamma_{\textsc{iv,f}}^{\textup{r}})^2 \right]  \\
=& \mathbb{E}\left[ Z_iD_i(1) \left(Y_i(1) - \dot{x}_i^{\mathrm{T}} \gamma_{\textsc{iv,f}}^{\textup{r}} - \mu(1,1) + \mu(1,1) - \alpha_{\textsc{iv,f}}^{\textup{r}} - \beta_{\textsc{iv}}^{\textup{r}} D_i \right)^2 \right] \\
=& e \pi_{D(1)} 
\left(\sigma ^{2}(1,1)
+(1-\pi_{D(1)})^2(\mu(1,1)-\mu(0,1)-\beta_{\textsc{iv}}^{\textup{r}})^2
\right).
\end{align*}
Therefore,
\begin{align}
d_{11}
=& \mathbb{E}\left( \eta_{\alpha}^{\textup{r}} (\eta_{\alpha}^{{\textup{r}}})^\mathrm{T} \right) 
=\mathbb{E}\left[ (y_i- \alpha_{\textsc{iv,f}}^{\textup{r}} - \beta_{\textsc{iv}}^{\textup{r}} D_i - \dot{x}_i^{\mathrm{T}} \gamma_{\textsc{iv,f}}^{\textup{r}} )^2 \right] \notag \\
=& \left(
\begin{array}{l}
e\pi _{D(1)}\times  
\left(\sigma ^{2}(1,1)
+(1-\pi_{D(1)})^2(\mu(1,1)-\mu(0,1)-\beta_{\textsc{iv}}^{\textup{r}})^2
\right) \\ 
+ e(1-\pi _{D(1)})\times 
\left(\sigma ^{2}(0,1
)+(\pi_{D(1)})^2(\mu(1,1)-\mu(0,1)-\beta_{\textsc{iv}}^{\textup{r}})^2
\right) \\ 
+ (1-e)\pi _{D(0)}\times  
\left( \sigma ^{2}(1,0)
+(1-\pi_{D(0)})^2(\mu(1,0)-\mu(0,0)-\beta_{\textsc{iv}}^{\textup{r}})^2
\right) \\ 
+ (1-e)(1-\pi _{D(0)})\times 
\left( \sigma^{2}(0,0)
+(\pi_{D(0)})^2(\mu(1,0)-\mu(0,0)-\beta_{\textsc{iv}}^{\textup{r}})^2
\right)
\end{array}
\right), \label{eq:IV_D11} \\
d_{22}
=& \mathbb{E}\left( \eta_{\beta}^{\textup{r}} (\eta_{\beta}^{{\textup{r}}})^\mathrm{T} \right) 
=\mathbb{E}\left[ (y_i- \alpha_{\textsc{iv,f}}^{\textup{r}} - \beta_{\textsc{iv}}^{\textup{r}} D_i - \dot{x}_i^{\mathrm{T}} \gamma_{\textsc{iv,f}}^{\textup{r}} )^2 Z_i \right] \notag \\
=& \left(
\begin{array}{l}
e\pi _{D(1)}\times  
\left(\sigma ^{2}(1,1)
+(1-\pi_{D(1)})^2(\mu(1,1)-\mu(0,1)-\beta_{\textsc{iv}}^{\textup{r}})^2
\right) \\ 
+ e(1-\pi_{D(1)})\times 
\left(\sigma ^{2}(0,1
)+(\pi_{D(1)})^2(\mu(1,1)-\mu(0,1)-\beta_{\textsc{iv}}^{\textup{r}})^2
\right) 
\end{array} 
\right),
\label{eq:IV_D22}
\end{align}
and 
\begin{align*}
d_{12}
=\mathbb{E}\left(\eta_{\alpha}^{\textup{r}} (\eta_{\beta}^{{\textup{r}}})^\mathrm{T}\right) 
=\mathbb{E}\left[ (y_i- \alpha_{\textsc{iv,f}}^{\textup{r}} - \beta_{\textsc{iv}}^{\textup{r}} D_i - \dot{x}_i^{\mathrm{T}} \gamma_{\textsc{iv,f}}^{\textup{r}} )^2 Z_i \right] 
= d_{22}.
\end{align*}
The terms $d_{13}$, $d_{23}$ and $d_{33}$ are irrelevant here. So we omit them.

\noindent\textbf{Compute $A_{\textsc{F}}^{\textup{r}}$:}
With a slight abuse of notation, let $\frac{\partial}{\partial \alpha} \eta_{\alpha}^{\textup{r}}$ denote the value of $\frac{\partial}{\partial \alpha} \eta_\alpha$ evaluated at $\theta^{\textup{r}}$. Similarly define other partial derivatives.
From \eqref{eq:IV_ME}, we have
$$
\left.\frac{\partial}{\partial\left( \alpha, \beta, \gamma \right)} \eta\right|_{\theta=\theta^{\textup{r}}}
=\left(\begin{array}{ccc}
\frac{\partial}{\partial \alpha} \eta_\alpha^{\textup{r}} & \frac{\partial}{\partial \beta} \eta_\alpha^{\textup{r}} & \frac{\partial}{\partial \gamma^{\mathrm{T}}} \eta_\alpha^{\textup{r}}  \\
\frac{\partial}{\partial \alpha} \eta_\beta^{\textup{r}} & \frac{\partial}{\partial \beta} \eta_\beta^{\textup{r}} & \frac{\partial}{\partial \gamma^{\mathrm{T}}} \eta_\beta^{\textup{r}}  \\
\frac{\partial}{\partial \alpha} \eta_\gamma^{\textup{r}} & \frac{\partial}{\partial \beta} \eta_\gamma^{\textup{r}} & \frac{\partial}{\partial \gamma^{\mathrm{T}}} \eta_\gamma^{\textup{r}}  \\ 
\end{array}\right),
$$
where
\begin{align*}
\frac{\partial}{\partial \alpha} \eta_\alpha^{\textup{r}}
=& -1,  \quad
\frac{\partial}{\partial \beta} \eta_\alpha^{\textup{r}}
= -D_i,  \quad
\frac{\partial}{\partial \gamma} \eta_\alpha^{\textup{r}}
= - \dot{x}_i^{\mathrm{T}}, \\
\frac{\partial}{\partial \alpha} \eta_\beta^{\textup{r}}
=& -Z_i,  \quad
\frac{\partial}{\partial \beta} \eta_\beta^{\textup{r}}
= -Z_iD_i,  \quad
\frac{\partial}{\partial \gamma} \eta_\beta^{\textup{r}}
= -Z_i \dot{x}_i^{\mathrm{T}}, \\
\frac{\partial}{\partial \alpha} \eta_\gamma^{\textup{r}}
=& - \dot{x}_i,  \quad
\frac{\partial}{\partial \beta} \eta_\gamma^{\textup{r}}
= - \dot{x}_iD_i,  \quad
\frac{\partial}{\partial \gamma} \eta_\gamma^{\textup{r}}
= - \dot{x}_i \dot{x}_i^{\mathrm{T}}.
\end{align*}
Accordingly, we have
\begin{align}
A_{\textsc{f}}^{\textup{r}}
=-\left.\mathbb{E}\left(\frac{\partial}{\partial\left( \alpha, \beta, \gamma \right)} \eta\right)\right|_{\theta=\theta^r}
=& -\mathbb{E}\left(\left.\frac{\partial}{\partial\left( \alpha, \beta, \gamma \right)} \eta\right|_{\theta=\theta^r}\right) 
= \left(\begin{array}{ccc}
1 & \mathbb{E}(D_i) & 0 \\
e & \mathbb{E}(Z_iD_i) & 0  \\
0 & \mathbb{E}(D_i \dot{x}_i) & \mathbb{E}(\dot{x}_i \dot{x}_i^{\mathrm{T}}) 
\end{array}\right). 
\label{eq:IV_A}
\end{align}
By the formula of matrix inverse, we have
$$
A_{\textsc{f}}^{\textup{r}-1} 
= \left(\begin{array}{ccc}
\frac{\mathbb{E}(Z_iD_i)}{\mathbb{E}(Z_iD_i)-e\mathbb{E}(D_i)} & \frac{ -\mathbb{E}(D_i)}{\mathbb{E}(Z_iD_i)-e\mathbb{E}(D_i)} & 0 \\
\frac{-e}{\mathbb{E}(Z_iD_i)-e\mathbb{E}(D_i)} & \frac{1}{\mathbb{E}(Z_iD_i)-e\mathbb{E}(D_i)} & 0  \\
* & *  & \mathbb{E}(\dot{x}_i \dot{x}_i^{\mathrm{T}})^{-1} 
\end{array}\right)
$$
where * denotes the terms that are irrelevant here.
By direct algebra,
\begin{align*}
\mathbb{E}(Z_iD_i)-e\mathbb{E}(D_i)
=& \mathbb{E}(Z_iD_i(1)) - e(Z_iD_i(1)+(1-Z_i)D_i(0)) 
= e(1-e) (\pi_{D(1)} - \pi_{D(0)}).
\end{align*}
\noindent\textbf{Compute $(A_{\textsc{F}}^{\textup{r}-1}) D_{\textsc{F}}^{\textup{r}} \left(A_{\textsc{F}}^{\textup{r} -\mathrm{T}} \right)$:}
Direct algebra ensures
\begin{align}
V_{\textsc{iv,f}}^{\textup{r}}  
&= \left[ (A_{\textsc{f}}^{\textup{r}-1})  D_{\textsc{f}}^{\textup{r}} \left(A_{\textsc{f}}^{\textup{r}-\mathrm{T}} \right) \right]_{(2,2)} 
= \frac{(1-e)D_{22} - e(D_{22}-eD_{11})}{e^2(1-e)^2 (\pi_{D(1)} - \pi_{D(0)})^2} \notag \\
&=
\frac{1}{\mathbb{P}(U_i = \textup{c})^2}
\left[
\begin{array}{c}
\frac{
\tilde\sigma^2(1,1)\pi_D(1)
+
\tilde\sigma^2(0,1)\{1-\pi_D(1)\}
}{e} 
+
\frac{
\tilde\sigma^2(1,0)\pi_D(0)
+
\tilde\sigma^2(0,0)\{1-\pi_D(0)\}
}{1-e} \\
+
\frac{
\{\tilde\mu(1,1)-\tilde\mu(0,1)-\beta_{\textsc{iv}}^{\textup{r}} \}^2
\pi_D(1)\{1-\pi_D(1)\}
}{e} 
+
\frac{
\{\tilde\mu(1,0)-\tilde\mu(0,0)-\beta_{\textsc{iv}}^{\textup{r}}\}^2
\pi_D(0)\{1-\pi_D(0)\}
}{1-e}
\end{array} 
\right].
\label{eq:Lin_meat}
\end{align}

\paragraph*{Consistency of $\hat{V}_{\textsc{HW,F}}$:}
By \eqref{eq:EHW_ME}, the HW variance estimator for $\hat{\beta}_{\textsc{iv,f}}$ is
\begin{align}
\hat{V}_{\textsc{hw,iv,f}} 
=& \left[
\left( \frac{1}{n}\sum_{i=1}^n \tilde{D}_i \tilde{Z}_i^{\mathrm{T}} \right)^{-1}
\left( \frac{1}{n}\sum_{i=1}^n \hat{\varepsilon}_{i,\textsc{f}}^2 \tilde{Z}_i \tilde{Z}_i^{\mathrm{T}} \right)
\left( \frac{1}{n}\sum_{i=1}^n \tilde{Z}_i \tilde{D}_i^{\mathrm{T}} \right)^{-1}
\right]_{(2,2)} \notag \\
=& \frac{ (1-\frac{1}{n}\sum_{i=1}^n Z_i)^2 \frac{1}{n}\sum_{i=1}^n Z_i\hat{\varepsilon}_{i,\textsc{f}}^2 + (\frac{1}{n}\sum_{i=1}^n Z_i)^2 \frac{1}{n}\sum_{i=1}^n (1-Z_i) \hat{\varepsilon}_{i,\textsc{f}}^2 }{ \left( \frac{1}{n}\sum_{i=1}^n Z_iD_i -  \frac{1}{n}\sum_{i=1}^n D_i \frac{1}{n}\sum_{i=1}^n D_i \right)^2 }. 
\label{eq:EHW_IV}
\end{align}
Theorem \ref{thm:ME_random} ensures the consistency of $\hat{V}_{\textsc{hw,iv,f}}$: 
\[
\hat{V}_{\textsc{hw,iv,f}} 
=  V_{\textsc{iv,f}}^\textup{r}
+ o(1;\mathbb{P}_{(y,Z,D)}).
\]
\end{proof}

\begin{proof}[Proof of Theorem \ref{thm:AsyDist_IV_mixed}]
We prove the results by applying the theory of $Z$-estimation.

\paragraph*{Asymptotic normality:}
Direct algebra ensures that $\theta=\theta^{\textup{m}}=\left(\alpha_{\textsc{iv}}^{\textup{m}}, \beta_{\textsc{iv}}^{\textup{m}}, \gamma_{\textsc{iv}}^{\textup{m}}\right)$ solves
\[
\frac{1}{n}\sum_{i=1}^n \mathbb{E}(\eta(y_i, Z_i, D_i, x_i; \theta) \mid x_i)=0
\] 
with $\eta(\theta)$ defined in \eqref{eq:IV_ME}:
\begin{align*}
\alpha_{\textsc{iv,f}}^{\textup{m}} 
=& \frac{\frac{1}{n}\sum_{i=1}^n\pi_{D_i(1)} \left[ \frac{1}{n}\sum_{i=1}^n \pi_{D_i(0)} \mu_i(1,0,x) + \frac{1}{n}\sum_{i=1}^n (1-\pi_{D_i(0)}) \mu_i(0,0,x) \right] }{ \frac{1}{n}\sum_{i=1}^n (\pi_{D_i(1)}-\pi_{D_i(0)})}  \\
&- \frac{ \frac{1}{n}\sum_{i=1}^n\pi_{D_i(0)} \left[\frac{1}{n}\sum_{i=1}^n \pi_{D_i(1)} \mu_i(1,1,x) + \frac{1}{n}\sum_{i=1}^n (1-\pi_{D_i(1)}) \mu_i(0,1,x) \right] }{ \frac{1}{n}\sum_{i=1}^n (\pi_{D_i(1)}-\pi_{D_i(0)})} , \\
\beta_{\textsc{iv,f}}^{\textup{m}} 
=& \frac{ n^{-1} \sum_{i=1}^{n} \tau_{\textup{c}}(x_i) \pi_{\textup{c}}(x_i)  }{ n^{-1} \sum_{i=1}^{n} \pi_{\textup{c}}(x_i) }
= \beta_{\textsc{iv}}^{\textup{m}} , \\
\gamma_{\textsc{iv,f}}^{\textup{m}}
=& \left(\frac{1}{n}\sum_{i=1}^n \ddot{x}_i \ddot{x}_i^{\mathrm{T}} \right)^{-1}
\frac{1}{n}\sum_{i=1}^n \mathbb{E}(\ddot{x}_i(y_i- \alpha_{\textsc{iv,f}}^{\textup{m}} - \beta_{\textsc{iv}}^{\textup{m}} D_i) \mid x_i).
\end{align*}
Define $\tilde{Y}_i(d)^{\textup{m}} = Y_i(d) - \ddot{x}_i^{\mathrm{T}}\gamma_{\textsc{iv,f}}^{\textup{m}}$,
$\mu_i(d,z,x) = \mathbb{E}[\tilde{Y}_i(d)^{\textup{m}} | D_i(z)=d, x_i=x]$ and $\pi_{D_i(z)} = \mathbb{P}(D_i(z)=1 \mid x_i)$ for $d \in \{0,1\}$ and $z\in\{0, 1\}$.
Theorem \ref{thm:ME_mixed} ensures
\begin{align}
\sqrt{n}\left\{\left(\begin{array}{c}
\hat{\alpha}_{\textsc{iv,f}} \\
\hat{\beta}_{\textsc{iv,f}} \\
\hat{\gamma}_{\textsc{iv,f}} 
\end{array}\right)-\left(\begin{array}{c}
\alpha_{\textsc{iv,f}}^{\textup{m}} \\
\beta_{\textsc{iv,f}}^{\textup{m}} \\
\gamma_{\textsc{iv,f}}^{\textup{m}} 
\end{array}\right)\right\} \overset{\textup{d}}{\to} 
\mathcal{N}\left(0, (A^{{\textup{m}}-1} )D^{\textup{m}}\left(A^{{\textup{m}}-\mathrm{T}}\right)\right), \label{eq:IV_normal_mixed}
\end{align}
where
\begin{align*}
A_{\textsc{f}}^\textup{m} 
&= - \frac{1}{n} \sum_{i=1}^n \mathbb{E}\left(  \frac{\partial \eta(y_i, Z_i, D_i, x_i; \theta^\textup{m})}{\partial (\alpha, \beta, \gamma)}  \mid x_{i} \right)
\text{ and }
D_{\textsc{f}}^\textup{m} 
= \frac{1}{n} \sum_{i=1}^n \V\left[ \eta(y_i, Z_i, D_i, x_i; \theta^\textup{m}) \mid x_{i} \right].
\end{align*}
We compute below $A^{\textup{m}}$ and $D^{\textup{m}}$, respectively.

\noindent\textbf{Compute $D^{\textup{m}}$:}
Let $(\eta^{\textup{m}}, \eta_{\alpha}^{\textup{m}}, \eta_{\beta}^{\textup{m}}, \eta_{\gamma}^{\textup{m}})$ denote the value of $(\eta, \eta_{\alpha}, \eta_{\beta}, \eta_{\gamma})$ evaluated at $\theta=\theta^{\textup{m}}$. From \eqref{eq:IV_ME}, we have
\begin{align}
\eta^{\textup{m}}
=\left(\begin{array}{c}
\eta_{\alpha}^{\textup{m}} \\
\eta_{\beta}^{\textup{m}} \\
\eta_{\gamma}^{\textup{m}} 
\end{array}\right) \text { with } \quad 
\begin{aligned}
\eta_{\alpha}^{\textup{m}}
& = y_i- \alpha_{\textsc{iv,f}}^{\textup{m}} - \beta_{\textsc{iv}}^{\textup{m}} D_i - \gamma_{\textsc{iv,f}}^{\textup{m}} \ddot{x}_i, \\
\eta_{\beta}^{\textup{m}}
& = (y_i- \alpha_{\textsc{iv,f}}^{\textup{m}} - \beta_{\textsc{iv}}^{\textup{m}} D_i - \gamma_{\textsc{iv,f}}^{\textup{m}} \ddot{x}_i) Z_i, \\
\eta_{\gamma}^{\textup{m}}
&= (y_i- \alpha_{\textsc{iv,f}}^{\textup{m}} - \beta_{\textsc{iv}}^{\textup{m}} D_i - \gamma_{\textsc{iv,f}}^{\textup{m}} \ddot{x}_i) \ddot{x}_i. 
\end{aligned} \label{eq:IV_eta_mixed}
\end{align}
The $D_{\textsc{f}}^{\textup{m}}$ matrix equals
\begin{equation}
D_{\textsc{f}}^{\textup{m}} 
= \frac{1}{n} \sum_{i=1}^n \V
\left( \eta(y_i, Z_i, D_i, x_i; \theta^\textup{m}) \mid x_{i} \right)
=\left(\begin{array}{ccc}
d_{11} & d_{12} & d_{13}  \\
d_{12}^{\mathrm{T}} & d_{22} & d_{23}  \\
d_{13}^{\mathrm{T}} & d_{23}^{\mathrm{T}} & d_{33} 
\end{array}\right). \label{eq:IV_B_mixed}
\end{equation}
Therefore,
\begin{align}
d_{11}
=& \frac{1}{n} \sum_{i=1}^n \V
\left( \eta_{\beta}^{\textup{m}} \mid x_{i} \right) 
= \frac{1}{n} \sum_{i=1}^n 
\left( \mathbb{E}( (\eta_{\alpha}^{\textup{m}})^2 \mid x_{i} )  - \mathbb{E}(\eta_{\alpha}^{\textup{m}}\mid x_i)^2 \right), \label{eq:IV_D11_m} \\
d_{22}
=& \frac{1}{n} \sum_{i=1}^n \V
\left( \eta_{\beta}^{\textup{m}} \mid x_{i} \right) 
= \frac{1}{n} \sum_{i=1}^n 
\left( \mathbb{E}( (\eta_{\beta}^{\textup{m}})^2 \mid x_{i} )  - \mathbb{E}(\eta_{\beta}^{\textup{m}}\mid x_i)^2 \right),  \label{eq:IV_D22_m} \\
d_{12}
=& \frac{1}{n} \sum_{i=1}^n \mathbb{C}
\left( \eta_{\alpha}^{\textup{m}}, \eta_{\beta}^{\textup{m}} \mid x_{i} \right) 
= \frac{1}{n} \sum_{i=1}^n (\mathbb{E}
\left( \eta_{\alpha}^{\textup{m}} \eta_{\beta}^{\textup{m}} \mid x_{i} \right) 
- \mathbb{E}
\left( \eta_{\alpha}^{\textup{m}} \mid x_{i} \right) 
\mathbb{E}
\left( \eta_{\beta}^{\textup{m}} \mid x_{i} \right)) \notag  \\
=& \frac{1}{n} \sum_{i=1}^n (\mathbb{E}
\left( (\eta_{\beta}^{\textup{m}})^2 \mid x_{i} \right) 
- \mathbb{E}
\left( \eta_{\alpha}^{\textup{m}} \mid x_{i} \right) 
\mathbb{E}
\left( \eta_{\beta}^{\textup{m}}  \mid x_{i} \right)).
\end{align}
\noindent\textbf{Compute $A_{\textsc{F}}^{\textup{m}}$:}
With a slight abuse of notation, let $\frac{\partial}{\partial \alpha} \eta_{\alpha}^{\textup{m}}$ denote the value of $\frac{\partial}{\partial \alpha} \eta_\alpha$ evaluated at $\theta^{\textup{m}}$. Similarly define other partial derivatives.
From \eqref{eq:IV_ME}, we have
\begin{align}
A_{\textsc{f}}^{\textup{m}}
=& - \frac{1}{n} \sum_{i=1}^n \mathbb{E}\left(  \frac{\partial}{\partial (\alpha, \beta, \gamma)} \eta(y_i, Z_i, D_i, x_i; \theta^\textup{m}) \mid x_{i} \right) \notag \\
=& \left(\begin{array}{ccc}
1 & \frac{1}{n}\sum_{i=1}^n \mathbb{E}(D_i\mid x_i) & 0 \\
e & \frac{1}{n}\sum_{i=1}^n \mathbb{E}(Z_i D_i\mid x_i) & 0  \\
0 & \frac{1}{n}\sum_{i=1}^n \mathbb{E}(D_i\mid x_i)x_i & \frac{1}{n} \sum_{i=1}^n x_i x_i^{\mathrm{T}} 
\end{array}\right). \label{eq:IV_A}
\end{align}
By the formula of matrix inverse, we have
$$
(A_{\textsc{f}}^{\textup{m}})^{-1} 
= \left(\begin{array}{ccc}
\frac{\frac{1}{n}\sum_{i=1}^n\mathbb{E}(Z_iD_i\mid x_i)}{\frac{1}{n}\sum_{i=1}^n\mathbb{E}(Z_iD_i\mid x_i)-e\frac{1}{n}\sum_{i=1}^n\mathbb{E}(D_i\mid x_i)} & \frac{ -\frac{1}{n}\sum_{i=1}^n\mathbb{E}(D_i\mid x_i)}{\frac{1}{n}\sum_{i=1}^n\mathbb{E}(Z_iD_i\mid x_i)-e\frac{1}{n}\sum_{i=1}^n\mathbb{E}(D_i\mid x_i)} & 0 \\
\frac{-e}{\frac{1}{n}\sum_{i=1}^n\mathbb{E}(Z_iD_i\mid x_i)-e\frac{1}{n}\sum_{i=1}^n\mathbb{E}(D_i\mid x_i)} & \frac{1}{\frac{1}{n}\sum_{i=1}^n\mathbb{E}(Z_iD_i\mid x_i)-e\frac{1}{n}\sum_{i=1}^n\mathbb{E}(D_i\mid x_i)} & 0  \\
* & *  & (\frac{1}{n}\sum_{i=1}^n \ddot{x}_i \ddot{x}_i^{\mathrm{T}})^{-1}
\end{array}\right)
$$
where * denotes the terms that are irrelevant here.
By direct algebra,
\begin{align*}
\frac{1}{n}\sum_{i=1}^n\mathbb{E}(Z_iD_i\mid x_i)-e\frac{1}{n}\sum_{i=1}^n\mathbb{E}(D_i\mid x_i)
= e(1-e) \frac{1}{n}\sum_{i=1}^n (\pi_{D_i(1)} - \pi_{D_i(0)} ).
\end{align*}
\noindent\textbf{Compute $(A_{\textsc{F}}^{\textup{m}} )^{-1} D_{\textsc{F}}^{\textup{m}} \left(A_{\textsc{F}}^{\textup{m}-\mathrm{T}} \right)$:}
Direct algebra ensures
\begin{align}
& V_{\textsc{iv,f}}^{\textup{m}}  
= \left[ (A_{\textsc{f}}^{\textup{m}})^{-1}  D_{\textsc{f}}^{\textup{m}} \left(A_{\textsc{f}}^{\textup{m}-\mathrm{T}} \right) \right]_{(2,2)} 
= \frac{ d_{22} - e d_{12} - e (d_{12} - e d_{11}) }{e^2(1-e)^2 (\frac{1}{n}\sum_{i=1}^n (\pi_{D_i(1)} - \pi_{D_i(0)} ))^2 } \notag \\
=& \frac{ (1-2e) \frac{1}{n} \sum_{i=1}^n \mathbb{E}( (\eta_{\beta}^{\textup{m}})^2 \mid x_{i} ) + e^2 \frac{1}{n} \sum_{i=1}^n \mathbb{E}( (\eta_{\alpha}^{\textup{m}})^2 \mid x_{i} ) - \frac{1}{n} \sum_{i=1}^n (e  \mathbb{E}( (\eta_{\alpha}^{\textup{m}}) \mid x_{i} ) - \mathbb{E}( (\eta_{\beta}^{\textup{m}}) \mid x_{i} ))^2 }{e^2(1-e)^2 (\frac{1}{n}\sum_{i=1}^n (\pi_{D_i(1)} - \pi_{D_i(0)} ))^2 }. \notag
\end{align}
Take the subgroup $(Z_i=1,D_i=1)$ for example:
\begin{align*}
\frac{1}{n} \sum_{i=1}^n \mathbb{E}
\left( Z_iD_i (\eta_{\alpha}^{\textup{m}})^2 \mid x_{i} \right)
=& \frac{1}{n} \sum_{i=1}^n \mathbb{E}
\left( Z_iD_i \left(Y_i(1) - \ddot{x}_i^\mathrm{T} \gamma_{\textsc{iv}}^{\textup{m}} - \mu_i(1,1,x) + \mu_i(1,1,x) - \alpha_{\textsc{iv,f}}^{\textup{m}} - \beta_{\textsc{iv}}^{\textup{m}} \right)^2 \mid x_{i} \right) \\
=& \frac{1}{n} \sum_{i=1}^n e \pi_{D_i(1)}
(\sigma_i^2(1,1,x) + (\mu_i(1,1,x) - \alpha_{\textsc{iv,f}}^{\textup{m}} - \beta_{\textsc{iv}}^{\textup{m}})^2 ).
\end{align*}
So we have
\begin{align*}
\frac{1}{n} \sum_{i=1}^n \mathbb{E}
\left( (\eta_{\alpha}^{\textup{m}})^2 \mid x_{i} \right)
=& \frac{1}{n} \sum_{i=1}^n 
\left(
\begin{array}{l}
e \pi_{D_i(1)}
(\sigma_i^2(1,1,x) + (\mu_i(1,1,x) - \alpha_{\textsc{iv,f}}^{\textup{m}} - \beta_{\textsc{iv}}^{\textup{m}})^2 ) \\ 
+ e (1-\pi_{D_i(1)})
(\sigma_i^2(0,1,x) + (\mu_i(0,1,x) - \alpha_{\textsc{iv,f}}^{\textup{m}} )^2 ) \\ 
+ (1-e) \pi_{D_i(0)}
(\sigma_i^2(1,0,x) + (\mu_i(1,0,x) - \alpha_{\textsc{iv,f}}^{\textup{m}} - \beta_{\textsc{iv}}^{\textup{m}})^2 ) \\ 
+ (1-e) (1-\pi_{D_i(0)})
(\sigma_i^2(0,0,x) + (\mu_i(0,0,x) - \alpha_{\textsc{iv,f}}^{\textup{m}} )^2 )
\end{array}
\right) \\
\frac{1}{n} \sum_{i=1}^n \mathbb{E}
\left( (\eta_{\beta}^{\textup{m}})^2 \mid x_{i} \right)
=& \frac{1}{n} \sum_{i=1}^n 
\left(
\begin{array}{l}
e \pi_{D_i(1)}
(\sigma_i^2(1,1,x) + (\mu_i(1,1,x) - \alpha_{\textsc{iv,f}}^{\textup{m}} - \beta_{\textsc{iv}}^{\textup{m}})^2 ) \\ 
+ e (1-\pi_{D_i(1)})
(\sigma_i^2(0,1,x) + (\mu_i(0,1,x) - \alpha_{\textsc{iv,f}}^{\textup{m}} )^2 ) 
\end{array}
\right).
\end{align*}
Again, take the subgroup $(Z_i=1,D_i=1)$ for example:
\begin{align*}
\mathbb{E}
\left( Z_iD_i \eta_{\alpha}^{\textup{m}} \mid x_{i} \right)
=& 
\left( Z_iD_i \left(Y_i(1) - x_i^\mathrm{T} \gamma_{\textsc{iv,f}}^{\textup{m}} - \alpha_{\textsc{iv,f}}^{\textup{m}} - \beta_{\textsc{iv}}^{\textup{m}} \right) \mid x_{i} \right) 
= e \pi_{D_i(1)}
(\mu_i(1,1,x) - \alpha_{\textsc{iv,f}}^{\textup{m}} - \beta_{\textsc{iv}}^{\textup{m}}).
\end{align*}
So we have
\begin{align*}
e \mathbb{E}
\left( \eta_{\alpha}^{\textup{m}} \mid x_{i} \right)
- \mathbb{E}
\left( \eta_{\beta}^{\textup{m}} \mid x_{i} \right) 
=&  
\left(
\begin{array}{l}
+ e (1-e) \pi_{D_i(0)} (\mu_i(1,0,x) - \alpha_{\textsc{iv,f}}^{\textup{m}} - \beta_{\textsc{iv}}^{\textup{m}}) \\ 
+ e (1-e) (1-\pi_{D_i(0)}) (\mu_i(0,0,x) - \alpha_{\textsc{iv,f}}^{\textup{m}}) \\
- e (1-e) \pi_{D_i(1)} (\mu_i(1,1,x) - \alpha_{\textsc{iv,f}}^{\textup{m}} - \beta_{\textsc{iv}}^{\textup{m}}) \\ 
- e (1-e) (1-\pi_{D_i(1)}) (\mu_i(0,1,x) - \alpha_{\textsc{iv,f}}^{\textup{m}}) 
\end{array}
\right) \\
=& e (1-e) 
(\pi_{D_i(1)} - \pi_{D_i(0)})(\beta_{\textsc{iv}}^{\textup{m}} - \tau_{\textup{c}}(x_i)).
\end{align*}
Also, by direct algebra, 
\begin{align*}
& \mu_i(1,1,x) - \alpha_{\textsc{iv,f}}^{\textup{m}} - \beta_{\textsc{iv}}^{\textup{m}} \\
=& (1-\pi_{D_i(1)})(\mu_i(1,1,x)-\mu_i(0,1,x)-\beta_{\textsc{iv}}^{\textup{m}} ) 
+ \pi_{D_i(1)}\mu_i(1,1,x)
+ (1-\pi_{D_i(1)}) \mu_i(0,1,x)
-\pi_{D_i(1)} \beta_{\textsc{iv}}^{\textup{m}} \\
&- \frac{1}{n}\sum_{i=1}^n (\pi_{D_i(1)}\mu_i(1,1,x)
+ (1-\pi_{D_i(1)}) \mu_i(0,1,x) - \pi_{D_i(1)} \beta_{\textsc{iv}}^{\textup{m}}), \\
& \mu_i(0,1,x) - {\alpha}_{\textsc{iv,f}}^{\textup{m}}  \\
=& - \pi_{D_i(1)}(\mu_i(1,1,x)-\mu_i(0,1,x)-\beta_{\textsc{iv}}^{\textup{m}} )
+ (\pi_{D_i(1)}(\mu_i(1,1,x) 
+ (1-\pi_{D_i(1)})\mu_i(0,1,x)
- \pi_{D_i(1)} \beta_{\textsc{iv}}^{\textup{m}} )) \\
&- \frac{1}{n}\sum_{i=1}^n (\pi_{D_i(1)}(\mu_i(1,1,x) 
+ (1-\pi_{D_i(1)})\mu_i(0,1,x) - \pi_{D_i(1)} \beta_{\textsc{iv}}^{\textup{m}} )),  \\
& \mu_i(1,0,x) - {\alpha}_{\textsc{iv,f}}^{\textup{m}} - {\beta}_{\textsc{iv}}^{\textup{m}}  \\
=& (1-\pi_{D_i(0)}) (\mu_i(1,0,x) - \mu_i(0,0,x) - \beta_{\textsc{iv}}^{\textup{m}}) 
+ \pi_{D_i(1)}\mu_i(1,1,x) + (1-\pi_{D_i(1)})\mu_i(0,1,x)
-\pi_{D_i(1)} \beta_{\textsc{iv}}^{\textup{m}} \\
&- \frac{1}{n}\sum_{i=1}^n (\pi_{D_i(1)} \mu_i(1,1,x) + (1-\pi_{D_i(1)}) \mu_i(0,1,x) - \pi_{D_i(1)} \beta_{\textsc{iv}}^{\textup{m}}   ) 
- (\pi_{D_i(1)} - \pi_{D_i(0)}) 
(\tau_{\textup{c}}(x) - \beta_{\textsc{iv}}^{\textup{m}} ), \\
& \mu_i(0,0,x) - {\alpha}_{\textsc{iv,f}}^{\textup{m}}  \\
=& - \pi_{D_i(0)}(\mu_i(1,0,x)-\mu_i(0,0,x)-\beta_{\textsc{iv}}^{\textup{m}} ) 
+ \pi_{D_i(1)}\mu_i(1,1,x) + (1-\pi_{D_i(1)})\mu_i(0,1,x) - \pi_{D_i(1)} \beta_{\textsc{iv}}^{\textup{m}} \\
&- \frac{1}{n}\sum_{i=1}^n 
( \pi_{D_i(1)}\mu_i(1,1,x) + (1-\pi_{D_i(1)})\mu_i(0,1,x) - \pi_{D_i(1)} \beta_{\textsc{iv}}^{\textup{m}} ) 
-  (\pi_{D_i(1)} - \pi_{D_i(0)}) 
(\tau_{\textup{c}}(x) - \beta_{\textsc{iv}}^{\textup{m}}). 
\end{align*}
Therefore,
\begin{align}
& V_{\textsc{iv,f}}^{\textup{m}}  
= \frac{ (1-2e) \frac{1}{n} \sum_{i=1}^n \mathbb{E}( (\eta_{\beta}^{\textup{m}})^2 \mid x_{i} ) + e^2 \frac{1}{n} \sum_{i=1}^n \mathbb{E}( (\eta_{\alpha}^{\textup{m}})^2 \mid x_{i} ) - \frac{1}{n} \sum_{i=1}^n (e  \mathbb{E}( (\eta_{\alpha}^{\textup{m}}) \mid x_{i} ) - \mathbb{E}( (\eta_{\beta}^{\textup{m}}) \mid x_{i} ))^2 }{e^2(1-e)^2 (\frac{1}{n}\sum_{i=1}^n (\pi_{D_i(1)} - \pi_{D_i(0)} ))^2 } \notag \\
=& \frac{1}{(\frac{1}{n}\sum_{i=1}^n \pi_{\textup{c}}(x_i))^2} 
\frac{1}{n} \sum_{i=1}^n 
\left[
\begin{array}{l}
\frac{\pi_{D_i(1)}\sigma_i^2(1,1,x) + (1-\pi_{D_i(1)})\sigma_i^2(0,1,x)}{e}
+ \frac{\pi_{D_i(0)}\sigma_i^2(1,0,x) + (1-\pi_{D_i(0)})\sigma_i^2(0,0,x)}{1-e} \\
+ \frac{(\mu_i(1,1,x)-\mu_i(0,1,x)-\beta_{\textsc{iv}}^{\textup{m}} )^2 \pi_{D_i(1)} (1-\pi_{D_i(1)}) }{e} 
+ \frac{(\mu_i(1,0,x)-\mu_i(0,0,x)-\beta_{\textsc{iv}}^{\textup{m}} )^2 \pi_{D_i(0)} (1-\pi_{D_i(0)}) }{1-e} \\
+ \frac{1}{e} \left[
\begin{array}{l}
\pi_{D_i(1)}\mu_i(1,1,x)
+ (1-\pi_{D_i(1)}) \mu_i(0,1,x)
-\pi_{D_i(1)} \beta_{\textsc{iv}}^{\textup{m}} \\
- \frac{1}{n}\sum_{i=1}^n (\pi_{D_i(1)}\mu_i(1,1,x)
+ (1-\pi_{D_i(1)}) \mu_i(0,1,x) - \pi_{D_i(1)} \beta_{\textsc{iv}}^{\textup{m}})
\end{array}
\right]^2 \\
+ \frac{1}{1-e} \left[
\begin{array}{l}
\pi_{D_i(1)}\mu_i(1,1,x) + (1-\pi_{D_i(1)})\mu_i(0,1,x) - \pi_{D_i(1)} \beta_{\textsc{iv}}^{\textup{m}} \\
- \frac{1}{n}\sum_{i=1}^n 
( \pi_{D_i(1)}\mu_i(1,1,x) + (1-\pi_{D_i(1)})\mu_i(0,1,x) - \pi_{D_i(1)} \beta_{\textsc{iv}}^{\textup{m}} ) \\
-  (\pi_{D_i(1)} - \pi_{D_i(0)}) 
( \tau_{\textup{c}}(x_i) - \beta_{\textsc{iv}}^{\textup{m}} )
\end{array}
\right]^2 \\
+ 2e \pi_{D_i(1)} (1-\pi_{D_i(1)})(\mu_i(1,1,x)-\mu_i(0,1,x)-\beta_{\textsc{iv}}^{\textup{m}} )   \\
\times \left[
\begin{array}{l}
\pi_{D_i(1)}\mu_i(1,1,x)
+ (1-\pi_{D_i(1)}) \mu_i(0,1,x)
-\pi_{D_i(1)} \beta_{\textsc{iv}}^{\textup{m}} \\
- \frac{1}{n}\sum_{i=1}^n (\pi_{D_i(1)}\mu_i(1,1,x)
+ (1-\pi_{D_i(1)}) \mu_i(0,1,x) - \pi_{D_i(1)} \beta_{\textsc{iv}}^{\textup{m}})
\end{array}
\right] / e^2 \\
- 2e \pi_{D_i(1)} (1-\pi_{D_i(1)})(\mu_i(1,1,x)-\mu_i(0,1,x)-\beta_{\textsc{iv}}^{\textup{m}} )   \\
\times \left[
\begin{array}{l}
\pi_{D_i(1)}\mu_i(1,1,x)
+ (1-\pi_{D_i(1)}) \mu_i(0,1,x)
-\pi_{D_i(1)} \beta_{\textsc{iv}}^{\textup{m}} \\
- \frac{1}{n}\sum_{i=1}^n (\pi_{D_i(1)}\mu_i(1,1,x)
+ (1-\pi_{D_i(1)}) \mu_i(0,1,x) - \pi_{D_i(1)} \beta_{\textsc{iv}}^{\textup{m}})
\end{array}
\right] / e^2 \\
+ 2 (1-e) \pi_{D_i(0)} (1-\pi_{D_i(0)})(\mu_i(1,0,x)-\mu_i(0,0,x)-\beta_{\textsc{iv}}^{\textup{m}} )   \\
\times \left[
\begin{array}{l}
\pi_{D_i(1)}\mu_i(1,1,x)
+ (1-\pi_{D_i(1)}) \mu_i(0,1,x)
-\pi_{D_i(1)} \beta^{\textup{m}} \\
- \frac{1}{n}\sum_{i=1}^n (\pi_{D_i(1)}\mu_i(1,1,x)
+ (1-\pi_{D_i(1)}) \mu_i(0,1,x) - \pi_{D_i(1)} \beta_{\textsc{iv}}^{\textup{m}}) \\
-  (\pi_{D_i(1)} - \pi_{D_i(0)}) 
(\tau_{\textup{c}}(x_i) - \beta_{\textsc{iv}}^{\textup{m}})
\end{array}
\right] / (1-e)^2 \\
- 2 (1-e) \pi_{D_i(0)} (1-\pi_{D_i(0)})(\mu_i(1,0,x)-\mu_i(0,0,x)-\beta_{\textsc{iv}}^{\textup{m}} )   \\
\times \left[
\begin{array}{l}
\pi_{D_i(1)}\mu_i(1,1,x)
+ (1-\pi_{D_i(1)}) \mu_i(0,1,x)
-\pi_{D_i(1)} \beta_{\textsc{iv}}^{\textup{m}} \\
- \frac{1}{n}\sum_{i=1}^n (\pi_{D_i(1)}\mu_i(1,1,x)
+ (1-\pi_{D_i(1)}) \mu_i(0,1,x) - \pi_{D_i(1)} \beta_{\textsc{iv}}^{\textup{m}}) \\
-  (\pi_{D_i(1)} - \pi_{D_i(0)}) 
(\tau_{\textup{c}}(x_i) - \beta_{\textsc{iv}}^{\textup{m}})
\end{array}
\right] / (1-e)^2 \\
+ (\pi_{D_i(1)} - \pi_{D_i(0)})^2 ( \tau_{\textup{c}}(x_i) - \beta_{\textsc{iv}}^{\textup{m}} )^2
\end{array}
\right] \notag \\
=& \frac{1 }{(\frac{1}{n}\sum_{i=1}^n \pi_{\textup{c}}(x_i))^2} 
\frac{1}{n} \sum_{i=1}^n
\left[
\begin{array}{l}
\frac{\pi_{D_i(1)}\sigma_i^2(1,1,x) + (1-\pi_{D_i(1)})\sigma_i^2(0,1,x)}{e}
+ \frac{\pi_{D_i(0)}\sigma_i^2(1,0,x) + (1-\pi_{D_i(0)})\sigma_i^2(0,0,x)}{1-e} \\
+ \frac{(\mu_i(1,1,x)-\mu_i(0,1,x)-\beta_{\textsc{iv}}^{\textup{m}} )^2 \pi_{D_i(1)} (1-\pi_{D_i(1)}) }{e} 
+ \frac{(\mu_i(1,0,x)-\mu_i(0,0,x)-\beta_{\textsc{iv}}^{\textup{m}} )^2 \pi_{D_i(0)} (1-\pi_{D_i(0)}) }{1-e} \\
+ \frac{1}{e} \left[
\begin{array}{l}
\pi_{D_i(1)}\mu_i(1,1,x)
+ (1-\pi_{D_i(1)}) \mu_i(0,1,x)
-\pi_{D_i(1)} \beta_{\textsc{iv}}^{\textup{m}} \\
- \frac{1}{n}\sum_{i=1}^n (\pi_{D_i(1)}\mu_i(1,1,x)
+ (1-\pi_{D_i(1)}) \mu_i(0,1,x) - \pi_{D_i(1)} \beta_{\textsc{iv}}^{\textup{m}})
\end{array}
\right]^2 \\
+ \frac{1}{1-e} \left[
\begin{array}{l}
\pi_{D_i(1)}\mu_i(1,1,x) + (1-\pi_{D_i(1)})\mu_i(0,1,x) - \pi_{D_i(1)} \beta_{\textsc{iv}}^{\textup{m}} \\
- \frac{1}{n}\sum_{i=1}^n 
( \pi_{D_i(1)}\mu_i(1,1,x) + (1-\pi_{D_i(1)})\mu_i(0,1,x) - \pi_{D_i(1)} \beta_{\textsc{iv}}^{\textup{m}} ) \\
-  (\pi_{D_i(1)} - \pi_{D_i(0)}) 
(\tau_{\textup{c}}(x_i) - \beta_{\textsc{iv}}^{\textup{m}})
\end{array}
\right]^2 \\
+ (\pi_{D_i(1)} - \pi_{D_i(0)})^2 ( \tau_{\textup{c}}(x_i) - \beta_{\textsc{iv}}^{\textup{m}} )^2
\end{array}
\right]. 
\label{eq:EHW_IV_mixed}
\end{align}

\paragraph*{Conservativeness:} 
Theorem \ref{thm:ME_mixed} ensures
\[
\hat{V}_{\textsc{hw,iv,f}} 
=  V_{\textsc{iv,f}}^\textup{m}
+ B_{\textsc{iv,f}}^\textup{m}
+ o(1;\mathbb{P}_{(y,Z,D)\mid x})
\]
where $\hat{V}_{\textsc{hw,iv,f}} $ is defined in \eqref{eq:EHW_IV} and 
\begin{equation*}
B_{\textsc{iv,f}}^\textup{m}
= \left[ (A_{\textsc{f}}^{{\textup{m}}-1} ) \left( \frac{1}{n} \sum_{i=1}^n \mathbb{E}\left[ \eta(\theta^\textup{m}) \mid x_{i} \right] \mathbb{E}\left[ \eta(\theta^\textup{m}) \mid x_{i} \right]^{\mathrm{T}}  \right) \left(A_{\textsc{f}}^{{\textup{m}}-{\mathrm{T}}}\right) \right]_{(2,2)}.
\end{equation*}
We derive the ``middle'' part below:
\begin{align*}
\frac{1}{n} \sum_{i=1}^n \mathbb{E}\left[ \eta(\theta^\textup{m}) \mid x_{i} \right] \mathbb{E}\left[ \eta(\theta^\textup{m}) \mid x_{i} \right]^{\mathrm{T}}
=& \frac{1}{n} \sum_{i=1}^n 
\left(
\begin{array}{ccc}
\mathbb{E}(\eta_{\alpha}^{\textup{m}} \mid x_{i})^2 
& \mathbb{E}(\eta_{\alpha}^{\textup{m}} \mid x_{i})
\mathbb{E}(\eta_{\beta}^{\textup{m}} \mid x_{i}) 
& * \\
\mathbb{E}(\eta_{\alpha}^{\textup{m}} \mid x_{i}) \mathbb{E}(\eta_{\beta}^{\textup{m}} \mid x_{i}) 
& \mathbb{E}(\eta_{\beta}^{\textup{m}} \mid x_{i})^2 
& *  \\
* & * & *
\end{array}
\right)
\end{align*}
where * denotes the terms that are irrelevant. 
Therefore, by direct algebra, the asymptotic bias is 
\begin{align*}
B_{\textsc{iv,f}}^\textup{m}
=& \left[ (A_{\textsc{f}}^{{\textup{m}}-1} ) \left( \frac{1}{n} \sum_{i=1}^n \mathbb{E}\left[ \eta(\theta^\textup{m}) \mid x_{i} \right] \mathbb{E}\left[ \eta(\theta^\textup{m}) \mid x_{i} \right]^{\mathrm{T}}  \right) \left(A_{\textsc{f}}^{{\textup{m}}-\mathrm{T}}\right) \right]_{(2,2)} \notag \\
=& \frac{ \frac{1}{n}\sum_{i=1}^n (e \mathbb{E}(\eta_{\alpha}) - \mathbb{E}(\eta_{\beta}))^2  }{e^2(1-e)^2 (\frac{1}{n}\sum_{i=1}^n (\pi_{D_i(1)} - \pi_{D_i(0)} ))^2 } \notag \\
=& \frac{\frac{1}{n}\sum_{i=1}^n
(\pi_{D_i(1)} - \pi_{D_i(0)})^2( \tau_{\textup{c}}(x_i) - \beta_{\textsc{iv}}^{\textup{m}})^2}{ (\frac{1}{n}\sum_{i=1}^n (\pi_{D_i(1)} - \pi_{D_i(0)} ))^2 }. 
\label{eq:IV_mixed_bias}
\end{align*}
\end{proof}

\section{Proof of results in Section \ref{sec:cluster}}
\label{sec:Proof of cluster}
\subsection{M-estimation with clustered data}
\begin{proof}[Proof of Theorem \ref{thm:ME_random_cluster}]
The proof follows by applying Theorem \ref{thm:ME_random} with cluster average $\frac{M}{N}\psi(y_{ij}, x_{ij};\beta^\textup{r})$. So we omit it.

\end{proof}

\begin{proof}[proof of Theorem \ref{thm:ME_fixed_cluster}]
The proof follows by applying Theorem \ref{thm:ME_fixed} with cluster average $\frac{M}{N}\psi(y_{ij}, x_{ij};\beta^\textup{f})$. So we omit it.

\end{proof}

\begin{proof}[Proof of Theorem \ref{thm:ME_mixed_cluster}]
The proof follows by applying Theorem \ref{thm:ME_mixed} with cluster average $\frac{M}{N}\psi(y_{ij}, x_{ij};\beta^\textup{m})$. So we omit it.

\end{proof}

\subsection{Without covariates}
\begin{proof}[Proof of Theorem \ref{thm:cluster_random}]

The result follows from applying Theorem \ref{thm:ME_random_cluster} with $Z$-estimation $\psi(w_{ij}; a, b) = (1, Z_{ij})^{\mathrm{T}} (y_{ij} - a -b Z_{ij})$.
The proof is analogous to that of Theorem \ref{thm:cluster_add_r}, so we omit it.

\end{proof}

\subsection{Additive regression} \label{proof of Cluster2}
\begin{proof}[Proof of Theorem \ref{thm:cluster_add_r}]
Let $\hat{\gamma}_{\textsc{f}}$ denote the coefficient of ${x}_{ij}$ from the OLS fit in \eqref{eq:cluster_add}. 
\[
\hat{\beta}_{\textsc{f}}
= \frac{1}{n_{\mathcal{T}}} \sum_{ij} Z_i \left(y_{ij} - {x}_{ij}^{\mathrm{T}}\hat{\gamma}_{\textsc{f}} \right) 
- \frac{1}{n_{\mathcal{C}}} \sum_{ij} (1-Z_i) \left(y_{ij} - {x}_{ij}^{\mathrm{T}}\hat{\gamma}_{\textsc{f}} \right).
\]
Without loss of generality, we assume $\mu_x=0$.
Denote $(\hat{\alpha}_{\textsc{f}}, \hat{\beta}_{\textsc{f}}, \hat{\gamma}_{\textsc{f}})$ as the intercepts and coefficient vectors of $Z_i$ and $x_{ij}$ from the OLS fits of \eqref{eq:cluster_add}. The first-order conditions of \eqref{eq:cluster_add} ensure that $\theta=\left(\alpha, \beta, \gamma\right)=(\hat{\alpha}_{\textsc{f}}, \hat{\beta}_{\textsc{f}},\hat{\gamma}_{\textsc{f}})$ jointly solve
$$
0= N^{-1} \sum_{ij} \eta\left( y_{ij}, x_{ij}, Z_i; \theta\right)
= N^{-1} \sum_{ij} 
\eta \left(y_{ij}, x_{ij}, Z_i; \theta\right)
$$
where 
\begin{align}
\eta= \left(y_{ij} - \alpha - \beta Z_i - \gamma^{\mathrm{T}} x_{ij} \right)
\left(\begin{array}{c}
1 \\
Z_i \\
x_{ij}
\end{array}\right). 
\label{eq:Lin_ME_cluster}
\end{align}
Direct algebra ensures that $\theta=\theta^{\textup{r}}=\left(\alpha_{\textsc{f}}^{\textup{r}}, \beta_{\textsc{f}}^{\textup{r}}, \gamma_{\textsc{f}}^{\textup{r}}\right)$ solves $E\{\eta(\theta)\}=0$. Theorem \ref{thm:ME_random_cluster} ensures that 
\begin{align}
\sqrt{M}\left\{\left(\begin{array}{c}
\hat{\alpha}_{\textsc{f}} \\
\hat{\beta}_{\textsc{f}}  \\
\hat{\gamma}_{\textsc{f}}  
\end{array}\right)-\left(\begin{array}{c}
\alpha_{\textsc{f}}^{\textup{r}} \\
\beta_{\textsc{f}}^{\textup{r}} \\
\gamma_{\textsc{f}}^{\textup{r}} 
\end{array}\right)\right\} \overset{\textup{d}}{\to} 
\mathcal{N}\left(0, A_{\textsc{f}}^{\textup{r}-1}  D_{\textsc{f}}^{\textup{r}} \left(A_{\textsc{f}}^{\textup{r}} \right)^{-\mathrm{T}}\right),
\label{eq:Lin_normal_cluster_F}
\end{align}
where 
\begin{align*}
A_{\textsc{f}}^{\textup{r}}
=& - \frac{1}{M} \sum_{i=1}^M \mathbb{E}\left[ \sum_{j=1}^{n_i} \frac{\partial}{\partial (\alpha,\beta,\gamma)} \eta \right], \\
D_{\textsc{f}}^{\textup{r}}
=& \frac{1}{M} \sum_{i=1}^M \mathbb{E}\left[ \left( \sum_{j=1}^{n_i}\eta(w_{ij};\theta^\textup{r}) \right) \left( \sum_{j=1}^{n_i}\eta(w_{ij};\theta^\textup{r})\right)^{\mathrm{T}} \right]
\end{align*}
evaluated at $\theta=\theta^{\textup{r}}$. 
By direct algebra, $\beta_{\textsc{f}}^{\textup{r}} = \beta^{\textup{r}}$.
We compute below $A_{\textsc{f}}^{\textup{r}}$ and $D_{\textsc{f}}^{\textup{r}} $, respectively.

\paragraph*{Compute $A_{\textsc{F}}^{\textup{r}}$:}
By direct algebra,
\begin{align*}
A_{\textsc{f}}^{\textup{r}}
=& \frac{1}{M} \sum_{i=1}^M  \sum_{j=1}^{n_i} \mathbb{E}
\begin{pmatrix}
1 & Z_i & x_{ij}^{\mathrm{T}} \\
Z_i & Z_i & Z_ix_{ij}^{\mathrm{T}} \\
x_{ij} & Z_ix_{ij} & x_{ij}x_{ij}^{\mathrm{T}} 
\end{pmatrix} 
= \frac{N}{M} \begin{pmatrix}
1 & e & 0 \\
e & e & 0 \\
0 & 0 & \mathbb{E}(x_{ij}x_{ij}^{\mathrm{T}})
\end{pmatrix}
\end{align*}
and its inverse is 
\begin{align*}
(A_{\textsc{f}}^{\textup{r}})^{-1}
= \frac{M}{N} \begin{pmatrix}
\frac{e}{e(1-e)} & -\frac{e}{e(1-e)} & 0 \\
-\frac{e}{e(1-e)} & \frac{1}{e(1-e)} & 0 \\
0 & 0 & (\mathbb{E}(x_{ij}x_{ij}^{\mathrm{T}}))^{-1} 
\end{pmatrix}.
\end{align*}
\paragraph*{Compute $D_{\textsc{F}}^{\textup{r}}$:} 
Define $\varepsilon_{ij,\textsc{f}}^{\textup{r}} = y_{ij} - \alpha_{\textsc{f}}^{\textup{r}} - \beta^{\textup{r}} Z_i - x_{ij}^{\mathrm{T}} \gamma_{\textsc{f}}^{{\textup{r}}}$.
By direct algebra, 
\begin{align*}
& D_{\textsc{f}}^{\textup{r}}
= \frac{1}{M} \sum_{i=1}^M \mathbb{E}\left[ \left( \sum_{j=1}^{n_i}\eta( \theta^\textup{r}) \right) \left( \sum_{j=1}^{n_i}\eta( \theta^\textup{r})\right)^{\mathrm{T}} \right] 
= \begin{pmatrix}
d_{11} & d_{12} & d_{13} \\
d_{12}^{\mathrm{T}} & d_{22} & d_{23} \\
d_{13}^{\mathrm{T}} & d_{23}^{\mathrm{T}} & d_{33}
\end{pmatrix} 
\end{align*}
where 
\begin{align*}
d_{11}
=& \frac{1}{M} \sum_{i=1}^M \mathbb{E} \left[ \left( \sum_{j=1}^{n_i} \varepsilon_{ij,\textsc{f}}^\textup{r} \right)^2 \right],
\quad 
d_{12}
= \frac{1}{M} \sum_{i=1}^M \mathbb{E} \left[ \left( \sum_{j=1}^{n_i} \varepsilon_{ij,\textsc{f}}^\textup{r} \right)\left( \sum_{j=1}^{n_i} Z_i \varepsilon_{ij,\textsc{f}}^\textup{r} \right) \right], \\
d_{13}
=& \frac{1}{M} \sum_{i=1}^M \mathbb{E} \left[ \left( \sum_{j=1}^{n_i} \varepsilon_{ij,\textsc{f}}^\textup{r} \right)\left( \sum_{j=1}^{n_i} \varepsilon_{ij,\textsc{f}}^\textup{r} x_{ij}^{\mathrm{T}} \right) \right],
\quad 
d_{22}
= \frac{1}{M} \sum_{i=1}^M \mathbb{E} \left[ \left( \sum_{j=1}^{n_i} Z_i \varepsilon_{ij,\textsc{f}}^\textup{r}  \right)^2 \right] = d_{12}, \\
d_{23}
=& \frac{1}{M} \sum_{i=1}^M \mathbb{E} \left[ \left( \sum_{j=1}^{n_i} Z_i \varepsilon_{ij,\textsc{f}}^\textup{r}  \right) \left( \sum_{j=1}^{n_i} \varepsilon_{ij,\textsc{f}}^\textup{r} x_{ij}^{\mathrm{T}} \right) \right], 
\quad
d_{33}
= \frac{1}{M} \sum_{i=1}^M \mathbb{E} \left[ \left( \sum_{j=1}^{n_i} \varepsilon_{ij,\textsc{f}}^\textup{r} x_{ij} \right) \left( \sum_{j=1}^{n_i} \varepsilon_{ij,\textsc{f}}^\textup{r} x_{ij} \right)^{\mathrm{T}} \right].
\end{align*}

\paragraph*{Compute $A_{\textsc{F}}^{\textup{r}-1}  D_{\textsc{F}}^{\textup{r}} \left(A_{\textsc{F}}^{\textup{r}-\mathrm{T}} \right)$:}
That \eqref{eq:Lin_normal_cluster_F} implies that 
\begin{align}
\sqrt{M}(\hat{\beta}_{\textsc{f}} -  \beta^{\textup{r}} ) 
\overset{\textup{d}}{\to} 
\mathcal{N}\left(0, \left[A_{\textsc{f}}^{\textup{r}-1} D_{\textsc{f}}^{\textup{r}} \left(A_{\textsc{f}}^{\textup{r}-\mathrm{T}} \right)\right]_{(2,2)}\right).
\end{align}
By direct algebra,
\begin{align*}
& \left[A_{\textsc{f}}^{\textup{r}-1}  D_{\textsc{f}}^{\textup{r}} \left(A_{\textsc{f}}^{\textup{r}-\mathrm{T}} \right) \right]_{(2,2)} \\
=& \frac{M^2}{N^2} \left[ \begin{pmatrix}
\frac{e}{e(1-e)} & -\frac{e}{e(1-e)} & 0 \\
-\frac{e}{e(1-e)} & \frac{1}{e(1-e)} & 0 \\
0 & 0 & *
\end{pmatrix}
\begin{pmatrix}
d_{11} & d_{12} & d_{13} \\
d_{12} & d_{22} & d_{23} \\
d_{13}^{\mathrm{T}} & d_{23}^{\mathrm{T}} & d_{33}
\end{pmatrix} 
\begin{pmatrix}
\frac{e}{e(1-e)} & -\frac{e}{e(1-e)} & 0 \\
-\frac{e}{e(1-e)} & \frac{1}{e(1-e)} & 0 \\
0 & 0 & *
\end{pmatrix}
\right]_{(2,2)} \\
=& \frac{M^2}{N^2} \left[ \begin{pmatrix}
* & * & * \\
\frac{d_{12}^{\mathrm{T}} - e d_{11}}{e(1-e)} & \frac{d_{22} - e d_{12}}{e(1-e)} & \frac{d_{23} - e d_{13}}{e(1-e)} \\
* & * & *
\end{pmatrix} 
\begin{pmatrix}
\frac{e}{e(1-e)} & -\frac{e}{e(1-e)} & 0 \\
-\frac{e}{e(1-e)} & \frac{1}{e(1-e)} & 0 \\
0 & 0 & *
\end{pmatrix}
\right]_{(2,2)} \\
=&  \frac{M^2}{N^2} \frac{d_{22} - 2 e d_{12}  + e^2 d_{11}}{e^2(1-e)^2},
\end{align*}
where * denotes the terms that are not relevant here and 
\begin{align}
d_{11}
=& \frac{1}{M} \sum_{i=1}^M \mathbb{E} \left[ \left( \sum_{j=1}^{n_i} \varepsilon_{ij,\textsc{f}}^\textup{r} \right)^2 \right]
= e \frac{1}{M} \sum_{i=1}^M \mathbb{E} \left[ \left( \sum_{j=1}^{n_i} \varepsilon_{ij,\textsc{f}}^\textup{r}(1) \right)^2 \right]
+ (1-e) \frac{1}{M} \sum_{i=1}^M \mathbb{E} \left[ \left( \sum_{j=1}^{n_i} \varepsilon_{ij,\textsc{f}}^\textup{r}(0) \right)^2 \right], \\
d_{22}
=& \frac{1}{M} \sum_{i=1}^M \mathbb{E} \left[ Z_i \left( \sum_{j=1}^{n_i} \varepsilon_{ij,\textsc{f}}^\textup{r} \right)^2 \right]
= e \frac{1}{M} \sum_{i=1}^M \mathbb{E} \left[ \left( \sum_{j=1}^{n_i} \varepsilon_{ij,\textsc{f}}^\textup{r}(1) \right)^2 \right].
\end{align}
Therefore,
\begin{align*}
V^\textup{r}_{\textsc{f}}
= \left[A_{\textsc{f}}^{\textup{r}-1}  D_{\textsc{f}}^{\textup{r}} \left(A_{\textsc{f}}^{\textup{r}-1} \right)^{\mathrm{T}}\right]_{(2,2)}
=& \frac{1}{M} \sum_{i=1}^M \left( \frac{\V({\varepsilon}^{\textup{r}}_{i\cdot,\textsc{f}}(1))}{e} + \frac{\V({\varepsilon}^{\textup{r}}_{i\cdot,\textsc{f}}(0))}{1-e} \right).
\end{align*}
Theorem \ref{thm:ME_random_cluster} ensures that 
\[
\left[  \left( \frac{1}{M} \sum_{ij} \frac{\partial}{\partial \theta^{\mathrm{T}}} \eta( \hat{\theta}) \right)^{-1}
\left( \frac{1}{M} \sum_{i=1}^{M} \eta_{i}(\hat{\theta})^{\mathrm{T}} \eta_{i}(\hat{\theta}) \right)
\left( \frac{1}{M} \sum_{ij} \frac{\partial}{\partial \theta^{\mathrm{T}}} \eta( \hat{\theta}) \right)^{-1} \right]_{(2,2)} 
= V^\textup{r} + o(1;\mathbb{P}_{(y,x)}).
\]
By direct algebra,
\begin{align*}
- \frac{1}{M} \sum_{ij} \frac{\partial}{\partial \theta^{\mathrm{T}}} \eta(\hat{\theta})
=& \frac{1}{M}  \begin{pmatrix}
N & \sum_{ij} Z_i &  \sum_{ij}(x_{ij} - \bar{x})^{\mathrm{T}} \\
\sum_{ij} Z_i & \sum_{ij} Z_i & \sum_{ij} Z_i (x_{ij} - \bar{x})^{\mathrm{T}} \\
\sum_{ij}(x_{ij} - \bar{x}) &  \sum_{ij}Z_i(x_{ij} - \bar{x}) &  \sum_{ij} (x_{ij} - \bar{x}) (x_{ij} - \bar{x})^{\mathrm{T}}
\end{pmatrix} 
= \frac{1}{M} (X_{\textsc{f}}^{\mathrm{T}} X_{\textsc{f}})
\end{align*}
and 
\begin{align*}
\frac{1}{M} \sum_{i=1}^{M} \eta_{i}(\hat{\theta})^{\mathrm{T}} \eta_{i}(\hat{\theta})
=& \frac{1}{M} \sum_{i=1}^{M} X_{i,\textsc{f}}^{\mathrm{T}} \hat{U}_{i,\textsc{f}} X_{i,\textsc{f}}.
\end{align*}
Thus, we complete the proof by showing 
\begin{equation*}
\hat{V}_{\textsc{lz,f}}
=
\left[  \left( \frac{1}{M} \sum_{ij} \frac{\partial}{\partial \theta^{\mathrm{T}}} \eta(\hat{\theta}) \right)^{-1}
\left( \frac{1}{M} \sum_{i=1}^{M} \eta_{i}(\hat{\theta})^{\mathrm{T}} \eta_{i}(\hat{\theta}) \right)
\left( \frac{1}{M} \sum_{ij} \frac{\partial}{\partial \theta^{\mathrm{T}}} \eta(\hat{\theta}) \right)^{-1} \right]_{(2,2)}
\end{equation*}
where $\hat{V}_{\textsc{lz,f}}$ is defined in \eqref{eq:V_LZ_F}.

\end{proof}

\begin{proof}[Proof of Theorem \ref{thm:cluster_add_m}]
Without loss of generality, we center $x_{ij}$ around $\bar{x}$. 
Recall $(\hat{\alpha}_{\textsc{f}}, \hat{\beta}_{\textsc{f}}, \hat{\gamma}_{\textsc{f}})$ are the intercepts and coefficient vectors of $Z_i$ and $\left(x_{ij}-\bar{x}\right)$ from the OLS fits of \eqref{eq:cluster_add}. The first-order conditions of \eqref{eq:cluster_add} ensure that $\theta=\left(\alpha, \beta, \gamma\right)=(\hat{\alpha}_{\textsc{f}}, \hat{\beta}_{\textsc{f}},\hat{\gamma}_{\textsc{f}})$ jointly solve
$$
0= N^{-1} \sum_{ij} \eta\left( y_{ij}, x_{ij}, Z_i; \theta\right)
= N^{-1} \sum_{ij} 
\eta \left(y_{ij}, x_{ij}, Z_i; \theta\right)
$$
where 
\begin{align}
\eta= \left(y_{ij} - \alpha - \beta Z_i - \left(x_{ij}-\bar{x}\right)^{\mathrm{T}} \gamma  \right)
\left(\begin{array}{c}
1 \\
Z_i \\
x_{ij}-\bar{x}
\end{array}\right). 
\end{align}
Direct algebra ensures that $\theta=\theta^{\textup{m}}=\left(\alpha_{\textsc{f}}^{\textup{m}}, \beta_{\textsc{f}}^{\textup{m}}, \gamma_{\textsc{f}}^{\textup{m}}\right)$ solves $N^{-1} \sum_{ij} E\{\eta(\theta^{\textup{m}})\mid x_{ij}\}=0$. Theorem \ref{thm:ME_random_cluster} ensures that 
\begin{align}
\sqrt{M}\left\{\left(\begin{array}{c}
\hat{\alpha}_{\textsc{f}} \\
\hat{\beta}_{\textsc{f}}  \\
\hat{\gamma}_{\textsc{f}}  
\end{array}\right)-\left(\begin{array}{c}
\alpha_{\textsc{f}}^{\textup{m}} \\
\beta_{\textsc{f}}^{\textup{m}} \\
\gamma_{\textsc{f}}^{\textup{m}} 
\end{array}\right)\right\} \overset{\textup{d}}{\to} 
\mathcal{N}\left(0, A_{\textsc{f}}^{\textup{m}-1}  D_{\textsc{f}}^{\textup{m}} \left(A_{\textsc{f}}^{\textup{m}-1} \right)^{\mathrm{T}}\right), 
\label{eq:Lin_normal_cluster_mixed}
\end{align}
where 
\begin{align*}
A_{\textsc{f}}^{\textup{m}}
=& - \frac{1}{M} \sum_{i=1}^M \mathbb{E}\left[  \sum_{j=1}^{n_i} \frac{\partial}{\partial (\alpha,\beta,\gamma)} \eta \mid x_i \right], \\
D_{\textsc{f}}^{\textup{m}}
=& \frac{1}{M} \sum_{i=1}^M \mathbb{E}\left[ \left( \sum_{j=1}^{n_i}\eta(w_{ij};\theta^\textup{m}) \right) \left( \sum_{j=1}^{n_i}\eta(w_{ij};\theta^\textup{m})\right)^{\mathrm{T}} \mid X \right]
\end{align*}
evaluated at $\theta=\theta^{\textup{m}}$. Direct algebra ensures that $\beta_{\textsc{f}}^{\textup{m}} = \beta^{\textup{m}}$. We compute below $A_{\textsc{f}}^{\textup{m}}$ and $D_{\textsc{f}}^{\textup{m}}$, respectively.

\paragraph*{Compute $A_{\textsc{F}}^{\textup{m}}$:}
By direct algebra, we have
\begin{align*}
A_{\textsc{f}}^{\textup{m}}
=& \frac{1}{M} \sum_{i=1}^M \sum_{j=1}^{n_i} \mathbb{E}
\left[
\begin{pmatrix}
1 & Z_i & (x_{ij}-\bar{x})^{\mathrm{T}} \\
Z_i & Z_i & Z_i(x_{ij}-\bar{x})^{\mathrm{T}} \\
(x_{ij}-\bar{x}) & Z_i(x_{ij}-\bar{x}) & (x_{ij}-\bar{x})(x_{ij}-\bar{x})^{\mathrm{T}} 
\end{pmatrix} \mid X 
\right]\\
=& \frac{N}{M}  
\begin{pmatrix}
1 & e & 0 \\
e & e & 0 \\
0 & 0 & \frac{1}{N}\sum_{ij}(x_{ij}-\bar{x})(x_{ij}-\bar{x})^{\mathrm{T}}  
\end{pmatrix}
\end{align*}
and its inverse is 
\begin{align*}
(A_{\textsc{f}}^{\textup{m}})^{-1}
=& \frac{M}{N}  
\begin{pmatrix}
\frac{e}{e(1-e)} & -\frac{e}{e(1-e)} & 0 \\
-\frac{e}{e(1-e)} & \frac{1}{e(1-e)} & 0 \\
0 & 0 & (\frac{1}{N}\sum_{ij}(x_{ij}-\bar{x})(x_{ij}-\bar{x})^{\mathrm{T}})^{-1} 
\end{pmatrix}.
\end{align*}
\paragraph*{Compute $D_{\textsc{F}}^{\textup{m}}$:} 
Define $\varepsilon_{ij,\textsc{f}}^{\textup{m}} = y_{ij} - \alpha_{\textsc{f}}^{\textup{m}} - \beta^{\textup{m}} Z_i - \left(x_{ij}-\bar{x}\right)^{\mathrm{T}} \gamma_{\textsc{f}}^{{\textup{m}} } $.
By direct algebra, we have
\begin{align*}
& D_{\textsc{f}}^{\textup{m}}
= \frac{1}{M} \sum_{i=1}^M \V \left[ \left( \sum_{j=1}^{n_i}\eta( \theta^\textup{m}) \right) \mid x_i \right] 
= \begin{pmatrix}
d_{11} & d_{12} & d_{13} \\
d_{12}^{\mathrm{T}} & d_{22} & d_{23} \\
d_{13}^{\mathrm{T}} & d_{23}^{\mathrm{T}} & d_{33}
\end{pmatrix} 
\end{align*}
where 
\begin{align*}
d_{11}
=& \frac{1}{M} \sum_{i=1}^M \V \left[   \sum_{j=1}^{n_i} \varepsilon_{ij,\textsc{f}}^\textup{m}  \mid x_i \right],
\quad 
d_{12}
= \frac{1}{M} \sum_{i=1}^M \mathbb{C} \left( \sum_{j=1}^{n_i} \varepsilon_{ij,\textsc{f}}^\textup{m}, \sum_{j=1}^{n_i} Z_i \varepsilon_{ij,\textsc{f}}^\textup{m} \mid x_i \right), \\
d_{13}
=& \frac{1}{M} \sum_{i=1}^M \mathbb{C} \left( \sum_{j=1}^{n_i} \varepsilon_{ij,\textsc{f}}^\textup{m}, \sum_{j=1}^{n_i} \varepsilon_{ij,\textsc{f}}^\textup{m} (x_{ij}-\bar{x})^{\mathrm{T}}  \mid x_i \right),
\quad 
d_{22}
= \frac{1}{M} \sum_{i=1}^M \V \left[ \sum_{j=1}^{n_i} Z_i \varepsilon_{ij,\textsc{f}}^\textup{m} \mid x_i \right] , \\
d_{23}
=& \frac{1}{M} \sum_{i=1}^M \mathbb{C} \left( \sum_{j=1}^{n_i} Z_i \varepsilon_{ij,\textsc{f}}^\textup{m},  \sum_{j=1}^{n_i} \varepsilon_{ij,\textsc{f}}^\textup{m} (x_{ij}-\bar{x})^{\mathrm{T}} \right), 
\quad
d_{33}
= \frac{1}{M} \sum_{i=1}^M \V \left[  \sum_{j=1}^{n_i} \varepsilon_{ij,\textsc{f}}^\textup{m} (x_{ij}-\bar{x}) \mid x_i \right].
\end{align*}

\paragraph*{Compute $A_{\textsc{F}}^{\textup{m}-1} D_{\textsc{F}}^{\textup{m}} \left(A_{\textsc{F}}^{\textup{m}-\mathrm{T}} \right)$:}
That \eqref{eq:Lin_normal_cluster_mixed} implies that 
\begin{align}
\sqrt{n}(\hat{\beta}_{\textsc{f}} - \beta^{\textup{m}} ) 
\overset{\textup{d}}{\to} 
\mathcal{N}\left(0, \left[A_{\textsc{f}}^{\textup{m}-1} D_{\textsc{f}}^{\textup{m}} \left(A_{\textsc{f}}^{\textup{m}-\mathrm{T}} \right) \right]_{(2,2)}\right).
\end{align}
By direct algebra,
\begin{align*}
& \left[A_{\textsc{f}}^{\textup{m}-1} D_{\textsc{f}}^{\textup{m}} \left(A_{\textsc{f}}^{\textup{m}-1} \right)^{\mathrm{T}}\right]_{(2,2)} \\
=& \frac{M^2}{N^2} \left[ \begin{pmatrix}
\frac{e}{e(1-e)} & -\frac{e}{e(1-e)} & 0 \\
-\frac{e}{e(1-e)} & \frac{1}{e(1-e)} & 0 \\
0 & 0 & *
\end{pmatrix}
\begin{pmatrix}
d_{11} & d_{12} & d_{13} \\
d_{12}^{\mathrm{T}} & d_{22} & d_{23} \\
d_{13}^{\mathrm{T}} & d_{23}^{\mathrm{T}} & d_{33}
\end{pmatrix} 
\begin{pmatrix}
\frac{e}{e(1-e)} & -\frac{e}{e(1-e)} & 0 \\
-\frac{e}{e(1-e)} & \frac{1}{e(1-e)} & 0 \\
0 & 0 & *
\end{pmatrix}
\right]_{(2,2)} \\
=& \frac{M^2}{N^2} \left[ \begin{pmatrix}
* & * & * \\
\frac{d_{12} - e d_{11}}{e(1-e)} & \frac{d_{22} - e d_{12}}{e(1-e)} & \frac{d_{23} - e d_{13}}{e(1-e)} \\
* & * & *
\end{pmatrix} 
\begin{pmatrix}
\frac{e}{e(1-e)} & -\frac{e}{e(1-e)} & 0 \\
-\frac{e}{e(1-e)} & \frac{1}{e(1-e)} & 0 \\
0 & 0 & *
\end{pmatrix}
\right]_{(2,2)} \\
=& \frac{M^2}{N^2} \cdot \frac{d_{22} - 2 e d_{12}  + e^2 d_{11}}{e^2(1-e)^2},
\end{align*}
where
\begin{align*}
\frac{M^2}{N^2} d_{22}
=&  \frac{M}{N^2} \sum_{i=1}^M \V \left[ Z_i \sum_{j=1}^{n_i} \varepsilon_{ij,\textsc{f}}^\textup{m} \mid x_i \right] \\
=& \frac{M}{N^2} \sum_{i=1}^M \mathbb{E} \left[ Z_i \left( \sum_{j=1}^{n_i} \varepsilon_{ij,\textsc{f}}^\textup{m}(1) \right)^2 \mid x_i \right] 
- \frac{M}{N^2} \sum_{i=1}^M \left( \mathbb{E} \left[ Z_i  \sum_{j=1}^{n_i} \varepsilon_{ij,\textsc{f}}^\textup{m}(1) \mid x_i \right] \right)^2 \\
=& e \cdot \frac{1}{M} \sum_{i=1}^M \mathbb{E} \left[ \left( {\varepsilon}^{\textup{m}}_{i\cdot,\textsc{f}}(1) \right)^2 \mid x_i \right] 
- e^2 \cdot \frac{1}{M} \sum_{i=1}^M \mathbb{E} \left[ {\varepsilon}^{\textup{m}}_{i\cdot,\textsc{f}}(1) \mid x_i \right] ^2, \\
\frac{M^2}{N^2} d_{11}
=&  \frac{M}{N^2} \sum_{i=1}^M V \left[   \sum_{j=1}^{n_i} \varepsilon_{ij,\textsc{f}}^\textup{m}  \mid x_i \right]
= \frac{M}{N^2} \sum_{i=1}^M \mathbb{E} \left[ \left( \sum_{j=1}^{n_i} \varepsilon_{ij,\textsc{f}}^\textup{m} \right)^2 \mid x_i \right] 
- \frac{M}{N^2} \sum_{i=1}^M \left( \mathbb{E} \left[ \sum_{j=1}^{n_i} \varepsilon_{ij,\textsc{f}}^\textup{m} \mid x_i \right] \right)^2 \\
=& e \cdot \frac{1}{M} \sum_{i=1}^M \mathbb{E} \left[ \left( {\varepsilon}^{\textup{m}}_{i\cdot,\textsc{f}}(1) \right)^2  \mid x_i \right] 
+ (1-e) \cdot \frac{1}{M} \sum_{i=1}^M \mathbb{E} \left[ \left( {\varepsilon}^{\textup{m}}_{i\cdot,\textsc{f}}(0) \right)^2 \mid x_i \right] \\
-&  \frac{e^2 }{M} \sum_{i=1}^M \mathbb{E} \left[ {\varepsilon}^{\textup{m}}_{i\cdot,\textsc{f}}(1) \mid x_i \right] ^2
- \frac{(1-e)^2 }{M} \sum_{i=1}^M \mathbb{E} \left[ {\varepsilon}^{\textup{m}}_{i\cdot,\textsc{f}}(0) \mid x_i \right] ^2
- \frac{2 e(1-e)}{M} \sum_{i=1}^M \mathbb{E} \left[ {\varepsilon}^{\textup{m}}_{i\cdot,\textsc{f}}(1) \mid x_i \right] 
\mathbb{E} \left[ {\varepsilon}^{\textup{m}}_{i\cdot,\textsc{f}}(0) \mid x_i \right], \\
\frac{M^2}{N^2} d_{12}
=&  \frac{M}{N^2} \sum_{i=1}^M \mathbb{C} \left( \sum_{j=1}^{n_i} \varepsilon_{ij}^\textup{m}, \sum_{j=1}^{n_i} Z_i \varepsilon_{ij,\textsc{f}}^\textup{m} \mid x_i \right) \\
=& \frac{M}{N^2} \sum_{i=1}^M \mathbb{E} \left[ Z_i \left( \sum_{j=1}^{n_i} \varepsilon_{ij,\textsc{f}}^\textup{m} \right)^2 \mid x_i \right] 
- \frac{M}{N^2} \sum_{i=1}^M \mathbb{E} \left[ \sum_{j=1}^{n_i} \varepsilon_{ij,\textsc{f}}^\textup{m} \mid x_i \right]
\mathbb{E} \left[ Z_i  \sum_{j=1}^{n_i} \varepsilon_{ij,\textsc{f}}^\textup{m} \mid x_i \right] \\
=& \frac{e}{M} \sum_{i=1}^M \mathbb{E} \left[ \left( {\varepsilon}^{\textup{m}}_{i\cdot,\textsc{f}}(1) \right)^2 \mid x_i \right] 
- \frac{e(1-e) }{M} \sum_{i=1}^M \mathbb{E} \left[ {\varepsilon}^{\textup{m}}_{i\cdot,\textsc{f}}(1) \right] 
\mathbb{E} \left[ {\varepsilon}^{\textup{m}}_{i\cdot,\textsc{f}}(0) \mid x_i \right] 
- \frac{e^2 }{M} \sum_{i=1}^M \mathbb{E} \left[ {\varepsilon}^{\textup{m}}_{i\cdot,\textsc{f}}(1) \mid x_i \right]^2.
\end{align*}
Therefore,
\begin{align*}
V^\textup{m}_{\textsc{f}}
=& \left[A_{\textsc{f}}^{\textup{m}-1}  B_{\textsc{f}}^{\textup{m}} \left(A_{\textsc{f}}^{\textup{m}-1} \right)^{\mathrm{T}}\right]_{(2,2)} \\
=& \frac{1}{M} \sum_{i=1}^M \left( 
\frac{ \mathbb{E}\left( {\varepsilon}^{\textup{m}}_{i\cdot,\textsc{f}}(1)^2 \mid x_i \right)}{e}
+ \frac{\mathbb{E}\left( {\varepsilon}^{\textup{m}}_{i\cdot,\textsc{f}}(0)^2 \mid x_i \right)}{1-e}  - \mathbb{E}\left( {\varepsilon}^{\textup{m}}_{i\cdot,\textsc{f}}(1) - {\varepsilon}^{\textup{m}}_{i\cdot,\textsc{f}}(0) \mid x_i \right)^2 \right).
\end{align*}
Theorem \ref{thm:ME_mixed_cluster} ensures that $\hat{V}_{\textsc{lz,f}}
=  V_{\textsc{f}}^\textup{m}
+ B_{\textsc{f}}^\textup{m}
+ o(1;\mathbb{P}_{(y,Z)\mid x})$ where
\begin{equation}
B_{\textsc{f}}^\textup{m}
= \left[ (A_{\textsc{f}}^\textup{m})^{-1} \left( \frac{1}{M} \sum_{i=1}^M \mathbb{E}\left( \sum_{j=1}^{n_i} \eta(\theta^\textup{m}) \mid x_i \right) \mathbb{E}\left( \sum_{j=1}^{n_i} \eta(\theta^\textup{m}) \mid x_i \right)^{\mathrm{T}}  \right) (A_{\textsc{f}}^{\textup{m} -1})^{\mathrm{T}} \right]_{(2,2)}
\end{equation}
where
\begin{align*}
\mathbb{E}\left( \sum_{j=1}^{n_i} \eta(\theta^\textup{m}) \mid x_i \right) 
=& 
\begin{pmatrix}
\mathbb{E}( \sum_{j=1}^{n_i} \varepsilon_{ij,\textsc{f}}^\textup{m} \mid x_i ) \\
\mathbb{E}(\sum_{j=1}^{n_i}  Z_i \varepsilon_{ij,\textsc{f}}^\textup{m} \mid x_i ) \\
\mathbb{E}(\sum_{j=1}^{n_i} \varepsilon_{ij,\textsc{f}}^\textup{m} (x_{ij} - \bar{x}) \mid x_i ) 
\end{pmatrix}.
\end{align*} 
Therefore, 
\begin{align*}
& \frac{1}{M} \sum_{i=1}^M \mathbb{E}\left(  \sum_{j=1}^{n_i} \eta(\theta^\textup{m}) \mid x_i \right) \mathbb{E}\left( \sum_{j=1}^{n_i} \eta(\theta^\textup{m}) \mid x_i \right)^{\mathrm{T}} 
= \begin{pmatrix}
h_{11} & h_{12} & h_{13} \\
h_{12}^{\mathrm{T}} & h_{22} & h_{23} \\
h_{13}^{\mathrm{T}} & h_{23}^{\mathrm{T}} & h_{33}
\end{pmatrix} 
\end{align*}
where 
\begin{align*}
& \frac{M^2}{N^2} h_{11}
= \frac{M}{N^2} \sum_{i=1}^M \mathbb{E}\left(\sum_{j=1}^{n_i} \varepsilon_{ij,\textsc{f}}^\textup{m} \mid x_i \right)^2 \\
=&  \frac{e^2}{M} \sum_{i=1}^M \mathbb{E}\left( {\varepsilon}^{\textup{m}}_{i\cdot,\textsc{f}}(1) \mid x_i \right)^2
+  \frac{(1-e)^2 }{M} \sum_{i=1}^M \mathbb{E}\left( {\varepsilon}^{\textup{m}}_{i\cdot,\textsc{f}}(0) \mid x_i \right)^2
+  2 \frac{ e(1-e)}{M} \sum_{i=1}^M \mathbb{E}\left(  {\varepsilon}^{\textup{m}}_{i\cdot,\textsc{f}}(0) \mid x_i \right)\mathbb{E}\left(  {\varepsilon}^{\textup{m}}_{i\cdot,\textsc{f}}(1) \mid x_i \right),   \\
& \frac{M^2}{N^2} h_{12} 
= \frac{M}{N^2} \sum_{i=1}^M \mathbb{E}\left(\sum_{j=1}^{n_i} \varepsilon_{ij,\textsc{f}}^\textup{m} \mid x_i \right) \mathbb{E}\left(Z_i \sum_{j=1}^{n_i} \varepsilon_{ij,\textsc{f}}^\textup{m} \mid x_i \right) \\
=& e^2  \cdot \frac{1}{M} \sum_{i=1}^M \mathbb{E}\left( {\varepsilon}^{\textup{m}}_{i\cdot,\textsc{f}}(1) \mid x_i \right)^2
+ e(1-e) \cdot \frac{1}{M} \sum_{i=1}^M \mathbb{E}\left( {\varepsilon}^{\textup{m}}_{i\cdot,\textsc{f}}(0) \mid x_i \right)\mathbb{E}\left( {\varepsilon}^{\textup{m}}_{i\cdot,\textsc{f}}(1) \mid x_i \right), \\
& \frac{M^2}{N^2} h_{22}
= \frac{M}{N^2} \sum_{i=1}^M  \mathbb{E}\left(Z_i \sum_{j=1}^{n_i} \varepsilon_{ij,\textsc{f}}^\textup{m} \mid x_i \right)^2
= e^2  \cdot \frac{1}{M} \sum_{i=1}^M \mathbb{E}\left( {\varepsilon}^{\textup{m}}_{i\cdot,\textsc{f}}(1) \mid x_i \right)^2. 
\end{align*}
Therefore, we verify the asymptotic bias: 
\begin{align*}
& B_{\textsc{f}}^\textup{m}
= \frac{h_{22} - 2 e h_{12}  + e^2 h_{11}}{e^2(1-e)^2}
= \frac{1}{M} \sum_{i=1}^M \mathbb{E}\left( {\varepsilon}^{\textup{m}}_{i\cdot,\textsc{f}}(1) - {\varepsilon}^{\textup{m}}_{i\cdot,\textsc{f}}(0) \mid x_i \right)^2.
\end{align*}

\end{proof}

\subsection{Fully-interacted regression} \label{proof of Cluster3}

\begin{lemma}\label{lemma:aij}
Under Assumption \ref{asu:cluster}, if $(a_{ij})_{1\le i \le M, 1\le j\le n_i}$ satisfies $n^{-1}\sum_{ij} \mathbb{E}(a_{ij}^2) = O(1)$ under probability measure $\mathbb{P}$ and $Z_i \indep a_{ij}$, then 
\[
\frac{1}{N} \sum_{ij } Z_i a_{ij} -  \frac{1}{N}\sum_{ij} e \mathbb{E}(a_{ij}) 
= O_{\mathbb{P}}(\Omega^{1/2}).
\]
If further $\Omega = o(1)$, then 
\[
\frac{1}{n_{\mathcal{T}}} \sum_{ij } Z_i a_{ij} - \frac{1}{N} \sum_{ij} \mathbb{E}(a_{ij}) = O_{\mathbb{P}}(\Omega^{1/2}).
\]
\end{lemma}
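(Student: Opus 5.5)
The plan is a Chebyshev argument at the cluster level. First I group by cluster: set $A_i=\sum_{j=1}^{n_i} a_{ij}$, so that
\[
\frac1N\sum_{ij}Z_i a_{ij}-\frac1N\sum_{ij}e\,\mathbb{E}(a_{ij})=\frac1N\sum_{i=1}^M\bigl(Z_iA_i-e\,\mathbb{E}A_i\bigr).
\]
By Assumption \ref{asu:cluster}(b)--(d), the summands are independent across $i$. Since $Z_i\indep a_{ij}$, each summand has mean zero: $\mathbb{E}(Z_iA_i)=e\,\mathbb{E}A_i$.

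The variance is therefore
\[
N^{-2}\sum_i \V(Z_iA_i)\le N^{-2}\sum_i\mathbb{E}(A_i^2),
\]
using $Z_i^2=Z_i\le 1$. To bound $\mathbb{E}(A_i^2)$ I apply Cauchy--Schwarz within each cluster, $A_i^2\le n_i\sum_j a_{ij}^2$. This gives
\[
N^{-2}\sum_i\mathbb{E}A_i^2\le N^{-2}\sum_i n_i\sum_j\mathbb{E}a_{ij}^2\le \frac{\max_i n_i}{N}\cdot\frac1N\sum_{ij}\mathbb{E}a_{ij}^2=\Omega\cdot O(1).
\]
Here I read the hypothesis ``$n^{-1}\sum_{ij}$'' as the normalization by the total sample size $N$. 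Chebyshev's inequality then gives the first display, $O_{\mathbb{P}}(\Omega^{1/2})$. This within-cluster Cauchy--Schwarz step is the only real content: arbitrary within-cluster dependence is absorbed by the factor $n_i\le \Omega N$.

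For the second claim I write $n_{\mathcal T}/N=N^{-1}\sum_{ij}Z_i$ and apply the first part with $a_{ij}\equiv 1$. This gives $n_{\mathcal T}/N=e+O_{\mathbb P}(\Omega^{1/2})$, which is bounded away from zero with probability tending to one once $\Omega=o(1)$. Then
\[
\frac1{n_{\mathcal T}}\sum_{ij}Z_ia_{ij}=\frac{N}{n_{\mathcal T}}\Bigl(e\,\bar\mu_a+O_{\mathbb P}(\Omega^{1/2})\Bigr),\qquad \bar\mu_a=N^{-1}\sum_{ij}\mathbb{E}a_{ij}.
\]
Note that $\bar\mu_a=O(1)$ by Jensen's inequality. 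Expanding $N/n_{\mathcal T}=e^{-1}+O_{\mathbb P}(\Omega^{1/2})$ via the delta method (continuity of $t\mapsto 1/t$ at $e>0$), the product is $\bar\mu_a+O_{\mathbb P}(\Omega^{1/2})$, as claimed.

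The main point to handle carefully is this ratio step. The remainder in $N/n_{\mathcal T}$ must be controlled at rate $\Omega^{1/2}$, not merely shown to be $o_{\mathbb P}(1)$. I would do this through the explicit identity
\[
\frac{N}{n_{\mathcal T}}-\frac1e=\frac{e-n_{\mathcal T}/N}{e\,n_{\mathcal T}/N},
\]
whose denominator is bounded below with high probability.
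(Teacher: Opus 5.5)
Your proposal is correct and is essentially the paper's proof. It uses the same mean computation, the same within-cluster Cauchy--Schwarz step $(\sum_{j} a_{ij})^2\le n_i\sum_{j} a_{ij}^2$ combined with $n_i\le \Omega N$, Chebyshev, and the same ratio argument with $a_{ij}\equiv 1$; your explicit identity for $N/n_{\mathcal T}-1/e$ simply makes rigorous the paper's multiplication of $O_{\mathbb P}(\Omega^{1/2})$ expansions. The paper writes the variance bound as $\sum_i\mathbb{E}\{(\sum_j a_{ij})^2\}/\{eM(M-1)\}$ rather than through independent cluster-level summands, but both give $O(\Omega)$. The cross-cluster independence of $(Z_i,\sum_j a_{ij})$ you invoke is not literally stated in Assumption \ref{asu:cluster}, but it is the setting the paper's clustered $Z$-estimation framework presumes.
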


\begin{proof}[Proof of Lemma \ref{lemma:aij}]
First, $\mathbb{E}(n^{-1} \sum_{ij } Z_i a_{ij}) 
=  N^{-1} e \sum_{ij} \mathbb{E}(a_{ij})$. Then we verify the variance:
\begin{align*}
& \mathrm{var}\left( \frac{1}{N} \sum_{ij } Z_i a_{ij} \right) 
= \frac{(eM)^2}{N^2} 
\mathrm{var}\left( \frac{1}{eM} \sum_{i=1}^M Z_i \sum_{j=1}^{n_i} a_{ij} \right) 
\le \frac{(eM)^2}{N^2} \frac{\sum_{i=1}^M \mathbb{E}[(\sum_{j=1}^{n_i} a_{ij})^2] }{eM(M-1)} \\
&\le \frac{eM}{(M-1) N^2} \sum_{i=1}^M n_i \sum_{j=1}^{n_i} \mathbb{E} \left[ a_{ij}^2 \right] 
\le \frac{eM}{M-1} \Omega \left( \frac{1}{N} \sum_{ij} \mathbb{E} \left[ a_{ij}^2 \right]  \right)
= O_{\mathbb{P}}(1) \Omega O_{\mathbb{P}}(1) 
= O_{\mathbb{P}}(\Omega).
\end{align*}
When $a_{ij} = 1$ for all $(i,j)$, then it implies that $n_{\mathcal{T}}/N - e = O_{\mathbb{P}}(\Omega^{1/2})$. 
If we further assume $\Omega = o_{\mathbb{P}}(1)$, then 
\begin{align*}
\frac{1}{n_{\mathcal{T}}} \sum_{ij} Z_i a_{ij}
&= \frac{1}{N} \sum_{ij} Z_i a_{ij} \frac{N}{n_{\mathcal{T}}}
= \left( \frac{1}{N} \sum_{ij} \mathbb{E}(a_{ij}) e + O_{\mathbb{P}}(\Omega^{1/2}) \right)
\left( \frac{1}{e} + O_{\mathbb{P}}(\Omega^{1/2}) \right) \\
&= \frac{1}{N} \sum_{ij} \mathbb{E}(a_{ij}) + O_{\mathbb{P}}(\Omega^{1/2}) + O_{\mathbb{P}}(\Omega) 
= \frac{1}{N} \sum_{ij} \mathbb{E}(a_{ij}) + O_{\mathbb{P}}(\Omega^{1/2}).
\end{align*}
\end{proof}

\begin{lemma}\label{lemma:inverse}
If $\Delta=O_{\mathbb{P}}(\mu)$ with $\mu=o(1)$, and $\Lambda$ converges in probability to a finite and invertible matrix, then $(\Lambda+\Delta)^{-1}-\Lambda^{-1}=O_{\mathbb{P}}(\mu)$.
\end{lemma}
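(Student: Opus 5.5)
The plan is to use the exact algebraic identity
\[
(\Lambda+\Delta)^{-1}-\Lambda^{-1}=-(\Lambda+\Delta)^{-1}\Delta\Lambda^{-1}
\]
and show that both inverse factors are $O_{\mathbb{P}}(1)$. This identity holds whenever both $\Lambda$ and $\Lambda+\Delta$ are invertible. Since $\Delta=O_{\mathbb{P}}(\mu)$, the middle factor then contributes the rate $\mu$, and the product is $O_{\mathbb{P}}(1)\cdot O_{\mathbb{P}}(\mu)\cdot O_{\mathbb{P}}(1)=O_{\mathbb{P}}(\mu)$ in any fixed matrix norm. All norms are equivalent in the fixed dimension, so the choice of norm does not matter.

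First, let $\Lambda_0$ denote the finite invertible probability limit of $\Lambda$. By continuity of matrix inversion at $\Lambda_0$ and the continuous mapping theorem, $\Lambda$ is invertible with probability approaching one and $\Lambda^{-1}\to\Lambda_0^{-1}$ in probability. In particular $\Lambda^{-1}=O_{\mathbb{P}}(1)$.

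Second, since $\mu=o(1)$, we have $\Delta=O_{\mathbb{P}}(\mu)=o_{\mathbb{P}}(1)$, so $\Lambda+\Delta\to\Lambda_0$ in probability as well. The same argument then gives that $\Lambda+\Delta$ is invertible with probability approaching one and $(\Lambda+\Delta)^{-1}=O_{\mathbb{P}}(1)$. On the complementary event, whose probability vanishes, the difference can be defined arbitrarily without affecting an $O_{\mathbb{P}}$ statement.

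Finally, combine the two bounds through the identity, using submultiplicativity of the operator norm:
\[
\|(\Lambda+\Delta)^{-1}-\Lambda^{-1}\|\le\|(\Lambda+\Delta)^{-1}\|\,\|\Delta\|\,\|\Lambda^{-1}\|.
\]
The only delicate point is the stochastic boundedness of $(\Lambda+\Delta)^{-1}$, and it is settled by the convergence argument in the second step. If a more elementary route is preferred, one can instead use a Neumann-series bound. On the event $\|\Lambda_0^{-1}\|\,\|\Lambda+\Delta-\Lambda_0\|\le 1/2$, which has probability tending to one, it gives
\[
\|(\Lambda+\Delta)^{-1}\|\le 2\|\Lambda_0^{-1}\|.
\]
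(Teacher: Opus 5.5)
Your proof is correct. The identity $(\Lambda+\Delta)^{-1}-\Lambda^{-1}=-(\Lambda+\Delta)^{-1}\Delta\Lambda^{-1}$ gives the rate $\mu$ directly, once you know that $\Lambda^{-1}=O_{\mathbb{P}}(1)$ and $(\Lambda+\Delta)^{-1}=O_{\mathbb{P}}(1)$. Both bounds follow from continuity of inversion at the invertible limit, since $\Delta=o_{\mathbb{P}}(1)$. Your Neumann-series bound is a valid elementary substitute for the continuous-mapping step.

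The paper gives no proof of its own; it only cites Lemma A5 of \cite{SuDing2021}. Your write-up therefore supplies the standard self-contained argument.

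In the paper's uses, $\Lambda$ is deterministic (for example $\Lambda_{\textsc{l}}^{\textup{r}}$, the expectation of $G_{\textsc{l}}$) and $\mu=\Omega^{1/2}$. Your allowance for a random $\Lambda$ is therefore slightly more general than needed.
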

Lemma \ref{lemma:inverse} is Lemma A5 in \cite{SuDing2021}, which is useful for deriving the probability limit of the inverse of a matrix.

Let $\hat{\gamma}_1$ and $\hat{\gamma}_0$ denote the coefficient vectors of $x_{ij}-\bar{x}$ from the OLS fits of
\begin{align}
y_{ij} \sim 1+\left(x_{ij}-\bar{x}\right) & \text { over }\left\{(i,j): Z_{ij}=1\right\}  \label{eq:Lin_1_cluster} \\
y_{ij} \sim 1+\left(x_{ij}-\bar{x}\right) & \text { over }\left\{(i,j): Z_{ij}=0\right\} , \label{eq:Lin_0_cluster}
\end{align}
respectively. Let $\hat{Y}_{\textsc{l}}(1)$ and $\hat{Y}_{\textsc{l}}(0)$ denote the intercepts from \eqref{eq:Lin_1_cluster} and \eqref{eq:Lin_0_cluster}, respectively.

\begin{lemma}\label{lemma:Lin_beta_cluster}
$\hat{\beta}_{\textsc{l}} = \hat{Y}_{\textsc{l}}(1)-\hat{Y}_{\textsc{l}}(0)$ with
\begin{align*}
\hat{Y}_{\textsc{l}}(1)
=& \frac{\sum_{ij} Z_i \cdot\left(Y_{ij}(1)-\left(x_{ij}-\bar{x}\right)^{\mathrm{T}} \hat{\gamma}_1\right)}{\sum_{ij} Z_{ij}}, \\
\hat{Y}_{\textsc{l}}(0)
=& \frac{\sum_{ij} \left(1-Z_i\right) \cdot\left(Y_{ij}(0)-\left(x_{ij}-\bar{x}\right)^{\mathrm{T}} \hat{\gamma}_0\right)}{\sum_{ij} \left(1-Z_{ij}\right)} .
\end{align*}
\end{lemma}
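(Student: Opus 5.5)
The plan is to mirror the proof of Lemma \ref{lemma:Lin_beta} in the i.i.d. case. The only new feature is that the treatment is assigned at the cluster level, so $Z_{ij}=Z_i$ for every unit $j$ in cluster $i$. This changes none of the least-squares algebra, because the fit in \eqref{eq:cluster_adj} is still an ordinary OLS over the pooled $N$ units.

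\textbf{Step 1: reduce to two separate fits.} The regressor vector $(1, Z_{ij}, \ddot{x}_{ij}^{\mathrm{T}}, Z_{ij}\ddot{x}_{ij}^{\mathrm{T}})$ spans the same column space as the reparametrized vector
\[
(Z_{ij},\ Z_{ij}\ddot{x}_{ij}^{\mathrm{T}},\ 1-Z_{ij},\ (1-Z_{ij})\ddot{x}_{ij}^{\mathrm{T}}),
\]
via the invertible matrix $R$ in \eqref{eq:R}. In the reparametrized design the columns for treated units and control units have disjoint supports, so the normal equations decouple. The pooled fit therefore reproduces the two separate fits \eqref{eq:Lin_1_cluster} and \eqref{eq:Lin_0_cluster}. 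Translating back through $R^{-1}$, the coefficient on $Z_{ij}$ in the original parametrization equals the treated intercept minus the control intercept, so $\hat\beta_{\textsc{l}}=\hat Y_{\textsc{l}}(1)-\hat Y_{\textsc{l}}(0)$. The same identification gives $\hat\gamma_1$ and $\hat\gamma_0$ as the slopes in the two separate fits.

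\textbf{Step 2: solve for the intercepts.} I then use the intercept first-order condition of \eqref{eq:Lin_1_cluster}:
\[
\sum_{ij} Z_i\{y_{ij}-(x_{ij}-\bar x)^{\mathrm{T}}\hat\gamma_1-\hat Y_{\textsc{l}}(1)\}=0.
\]
On treated units $y_{ij}=Y_{ij}(1)$, and $\sum_{ij}Z_i=\sum_{ij}Z_{ij}=n_{\mathcal T}$. Solving this equation for $\hat Y_{\textsc{l}}(1)$ gives the displayed formula. The formula for $\hat Y_{\textsc{l}}(0)$ follows symmetrically from the control fit.

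\textbf{Main obstacle.} There is no genuine difficulty here. The only point needing care is that both fits are nondegenerate, i.e. $n_{\mathcal T}>0$ and $n_{\mathcal C}>0$ and the within-arm design matrices are invertible. These conditions hold with probability approaching one under Assumption \ref{asu:cluster} together with Lemma \ref{lemma:aij}, which gives $n_{\mathcal T}/N\to e$. Once that is in place, the conclusion is a purely numerical identity.
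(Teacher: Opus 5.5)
Your proposal is correct and follows the paper's proof. The identity $\hat\beta_{\textsc{l}}=\hat Y_{\textsc{l}}(1)-\hat Y_{\textsc{l}}(0)$ comes from least-squares algebra, which you make explicit through the reparametrization $R$ of \eqref{eq:R} (the paper uses the same device elsewhere). The intercept formulas then come from solving the intercept first-order condition of \eqref{eq:Lin_1_cluster} with $Z_{ij}=Z_i$.

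Your nondegeneracy remark is not needed, since the lemma is a numerical identity whenever the fits exist. Note also that $n_{\mathcal T}/N\to e$ alone would not make the within-arm Gram matrices invertible; that would also require a nonsingular covariance of $x_{ij}$.
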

\begin{proof}[Proof of Lemma \ref{lemma:Lin_beta_cluster}]
That $\hat{\beta}_{\textsc{l}} = \hat{Y}_{\textsc{l}}(1)-\hat{Y}_{\textsc{l}}(0)$ follows from properties of least squares. To verify the explicit form of $\hat{Y}_{\textsc{l}}(1)$ and $\hat{Y}_{\textsc{l}}(0)$, observe that the residual from \eqref{eq:Lin_1_cluster} equals $Y_{ij}(1)-\left(x_{ij}-\bar{x}\right)^{\mathrm{T}} \hat{\gamma}_1-\hat{Y}_{\textsc{l}}(1)$ for units with $Z_{ij}=1$. The first-order condition ensures
$$
\sum_{ij} Z_{ij} \cdot\left(Y_{ij}(1)-\left(x_{ij}-\bar{x}\right)^{\mathrm{T}} \hat{\gamma}_1-\hat{Y}_{\textsc{l}}(1)\right)=0
$$
This verifies the expression of $\hat{Y}_{\textsc{l}}(1)$. The expression of $\hat{Y}_{\textsc{l}}(0)$ follows by symmetry.

\end{proof}

\begin{lemma} \label{lemma:QIz}
If $\Omega = o(1)$, then $\hat{\gamma}_1 - \gamma^v_z = O_{\mathbb{P}}(\Omega^{1/2})$ for $z \in \{0, 1\}$ and $v \in \{\textup{r}, \textup{m}\}$. 
\end{lemma}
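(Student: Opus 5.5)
The lemma says $\hat\gamma_z-\gamma_z^v=O_{\mathbb P}(\Omega^{1/2})$ (the statement writes $\hat\gamma_1$; I read it as $\hat\gamma_z$). The plan is to write $\hat\gamma_z$ in closed form as a ratio of weighted sample moments and control each moment with Lemma~\ref{lemma:aij}. I carry out the case $z=1$; the case $z=0$ follows by replacing $Z_i$ with $1-Z_i$ and $e$ with $1-e$.

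\textbf{Closed form.} By the Frisch--Waugh--Lovell theorem applied to \eqref{eq:Lin_1_cluster}, the fitted slope equals
\[
\hat\gamma_1=\hat S_{xx,1}^{-1}\hat S_{xy,1},
\]
where
\[
\hat S_{xx,1}=\frac{1}{n_{\mathcal T}}\sum_{ij}Z_i(x_{ij}-\hat\mu_{x,1})(x_{ij}-\hat\mu_{x,1})^{\mathrm T},\qquad
\hat S_{xy,1}=\frac{1}{n_{\mathcal T}}\sum_{ij}Z_i(x_{ij}-\hat\mu_{x,1})\bigl(y_{ij}(1)-\hat\mu_{y,1}\bigr).
\]
Here $\hat\mu_{x,1}=n_{\mathcal T}^{-1}\sum_{ij}Z_ix_{ij}$ and $\hat\mu_{y,1}=n_{\mathcal T}^{-1}\sum_{ij}Z_iy_{ij}(1)$. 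Centering at $\bar x$ or at $\hat\mu_{x,1}$ does not change the slope, so the choice of centering is immaterial.

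\textbf{Moment bounds.} Expanding the products shows that $\hat S_{xx,1}$ and $\hat S_{xy,1}$ are smooth functions of four treated-group averages:
\[
n_{\mathcal T}^{-1}\sum_{ij} Z_i a_{ij},\qquad a_{ij}\in\{x_{ij},\; y_{ij}(1),\; x_{ij}x_{ij}^{\mathrm T},\; x_{ij}y_{ij}(1)\}.
\]
The fourth-moment bounds in Assumption~\ref{asu:cluster}(e)--(f), combined with Cauchy--Schwarz, give $N^{-1}\sum_{ij}\mathbb E(a_{ij}^2)=O(1)$ for each choice. Assumption~\ref{asu:cluster}(c) gives $Z_i\indep a_{ij}$. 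Assumption~\ref{asu:cluster}(g) gives $\Omega=o(1)$. The second part of Lemma~\ref{lemma:aij} then applies to each average and shows it equals $N^{-1}\sum_{ij}\mathbb E(a_{ij})+O_{\mathbb P}(\Omega^{1/2})$. Products of these terms keep the error at $O_{\mathbb P}(\Omega^{1/2})$. Hence $\hat S_{xx,1}=S_{xx}+O_{\mathbb P}(\Omega^{1/2})$ and $\hat S_{xy,1}=S_{xy,1}+O_{\mathbb P}(\Omega^{1/2})$, where $S_{xx}$ and $S_{xy,1}$ are the population (or conditional) covariance matrices.

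\textbf{Inversion.} Take $\Lambda=S_{xx}$ and $\Delta=\hat S_{xx,1}-S_{xx}$. Lemma~\ref{lemma:inverse} gives $\hat S_{xx,1}^{-1}=S_{xx}^{-1}+O_{\mathbb P}(\Omega^{1/2})$, and multiplying out yields
\[
\hat\gamma_1=S_{xx}^{-1}S_{xy,1}+O_{\mathbb P}(\Omega^{1/2}).
\]

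\textbf{Identifying $S_{xx}^{-1}S_{xy,1}$ with $\gamma_1^v$.}
\begin{itemize}
\item \emph{Random design} ($v=\textup r$). Identical marginals (Assumption~\ref{asu:cluster}(a)) give $S_{xx}=\V(x_{ij})$ and $S_{xy,1}=\mathbb C(x_{ij},y_{ij}(1))$, so $S_{xx}^{-1}S_{xy,1}=\gamma_1^{\textup r}$ by definition. Invertibility of $S_{xx}$ needs a positive-definiteness condition of the Assumption~\ref{asu:completeRandom}(e) type, which I will take as implicit.
\item \emph{Mixed design} ($v=\textup m$). I rerun the argument under $\mathbb P_{(y,Z)\mid x}$, conditioning on $X$. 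Lemma~\ref{lemma:aij} is proved by a mean-plus-variance calculation, so it holds conditionally whenever the conditional second moments $N^{-1}\sum_{ij}\mathbb E(a_{ij}^2\mid x_i)$ are $O_{\mathbb P}(1)$. Those conditional second moments are fine by the law of large numbers, and the $O_{\mathbb P}$ conclusion then transfers from $\mathbb P_{(y,Z)\mid x}$ to $\mathbb P$ via the Markov argument in the proof of Lemma~\ref{lem:conditional-ulln}. In this case $S_{xx}=N^{-1}\sum_{ij}(x_{ij}-\bar x)(x_{ij}-\bar x)^{\mathrm T}$ and $S_{xy,1}=N^{-1}\sum_{ij}(x_{ij}-\bar x)\,\mathbb E(y_{ij}(1)\mid x_i)$, and $S_{xx}^{-1}S_{xy,1}$ is exactly the definition of $\gamma_1^{\textup m}$.
\end{itemize}

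\textbf{Main obstacle.} The hard part is the mixed design. There $S_{xx}$ depends on the realized $X$, so Lemma~\ref{lemma:inverse} needs $S_{xx}$ to have a well-conditioned probability limit. I will secure this by noting that $S_{xx}\to\V(x_{ij})$ almost surely under the clustered law of large numbers, using $\Omega\to0$ and fourth moments, and assuming $\V(x_{ij})$ is positive definite.
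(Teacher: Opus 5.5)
Your proposal is correct and follows essentially the same route as the paper. Both write $\hat\gamma_1$ as a ratio of centered treated-group moments, control each moment with Lemma~\ref{lemma:aij}, invert with Lemma~\ref{lemma:inverse}, and identify the limit as $\gamma_1^{\textup{r}}$, or as $\gamma_1^{\textup{m}}$ after conditioning on $X$. You are somewhat more careful than the paper on two points it leaves implicit: centering at raw $x_{ij}$ instead of the random $\ddot x_{ij}$, and the $X$-dependent $\Lambda$ in the mixed design. For the latter, convergence in probability of $S_{xx}$ to $\V(x_{ij})$ is all Lemma~\ref{lemma:inverse} needs, and it follows from Chebyshev with the fourth moments and $\Omega\to0$; the almost-sure claim is unnecessary.
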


\begin{proof}
See proof in Section \ref{sec:proof_lemma}.
\end{proof}

\begin{lemma} \label{lemma:cluster_random}
Let $r_{ij}^\textup{r}(z) = \varepsilon^{\textup{m}}_{ij}(z) - \ddot{x}_{i j}^{\mathrm{T}} \gamma^{\textup{m}}_z$. 
If $\Omega = o(M^{-2/3})$, then 
\begin{align}
& \frac{M}{N^{2}} \sum_{i=1}^{M} Z_{i}\left(\sum_{j=1}^{n_{i}} \hat{\varepsilon}_{i j,\textsc{l}}\right)^{2}
- e \frac{1}{M} \sum_{i=1}^{M} \mathbb{E}\left( {\varepsilon}^{\textup{m}}_{i\cdot,\textsc{l}}(1)^2 \mid x_i \right)
= o_{\mathbb{P}}(1),
\label{Equ: A1_random} \\
& \frac{M}{N^{2}} \sum_{i=1}^{M} Z_{i} \sum_{j=1}^{n_{i}} \hat{\varepsilon}_{i j,\textsc{l}} \sum_{j=1}^{n_{i}} \hat{\varepsilon}_{i j,\textsc{l}} \ddot{x}_{i j}
=O_{\mathbb{P}}(M \Omega), 
\label{Equ: A2_random} \\
& \frac{M}{N^{2}} \sum_{i=1}^{M} Z_{i} \sum_{j=1}^{n_{i}} \hat{\varepsilon}_{i j,\textsc{l}} \ddot{x}_{i j} \sum_{j=1}^{n_{i}} \hat{\varepsilon}_{i j,\textsc{l}} \ddot{x}_{i j}^{\mathrm{T}}
=O_{\mathbb{P}}(M \Omega). 
\label{Equ: A3_random} 
\end{align}
\end{lemma}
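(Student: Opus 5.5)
The plan is to follow the Su--Ding strategy for cluster-randomized experiments: write each fitted residual as a population residual plus a correction, then control each piece with Lemma~\ref{lemma:aij} and Lemma~\ref{lemma:QIz}.

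\textbf{Residual decomposition.} For a treated unit ($Z_i=1$), Lemma~\ref{lemma:Lin_beta_cluster} gives
\[
\hat{\varepsilon}_{ij,\textsc{l}} = r_{ij}(1) - (\hat Y_{\textsc{l}}(1)-\mu_1) - \ddot{x}_{ij}^{\mathrm{T}}(\hat\gamma_1-\gamma_1).
\]
Here $r_{ij}(1)=\varepsilon_{ij}(1)-\ddot{x}_{ij}^{\mathrm{T}}\gamma_1$ is the population residual and $\mu_1$ is the corresponding centering constant. By Lemma~\ref{lemma:QIz}, $\hat\gamma_1-\gamma_1=O_{\mathbb{P}}(\Omega^{1/2})$. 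Lemma~\ref{lemma:aij}, applied with $a_{ij}$ equal to the outcome and to the covariates, gives $\hat Y_{\textsc{l}}(1)-\mu_1=O_{\mathbb{P}}(\Omega^{1/2})$. Hence the cluster total is
\[
\sum_j \hat{\varepsilon}_{ij,\textsc{l}} = \sum_j r_{ij}(1) - n_i\,\delta_1 - \Big(\sum_j \ddot{x}_{ij}\Big)^{\mathrm{T}}\delta_2,
\]
where $\delta_1$ and $\delta_2$ are both $O_{\mathbb{P}}(\Omega^{1/2})$.

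\textbf{Proof of \eqref{Equ: A1_random}.} Expand the square. The leading term is $\frac{M}{N^2}\sum_i Z_i(\sum_j r_{ij}(1))^2 = \frac1M\sum_i Z_i\,\varepsilon_{i\cdot,\textsc{l}}(1)^2$. Its conditional mean given $x$ is $e\frac1M\sum_i \mathbb{E}(\varepsilon_{i\cdot,\textsc{l}}(1)^2\mid x_i)$, because $Z_i$ is independent. Its variance is bounded by $\frac{1}{M^2}\sum_i \mathbb{E}\,\varepsilon_{i\cdot}^4$.

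To bound the fourth moment, use $\varepsilon_{i\cdot}=\tilde\omega_i \cdot(\text{average of } n_i \text{ terms})$ together with the fourth-moment conditions in Assumption~\ref{asu:cluster}(e)--(f). Hölder's inequality gives $\mathbb{E}\,\varepsilon_{i\cdot}^4\lesssim \tilde\omega_i^4\le (M\Omega)^3\tilde\omega_i$. The variance is therefore $O(M^3\Omega^3/M)=O(M^2\Omega^3)$, which is $o(1)$ precisely because $\Omega=o(M^{-2/3})$.

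The correction terms are handled as follows:
\begin{itemize}
\item $\frac{M}{N^2}\sum_i n_i^2\delta_1^2 \le M\Omega\,\delta_1^2 = O_{\mathbb{P}}(M\Omega^2)=o(1)$.
\item The term involving $(\sum_j\ddot x_{ij})^{\otimes2}$ is of the same order, using $\frac{M}{N^2}\sum_i \|\sum_j \ddot{x}_{ij}\|^2\le M\Omega\cdot\frac1N\sum_{ij}\|\ddot{x}_{ij}\|^2$.
\item The cross terms are then $o_{\mathbb{P}}(1)$ by Cauchy--Schwarz.
\end{itemize}

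\textbf{Proofs of \eqref{Equ: A2_random} and \eqref{Equ: A3_random}.} These are cruder bounds. By Cauchy--Schwarz, $(\sum_j a_{ij})^2\le n_i\sum_j a_{ij}^2$, so
\[
\frac{M}{N^2}\sum_i\Big(\sum_j \hat\varepsilon_{ij}\Big)\Big(\sum_j \hat\varepsilon_{ij}\ddot x_{ij}\Big) \le M\Omega\Big(\frac1N\sum_{ij}\hat\varepsilon_{ij}^2\Big)^{1/2}\Big(\frac1N\sum_{ij}\hat\varepsilon_{ij}^2\|\ddot x_{ij}\|^2\Big)^{1/2}.
\]
It then suffices to show that $\frac1N\sum_{ij}\hat\varepsilon_{ij}^2\|\ddot x_{ij}\|^2=O_{\mathbb{P}}(1)$. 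This follows from the residual decomposition, the fourth-moment assumptions, Markov's inequality, and the consistency of $\hat\gamma_1$ and $\hat Y_{\textsc{l}}(1)$. The same argument gives \eqref{Equ: A3_random}.

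\textbf{Main obstacle.} The hardest step is the variance bound for the leading term in \eqref{Equ: A1_random}. The fourth moment of the cluster total must be tied to $\Omega$ sharply enough to yield $M^2\Omega^3$, and this requires care with the $\tilde\omega_i$ weights. A secondary difficulty is the notational mixing of the random and mixed estimands ($r^{\textup r}$ versus $\gamma^{\textup m}$). I would use the conditioning that matches the target quantity and note that the argument is identical in either case.
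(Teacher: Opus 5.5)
Your proposal is correct. For \eqref{Equ: A1_random} it is essentially the paper's argument. The paper also writes the treated residual as $r_{ij}(1)$ plus a slope error and an intercept error; it does this through $\bar\varepsilon_{\mathcal T}$ and $\bar x_{\mathcal T}$, and your $\hat Y_{\textsc{l}}(1)-\mu_1$ bookkeeping is a bit cleaner. It then expands into six terms and controls the leading one by Chebyshev with exactly your bound $\frac{M^2}{N^4}\sum_i n_i^3\sum_j \mathbb{E}\, r_{ij}(1)^4=O(M^2\Omega^3)$. The remaining terms are killed at rate $O_{\mathbb P}(M\Omega)\,O_{\mathbb P}(\Omega^{1/2})$, using $2|ab|\le a^2+b^2$ where you use Cauchy--Schwarz.

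You genuinely differ on \eqref{Equ: A2_random}--\eqref{Equ: A3_random}. The paper expands the product into nine terms $T_7,\ldots,T_{15}$ and bounds each one; it writes out only $T_7$, by showing that cluster-level squares such as $\frac{M}{N^2}\sum_i(\sum_j \varepsilon_{ij}(1))^2$ have mean $O(M\Omega)$ and variance $o(1)$. You instead apply $(\sum_j a_{ij})^2\le n_i\sum_j a_{ij}^2$ and $n_i\le \Omega N$ directly to the fitted residuals. This extracts the factor $M\Omega$ in one step and leaves only unit-level averages $\frac1N\sum_{ij}\hat\varepsilon_{ij,\textsc{l}}^2\bigl(1+\|\ddot x_{ij}\|^2\bigr)$. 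These are $O_{\mathbb P}(1)$ by your residual decomposition, Markov's inequality, and the fourth-moment conditions. Your route is shorter, treats all pieces uniformly, and needs no variance computations. It loses nothing, since only $O_{\mathbb P}(M\Omega)$ is claimed. The paper's term-by-term expansion mirrors \citet{SuDing2021} but gives no sharper conclusion here.

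Two small points. First, in the cross terms of \eqref{Equ: A1_random}, do not rely on the leading term being $O_{\mathbb P}(1)$. Its mean is only bounded by a multiple of $M^{-1}\sum_i\tilde\omega_i^2\le M\Omega$, which can diverge. Cauchy--Schwarz still gives $O_{\mathbb P}(\sqrt{M\Omega}\cdot\sqrt{M}\,\Omega)=O_{\mathbb P}(M\Omega^{3/2})=o_{\mathbb P}(1)$, so your conclusion stands. Second, apply Lemma~\ref{lemma:aij} to the potential outcome $Y_{ij}(1)$ rather than the observed $y_{ij}$, because that lemma requires $Z_i\indep a_{ij}$.
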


\begin{proof}
See proof in Section \ref{sec:proof_lemma}.
\end{proof}

\begin{proof}[Proof of Theorem \ref{thm:cluster_adj_r}]
Recall from Lemma \ref{lemma:Lin_beta_cluster} that $(\hat{Y}_{\textsc{l}}(1), \hat{\gamma}_1)$ and $(\hat{Y}_{\textsc{l}}(0), \hat{\gamma}_0)$ are the intercepts and coefficient vectors of $\left(x_i-\bar{x}\right)$ from the OLS fits of \eqref{eq:Lin_1_cluster} and \eqref{eq:Lin_0_cluster}, respectively, with $\hat{\beta}_{\textsc{l}} =\hat{Y}_{\textsc{l}}(1)-\hat{Y}_{\textsc{l}}(0)$. The first-order conditions of \eqref{eq:Lin_1_cluster} ensure that $\left(\mu_1, \gamma_1\right)=(\hat{Y}_{\textsc{l}}(1), \hat{\gamma}_1)$ solve
$$
0=\sum_{ij} Z_i \cdot\left(Y_{ij}(1)-\left(x_{ij}-\bar{x}\right)^{\mathrm{T}} \gamma_1-\mu_1\right)\left(\begin{array}{c}
1 \\
x_{ij}-\bar{x}
\end{array}\right).
$$
This ensures that $\theta=\left(\mu_1, \gamma_1, \mu_0, \gamma_0, \mu_x\right)= \left(\hat{Y}_{\textsc{l}}(1), \hat{\gamma}_1, \hat{Y}_{\textsc{l}}(0), \hat{\gamma}_0, \bar{x}\right)$ jointly solves
$$
0= N^{-1} \sum_{ij} \eta\left(Y_{ij}(1), Y_{ij}(0), x_{ij}, Z_i; \theta\right)
= N^{-1} \sum_{ij} \left(\begin{array}{c}
\eta_1\left(Y_{ij}(1), x_{ij}, Z_i; \mu_1, \gamma_1, \mu_x\right) \\
\eta_0\left(Y_{ij}(0), x_{ij}, Z_i; \mu_0, \gamma_0, \mu_x\right) \\
\eta_x\left(x_{ij} ; \mu_x\right) 
\end{array}\right)
$$
where 
\begin{align}
\eta=
\left(\begin{array}{c}
\eta_1 \\
\eta_0 \\
\eta_x 
\end{array}\right) 
\text { with } 
\quad \begin{aligned}
\eta_1 
& =Z_i \cdot\left\{y_{ij}-\left(x_{ij}-\mu_x\right)^{\mathrm{T}} \gamma_1-\mu_1\right\}
\left(\begin{array}{c}
1 \\
x_{ij}-\mu_x
\end{array}\right), \\
\eta_0 
& =(1-Z_i)  \cdot\left\{y_{ij}-\left(x_{ij}-\mu_x\right)^{\mathrm{T}} \gamma_0-\mu_0\right\}
\left(\begin{array}{c}
1 \\
x_{ij}-\mu_x
\end{array}\right), \\
\eta_x 
&= x_{ij}-\mu_x.
\end{aligned} \label{eq:Lin_ME_cluster}
\end{align}
Direct algebra ensures that $\theta=\theta^{\textup{r}}=\left(\mu_1^{\textup{r}}, \gamma_1^{\textup{r}}, \mu_0^{\textup{r}}, \gamma_0^{\textup{r}},\mu_x\right)$ solves $\mathbb E\{\eta(\theta)\}=0$.

Theorem \ref{thm:ME_random_cluster} ensures that 
\begin{align}
\sqrt{M}\left\{\left(\begin{array}{c}
\hat{Y}_{\textsc{l}}(1) \\
\hat{\gamma}_1 \\
\hat{Y}_{\textsc{l}}(0) \\
\hat{\gamma}_0 \\
\bar{x}
\end{array}\right)-\left(\begin{array}{c}
\mu_1^{\textup{r}} \\
\gamma_1^{\textup{r}} \\
\mu_0^{\textup{r}} \\
\gamma_0^{\textup{r}} \\
\mu_x
\end{array}\right)\right\} \overset{\textup{d}}{\to} 
\mathcal{N}\left(0, A_{\textsc{l}}^{\textup{r}-1}  D_{\textsc{l}}^{\textup{r}} \left(A_{\textsc{l}}^{\textup{r}-\mathrm{T}} \right)\right), \label{eq:Lin_normal_cluster_L}
\end{align}
where
\begin{align*}
A_{\textsc{l}}^{\textup{r}}
=& - \frac{1}{M} \sum_{i=1}^M \mathbb{E}\left[  \sum_{j=1}^{n_i} \frac{\partial}{\partial (\mu_1,\gamma_1,\mu_0,\gamma_0, \mu_x)} \eta \right] \\
D_{\textsc{l}}^{\textup{r}}
=& \frac{1}{M} \sum_{i=1}^M \mathbb{E}\left[ \left(  \sum_{j=1}^{n_i}\eta(\theta^\textup{r}) \right) \left( \sum_{j=1}^{n_i}\eta(\theta^\textup{r})\right)^{\mathrm{T}} \right]
\end{align*}
evaluated at $\theta=\theta^{\textup{r}}$. 
We compute below $A_{\textsc{l}}^{\textup{r}}$ and $D_{\textsc{l}}^{\textup{r}} $, respectively.

\noindent\textbf{Compute $A_{\textsc{L}}^{\textup{r}} $:}
With a slight abuse of notation, let $\frac{\partial}{\partial \mu_1} \eta_1^{\textup{r}}$ denote the value of $\frac{\partial}{\partial \mu_1} \eta_1$ evaluated at $\theta^{\textup{r}}$. Similarly define other partial derivatives. Observe that
\begin{align*}
\frac{\partial}{\partial \mu_x^{\mathrm{T}}} \eta_1^{\textup{r}} 
=& Z_i \left(\begin{array}{c}
\gamma_1^{\mathrm{T}} \\
-\left\{Y_{ij}(1)-\left(x_{ij}-\mu_x\right)^{\mathrm{T}} \gamma_1-\mu_1\right\} I_J+\left(x_{ij}-\mu_x\right) \gamma_1^{\mathrm{T}}
\end{array}\right). 
\end{align*}
From \eqref{eq:Lin_ME_cluster}, we have
\[
\left.\frac{\partial}{\partial\left(\mu_1, \gamma_1, \mu_0, \gamma_0, \mu_x\right)} \eta\right|_{\theta=\theta^{\textup{r}}}
=\left(\begin{array}{ccc}
\frac{\partial}{\partial\left(\mu_1^{\mathrm{T}}, \gamma_1^{\mathrm{T}}\right)} \eta_1^{\textup{r}} & 0 & \frac{\partial}{\partial \mu_x^{\mathrm{T}}} \eta_1^{\textup{r}} \\
0 & \frac{\partial}{\partial\left(\mu_0^{\mathrm{T}}, \gamma_0^{\mathrm{T}}\right)} \eta_0^{\textup{r}} & \frac{\partial}{\partial \mu_x^{\mathrm{T}}} \eta_0^{\textup{r}} \\
0 & 0 & \frac{\partial}{\partial \mu_x^{\mathrm{T}}} \eta_x^{\textup{r}} 
\end{array}\right)
\]
where
\begin{align*}
\frac{\partial}{\partial\left(\mu_1^{\mathrm{T}}, \gamma_1^{\mathrm{T}}\right)} \eta_1^{\textup{r}}
& =-Z_i \cdot\left(\begin{array}{c}
1 \\
x_{ij}-\mu_x
\end{array}\right)\left(1,\left(x_{ij}-\mu_x\right)^{\mathrm{T}}\right), \\
\frac{\partial}{\partial \mu_x^{\mathrm{T}}} \eta_1^{\textup{r}}
& = Z_i \cdot\left(\begin{array}{c}
\gamma_1^{{\textup{r}} \mathrm{~T}} \\
-\left\{Y_{ij}(1)-\left(x_{ij}-\mu_x^*\right)^{\mathrm{T}} \gamma_1^{\textup{r}}-\mu_1^{\textup{r}}\right\} I_J+\left(x_{ij}-\mu_x\right) (\gamma_1^{{\textup{r}} })^{\mathrm{T}}
\end{array}\right), \\
\frac{\partial}{\partial \mu_x^{\mathrm{T}}} \eta_x^{\textup{r}}
& =-I_J . 
\end{align*}
Accordingly, we have
\begin{align*}
A_{\textsc{L}}^{\textup{r}} 
= - \frac{1}{M} \sum_{i=1}^M \mathbb{E}\left[ \frac{M}{N} \sum_{j=1}^{n_i} \frac{\partial}{\partial (\mu_1,\gamma_1,\mu_0,\gamma_0, \mu_x)} \eta \right] 
=\left(\begin{array}{cc|cc|c}
g_{11} & g_{12} & 0 & 0 & g_{15}   \\
g_{21} & g_{22} & 0 & 0 & g_{25}   \\
\hline 0 & 0 & g_{33} & g_{34} & g_{35}  \\
0 & 0 & g_{43} & g_{44} & g_{45}  \\
\hline 0 & 0 & 0 & 0 & g_{55} 
\end{array}\right),
\end{align*}
where
\begin{align*}
& \left(\begin{array}{ll}
g_{11} & g_{12} \\
g_{21} & g_{22}
\end{array}\right)
= - \frac{1}{M} \sum_{i=1}^M \mathbb{E}\left[ \frac{M}{N} \sum_{j=1}^{n_i} \frac{\partial}{\partial (\mu_1,\gamma_1)} \eta_1^{\textup{r}} \right] \\
& = \frac{1}{N} \sum_{i=1}^M \sum_{j=1}^{n_i} 
\mathbb{E}\left\{Z_i \left(\begin{array}{c}
1 \\
x_{ij}-\mu_x
\end{array}\right)
\left(1,\left(x_{ij}-\mu_x \right)^{\mathrm{T}}\right)\right\}
=e \cdot\left(\begin{array}{cc}
1 & 0 \\
0 & \operatorname{cov}\left(x_{ij}\right)
\end{array}\right) \text {, } \\
& \left(\begin{array}{l}
g_{15} \\
g_{25}
\end{array}\right)
=- \frac{1}{N} \sum_{i=1}^M \sum_{j=1}^{n_i} \mathbb{E}\left(\frac{\partial}{\partial \mu_x^{\mathrm{T}}} \eta_1^{\textup{r}} \right) \\
& =- \frac{1}{N} \sum_{i=1}^M \sum_{j=1}^{n_i} 
\mathbb{E}\left\{Z_i \left(\begin{array}{c}
(\gamma_1^{{\textup{r}} })^{\mathrm{T}} \\
-\left\{Y_{ij}(1)-\left(x_{ij}-\mu_x\right)^{\mathrm{T}} \gamma_1^{\textup{r}}-\mu_1^{\textup{r}}\right\} I_J + \left(x_{ij}-\mu_x\right) (\gamma_1^{{\textup{r}} })^{\mathrm{T}}
\end{array}\right)\right\} 
=-e \cdot\left(\begin{array}{c}
(\gamma_1^{{\textup{r}} })^{\mathrm{T}} \\
0_{J \times J}
\end{array}\right) \text {, } \\
& \left(\begin{array}{ll}
g_{33} & g_{34} \\
g_{43} & g_{44}
\end{array}\right)
=(1-e) \cdot\left(\begin{array}{cc}
1 & 0 \\
0 & \operatorname{cov}\left(x_{ij}\right)
\end{array}\right), 
\quad
\left(\begin{array}{l}
g_{35} \\
g_{45}
\end{array}\right)=
-(1-e) \cdot\left(\begin{array}{c}
(\gamma_0^{{\textup{r}} })^{\mathrm{T}} \\
0_{J \times J}
\end{array}\right), 
\end{align*}
and
$$
g_{55} = - \frac{1}{N} \sum_{i=1}^M \sum_{j=1}^{n_i} \mathbb{E}\left(\frac{\partial}{\partial \mu_x^{\mathrm{T}}} \eta_x^{\textup{r}}\right)
= I_J.
$$
This ensures
$$
A_{\textsc{l}}^{\textup{r}}
=\left(\begin{array}{cccc|c}
g_{11} & 0 & 0 & 0 & g_{15}   \\
0 & g_{22} & 0 & 0 & 0   \\
0 & 0 & g_{33} & 0 & g_{35}   \\
0 & 0 & 0 & g_{44} & 0   \\
\hline 0 & 0 & 0 & 0 & I 
\end{array}\right)=\left(\begin{array}{c|c}
G_{11} & G_{12} \\
\hline 0 & G_{22}
\end{array}\right),
$$
where $g_{11}=e, g_{93}=1-e, g_{15}=-e \gamma_1^\textup{r}$, and $g_{35}=-(1-e) (\gamma_0^{\textup{r}})^\mathrm{T}$. By the formula of block matrix inverse, we have
$$
A_{\textsc{l}}^{\textup{r} -1}
=\left(\begin{array}{cc}
G_{11}^{-1} & -G_{11}^{-1} G_{12} G_{22}^{-1} \\
0 & G_{22}^{-1}
\end{array}\right)
= \left(\begin{array}{cccc|cc}
g_{11}^{-1} & 0 & 0 & 0 & -g_{11}^{-1} g_{15}  \\
0 & g_{22}^{-1} & 0 & 0 & 0   \\
0 & 0 & g_{33}^{-1} & 0 & -g_{33}^{-1} g_{35}  \\
0 & 0 & 0 & g_{44}^{-1} & 0  \\
\hline 0 & 0 & 0 & 0 & I 
\end{array}\right)
$$
with
\begin{align*}
\left(\begin{array}{lllll}
1 & 0 & 0 & 0 & 0  \\
0 & 0 & 1 & 0 & 0  
\end{array}\right) A_{\textsc{l}}^{\textup{r} -1} 
& =\left(\begin{array}{ccccc}
g_{11}^{-1} & 0 & 0 & 0 & -g_{11}^{-1} g_{15}  \\
0 & 0 & g_{33}^{-1} & 0 & -g_{33}^{-1} g_{35}  
\end{array}\right) \\
& = \left(\begin{array}{cc}
g_{11}^{-1} & 0 \\
0 & g_{33}^{-1}
\end{array}\right)\left(\begin{array}{ccccc}
1 & 0 & 0 & 0 & -g_{15}  \\
0 & 0 & 1 & 0 & -g_{35} 
\end{array}\right). 
\end{align*}

\noindent\textbf{Computing $D_{\textsc{L}}^{\textup{r}} $:}
Let $\left(\eta^{\textup{r}}, \eta_1^{\textup{r}}, \eta_0^{\textup{r}} \right)$ denote the value of $\left(\eta, \eta_1, \eta_0 \right)$ evaluated at $\theta=\theta^{\textup{r}}$. From \eqref{eq:Lin_ME_cluster}, we have
\begin{align}
\eta^{\textup{r}}
=\left(\begin{array}{c}
\eta_1^{\textup{r}} \\
\eta_0^{\textup{r}} \\
\eta_x^{\textup{r}}
\end{array}\right) 
\text { with } \quad 
\begin{aligned}
\eta_1^{\textup{r}} 
& =Z_i \cdot\left\{Y_{ij}(1)-\left(x_{ij}-\mu_x\right)^{\mathrm{T}} \gamma_1^{\textup{r}}-\mu_1^{\textup{r}}\right\}\left(\begin{array}{c}
1 \\
x_{ij}-\mu_x
\end{array}\right), \\
\eta_0^{\textup{r}} 
& =(1-Z_i) \cdot\left\{Y_{ij}(0)-\left(x_i-\mu_x\right)^{\mathrm{T}} \gamma_0^{\textup{r}}-\mu_0^{\textup{r}}\right\}\left(\begin{array}{c}
1 \\
x_{ij}-\mu_x
\end{array}\right), \\
\eta_x^{\textup{r}}
&= x_{ij}-\mu_x.
\end{aligned} \label{eq:Lin_eta_oracle}
\end{align}
The $D_{\textsc{l}}^{\textup{r}} $ matrix equals
$$
D_{\textsc{l}}^{\textup{r}} 
=\left(\begin{array}{cc|cc|c}
d_{11} & d_{12} & 0 & 0 & d_{15} \\
d_{12}^{\mathrm{T}} & d_{22} & 0 & 0 & d_{25} \\
\hline 0 & 0 & d_{33} & d_{34} & d_{35}   \\
0 & 0 & d_{34}^{\mathrm{T}} & d_{44}  & d_{45} \\
\hline
d_{15}^{\mathrm{T}} & d_{25}^{\mathrm{T}} & d_{35}^{\mathrm{T}} & d_{45}^{\mathrm{T}} & d_{55}
\end{array}\right),
$$
where 
\begin{align*}
d_{11} 
& = e \frac{1}{M} \sum_{i=1}^M  \V \left( {\varepsilon}^{\textup{r}}_{i\cdot,\textsc{l}}(1)\right), \quad
d_{33} 
= (1-e) \frac{1}{M} \sum_{i=1}^M  \V \left( {\varepsilon}^{\textup{r}}_{i\cdot,\textsc{l}}(0)\right), \\
d_{55} 
&= \frac{1}{M}\sum_{i=1}^M \mathbb{E}\left( \left(\frac{M}{N} \sum_{j=1}^{n_i}(x_{ij} - \mu_x) \right)\left(\frac{M}{N} \sum_{j=1}^{n_i}(x_{ij} - \mu_x) \right)^{\mathrm{T}}  \right)
=\frac{1}{M}\sum_{i=1}^M \V \left( {x}_{i\cdot}\right) \\
d_{15}
&= \frac{1}{M}\sum_{i=1}^M \mathbb{E}\left[Z_i  \frac{M}{N} \sum_{j=1}^{n_i}\left\{Y_{ij}(1)-\left(x_{ij}-\mu_x\right)^{\mathrm{T}} \gamma_1^{\textup{r}}-\mu_1^{\textup{r}}\right\} \frac{M}{N} \sum_{j=1}^{n_i} \left(x_{ij}-\mu_x^{\textup{r}}\right)^{\mathrm{T}}\right] \\
&= e \cdot\left(\mathbb{E}\left[\left\{Y_{ij}(1)-\mu_1^{\textup{r}} \right\}\left(x_i-\mu_x\right)^{\mathrm{T}}\right]-\gamma_1^{{\textup{r}} \mathrm{~T}} \mathbb{E}\left\{\left(x_{ij}-\mu_x\right)\left(x_{ij}-\mu_x\right)^{\mathrm{T}}\right\}\right)
=0, \\
d_{35}
&= \frac{1}{M}\sum_{i=1}^M \mathbb{E}\left[(1-Z_i)  \frac{M}{N} \sum_{j=1}^{n_i}\left\{Y_{ij}(0)-\left(x_{ij}-\mu_x\right)^{\mathrm{T}} \gamma_0^{\textup{r}}-\mu_0^{\textup{r}}\right\} \frac{M}{N} \sum_{j=1}^{n_i} \left(x_{ij}-\mu_x \right)^{\mathrm{T}}\right] \\
&= (1-e) \cdot\left(\mathbb{E}\left[\left\{Y_{ij}(0)-\mu_0^{\textup{r}}\right\}\left(x_{ij}-\mu_x \right)^{\mathrm{T}}\right]-(\gamma_0^{{\textup{r}}})^{\mathrm{T}} \mathbb{E}\left\{\left(x_{ij}-\mu_x\right)\left(x_{ij}-\mu_x \right)^{\mathrm{T}}\right\}\right)=0,
\end{align*}
We omit other terms that are irrelevant here.

\noindent\textbf{Compute $A_{\textsc{L}}^{\textup{r}-1} D_{\textsc{L}}^{\textup{r}} \left(A_{\textsc{L}}^{\textup{r}-\mathrm{T}} \right)$}
By direct algebra,
\begin{align*}
& \left(\begin{array}{cccc|c}
1 & 0 & 0 & 0 & -g_{15}  \\
0 & 0 & 1 & 0 & -g_{35}  
\end{array}\right)\left(\begin{array}{cccc|c}
d_{11} & d_{12} & 0 & 0 & 0   \\
d_{12}^{\mathrm{T}} & d_{22} & 0 & 0 & d_{25} \\
0 & 0 & d_{33} & d_{34} & 0   \\
0 & 0 & d_{34}^{\mathrm{T}} & d_{44} & d_{45}   \\
\hline 0 & d_{25}^{\mathrm{T}} & 0 & d_{45}^{\mathrm{T}} & d_{55} 
\end{array}\right)\left(\begin{array}{cc}
1 & 0 \\
0 & 0 \\
0 & 1 \\
0 & 0 \\
\hline-g_{15}^{\mathrm{T}} & -g_{35}^{\mathrm{T}} 
\end{array}\right) \\
=& \left(\begin{array}{ll}
d_{11} & 0   \\
0 &  d_{33}
\end{array}\right)
+ \left(\begin{array}{c}
g_{15} \\
g_{35}
\end{array}\right)
\left(\begin{array}{c}
d_{55}
\end{array}\right)
\left(\begin{array}{cc}
g_{15} & g_{35}
\end{array}\right).
\end{align*}
Then
\begin{align*}
& V_{\textsc{l}}
= \operatorname{cov}\left\{\left(\begin{array}{c}
\hat{Y}_{\textsc{l}}(1) \\
\hat{Y}_{\textsc{l}}(0)
\end{array}\right)\right\}
= \operatorname{cov}\left\{\left(\begin{array}{lllll}
1 & 0 & 0 & 0 & 0 \\
0 & 0 & 1 & 0 & 0
\end{array}\right)
\left(\begin{array}{c}
\hat{Y}_{\textsc{l}}(1) \\
\hat{\gamma}_1 \\
\hat{Y}_{\textsc{l}}(0) \\
\hat{\gamma}_0 \\
\bar{x}
\end{array}\right)\right\} \\
=& \left(\begin{array}{lllll}
1 & 0 & 0 & 0  & 0 \\
0 & 0 & 1 & 0  & 0
\end{array}\right) 
A_{\textsc{l}}^{\textup{r}-1}  D_{\textsc{l}}^{\textup{r}} \left(A_{\textsc{l}}^{\textup{r}-\mathrm{T}} \right) \left(\begin{array}{ll}
1 & 0 \\
0 & 0 \\
0 & 1 \\
0 & 0 \\
0 & 0
\end{array}\right) \\
=& \left(\begin{array}{cc}
g_{11}^{-1} & 0 \\
0 & g_{33}^{-1}
\end{array}\right)
\left(\begin{array}{ccccc}
1 & 0 & 0 & 0 & -g_{15}  \\
0 & 0 & 1 & 0 & -g_{35} 
\end{array}\right)
D_{\textsc{l}}^{\textup{r}}
\left(\begin{array}{cc}
1 & 0 \\
0 & 0 \\
0 & 1 \\
0 & 0 \\
-g_{15} & -g_{35}
\end{array}\right)  
\left(\begin{array}{cc}
g_{11}^{-1} & 0 \\
0 & g_{33}^{-1}
\end{array}\right)  \\
=& \left(\begin{array}{cc}
g_{11}^{-1} & 0 \\
0 & g_{33}^{-1}
\end{array}\right)
\left(\begin{array}{ll}
d_{11} & 0   \\
0 &  d_{33}
\end{array}\right)
\left(\begin{array}{cc}
g_{11}^{-1} & 0 \\
0 & g_{33}^{-1}
\end{array}\right) \\
&+ \left(\begin{array}{cc}
g_{11}^{-1} & 0 \\
0 & g_{33}^{-1}
\end{array}\right)
\left(\begin{array}{c}
g_{15} \\
g_{35}
\end{array}\right)
\left(\begin{array}{c}
d_{55}
\end{array}\right)
\left(\begin{array}{cc}
g_{15} & g_{35}
\end{array}\right)
\left(\begin{array}{cc}
g_{11}^{-1} & 0 \\
0 & g_{33}^{-1}
\end{array}\right)\\
=& \left(\begin{array}{cc}
  \frac{d_{11}}{g_{11}^2} & 0 \\
  0 & \frac{d_{33}}{g_{33}^2}
  \end{array}\right)
+ \left(\begin{array}{c}
\frac{g_{15}}{g_{11}} \\
\frac{g_{35}}{g_{33}}
\end{array}\right)
\left(\begin{array}{c}
d_{55}
\end{array}\right)
\left(\begin{array}{cc}
\frac{g_{15}}{g_{11}} &  \frac{g_{35}}{g_{33}}
\end{array}\right) \\
=& \left(\begin{array}{cc}
\frac{d_{11}}{g_{11}^2} & 0 \\
0 & \frac{d_{33}}{g_{33}^2}
\end{array}\right)
+ \left(\begin{array}{c}
\gamma_1^{\textup{r}} \\
\gamma_0^{\textup{r}}
\end{array}\right)
\left(\begin{array}{c}
d_{55}
\end{array}\right)
\left(\begin{array}{cc}
\gamma_1^{\textup{r}} &  \gamma_0^{\textup{r}}
\end{array}\right).   
\end{align*}
This ensures
\begin{align*}
V_{\textsc{l}}^{\textup{r}}
& = \operatorname{var}\left\{(1,-1)\left(\begin{array}{c}
\hat{Y}_{\textsc{l}}(1) \\
\hat{Y}_{\textsc{l}}(0)
\end{array}\right)\right\} 
=(1,-1) V_{\textsc{l}}\left(\begin{array}{c}
1 \\
-1
\end{array}\right)  \\
& = \frac{ \frac{1}{M} \sum_{i=1}^M  \V \left( {\varepsilon}^{\textup{r}}_{i\cdot,\textsc{l}}(1)\right) }{e}
+ \frac{ \frac{1}{M} \sum_{i=1}^M  \V \left( {\varepsilon}^{\textup{r}}_{i\cdot,\textsc{l}}(0)\right) }{1-e}
+ \left(\gamma_1^{\textup{r}}-\gamma_0^{\textup{r}}\right)^{\mathrm{T}} \frac{1}{M}\sum_{i=1}^M \V \left( {x}_{i\cdot}\right)
\left(\gamma_1^{\textup{r}}-\gamma_0^{\textup{r}}\right).
\end{align*} 
\paragraph*{Consistency:}
Now we show that 
\[
\hat{V}_{\textsc{lz,l}}
= \frac{ \frac{1}{M} \sum_{i=1}^M  \V \left( {\varepsilon}^{\textup{r}}_{i\cdot,\textsc{l}}(1)\right) }{e}
+ \frac{ \frac{1}{M} \sum_{i=1}^M  \V \left( {\varepsilon}^{\textup{r}}_{i\cdot,\textsc{l}}(0)\right) }{1-e} + o_{\mathbb{P}}(1).
\]
We follow the proof of Theorem 2 in \cite{SuDing2021}.
Define $\tilde{X}_{i,\textsc{l}}$ as an $n_{i} \times\left(2+2 p_{x}\right)$ matrix with row $j$ equaling $\tilde{x}_{i j,\textsc{l}}^{\mathrm{T}}= (Z_{i j}, 1-Z_{i j}, Z_{i j} \ddot{x}_{i j}^{\mathrm{T}}, (1-Z_{i j}) \ddot{x}_{i j}^{\mathrm{T}})$, stacked as an $N \times\left(2+2 p_{x}\right)$ matrix $\tilde{X}_{\textsc{l}}$. 
\citet[Lemma A8]{SuDing2021} shows the following equivalent form:
\[
\hat{V}_{\textsc{lz,l}} 
= \left[\left(\tilde{X}_{\textsc{l}}^{\mathrm{T}} \tilde{X}_{\textsc{l}}\right)^{-1}\left(\sum_{i=1}^{M} \tilde{X}_{i,\textsc{l}}^{\mathrm{T}} \hat{U}_{i,\textsc{l}} \tilde{X}_{i,\textsc{l}}\right)\left(\tilde{X}_{\textsc{l}}^{\mathrm{T}} \tilde{X}_{\textsc{l}}\right)^{-1}\right]_{(1,1)+(2,2)-2(1,2)}.
\]
Define
\begin{align*}
& G_{\textsc{l}}
= \tilde{X}_{\textsc{l}}^{\mathrm{T}}\tilde{X}_{\textsc{l}} / N
= \frac{1}{N} 
\left(
\begin{array}{cc:cc}
n_{\mathcal{T}} & 0 & \sum_{i j }Z_i{\ddot{x}_{i j}^{\mathrm{T}}} & 0 \\
0  & n_{\mathcal{C}} & 0 & \sum_{ij} (1-Z_i) \ddot{x}_{i j}^{\mathrm{T}}  \\
\hdashline 
\sum_{i j }Z_i \ddot{x}_{i j} & 0 & \sum_{ij} Z_i \ddot{x}_{i j} \ddot{x}_{i j}^{\mathrm{T}}  & 0 \\
0 & \sum_{ij} (1-Z_i) \ddot{x}_{i j} & 0 & \sum_{ij} (1-Z_i) \ddot{x}_{i j} \ddot{x}_{i j}^{\mathrm{T}} 
\end{array}
\right).  
\end{align*}
Define 
\[
\Lambda_{\textsc{l}}^{\textup{r}}
= \left(
\begin{array}{cc:cc}
e & 0 & 0  & 0 \\
0 & 1-e  & 0 & 0  \\
\hdashline 
0 & 0 &  e \sum_{i j}\mathbb{E}[ x_{i j} x_{i j}^{\mathrm{T}} ]/ N & 0 \\
0 & 0  & 0 & (1-e) \sum_{i j}\mathbb{E}[ x_{i j} x_{i j}^{\mathrm{T}} ]/ N 
\end{array}
\right) 
\]
as the expection of $G_{\textsc{l}}$.
Define $H_{\textsc{l}} = M / N^2 \sum_{i=1}^M H_{i,\textsc{l}}$ with
\begin{align*}
& H_{i,\textsc{l}} 
= \tilde{X}_{i,\textsc{l}}^{\mathrm{T}}\hat{U}_{i,\textsc{l}}\tilde{X}_{i,\textsc{l}} \\
= & \left(
\begin{array}{cccc}
Z_{i}( \sum_{j} \hat{\varepsilon}_{ij,\textsc{l}})^{2} & 0 & Z_{i} \sum_{j} \hat{\varepsilon}_{i j,\textsc{l}} \sum_{j} \hat{\varepsilon}_{ij,\textsc{l}} \ddot{x}_{i j}^{\mathrm{T}} & 0 \\
0 & \left(1-Z_{i}\right)( \sum_{j} \hat{\varepsilon}_{i j,\textsc{l}})^{2} & 0 & \left(1-Z_{i}\right) \sum_{j} \hat{\varepsilon}_{i j,\textsc{l}} \sum_{j} \hat{\varepsilon}_{i j,\textsc{l}} \ddot{x}_{i j}^{\mathrm{T}} \\
* & 0 & Z_{i} \sum_{j} \hat{\varepsilon}_{i j,\textsc{l}} \ddot{x}_{i j} \sum_{j} \hat{\varepsilon}_{i j,\textsc{l}} \ddot{x}_{i j}^{\mathrm{T}} & 0 \\
0 & * & 0 & \left(1-Z_{i}\right) \sum_{j} \hat{\varepsilon}_{i j,\textsc{l}} \ddot{x}_{i j} \sum_{j} \hat{\varepsilon}_{i j,\textsc{l}} \ddot{x}_{i j}^{\mathrm{T}}
\end{array}
\right)
\end{align*}
where the * elements can be determined by symmetry.

Follow the proof of \citet[Theorem 2]{SuDing2021}, it is sufficient to show that
\begin{equation}
\left[G_{\textsc{l}}^{-1} H_{\textsc{l}} G_{\textsc{l}}^{-1}\right]_{(1-2,1-2)}
- \left[(\Lambda_{\textsc{l}}^{\textup{r}})^{-1} H_{\textsc{l}} (\Lambda_{\textsc{l}}^{\textup{r}})^{-1} \right]_{(1-2,1-2)}
= o_{\mathbb{P}}(1).
\label{eq:GHG}
\end{equation}
By Lemma \ref{lemma:cluster_random}, 
\begin{align*}
\left[(\Lambda_{\textsc{l}}^{\textup{r}})^{-1} H_{\textsc{l}} (\Lambda_{\textsc{l}}^{\textup{r}})^{-1} \right]_{(1,1) + (2,2) - 2(1,2)}
&= \frac{M \sum_{i=1}^{M} Z_{i}\left(\sum_{j=1}^{n_{i}} \hat{\varepsilon}_{i j,\textsc{l}}\right)^{2}}{N^{2} e^{2}}+\frac{M \sum_{i=1}^{M}\left(1-Z_{i}\right)\left(\sum_{j=1}^{n_{i}} \hat{\varepsilon}_{i j,\textsc{l}}\right)^{2}}{N^{2}(1-e)^{2}} \\
&= \frac{1}{M} \sum_{i=1}^M \left( \frac{\V( {\varepsilon}^{\textup{r}}_{i\cdot,\textsc{l}}(1) )}{e} 
+ \frac{\V( {\varepsilon}^{\textup{r}}_{i\cdot,\textsc{l}}(0) )}{1-e} \right)
+ o_{\mathbb{P}}(1).
\end{align*}
We complete the proof by showing \eqref{eq:GHG}.
Define $\Delta=G_{\textsc{l}}-\Lambda_{\textsc{l}}^{\textup{r}}$, and $\Psi=G_{\textsc{l}}^{-1}-(\Lambda_{\textsc{l}}^{\textup{r}})^{-1}$.
\begin{align*}
G_{\textsc{l}}^{-1} H_{\textsc{l}} G_{\textsc{l}}^{-1} - (\Lambda_{\textsc{l}}^{\textup{r}})^{-1} H_{\textsc{l}} (\Lambda_{\textsc{l}}^{\textup{r}})^{-1}
& = \left((\Lambda_{\textsc{l}}^{\textup{r}})^{-1}+\Psi\right) H_{\textsc{l}}\left((\Lambda_{\textsc{l}}^{\textup{r}})^{-1}+\Psi\right)-(\Lambda_{\textsc{l}}^{\textup{r}})^{-1} H_{\textsc{l}} (\Lambda_{\textsc{l}}^{\textup{r}})^{-1} \\
& = \Psi H_{\textsc{l}} (\Lambda_{\textsc{l}}^{\textup{r}})^{-1}+(\Lambda_{\textsc{l}}^{\textup{r}})^{-1} H_{\textsc{l}} \Psi+\Psi H_{\textsc{l}} \Psi,  
\end{align*}
with where $\Psi_{11}, \Psi_{12}, \Psi_{21}$, and $\Psi_{22}$ are $2 \times 2,2 \times 2 p_x, 2 p_x \times 2$, and $2 p_x \times 2 p_x$ submatrices of $\Psi$, respectively, and the $*$ elements do not matter in the proof. Therefore, by Lemma \ref{lemma:cluster_random},
\begin{align*}
{\left[\Psi H_{\textsc{l}} (\Lambda_{\textsc{l}}^{\textup{r}})^{-1}\right]_{(1-2,1-2)} } 
& =\Psi_{11} H_{\textsc{l},11} (\Lambda_{\textsc{l}}^{\textup{r}})_{11}^{-1}+\Psi_{12} H_{\textsc{l},21} (\Lambda_{\textsc{l}}^{\textup{r}})_{11}^{-1} \\
& =O_{\mathbb{P}}\left(\Omega^{1 / 2}\right) O_{\mathbb{P}}(M \Omega) O_{\mathbb{P}}(1)+O_{\mathbb{P}}\left(\Omega^{1 / 2}\right) O_{\mathbb{P}}(M \Omega) O_{\mathbb{P}}(1)=o_{\mathbb{P}}(1), \\
\Psi H_{\textsc{l}} \Psi 
& =O_{\mathbb{P}}\left(\Omega^{1 / 2}\right) O_{\mathbb{P}}(M \Omega) O_{\mathbb{P}}\left(\Omega^{1 / 2}\right)=O_{\mathbb{P}}\left(M \Omega^2\right)=o_{\mathbb{P}}(1), 
\end{align*}
which imply \eqref{eq:GHG}.
Therefore, we complete the proof.

\paragraph*{Anti-conservativeness}
The anti-conservativeness of $\hat{V}_{\textsc{lz},\textsc{l}}$ follows immediately.

\end{proof}

\begin{proof}[Proof of Theorem \ref{thm:cluster_adj_m}]
Recall Lemma \ref{lemma:Lin_beta}, $\theta=\left(\mu_1, \gamma_1, \mu_0, \gamma_0\right)= \left(\hat{Y}_{\textsc{l}}(1), \hat{\gamma}_1, \hat{Y}_{\textsc{l}}(0), \hat{\gamma}_0\right)$ jointly solves
$$
0= N^{-1} \sum_{ij} \eta\left(Y_{ij}(1), Y_{ij}(0), x_{ij}, Z_i; \theta\right)
= N^{-1} \sum_{ij}  \left(\begin{array}{c}
\eta_{1}\left(Y_{ij}(1), x_{ij}, Z_i; \mu_1, \gamma_1 \right) \\
\eta_{0}\left(Y_{ij}(0), x_{ij}, Z_i; \mu_0, \gamma_0 \right) 
\end{array}\right)
$$
where 
\begin{align}
\eta=
\left(\begin{array}{c}
\eta_1 \\
\eta_0 
\end{array}\right) 
\text { with } 
\quad \begin{aligned}
\eta_1 
& =Z_i \cdot\left\{y_{ij}-\left(x_{ij}-\bar{x}\right)^{\mathrm{T}} \gamma_1-\mu_1\right\}
\left(\begin{array}{c}
1 \\
x_{ij}-\bar{x}
\end{array}\right), \\
\eta_0 
& =(1-Z_i)  \cdot\left\{y_{ij}-\left(x_{ij}-\bar{x}\right)^{\mathrm{T}} \gamma_0-\mu_0\right\}
\left(\begin{array}{c}
1 \\
x_{ij}-\bar{x}
\end{array}\right).
\end{aligned} \label{eq:Lin_ME_mixed_cluster}
\end{align}
Direct algebra ensures that $\theta=\theta^\textup{m}=\left(\mu_1^\textup{m}, \gamma_1^\textup{m}, \mu_0^\textup{m}, \gamma_0^\textup{m} \right)$ solves $N^{-1} \sum_{ij} E\{\eta(\theta)\mid x\}=0$. 
Theorem \ref{thm:ME_mixed_cluster} ensures that 
\begin{align}
\sqrt{M}\left\{\left(\begin{array}{c}
\hat{Y}_{\textsc{l}}(1) \\
\hat{\gamma}_1 \\
\hat{Y}_{\textsc{l}}(0) \\
\hat{\gamma}_0 
\end{array}\right)-\left(\begin{array}{c}
\mu_1^\textup{m} \\
\gamma_1^\textup{m} \\
\mu_0^\textup{m} \\
\gamma_0^\textup{m} 
\end{array}\right)\right\} 
\overset{\textup{d}}{\to} 
\mathcal{N}\left(0, A_{\textsc{l}}^{\textup{m}-1}  D_{\textsc{l}}^{\textup{m}} \left(A_{\textsc{l}}^{\textup{m}-\mathrm{T}} \right) \right), \label{eq:Lin_normal_mixed_cluster}
\end{align}
where
\begin{align*}
A_{\textsc{l}}^{\textup{m}}
&= \frac{1}{M} \sum_{i=1}^M \mathbb{E}\left[ \sum_{j=1}^{n_i} \frac{\partial}{\partial \beta^\mathrm{T}} \psi(y_{ij}, Z_i, x_{ij};\theta^\textup{m} ) \mid x_{i} \right] \\
D_{\textsc{l}}^\textup{m} 
&= \frac{1}{M} \sum_{i=1}^M \V\left[ \sum_{j=1}^{n_i} \psi(y_{ij}, Z_i, x_{ij};\theta^\textup{m}) \mid x_{i} \right].
\end{align*}
We compute below $A_{\textsc{l}}^{\textup{m}}$ and $D_{\textsc{l}}^{\textup{m}} $, respectively.

\paragraph*{Computing $D_{\textsc{L}}^{\textup{m}} $:}
The $D_{\textsc{l}}^{\textup{m}} $ matrix equals
\begin{align}
D_{\textsc{l}}^{\textup{m}} 
= \frac{1}{M} \sum_{i=1}^M \V\left[ \sum_{j=1}^{n_i} \eta(y_{ij},x_{ij};\theta^\textup{m}) \mid x_{i} \right]
= \left(\begin{array}{cc|cc}
d_{11} & d_{12} & d_{13} & d_{14}   \\
d_{12}^{\mathrm{T}} & d_{22} & d_{23} & d_{24}  \\
\hline 
d_{13}^{\mathrm{T}} & d_{23}^{\mathrm{T}} & d_{33} & d_{34} \\
d_{14}^{\mathrm{T}} & d_{24}^{\mathrm{T}} & d_{34}^{\mathrm{T}} & d_{44}  
\end{array}\right),
\label{eq:Lin_B_mixed_cluster} 
\end{align}
where
\begin{align*}
d_{11}
=& \frac{1}{M} \sum_{i=1}^M \V\left[ \sum_{j=1}^{n_i} \eta_1(y_{ij},x_{ij};\theta^\textup{m}) \mid x_{i} \right]
= \frac{N^2}{M^2} \frac{1}{M} \sum_{i=1}^M \left( e \mathbb{E}(\tilde{\varepsilon}^{\textup{m}}_{i\cdot,\textsc{l}}(1)^2 \mid x) - e^2 \mathbb{E}(\tilde{\varepsilon}^{\textup{m}}_{i\cdot,\textsc{l}}(1))^2 \mid x_i \right), \\
d_{22}
=& \frac{1}{M} \sum_{i=1}^M \V\left[  \sum_{j=1}^{n_i} \eta_0(y_{ij},x_{ij};\theta^\textup{m}) \mid x_{i} \right]
= \frac{N^2}{M^2} \frac{1}{M} \sum_{i=1}^M \left( (1-e) \mathbb{E}(\tilde{\varepsilon}^{\textup{m}}_{i\cdot,\textsc{l}}(0)^2 \mid x_i) - (1-e)^2 \mathbb{E}(\tilde{\varepsilon}^{\textup{m}}_{i\cdot,\textsc{l}}(0))^2 \mid x_i \right), \\
d_{13} 
=& - \frac{N^2}{M^2} e(1-e) \frac{1}{M}\sum_{i=1}^M \mathbb{E}(\tilde{\varepsilon}^{\textup{m}}_{i\cdot,\textsc{l}}(1) \mid x_i ) \mathbb{E}(\tilde{\varepsilon}^{\textup{m}}_{i\cdot,\textsc{l}}(0) \mid x_i).
\end{align*}

\paragraph*{Compute $A_{\textsc{L}}^{\textup{m}}$:}
With a slight abuse of notation, let $\frac{\partial}{\partial \mu_1} \eta_1^{\textup{m}}$ denote the value of $\frac{\partial}{\partial \mu_1} \eta_1$ evaluated at $\theta^{\textup{m}}$. Similarly define other partial derivatives. 
From \eqref{eq:Lin_ME_mixed_cluster}, we have
$$
\left.\frac{\partial}{\partial\left(\mu_1, \gamma_1, \mu_0, \gamma_0 \right)} \eta\right|_{\theta=\theta^{\textup{m}}}
=\left(\begin{array}{ccc}
\frac{\partial}{\partial\left(\mu_1, \gamma_1^{\mathrm{T}}\right)} \eta_1^{\textup{m}} & 0  \\
0 & \frac{\partial}{\partial\left(\mu_0, \gamma_0^{\mathrm{T}}\right)} \eta_0^{\textup{m}}
\end{array}\right),
$$
where
$$
\begin{aligned}
\frac{\partial}{\partial\left(\mu_1, \gamma_1^{\mathrm{T}}\right)} \eta_1^{\textup{m}} 
& = - Z _i \cdot\left(\begin{array}{c}
1 \\
x_{ij} - \bar{x}
\end{array}\right)
\left(1,\left(x_{ij}-\bar{x} \right)^{\mathrm{T}}\right).
\end{aligned}
$$
Accordingly, we have
\begin{align*}
A_{\textsc{l}}^{\textup{m}} 
= - \frac{1}{M} \sum_{i=1}^M \left.\mathbb{E}\left( \sum_{j=1}^{n_i} \frac{\partial}{\partial\left(\mu_1, \gamma_1, \mu_0, \gamma_0 \right)} \eta \mid x_i \right)\right|_{\theta=\theta^{\textup{m}}}
= \left(\begin{array}{cc|cc}
g_{11} & g_{12} & 0 & 0 \\
g_{21} & g_{22} & 0 & 0 \\
\hline 0 & 0 & g_{33} & g_{34}  \\
0 & 0 & g_{43} & g_{44} 
\end{array}\right),
\end{align*}
where
\begin{align*}
& \left(\begin{array}{ll}
g_{11} & g_{12} \\
g_{21} & g_{22}
\end{array}\right)
= - \frac{1}{M} \sum_{i=1}^M  \mathbb{E}\left(  \sum_{j=1}^{n_i} \frac{\partial}{\partial\left(\mu_1^{\mathrm{T}}, \gamma_1^{\mathrm{T}}\right)} \eta_1^{\textup{m}}\mid x_i \right) \\
&= \frac{1}{M} \sum_{i=1}^M \mathbb{E}\left\{  \sum_{j=1}^{n_i} Z_i  \left(\begin{array}{c}
1 \\
x_{ij}-\bar{x}
\end{array}\right)\left(1,\left(x_{ij}-\bar{x}\right)^{\mathrm{T}}\right) \mid x_i \right\} 
= \frac{N}{M} \cdot e 
\left(\begin{array}{cc}
1 & 0 \\
0 & \frac{1}{N} \sum_{i=1}^M \sum_{j=1}^{n_i} (x_{ij}-\bar{x})(x_{ij}-\bar{x})^{\mathrm{T}}
\end{array}\right), 
\end{align*}
and by symmetry,
\[
\left(\begin{array}{ll}
g_{33} & g_{34} \\
g_{43} & g_{44}
\end{array}\right)
= \frac{N}{M} \cdot (1-e) \left(\begin{array}{cc}
1 & 0 \\
0 & \frac{1}{N} \sum_{i=1}^M \sum_{j=1}^{n_i} (x_{ij}-\bar{x})(x_{ij}-\bar{x})^{\mathrm{T}}
\end{array}\right).
\]
By the formula of block matrix inverse, we have
$$
A_{\textsc{l}}^{\textup{m}-1} 
=\left(\begin{array}{cccc}
g_{11}^{-1} & 0 & 0 & 0  \\
0 & g_{22}^{-1} & 0 & 0   \\
0 & 0 & g_{33}^{-1} & 0   \\
0 & 0 & 0 & g_{44}^{-1} 
\end{array}\right)
$$
with
\begin{align}
\left(\begin{array}{llll}
1 & 0 & 0 & 0  \\
0 & 0 & 1 & 0 
\end{array}\right) A_{\textsc{l}}^{\textup{m}-1}  
& =\left(\begin{array}{cccc}
g_{11}^{-1} & 0 & 0 & 0  \\
0 & 0 & g_{33}^{-1} & 0 
\end{array}\right) 
=\left(\begin{array}{cc}
g_{11}^{-1} & 0 \\
0 & g_{33}^{-1}
\end{array}\right)\left(\begin{array}{cccc}
1 & 0 & 0 & 0  \\
0 & 0 & 1 & 0  
\end{array}\right). \label{eq:Lin_A_mixed_cluster}
\end{align}

\paragraph*{Compute $A_{\textsc{L}}^{\textup{m}-1}  D_{\textsc{L}}^{\textup{m}} \left(A_{\textsc{L}}^{\textup{m}-\mathrm{T}} \right)$:}
Equations \eqref{eq:Lin_meat_mixed}, \eqref{eq:Lin_normal_mixed_cluster}, \eqref{eq:Lin_B_mixed_cluster}, and \eqref{eq:Lin_A_mixed_cluster} together ensure
\begin{align*}
& V_{\textsc{l}}
= \operatorname{cov}\left\{\left(\begin{array}{c}
\hat{Y}_{\textsc{l}}(1) \\
\hat{Y}_{\textsc{l}}(0)
\end{array}\right) \mid x \right\}
= \operatorname{cov}\left\{\left(\begin{array}{llll}
1 & 0 & 0 & 0   \\
0 & 0 & 1 & 0 
\end{array}\right)\left(\begin{array}{c}
\hat{Y}_{\textsc{l}}(1) \\
\hat{\gamma}_1 \\
\hat{Y}_{\textsc{l}}(0) \\
\hat{\gamma}_0 
\end{array}\right) \mid x \right\} \\
=& \left(\begin{array}{llll}
1 & 0 & 0 & 0  \\
0 & 0 & 1 & 0 
\end{array}\right) A_{\textsc{l}}^{\textup{m}-1}  D_{\textsc{l}}^{\textup{m}} \left(A_{\textsc{l}}^{\textup{m}-\mathrm{T}} \right) 
\left(\begin{array}{ll}
1 & 0 \\
0 & 0 \\
0 & 1 \\
0 & 0 
\end{array}\right) \\
=& \left(\begin{array}{cc}
g_{11}^{-1} & 0 \\
0 & g_{33}^{-1}
\end{array}\right) 
\left(\begin{array}{cc}
d_{11} & d_{13} \\
d_{13}^{\mathrm{T}} & d_{33}
\end{array}\right)
\left(\begin{array}{cc}
g_{11}^{-1} & 0 \\
0 & g_{33}^{-1}
\end{array}\right) \\
=& \frac{M^2}{N^2} \left(\begin{array}{cc}
\frac{d_{11}}{e^2 } & \frac{d_{13}}{e(1-e) } \\
\frac{d_{13}^{\mathrm{T}}}{e(1-e) } & \frac{d_{22}}{(1-e)^2 }
\end{array}\right).
\end{align*}
This ensures
\begin{align*}
& V_{\textsc{l}}^{\textup{m}}
= \operatorname{var}\left(\hat{\beta}_{\textsc{l}} \mid x \right) 
= \operatorname{var}\left\{\hat{Y}_{\textsc{l}}(1)-\hat{Y}_{\textsc{l}}(0) \mid x\right\}
= \operatorname{var}\left\{(1,-1)\left(\begin{array}{c}
\hat{Y}_{\textsc{l}}(1) \\
\hat{Y}_{\textsc{l}}(0)
\end{array}\right) \mid x \right\} 
= (1,-1) V_{\textsc{l}}\left(\begin{array}{c}
1 \\
-1
\end{array}\right) \\
=& \frac{M^2}{N^2} \cdot \left( \frac{d_{11}}{e^2} - 2\frac{d_{13}}{e(1-e)} + \frac{d_{33}}{(1-e)^2} \right) \\
=& \frac{ \frac{1}{M} \sum_{i=1}^M \left( e \mathbb{E}( {\varepsilon}^{\textup{m}}_{i\cdot,\textsc{l}}(1)^2 \mid x) - e^2 \mathbb{E}( {\varepsilon}^{\textup{m}}_{i\cdot,\textsc{l}}(1))^2 \mid x_i \right) }{e^2} + \frac{\frac{1}{M} \sum_{i=1}^M \left( e^2 \mathbb{E}( {\varepsilon}^{\textup{m}}_{i\cdot,\textsc{l}}(0)^2 \mid x_i ) - (1-e)^2 \mathbb{E}({\varepsilon}^{\textup{m}}_{i\cdot,\textsc{l}}(0))^2 \mid x_i \right)}{(1-e)^2} \\
& + 2 \frac{e(1-e) \frac{1}{M}\sum_{i=1}^M \mathbb{E}( {\varepsilon}^{\textup{m}}_{i\cdot,\textsc{l}}(1) \mid x_i ) \mathbb{E}( {\varepsilon}^{\textup{m}}_{i\cdot,\textsc{l}}(0) \mid x_i)}{e(1-e)} \\
=& \frac{1}{M} \sum_{i=1}^M \left( 
\frac{ \mathbb{E}\left( {\varepsilon}^{\textup{m}}_{i\cdot,\textsc{l}}(1)^2 \mid x_i \right)}{e}
+ \frac{\mathbb{E}\left( {\varepsilon}^{\textup{m}}_{i\cdot,\textsc{l}}(0)^2 \mid x_i \right)}{1-e}  - \mathbb{E}\left(  {\varepsilon}^{\textup{m}}_{i\cdot,\textsc{l}}(1) -  {\varepsilon}^{\textup{m}}_{i\cdot,\textsc{l}}(0) \mid x_i \right)^2 \right).
\end{align*}

\paragraph*{Conservativeness:}
Theorem \ref{thm:ME_mixed_cluster} ensures that 
\[
\hat{V}_{\textsc{lz,l}} 
= V_{\textsc{l}}^\textup{m}
+ B_{\textsc{l}}^\textup{m}
+ o(1;\mathbb{P}_{(y,Z)\mid x})  
\]
with 
\begin{equation*}
B_{\textsc{l}}^\textup{m}
= \left[ \left( A_{\textsc{l}}^{\textup{m}} \right)^{-1} \left( \frac{1}{M} \sum_{i=1}^M \mathbb{E}\left( \sum_{j=1}^{n_i} \eta(\theta^\textup{m}) \mid x_i \right) \mathbb{E}\left( \sum_{j=1}^{n_i} \eta(\theta^\textup{m}) \mid x_i \right)^{\mathrm{T}}  \right) (A_{\textsc{l}}^{\textup{m}})^{-{\mathrm{T}}} \right]_{(1,1)+(3,3)-2(1,3)}.
\end{equation*}
We compute the ``middle'' matrix below:
\begin{align*}
& \frac{1}{M} \sum_{i=1}^M \mathbb{E}\left(  \sum_{j=1}^{n_i} \eta(\theta^\textup{m}) \mid x_i \right) \mathbb{E}\left( \sum_{j=1}^{n_i} \eta(\theta^\textup{m}) \mid x_i \right)^{\mathrm{T}}  
= \left(
\begin{array}{cccc}
h_{11} & h_{12} & h_{13} & h_{14} \\
h_{12}^{\mathrm{T}} & h_{22} & h_{23} & h_{24} \\
h_{13}^{\mathrm{T}} & h_{23}^{\mathrm{T}} & h_{33} & h_{34} \\
h_{14}^{\mathrm{T}} & h_{24}^{\mathrm{T}} & h_{34}^{\mathrm{T}} & h_{44} \\
\end{array}
\right)
\end{align*}
where 
\begin{align*}
h_{11}
=& \frac{N^2}{M^2} e^2 \frac{1}{M} \sum_{i=1}^M \mathbb{E}( {\varepsilon}^{\textup{m}}_{i\cdot,\textsc{l}}(1) \mid x)^2 \\
h_{33}
=& \frac{N^2}{M^2} (1-e)^2 \frac{1}{M} \sum_{i=1}^M \mathbb{E}( {\varepsilon}^{\textup{m}}_{i\cdot,\textsc{l}}(0) \mid x)^2 \\
h_{13}
=& \frac{N^2}{M^2} e(1-e) \frac{1}{M} \sum_{i=1}^M \mathbb{E}( {\varepsilon}^{\textup{m}}_{i\cdot,\textsc{l}}(1) \mid x_i) \mathbb{E}({\varepsilon}^{\textup{m}}_{i\cdot,\textsc{l}}(0) \mid x).
\end{align*}
Therefore,
\begin{align*}
& B_{\textsc{l}}^\textup{m} 
= (1,-1) \left(\begin{array}{llll}
1 & 0 & 0 & 0  \\
0 & 0 & 1 & 0 
\end{array}\right) A_{\textsc{l}}^{\textup{m}-1}  
\left(
\begin{array}{cccc}
h_{11} & h_{12} & h_{13} & h_{14} \\
h_{12}^{\mathrm{T}} & h_{22} & h_{23} & h_{24} \\
h_{13}^{\mathrm{T}} & h_{23}^{\mathrm{T}} & h_{33} & h_{34} \\
h_{14}^{\mathrm{T}} & h_{24}^{\mathrm{T}} & h_{34}^{\mathrm{T}} & h_{44} \\
\end{array}
\right)
\left(A_{\textsc{l}}^{\textup{m}} \right)^{-\mathrm{T}}\left(\begin{array}{ll}
1 & 0 \\
0 & 0 \\
0 & 1 \\
0 & 0 
\end{array}\right)
\left(\begin{array}{c}
1 \\
-1
\end{array}\right) \\
=&  (1,-1) \left(\begin{array}{cc}
g_{11}^{-1} & 0 \\
0 & g_{33}^{-1}
\end{array}\right) 
\left(\begin{array}{cc}
h_{11} & h_{13} \\
h_{13}^{\mathrm{T}} & h_{33}
\end{array}\right)
\left(\begin{array}{cc}
g_{11}^{-1} & 0 \\
0 & g_{33}^{-1}
\end{array}\right)
\left(\begin{array}{c}
1 \\
-1
\end{array}\right) \\
=& \frac{M^2}{N^2} \left( \frac{h_{11}}{e^2} - 2\frac{h_{13}}{e(1-e)} + \frac{h_{33}}{(1-e)^2} \right) \\
=& \frac{1}{M} \sum_{i=1}^M \mathbb{E}( {\varepsilon}^{\textup{m}}_{i\cdot,\textsc{l}}(1) \mid x_i)^2 
+ \frac{1}{M} \sum_{i=1}^M \mathbb{E}( {\varepsilon}^{\textup{m}}_{i\cdot,\textsc{l}}(0) \mid x_i)^2
- 2 \frac{1}{M} \sum_{i=1}^M \mathbb{E}( {\varepsilon}^{\textup{m}}_{i\cdot,\textsc{l}}(1) \mid x_i) \mathbb{E}( {\varepsilon}^{\textup{m}}_{i\cdot,\textsc{l}}(0) \mid x_i) \\
=& \frac{1}{M} \sum_{i=1}^M \mathbb{E}( {\varepsilon}^{\textup{m}}_{i\cdot,\textsc{l}}(1) - {\varepsilon}^{\textup{m}}_{i\cdot,\textsc{l}}(0) \mid x )^2.
\end{align*}
Thus, we complete the proof.

\end{proof}

\subsection{Proof of useful lemmas} \label{sec:proof_lemma}

\begin{proof}[Proof of Lemma \ref{lemma:QIz}]
We only prove the result for $z=1$, and omit the proof for $z=0$. 
For the treated subsample, the OLS coefficient on the residualized covariate
is given by
\begin{align*}
\hat{\gamma}_1
&= \left( \sum_{ij} Z_i (\ddot{x}_{ij} - \bar{x}_{\mathcal{T}}) (\ddot{x}_{ij} - \bar{x}_{\mathcal{T}})^{\mathrm{T}} \right)^{-1}   
\left( \sum_{ij} Z_i (\ddot{x}_{ij} - \bar{x}_{\mathcal{T}})(y_{ij} - \bar{Y}_{\mathcal{T}}) \right).
\end{align*}
Under random design, given $\Omega = o(1)$, Lemma \ref{lemma:aij} ensures that 
\begin{align*}
& \frac{1}{N} \sum_{ij } Z_i \ddot{x}_{ij}y_{ij}
= \frac{e}{N} \sum_{ij} \mathbb{E}(\ddot{x}_{ij}Y_{ij}(1)) + O_{\mathbb{P}}(\Omega^{1/2}),  \\
& \bar{x}_{\mathcal{T}}
= \frac{1}{n_{\mathcal{T}}} \sum_{ij} Z_i \ddot{x}_{ij}
= \frac{1}{N}\sum_{i=1}^M n_i\mathbb{E}(\ddot{x}_{ij}) + O_{\mathbb{P}}(\Omega^{1/2}) , \\
& \bar{Y}_{\mathcal{T}}
= \frac{1}{n_{\mathcal{T}}} \sum_{ij} Z_i y_{ij}
= \mathbb{E}(Y_{ij}(1)) + O_{\mathbb{P}}(\Omega^{1/2}).
\end{align*}
Therefore, the numerator and denominator of $\hat{\gamma}_1$ satisfy 
\begin{align*}
\frac{1}{N} \sum_{ij} Z_i (\ddot{x}_{ij} - \bar{x}_{\mathcal{T}})(y_{ij} - \bar{Y}_{\mathcal{T}})
&= \frac{1}{N} \sum_{ij} Z_i \ddot{x}_{ij} y_{ij}
- \frac{n_{\mathcal{T}}}{N} \bar{x}_{\mathcal{T}} \bar{Y}_{\mathcal{T}}  
= \frac{e}{N} \sum_{ij} \mathbb{E}(\ddot{x}_{ij}Y_{ij}(1)) + O_{\mathbb{P}}(\Omega^{1/2}), \\
\frac{1}{N} \sum_{ij} Z_i (\ddot{x}_{ij} - \bar{x}_{\mathcal{T}}) (\ddot{x}_{ij} - \bar{x}_{\mathcal{T}})^{\mathrm{T}} 
&= \frac{1}{N} \sum_{ij} Z_i \ddot{x}_{ij} \ddot{x}_{ij}^{\mathrm{T}} 
- \frac{n_{\mathcal{T}}}{N}\bar{x}_{\mathcal{T}}\bar{x}_{\mathcal{T}}^{\mathrm{T}} 
= \frac{e}{N} \sum_{ij} \mathbb{E}(\ddot{x}_{ij}\ddot{x}_{ij}^{\mathrm{T}})
+ O_{\mathbb{P}}(\Omega^{1/2}).
\end{align*}
By Lemma \ref{lemma:inverse}, we can show that $\hat{\gamma}_1 - \gamma_1^{\textup{r}} = O_{\mathbb{P}}(\Omega^{1/2})$
with
\[
\gamma_1^{\textup{r}} 
= \left(\sum_{ij} \mathbb{E}(\ddot{x}_{ij}\ddot{x}_{ij}^{\mathrm{T}}) \right)^{-1}
\left(\sum_{ij} \mathbb{E}(\ddot{x}_{ij}Y_{ij}(1))\right)
= \left(\sum_{ij} \mathbb{E}(\ddot{x}_{ij}\ddot{x}_{ij}^{\mathrm{T}}) \right)^{-1}
\left(\sum_{ij} \mathbb{E}(\ddot{x}_{ij}\varepsilon_{ij}(1))\right).
\]
Under mixed design, given $\Omega = o(1)$, Lemma \ref{lemma:aij} ensures that 
\begin{align*}
& \frac{1}{N} \sum_{ij } Z_i \ddot{x}_{ij}y_{ij}
= \frac{e}{N} \sum_{ij} \ddot{x}_{ij} \mathbb{E}[Y_{ij}(1) \mid x_{i}] + O_{\mathbb{P}}(\Omega^{1/2}),  \\
& \bar{x}_{\mathcal{T}}
= \frac{1}{n_{\mathcal{T}}} \sum_{ij} Z_i \ddot{x}_{ij}
= O_{\mathbb{P}}(\Omega^{1/2}), \\
& \bar{Y}_{\mathcal{T}}
= \frac{1}{n_{\mathcal{T}}} \sum_{ij} Z_i y_{ij}
= \frac{1}{N}\sum_{ij} \mathbb{E}[Y_{ij}(1) \mid x_{i}] + O_{\mathbb{P}}(\Omega^{1/2}).
\end{align*}
Therefore, under mixed design, the numerator and denominator of $\hat{\gamma}_1$ satisfy 
\begin{align*}
\frac{1}{N} \sum_{ij } Z_i (\ddot{x}_{ij} - \bar{x}_{\mathcal{T}})(y_{ij} - \bar{Y}_{\mathcal{T}})
&= \frac{1}{N} \sum_{ij } Z_i \ddot{x}_{ij} y_{ij}
- \frac{n_{\mathcal{T}}}{N} \bar{x}_{\mathcal{T}} \bar{Y}_{\mathcal{T}}  
= \frac{e}{N} \sum_{ij} \ddot{x}_{ij} \mathbb{E}[Y_{ij}(1) \mid x_{i}] + O_{\mathbb{P}}(\Omega^{1/2}), \\
\frac{1}{N} \sum_{ij } Z_i (\ddot{x}_{ij} - \bar{x}_{\mathcal{T}}) (\ddot{x}_{ij} - \bar{x}_{\mathcal{T}})^{\mathrm{T}} 
&= \frac{1}{N} \sum_{ij } Z_i \ddot{x}_{ij} \ddot{x}_{ij}^{\mathrm{T}} 
- \frac{n_{\mathcal{T}}}{N}\bar{x}_{\mathcal{T}}\bar{x}_{\mathcal{T}}^{\mathrm{T}} 
= \frac{e}{N} \sum_{ij} \ddot{x}_{ij}\ddot{x}_{ij}^{\mathrm{T}}
+ O_{\mathbb{P}}(\Omega^{1/2}). 
\end{align*}
Again, by Lemma \ref{lemma:inverse}, we can show that $\hat{\gamma}_1 - \gamma^{\textup{m}}_1 = O_{\mathbb{P}}(\Omega^{1/2})$
with
\[
\gamma^{\textup{m}}_1 
= \left(\sum_{ij} \ddot{x}_{ij}\ddot{x}_{ij}^{\mathrm{T}} \right)^{-1}
\left(\sum_{ij} \ddot{x}_{ij} \mathbb{E}[Y_{ij}(1) \mid x_{i}]\right).
\]
\end{proof}

\begin{proof}[Proof of Lemma \ref{lemma:cluster_random}]
Without loss of generality, we assume that $x_{ij}$ is one-dimensional.
To prove  (\ref{Equ: A1_random}), we follow the proof of Lemma A10 in \cite{SuDing2021}. Define $\bar{Y}_{\mathcal{T}}
= n_{\mathcal{T}}^{-1} \sum_{ij} Z_i y_{ij}$. 
By the definition of $\hat{\varepsilon}_{i j,\textsc{l}}$,
\begin{align*}
& \frac{M}{N^{2}} \sum_{i=1}^{M} Z_{i}\left(\sum_{j=1}^{n_{i}} \hat{\varepsilon}_{i j,\textsc{l}} \right)^{2} 
= \frac{M}{N^{2}} \sum_{i=1}^{M} Z_{i}\left(\sum_{j=1}^{n_{i}} 
\left( 
Y_{ij}(1) - \bar{Y}_{\mathcal{T}} - (x_{ij} - \bar{x}_{\mathcal{T}}) \hat{\gamma}_1 
\right) \right)^{2} \\
=& \frac{M}{N^{2}} \sum_{i=1}^{M} Z_{i}\left(\sum_{j=1}^{n_{i}}
\left[
r^{\textup{r}}_{i j}(1)
+ \mathbb{E}(Y_{ij}(1))
+ (x_{ij} - \mathbb{E}(x_{ij})) \gamma^{\textup{r}}_1
- \bar{Y}_{\mathcal{T}} - (\ddot{x}_{ij} - \bar{x}_{\mathcal{T}}) \hat{\gamma}_1 
\right]\right)^{2} \\
=& \frac{M}{N^{2}} \sum_{i=1}^{M} Z_{i}
\left(
\sum_{j=1}^{n_{i}}\left[
r^{\textup{r}}_{i j}(1)
+ (x_{i j} - \mathbb{E}(x_{ij} )) (\gamma^{\textup{r}}_1-\hat{\gamma}_1)
- \left( \bar{\varepsilon}_{\mathcal{T}} - (\bar{x}_{\mathcal{T}} - \mathbb{E}(x_{ij})) \hat{\gamma}_1 \right) 
\right]
\right)^{2} \\
=& T_{1}+T_{2}+T_{3}+T_{4}-T_{5}-T_{6}
\end{align*}
where 
\begin{align*}
T_{1}
&=\frac{M}{N^{2}} \sum_{i=1}^{M} Z_{i}\left(\sum_{j=1}^{n_{i}} r^{\textup{r}}_{i j}(1)\right)^{2}, 
\quad 
T_{2}
= \frac{M}{N^{2}} \sum_{i=1}^{M} Z_{i}\left(\sum_{j=1}^{n_{i}} 
(x_{i j} - \mathbb{E}(x_{ij} )) \right)^{2}
\left(\gamma^{\textup{r}}_1-\hat{\gamma}_1 \right)^{2}, \\
T_{3}
&=\frac{M}{N^{2}} \sum_{i=1}^{M} Z_{i} n_{i}^{2}
\left( \bar{\varepsilon}_{\mathcal{T}} - (\bar{x}_{\mathcal{T}} - \mathbb{E}(x_{ij})) \hat{\gamma}_1 \right)^{2}, \\
T_{4}
&= \frac{2 M}{N^{2}} \sum_{i=1}^{M} Z_{i} \sum_{j=1}^{n_{i}} r_{i j}^{\textup{r}}(1) \sum_{j=1}^{n_{i}} (x_{i j} - \mathbb{E}(x_{ij} ))
\left(\gamma^{\textup{r}}_1-\hat{\gamma}_1 \right), \\
T_{5}
&=\frac{2 M}{N^{2}} \sum_{i=1}^{M} Z_{i} \sum_{j=1}^{n_{i}} r^{\textup{r}}_{i j}(1) n_{i}
\left( \bar{\varepsilon}_{\mathcal{T}} - (\bar{x}_{\mathcal{T}} - \mathbb{E}(x_{ij})) \hat{\gamma}_1 \right), \\
T_{6}
&= \frac{2 M}{N^{2}} \sum_{i=1}^{M} Z_{i} \sum_{j=1}^{n_{i}} 
(x_{i j} - \mathbb{E}(x_{ij} ))
\left( \gamma^{\textup{r}}_1-\hat{\gamma}_1 \right)
n_{i}
\left( \bar{\varepsilon}_{\mathcal{T}} - (\bar{x}_{\mathcal{T}} - \mathbb{E}(x_{ij})) \hat{\gamma}_1 \right).
\end{align*}
Similarly, we claim that except for $T_1$, all other terms $T_2-T_6$ are of order $o_{\mathbb{P}}(1)$. 
We show that $T_4 = o_{\mathbb{P}}(1)$ and omit the proofs for other terms. It is bounded from the above by
\begin{align*}
|T_4|
\le & \frac{2 M}{N^{2}} \sum_{i=1}^{M} 
\left| \sum_{j=1}^{n_{i}} r_{i j}^{\textup{r}}(1) \sum_{j=1}^{n_{i}} (x_{i j} - \mathbb{E}(x_{ij} )) \right|
\left| \gamma^{\textup{r}}_1-\hat{\gamma}_1 \right| \\
\le & \frac{M}{N^{2}} \sum_{i=1}^{M} 
\left[ \left(\sum_{j=1}^{n_{i}} r_{i j}^{\textup{r}}(1) \right)^2 
+ \left(\sum_{j=1}^{n_{i}} (x_{i j} - \mathbb{E}(x_{ij} )) \right)^2 \right]
\left| \gamma^{\textup{r}}_1-\hat{\gamma}_1 \right|.
\end{align*}
By Lemma \ref{lemma:QIz}, $\gamma^{\textup{r}}_1-\hat{\gamma}_1 = O_{\mathbb{P}}\left(\Omega^{1 / 2}\right)$
and by assumption \ref{asu:cluster},
\begin{align*}
\mathbb{E}\left[ \frac{M}{N^{2}} \sum_{i=1}^{M} 
\left(\sum_{j=1}^{n_{i}} r_{i j}^{\textup{r}}(1) \right)^2 \right]
& \leq_{\mathrm{HI}} \frac{M}{N^{2}} \sum_{i=1}^{M} n_i
\sum_{j=1}^{n_{i}} \mathbb{E}\left[ (r_{i j}^{\textup{r}}(1))^2 \right]
= O(M\Omega), \\
\V\left[ \frac{M}{N^{2}} \sum_{i=1}^{M} 
\left(\sum_{j=1}^{n_{i}} r_{i j}^{\textup{r}}(1) \right)^2 \right]
& \leq \frac{M^2}{N^{4}} \sum_{i=1}^{M} 
\mathbb{E} \left[ \left(\sum_{j=1}^{n_{i}} r_{i j}^{\textup{r}}(1) \right)^4 \right]
\le_{\mathrm{HI}} \frac{M^2}{N^{4}} \sum_{i=1}^{M} n_i^3 \sum_{j=1}^{n_{i}}
\mathbb{E} [ r_{i j}^{\textup{r}}(1)^4 ]
\leq O\left(M^{2} \Omega^{3}\right) 
= o(1), 
\end{align*}
and
\begin{align*}
\mathbb{E}\left[ \frac{M}{N^{2}} \sum_{i=1}^{M} 
\left(\sum_{j=1}^{n_{i}} (x_{i j} - \mathbb{E}(x_{ij} )) \right)^2 \right]
& \leq_{\mathrm{HI}} \frac{M}{N^{2}} \sum_{i=1}^{M} n_i
\sum_{j=1}^{n_{i}}  \mathbb{E} \left[(x_{i j} - \mathbb{E}(x_{ij} ))^2\right]
= O(M\Omega), \\
\V\left[ \frac{M}{N^{2}} \sum_{i=1}^{M} 
\left(\sum_{j=1}^{n_{i}} (x_{i j} - \mathbb{E}(x_{ij} )) \right)^2 \right]
& \leq \frac{M^2}{N^{4}} \sum_{i=1}^{M} 
\mathbb{E} \left[ \left(\sum_{j=1}^{n_{i}} (x_{i j} - \mathbb{E}(x_{ij} )) \right)^4 \right] \\
& \le_{\mathrm{HI}} \frac{M^2}{N^{4}} \sum_{i=1}^{M} n_i^3 \sum_{j=1}^{n_{i}}
\mathbb{E} ((x_{i j} - \mathbb{E}(x_{ij} )) ^4)
\leq O\left(M^{2} \Omega^{3}\right) 
= o(1),
\end{align*}
we have $|T_4|\le O_{\mathbb{P}}(M\Omega) O_{\mathbb{P}} (\Omega^{1 / 2}) = o_{\mathbb{P}}(1)$ using the assumption $\Omega=o(M^{-2 / 3})$.

To finish the proof of (\ref{Equ: A1_random}), we only need to verify that $T_{1}$ differs from its mean by a term of order $o_{\mathbb{P}}(1)$, which follows from Chebyshev's inequality and the variance calculation:
\[
\operatorname{\mathrm{var}}\left(T_{1}\right)
\leq \frac{M^{2}}{N^{4}} \sum_{i=1}^{M}\mathbb{E}\left[\left(\sum_{j=1}^{n_{i}} r^{\textup{r}}_{i j}(1)\right)^{4} \right]
\leq_{\mathrm{HI}} \frac{M^{2}}{N^{4}} \sum_{i=1}^{M} n_{i}^{3} \sum_{j=1}^{n_{i}} \mathbb{E}\left[ r^{\textup{r}}_{i j}(1)^{4} \right]
\leq O\left(M^{2} \Omega^{3}\right) 
= o(1).
\]
Second, we prove (\ref{Equ: A2_random}). 
Define $\tilde{\varepsilon}_{ij}^{\textup{r}}(1) = \varepsilon_{ij}^{\textup{r}}(1) - (x_{ij}-\mathbb{E}(x_{ij} )) \hat{\gamma}_1 - (\bar{\varepsilon}_{\mathcal{T}} - (\bar{x}_{\mathcal{T}} - \mathbb{E}(x_{ij} )) \hat{\gamma}_1)$. 
By definition of $\hat{\varepsilon}_{i j,\textsc{l}}$,
\begin{align*}
&\frac{M}{N^{2}} \sum_{i=1}^{M} Z_{i} \sum_{j=1}^{n_{i}} \hat{\varepsilon}_{i j,\textsc{l}} \sum_{j=1}^{n_{i}} \hat{\varepsilon}_{i j,\textsc{l}} x_{i j} \\
=& \frac{M}{N^{2}} \sum_{i=1}^{M} Z_{i} \sum_{j=1}^{n_{i}}
\left[
Y_{ij}(1) - \bar{Y}_{\mathcal{T}} - (x_{ij} - \bar{x}_{\mathcal{T}}) \hat{\gamma}_1 
\right] 
\sum_{j=1}^{n_{i}} x_{i j}
\left[
Y_{ij}(1) - \bar{Y}_{\mathcal{T}} - (x_{ij} - \bar{x}_{\mathcal{T}}) \hat{\gamma}_1
\right]  \\
=& \frac{M}{N^{2}} \sum_{i=1}^{M} Z_{i} \sum_{j=1}^{n_{i}}
\tilde{\varepsilon}_{ij}^{\textup{r}}(1) 
\sum_{j=1}^{n_{i}} x_{i j}
\tilde{\varepsilon}_{ij}^{\textup{r}}(1) \\
=& T_{7}-T_{8}-T_{9}-T_{10}+T_{11}+T_{12}-T_{13}+T_{14}+T_{15}    
\end{align*}
where
\begin{align*}
T_{7}
&= \frac{M}{N^{2}} \sum_{i=1}^{M} Z_{i} \sum_{j=1}^{n_{i}} \varepsilon^{\textup{r}}_{i j}(1) \sum_{j=1}^{n_{i}} \varepsilon^{\textup{r}}_{i j}(1) (x_{i j} - \mathbb{E}(x_{ij} )), \\ 
T_{8}
&= \frac{M}{N^{2}} \sum_{i=1}^{M} Z_{i} \sum_{j=1}^{n_{i}} \varepsilon^{\textup{r}}_{i j}(1) \sum_{j=1}^{n_{i}} (x_{i j} - \mathbb{E}(x_{ij} ))^{2} \hat{\gamma}_1,\\
T_{9}
&= \frac{M}{N^{2}} \sum_{i=1}^{M} Z_{i} \sum_{j=1}^{n_{i}} \varepsilon^{\textup{r}}_{i j}(1) \sum_{j=1}^{n_{i}} (x_{i j} - \mathbb{E}(x_{ij} ))
\left(\bar{\varepsilon}_{\mathcal{T}}-(\bar{x}_{\mathcal{T}} - \mathbb{E}(x_{ij} )) \hat{\gamma}_1 \right), \\
T_{10} 
&= \frac{M}{N^{2}} \sum_{i=1}^{M} Z_{i} \sum_{j=1}^{n_{i}} x_{i j} \sum_{j=1}^{n_{i}} \varepsilon^{\textup{r}}_{i j}(1) (x_{i j} - \mathbb{E}(x_{ij} )) \hat{\gamma}_1,\\
T_{11}
&= \frac{M}{N^{2}} \sum_{i=1}^{M} Z_{i} \sum_{j=1}^{n_{i}} (x_{i j} - \mathbb{E}(x_{ij} )) \sum_{j=1}^{n_{i}} (x_{i j} - \mathbb{E}(x_{ij} ))^{2} \hat{\gamma}_1^{2}, \\
T_{12}
&= \frac{M}{N^{2}} \sum_{i=1}^{M} Z_{i}\left(\sum_{j=1}^{n_{i}} (x_{i j} - \mathbb{E}(x_{ij} ))\right)^{2} \hat{\gamma}_1 \left(\bar{\varepsilon}_{\mathcal{T}}-(\bar{x}_{\mathcal{T}} - \mathbb{E}(x_{ij} )) \hat{\gamma}_1 \right),\\
T_{13}
&= \frac{M}{N^{2}} \sum_{i=1}^{M} Z_{i} n_{i} \sum_{j=1}^{n_{i}} \varepsilon^{\textup{r}}_{i j}(1) (x_{i j} - \mathbb{E}(x_{ij} ))\left(\bar{\varepsilon}_{\mathcal{T}}-(\bar{x}_{\mathcal{T}} - \mathbb{E}(x_{ij} )) \hat{\gamma}_1\right), \\ 
T_{14} 
&=\frac{M}{N^{2}} \sum_{i=1}^{M} Z_{i} n_{i} \sum_{j=1}^{n_{i}} (x_{i j} - \mathbb{E}(x_{ij} ))^{2}\left(\bar{\varepsilon}_{\mathcal{T}}-(\bar{x}_{\mathcal{T}} - \mathbb{E}(x_{ij} )) \hat{\gamma}_1 \right) \hat{\gamma}_1,\\
T_{15}
&= \frac{M}{N^{2}} \sum_{i=1}^{M} Z_{i} n_{i} \sum_{j=1}^{n_{i}} (x_{i j} - \mathbb{E}(x_{ij} ))\left(\bar{\varepsilon}_{\mathcal{T}}-(\bar{x}_{\mathcal{T}} - \mathbb{E}(x_{ij} )) \hat{\gamma}_1 \right)^{2}.    
\end{align*}
We claim that all terms $T_{7}-T_{15}$ are of order $O_{\mathbb{P}}(M \Omega)$. We show that $T_{7}=O_{\mathbb{P}}(M \Omega)$ and omit the proofs for other terms.
It is bounded from the above by
\begin{align*}
\left|T_7\right| 
& \leq \frac{M}{N^2} \sum_{i=1}^M\left|\sum_{j=1}^{n_i} \varepsilon_{i j}^{\textup{r}}(1) \sum_{j=1}^{n_i} \varepsilon_{i j}^{\textup{r}}(1) (x_{i j} - \mathbb{E}(x_{ij} )) \right| \\ 
& \leq \frac{M}{N^2} \sum_{i=1}^M\left[\left(\sum_{j=1}^{n_i} \varepsilon_{i j}^{\textup{r}}(1)\right)^2+\left(\sum_{j=1}^{n_i} \varepsilon_{i j}^{\textup{r}}(1) (x_{i j} - \mathbb{E}(x_{ij} ))\right)^2\right].
\end{align*}
By Assumption \ref{asu:cluster},
\begin{align*}
\mathbb{E}\left[ \frac{M}{N^{2}} \sum_{i=1}^{M} 
\left(\sum_{j=1}^{n_{i}} \varepsilon_{i j}^{\textup{r}}(1) \right)^2 \right]
& \leq_{\mathrm{HI}} \frac{M}{N^{2}} \sum_{i=1}^{M} n_i
\sum_{j=1}^{n_{i}} \mathbb{E}\left[ (\varepsilon_{i j}^{\textup{r}}(1))^2 \right]
= O(M\Omega), \\
V\left[ \frac{M}{N^{2}} \sum_{i=1}^{M} 
\left(\sum_{j=1}^{n_{i}} \varepsilon_{i j}^{\textup{r}}(1) \right)^2 \right]
& \leq \frac{M^2}{N^{4}} \sum_{i=1}^{M} 
\mathbb{E} \left[ \left(\sum_{j=1}^{n_{i}} \varepsilon_{i j}^{\textup{r}}(1) \right)^4 \right] \\
& \le_{\mathrm{HI}} \frac{M^2}{N^{4}} \sum_{i=1}^{M} n_i^3 \sum_{j=1}^{n_{i}}
\mathbb{E} [ r_{i j}^{\textup{r}}(1)^4 ]
\leq O\left(M^{2} \Omega^{3}\right) 
= o(1),
\end{align*}
and 
\begin{align*}
& \mathbb{E}\left[ \frac{M}{N^{2}} \sum_{i=1}^{M} 
\left(\sum_{j=1}^{n_{i}} \varepsilon_{i j}^{\textup{r}}(1)(x_{i j} - \mathbb{E}(x_{ij} )) \right)^2 \right]
\leq_{\mathrm{HI}} \frac{M}{N^{2}} \sum_{i=1}^{M} n_i
\sum_{j=1}^{n_{i}} \mathbb{E}\left[ (\varepsilon_{i j}^{\textup{r}}(1)(x_{i j} - \mathbb{E}(x_{ij} )))^2 \right]
= O(M\Omega), \\
& V\left[ \frac{M}{N^{2}} \sum_{i=1}^{M} 
\left(\sum_{j=1}^{n_{i}} \varepsilon_{i j}^{\textup{r}}(1) (x_{i j} - \mathbb{E}(x_{ij} ))\right)^2 \right]
\leq \frac{M^2}{N^{4}} \sum_{i=1}^{M} 
\mathbb{E} \left[ \left(\sum_{j=1}^{n_{i}} \varepsilon_{i j}^{\textup{r}}(1) (x_{i j} - \mathbb{E}(x_{ij} ))\right)^4 \right] \\
& \le_{\mathrm{HI}} \frac{M^2}{N^{4}} \sum_{i=1}^{M} n_i^3 \sum_{j=1}^{n_{i}}
\mathbb{E} [ (\varepsilon_{i j}^{\textup{r}}(1)(x_{i j} - \mathbb{E}(x_{ij} )))^4 ]
\leq O\left(M^{2} \Omega^{3}\right) 
= o(1). 
\end{align*}
The proof of \eqref{Equ: A3_random} is similar to \eqref{Equ: A2_random}, and we omit it. Therefore, $H = O_{\mathbb{P}}(M \Omega)$ follows from \eqref{Equ: A1_random}–\eqref{Equ: A3_random}.

\end{proof}

\end{document}